\documentclass[11pt]{amsart}

\usepackage{anysize} \marginsize{1in}{1in}{1in}{1in}
\usepackage{comment}
\usepackage{listings}
\usepackage{xcolor}
\usepackage{amsmath}
\usepackage{mathtools}
\usepackage{booktabs}
\usepackage[all]{xy}
\usepackage[utf8]{inputenc}
\usepackage{varioref}
\usepackage{amsfonts}
\usepackage{amssymb}
\usepackage{bbm}
\usepackage{esint}
\usepackage{graphicx}
\usepackage{subcaption}
\usepackage{tikz}
\usepackage{empheq}
\usepackage{enumitem}
\usepackage{tikz-cd}
\usepackage{todonotes}
\usepackage{svg}
\usepackage{inconsolata}
\usepackage{upgreek}

\usetikzlibrary{matrix,arrows,decorations.pathmorphing}
\usepackage{mathrsfs}
\usepackage[hypertexnames=false,backref=page,pdftex,
 	pdfpagemode=UseNone,
 	breaklinks=true,
 	extension=pdf,
 	colorlinks=true,
 	linkcolor=blue,
 	citecolor=red,
 	urlcolor=blue,
 ]{hyperref}

\usepackage{minted}
\setminted{
  linenos,
  breaklines,
  autogobble,
  fontsize=\footnotesize,
  frame=single
}

\newcommand{\sC}{{\mathcal C}}

\newcommand{\sE}{{\mathcal E}}

\newcommand{\sL}{{\mathcal L}}

\newcommand{\C}{{\mathbb C}}

\newcommand{\N}{{\mathbb N}}

\newcommand{\R}{{\mathbb R}}

\newcommand{\Z}{{\mathbb Z}}

\newcommand{\dd}{{{\mathrm{d}}}}

\newcommand{\supp}{\operatorname{supp}}

\renewcommand{\to}[1][]{\xrightarrow{\ #1\ }}

\newcommand{\cp}{\mathbb C \mathrm P}
\newcommand{\rp}{\mathbb R \mathrm P}

\newcommand{\ev}{\operatorname{ev}}

\newcommand{\RR}{\mathbb{R}}
\newcommand{\SH}{\operatorname{SH}}
\newcommand{\CF}{\operatorname{CF}}
\newcommand{\HF}{\operatorname{HF}}

\makeatletter
\newcommand*{\da@rightarrow}{\mathchar"0\hexnumber@\symAMSa 4B }
\newcommand*{\da@leftarrow}{\mathchar"0\hexnumber@\symAMSa 4C }
\newcommand*{\xdashrightarrow}[2][]{%
  \mathrel{%
    \mathpalette{\da@xarrow{#1}{#2}{}\da@rightarrow{\,}{}}{}%
  }%
}
\newcommand{\xdashleftarrow}[2][]{%
  \mathrel{%
    \mathpalette{\da@xarrow{#1}{#2}\da@leftarrow{}{}{\,}}{}%
  }%
}
\newcommand*{\da@xarrow}[7]{%
  \sbox0{$\ifx#7\scriptstyle\scriptscriptstyle\else\scriptstyle\fi#5#1#6\m@th$}%
  \sbox2{$\ifx#7\scriptstyle\scriptscriptstyle\else\scriptstyle\fi#5#2#6\m@th$}%
  \sbox4{$#7\dabar@\m@th$}%
  \dimen@=\wd0 %
  \ifdim\wd2 >\dimen@
    \dimen@=\wd2 %
  \fi
  \count@=2 %
  \def\da@bars{\dabar@\dabar@}%
  \@whiledim\count@\wd4<\dimen@\do{%
    \advance\count@\@ne
    \expandafter\def\expandafter\da@bars\expandafter{%
      \da@bars
      \dabar@ 
    }%
  }%
  \mathrel{#3}%
  \mathrel{%
    \mathop{\da@bars}\limits
    \ifx\\#1\\%
    \else
      _{\copy0}%
    \fi
    \ifx\\#2\\%
    \else
      ^{\copy2}%
    \fi
  }%
  \mathrel{#4}%
}
\makeatother

\makeatletter
\newsavebox\myboxA
\newsavebox\myboxB
\newlength\mylenA

\newcommand*\xtilde[2][0.8]{%
    \sbox{\myboxA}{$\m@th#2$}%
    \setbox\myboxB\null
    \ht\myboxB=\ht\myboxA%
    \dp\myboxB=\dp\myboxA%
    \wd\myboxB=#1\wd\myboxA
    \sbox\myboxB{$\m@th\widetilde{\copy\myboxB}$}
    \setlength\mylenA{\the\wd\myboxA}
    \addtolength\mylenA{-\the\wd\myboxB}%
    \ifdim\wd\myboxB<\wd\myboxA%
       \rlap{\hskip 0.5\mylenA\usebox\myboxB}{\usebox\myboxA}%
    \else
        \hskip -0.5\mylenA\rlap{\usebox\myboxA}{\hskip 0.5\mylenA\usebox\myboxB}%
    \fi}

\newbox\usefulbox

\def\getslant #1{\strip@pt\fontdimen1 #1}

\def\xxtilde #1{\mathchoice
 {{\setbox\usefulbox=\hbox{$\m@th\displaystyle #1$}%
    \dimen@ \getslant\the\textfont\symletters \ht\usefulbox
    \divide\dimen@ \tw@ 
    \kern\dimen@ 
    \xtilde{\kern-\dimen@ \box\usefulbox\kern\dimen@ }\kern-\dimen@ }}
 {{\setbox\usefulbox=\hbox{$\m@th\textstyle #1$}%
    \dimen@ \getslant\the\textfont\symletters \ht\usefulbox
    \divide\dimen@ \tw@ 
    \kern\dimen@ 
    \xtilde{\kern-\dimen@ \box\usefulbox\kern\dimen@ }\kern-\dimen@ }}
 {{\setbox\usefulbox=\hbox{$\m@th\scriptstyle #1$}%
    \dimen@ \getslant\the\scriptfont\symletters \ht\usefulbox
    \divide\dimen@ \tw@ 
    \kern\dimen@ 
    \xtilde{\kern-\dimen@ \box\usefulbox\kern\dimen@ }\kern-\dimen@ }}
 {{\setbox\usefulbox=\hbox{$\m@th\scriptscriptstyle #1$}%
    \dimen@ \getslant\the\scriptscriptfont\symletters \ht\usefulbox
    \divide\dimen@ \tw@ 
    \kern\dimen@ 
    \xtilde{\kern-\dimen@ \box\usefulbox\kern\dimen@ }\kern-\dimen@ }}%
 {}}

\newcommand*\xoverline[2][0.75]{%
    \sbox{\myboxA}{$\m@th#2$}%
    \setbox\myboxB\null
    \ht\myboxB=\ht\myboxA%
    \dp\myboxB=\dp\myboxA%
    \wd\myboxB=#1\wd\myboxA
    \sbox\myboxB{$\m@th\overline{\copy\myboxB}$}
    \setlength\mylenA{\the\wd\myboxA}
    \addtolength\mylenA{-\the\wd\myboxB}%
    \ifdim\wd\myboxB<\wd\myboxA%
       \rlap{\hskip 0.5\mylenA\usebox\myboxB}{\usebox\myboxA}%
    \else
        \hskip -0.5\mylenA\rlap{\usebox\myboxA}{\hskip 0.5\mylenA\usebox\myboxB}%
    \fi}

\def\xxoverline #1{\mathchoice
 {{\setbox\usefulbox=\hbox{$\m@th\displaystyle #1$}%
    \dimen@ \getslant\the\textfont\symletters \ht\usefulbox
    \divide\dimen@ \tw@ 
    \kern\dimen@ 
    \overline{\kern-\dimen@ \box\usefulbox\kern\dimen@ }\kern-\dimen@ }}
 {{\setbox\usefulbox=\hbox{$\m@th\textstyle #1$}%
    \dimen@ \getslant\the\textfont\symletters \ht\usefulbox
    \divide\dimen@ \tw@ 
    \kern\dimen@ 
    \xoverline{\kern-\dimen@ \box\usefulbox\kern\dimen@ }\kern-\dimen@ }}
 {{\setbox\usefulbox=\hbox{$\m@th\scriptstyle #1$}%
    \dimen@ \getslant\the\scriptfont\symletters \ht\usefulbox
    \divide\dimen@ \tw@ 
    \kern\dimen@ 
    \xoverline{\kern-\dimen@ \box\usefulbox\kern\dimen@ }\kern-\dimen@ }}
 {{\setbox\usefulbox=\hbox{$\m@th\scriptscriptstyle #1$}%
    \dimen@ \getslant\the\scriptscriptfont\symletters \ht\usefulbox
    \divide\dimen@ \tw@ 
    \kern\dimen@ 
    \xoverline{\kern-\dimen@ \box\usefulbox\kern\dimen@ }\kern-\dimen@ }}%
 {}}
\makeatother

\makeatletter
\newcommand{\mylabel}[2]{#2\def\@currentlabel{#2}\label{#1}}
\makeatother

\makeatletter
\newcommand{\Mac}{}
\DeclareRobustCommand{\Mac}{%
  M%
  \raisebox{\dimexpr\fontcharht\font`M-\height}{%
    \check@mathfonts\fontsize{\sf@size}{0}\selectfont
    c%
  }%
}
\makeatother

\newtheoremstyle{citing}
  {}
  {}
  {\itshape}
  {}
  {\bfseries}
  {\textbf{.}}
  {.5em}
  {\thmnote{#3}}

\theoremstyle{plain}
\newtheorem{theorem}{Theorem}

\newtheorem{lemma}[theorem]{Lemma}
\newtheorem{corollary}[theorem]{Corollary}

\newtheorem{bigthm}{Theorem}

\newtheorem{proposition}[theorem]{Proposition}

\theoremstyle{remark}

\theoremstyle{definition}

\newtheorem{definition}[theorem]{Definition}

\numberwithin{equation}{section}

\theoremstyle{remark}
\newtheorem{remark}[theorem]{Remark}

{\theoremstyle{citing}
}

{\theoremstyle{definition}
}

\title{Hofer--Zehnder capacity as a geodesic selector}

\author{Johanna Bimmermann}
\address{Mathematical Institute, University of Oxford, Andrew Wiles Building, Woodstock Road, Oxford OX2 6GG, United Kingdom}
\email{johanna.bimmermann@maths.ox.ac.uk}

\author{Beomjun Sohn}
\address{Chair for Geometry and Analysis, RWTH Aachen University, Pontdriesch 10-12, DE-52062 Aachen, Germany}
\email{bsohn95@gmail.com}

\let\origmaketitle\maketitle
\def\maketitle{
  \begingroup
  \def\uppercasenonmath##1{} 
  \let\MakeUppercase\relax 
  \origmaketitle
  \endgroup
}

\begin{document}
\thispagestyle{empty}

\begin{abstract}
We compute the Hofer--Zehnder capacity of the unit disk cotangent bundle of every ellipsoid in $\mathbb R^3$. The capacity is determined by the smaller of two distinguished quantities in the geodesic length spectrum: twice the systole and the length of the shortest simple closed geodesic of Morse index $3$. For the lower bound, we use Riemannian billiards on a suitable cut of the ellipsoid. For the upper bounds, we develop two complementary methods. The first combines an argument by Hofer--Viterbo with neck-stretching and yields, more generally, an upper bound for positively curved Riemannian two-spheres in terms of closed geodesics of prescribed index. The second uses the pair-of-pants product in symplectic homology and the Viterbo isomorphism to bound the Hofer--Zehnder capacity of any disk cotangent bundles of Riemannian two-spheres by twice the diastole; for positive curvature, the diastole agrees with the systole.
\end{abstract}

\maketitle

\setlength{\parindent}{1em}
\setcounter{tocdepth}{2}

\tableofcontents

\section{Introduction}

The Hofer--Zehnder capacity \cite{HZ90,HZ94} is a symplectic notion of ``size'' measured through Hamiltonian dynamics. Roughly speaking, it quantifies how much a Hamiltonian can oscillate before the existence of fast periodic orbits is guaranteed. More precisely, let $(M,\omega)$ be a symplectic manifold, possibly with boundary. The Hofer--Zehnder capacity is defined as
\[
c_{HZ}(M,\omega):=\sup\left\{\max H-\min H\mid H\in\mathcal H_{\mathrm{ad}}(M,\omega)\right\}.
\]
Here, a Hamiltonian $H\in C^\infty(M,\mathbb{R}_{\geq 0})$ is called admissible if it vanishes on a nonempty open subset of $M$, is equal to its maximum outside a compact subset of the interior of $M$, and has no nonconstant fast periodic orbits. A nonconstant $T$-periodic orbit of $H$ is a smooth map $\gamma\colon\mathbb{R}/T\mathbb{Z}\to M$ satisfying $\dot\gamma(t)=X_H(\gamma(t))\neq 0$. We say that $\gamma$ is fast if $T\leq 1$. The Hofer--Zehnder capacity is a symplectic capacity and as such it has the following properties:
\begin{itemize}
    \item (\emph{monotone}) if there exists a symplectic embedding $(M,\omega)\hookrightarrow(M',\omega')$ of codimension zero, then $c_{HZ}(M,\omega)\leq c_{HZ}(M',\omega')$;
    \item (\emph{conformal}) for every $a>0$, one has $c_{HZ}(M,a \cdot \omega)=a\cdot c_{HZ}(M,\omega)$
    \item (\emph{normalized}) $c_{HZ}(B^{2n}(1),\omega_0)=\pi=c_{HZ}(Z^{2n}(1),\omega_0)$
\end{itemize}
For the normalisation, $\omega_0:=\sum_{j=1}^n \dd x_j\wedge\dd y_j$ denotes the standard symplectic form on $\mathbb{R}^{2n}\cong \C^n$, $B^{2n}(1):=\lbrace z=x+iy \in\mathbb{C}^{n}\mid |z|<1\rbrace$ is the open unit ball, and $Z^{2n}(1):=\lbrace z=x+iy \in\mathbb{C}^{n}\mid |z_1|<1 \rbrace$  is the standard symplectic cylinder.

\medskip
\noindent\textbf{Computability.}
For bounded convex domains in \(U\subset\mathbb{R}^{2n}\), the Hofer--Zehnder capacity is computable and coincides with the minimal action of a Reeb orbit on the boundary $\partial U$ \cite{HZ90}, i.e.
  \[
  c_{HZ}(U,\omega_0)\;=\;\min\{\mathcal A(\gamma)\;:\; \gamma \text{ is a closed Reeb orbit on } \partial U\}.
  \]
However outside this class little is known. Unlike embedding capacities there is no universal upper bound (like volume). Indeed many (closed) symplectic manifolds have infinite Hofer--Zehnder capacity \cite{U12}. Usher uses the observation, that if a symplectic manifold admits a closed stable hypersurface without closed characteristics, then one can construct a Hamiltonian $H$ supported on a neighborhood of the hypersurface, without period orbits. For any constant $C>0$ the Hamiltonian $C\cdot H$ is admissible, hence $c_{HZ}=\infty$. Vice versa finiteness of the Hofer--Zehnder capacity implies that no such hypersurface exists, which is the Weinstein conjecture. It is therefore no surprise that finding upper bounds to the Hofer--Zehnder capacity requires hard machinery such as pseudoholomorphic curves or Floer theory.

\vspace{1em}
\noindent
\textbf{Codisk bundles.} Another very natural class of symplectic manifolds is given by fiberwise convex domains $K\subset T^*N$ in cotangent bundles, containing the zero-section. To any such domain one can associate a Finsler metric
\[
F:TN\to \R_{\geq 0}, \qquad F_x(v)=\sup_{p\in K} p(v).
\]
Then $K=D^*_FN=D_{F^*}N\subset T^*N$ is the unit disk bundle of the Legendre dual Finsler metric $F^*$, and $\tfrac12(F^*)^2$ generates the co-geodesic flow. If $K\subset T^*N$ is fiberwise an ellipsoid, then $F$ is induced by a Riemannian metric $g$. At present, there is no general result computing, or even bounding, the Hofer--Zehnder capacity of such co-disk bundles in a way that generalizes the minimal action formula for convex domains in $\R^{2n}$. Indeed, explicit values are known only in special examples, particularly when there is a canonical choice of metric. Since this paper is mostly concerned with Riemannian metrics, we use the metric isomorphism $TN\to T^*N$ induced by $g$ to pull back the canonical symplectic form on $T^*N$ to $TN$. We thus identify the unit tangent and cotangent disk bundles and write $D_gN=D_g^*N$. 

\vspace{1em}
\noindent
\textbf{Constant curvature.}
Despite this general lack of computability, explicit results are available when the geometry of the base manifold provides additional symmetry. In particular, if the base manifold is a symmetric space and $g$ is its invariant metric, then the Hofer--Zehnder capacity of the disk bundle can often be computed \cite{Bim23,Bim25,BM24}. We denote by $\mathrm{sys}(g)$ the systole of $g$, that is, the length of its shortest nonconstant closed geodesic. For surfaces of constant curvature, the following cases are known:
\begin{itemize}
    \item If $S^2$ is equipped with the round metric, then $c_{HZ}(DS^2,\dd\lambda)=\mathrm{sys}$.
    \item If $\rp^2$ is equipped with the round metric, then $c_{HZ}(D\rp^2,\dd\lambda)=2\cdot\mathrm{sys}$.
    \item If $T^2$ is equipped with a flat reversible Finsler metric, then $c_{HZ}(DT^2,\dd\lambda)=2\cdot\mathrm{sys}$.
\end{itemize}
We don't know how to extend this list to all closed surfaces of constant curvature. Even for flat Klein bottles $K$, the capacity is unknown in general, while for closed orientable hyperbolic surfaces even its finiteness remains open \cite{OP26}.

\vspace{1em}
\noindent
\textbf{Ellipsoids.}
As a first step beyond the constant-curvature setting, we study metrics on $S^2$ induced by ellipsoids. Ellipsoids form a natural class of deformations of the round sphere for which the geodesic flow remains completely integrable. This is particularly useful for our lower bounds, which rely on Riemannian billiards and therefore require a detailed understanding of the geodesic dynamics. By contrast, our upper bounds are more robust and apply in greater generality. More precisely, we consider the ellipsoid
\[
E(a,b,c):=\left\{(x,y,z)\in\mathbb{R}^3\;\middle|\;
\frac{x^2}{a^2}+\frac{y^2}{b^2}+\frac{z^2}{c^2}=1\right\},\ a\geq b\geq c > 0,
\]
equipped with the Riemannian metric induced by the Euclidean metric on $\mathbb{R}^3$. The complete integrability of the geodesic flow on $E(a,b,c)$ goes back to Jacobi \cite{Jac39} and allows us to describe the closed geodesics relevant to the computation of the Hofer--Zehnder capacity. This leads to our main result.

\begin{bigthm}\label{ellipsoid}
For every ellipsoid $E=E(a,b,c)$ with $a\geq b\geq c>0$, the Hofer--Zehnder capacity of its unit disk bundle is
\[
c_{HZ}(D_1E,\dd\lambda)=\min\left\{\ell(\eta),2\cdot\mathrm{sys}\right\},
\]
where $\mathrm{sys}$ is the length of the shortest closed geodesic and $\eta$ is a shortest closed geodesic of Morse index $3$. The systole is realized by the minor principal ellipse. The geodesic $\eta$ is either the major principal ellipse or, in the oblate case, a simple closed geodesic that winds once around the $z$-axis and intersects the major principal ellipse four times; see Fig.~\ref{dias}.
\end{bigthm}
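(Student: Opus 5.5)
The proof splits into a lower bound $c_{HZ}(D_1E)\geq\min\{\ell(\eta),2\,\mathrm{sys}\}$ and two upper bounds, $c_{HZ}(D_1E)\leq 2\,\mathrm{sys}$ and $c_{HZ}(D_1E)\leq\ell(\eta)$. Before either, we pin down the relevant closed geodesics using Jacobi's integrability.

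\textbf{Step 1: the relevant closed geodesics.} In ellipsoidal (Liouville) coordinates the geodesic flow separates, and the Liouville foliation is explicit. From this we check:
\begin{itemize}
\item the three principal ellipses are closed geodesics, and the minor one (in the plane spanned by the $b$- and $c$-axes) is the shortest closed geodesic, so it realizes $\mathrm{sys}$;
\item the Morse indices of the principal ellipses are computed from their conjugate points;
\item the simple closed geodesics of index $3$ are the major ellipse and, in the oblate regime, the periodic orbits on a rational Liouville torus winding once around the $z$-axis and crossing the major ellipse four times.
\end{itemize}
Comparing their lengths identifies $\eta$. I expect this to be a computation with elliptic integrals.

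\textbf{Step 2: lower bound.} We exhibit admissible Hamiltonians with oscillation arbitrarily close to $\min\{\ell(\eta),2\,\mathrm{sys}\}$. Following the billiard approach, we cut $E$ along a suitable curve $C$ into a region $\Omega$; the natural choice is a half of $E$ bounded by the major principal ellipse, or a curve of the integrable family. We then consider Hamiltonians of the form $H=f(|p|)$ on $D_1E$, modified near $C$ so that orbits reflect at $C$, i.e.\ Riemannian billiards in $\Omega$. Using monotonicity, the task reduces to showing that every periodic billiard trajectory in $\Omega$ has length at least $\min\{\ell(\eta),2\,\mathrm{sys}\}$. Those bouncing orbits come from the integrable structure: a two-bounce chord along the minor ellipse traversed back and forth has length $\mathrm{sys}$, which yields the factor $2$ after the symplectic reduction or doubling step. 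Controlling all periodic billiard orbits, including glancing ones, via the Liouville foliation is delicate, and I expect it to be the main obstacle. The choice of cut must also adapt to the oblate versus prolate cases.

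\textbf{Step 3: the bound $c_{HZ}\leq 2\,\mathrm{sys}$.} We use the symplectic homology method. The pair-of-pants product on $\SH(D_1S^2)$ is transported via the Viterbo isomorphism to the Chas--Sullivan loop product on $H_*(\Lambda S^2)$. A product relation, for instance the class of a fiber of the free loop space squaring to a nontrivial class detected at action level $2\cdot$diastole, shows that the unit is killed, or that a relevant class is nonzero, by action at most twice the diastole. This bounds $c_{HZ}$, via the relative or $\pi_1$-sensitive version, by $2\,\mathrm{diastole}$. For positive curvature the diastole equals $\mathrm{sys}$ (Calabi--Cao), and ellipsoids have positive curvature.

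\textbf{Step 4: the bound $c_{HZ}\leq\ell(\eta)$.} We use the Hofer--Viterbo argument with neck-stretching. If an admissible $H$ had oscillation greater than $\ell(\eta)$, we compactify $D_1E$ to a $\cp^1\times\cp^1$-type quadric. A moduli space of holomorphic spheres through a point must then break under neck-stretching into a building whose top level contains a closed Reeb orbit, i.e.\ a geodesic. Index constraints in dimension $4$, using that positive curvature makes the indices increase, force that orbit to have index $3$, so its length is at least $\ell(\eta)$. This produces a fast periodic orbit of $H$, a contradiction. Combining Steps 2–4 gives the formula.
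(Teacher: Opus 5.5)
Your overall architecture (billiard lower bound, loop-product upper bound $2\,\mathrm{dias}=2\,\mathrm{sys}$, neck-stretched Hofer--Viterbo upper bound $\ell(\eta)$) is the one the paper uses. However, Steps 2 and 4 fail as you describe them.

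\textbf{Step 2: the billiard table gives too little.} Take $\Omega=E\cap\{z>0\}$, bounded by the major ellipse $\gamma_z$. The half of the minor ellipse $\gamma_x$ lying in $\Omega$ meets $\gamma_z$ orthogonally at $(0,\pm b,0)$. It is therefore a non-glancing two-bounce orbit of length exactly $\mathrm{sys}$. So this table can certify at most $\mathrm{sys}$, which is strictly less than $\min\{\ell(\eta),2\,\mathrm{sys}\}$ for non-round $E$. There is no ``reduction or doubling step'': the bound is just the length of the shortest admissible billiard orbit.

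The missing idea is to remove a slit rather than cut along a separating curve. The paper takes $\Omega=E\setminus L$, where $L\subset\gamma_y$ is the arc between the two umbilics adjacent to the north pole.
\begin{itemize}
\item Geodesics winding around the $z$-axis, including $\gamma_z$ and the $\upomega=2$ family, never meet $L$. They survive as closed billiard orbits.
\item Geodesics winding around the $x$-axis all cross $L$. Since $y\mapsto -y$ fixes $L$, a periodic billiard orbit of this type has an even number of bounces and unfolds to a closed geodesic winding at least twice, so it has length at least $2\,\mathrm{sys}$. An example is $\gamma_x$ bouncing at the north pole from both sides.
\end{itemize}
Even with this table you still need a reflection law at the non-smooth tips $U_\pm$, the exclusion of single umbilical loops, and the non-admissibility of the gliding orbit along $\gamma_y$, which is shorter than $\eta$. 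You also need the estimate $2\ell(\gamma_y\setminus L)\ge\ell(\eta)$.

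\textbf{Step 4: the key inequality points the wrong way.} Neck-stretching yields a Floer plane from the minimum $p$ to a Reeb orbit $\gamma$ with $-\min H<\mathcal{A}(\gamma)$. To conclude $c_{HZ}\le\ell(\eta)$ you therefore need an \emph{upper} bound $\ell(\gamma)\le\ell(\eta)$. Knowing that $\gamma$ has index $3$ only gives $\ell(\gamma)\ge\ell(\eta)$, which bounds nothing.

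The paper gets the upper bound from an action window. After rescaling by the circumradius $a$, the codisk bundle sits inside $D^*_{g_0}S^2\subset\cp^1\times\cp^1\setminus\Sigma$. This forces $\ell(\gamma)\le 2\pi a$. When the tangent lift of $\gamma$ is contractible in $\rp^3$, an extra non-contractible negative end appears in the top level, and the window becomes $\ell(\gamma)+\mathrm{sys}\le 2\pi a$.

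You must then rule out every other index-$3$ geodesic inside this window, including non-simple ones such as $\gamma_x^2$ and families bifurcating from covers of $\gamma_x$. Your Step 1 does not list these. The paper assumes $\ell(\eta)<2\,\mathrm{sys}$, since otherwise Step 3 already suffices. Any such orbit would then force $3\,\mathrm{sys}\le 2\pi a$, and this contradicts $4\sqrt{a^2+b^2}\le\ell(\eta)<2\,\mathrm{sys}\le 4\pi b$.

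\textbf{Step 3: right tools, wrong relation.} The unit is not killed but \emph{hit}. For a sweepout $\gamma_\Phi$ by parallels of a diffeomorphism $\Phi\colon S^2\to E$, one has $\Delta[\gamma_\Phi]*\Delta[\bar\gamma_\Phi]=1$ in $H_2(\Lambda^{<2\mathrm{dias}+2\varepsilon}S^2;\mathbb{Z}_2)$. Feeding this into Irie's spectral invariants gives $c_{HZ}\le 2\,\mathrm{dias}$. A degree argument is needed here: it makes one factor's spectral value strictly smaller than its share of $-\min H$.
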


\begin{figure}[h]
  \centering
  \includegraphics[width=0.7\textwidth]{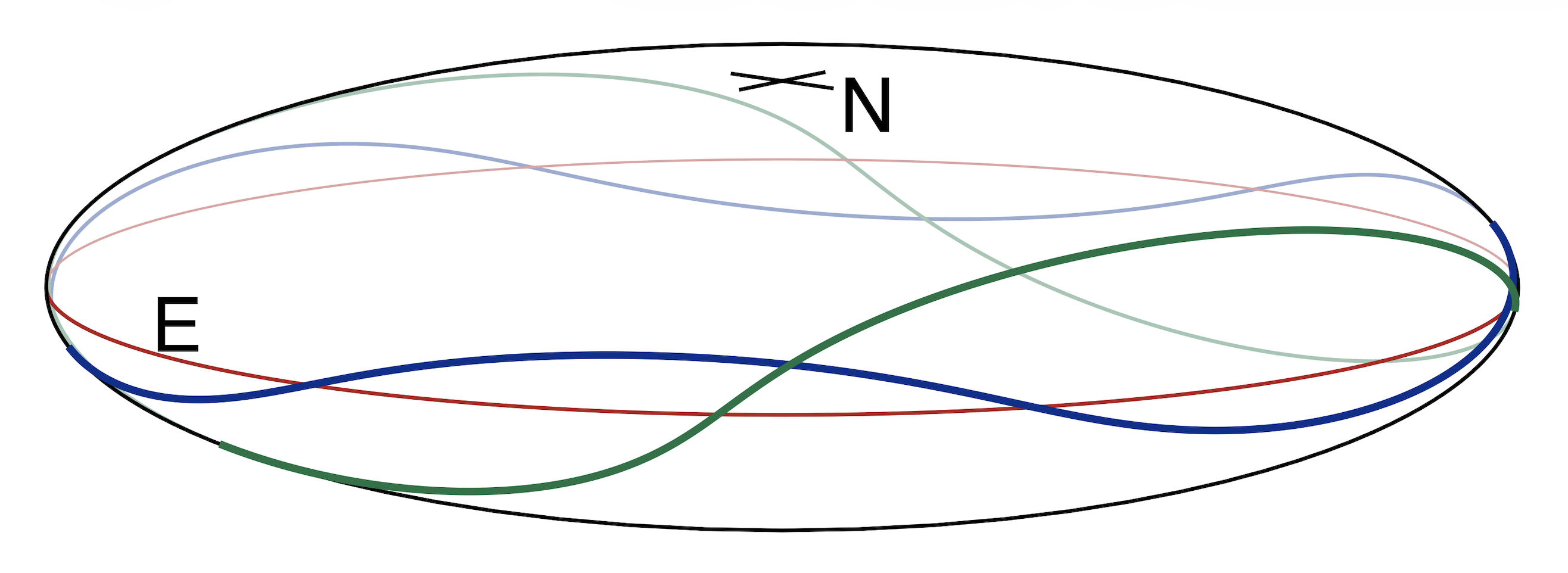} 
  \caption{The figure is taken from \cite{Kar25}. It shows an oblate ellipsoid of revolution $E(a,a,c)$ with two additional simple closed geodesics (green, blue). The green one has index 3 and the blue one has index 5, the mayor base ellipse (red) has odd index $\geq 7$ and is longer than both the green and the blue closed geodesic. For a general ellipsoid $E(a,b,c)$ and $c$ sufficiently small one gets similar additional closed geodesics.}
  \label{dias}
\end{figure}

If $a=b$ or $b=c$, then $E(a,b,c)$ is a surface of revolution, and the additional integral of the geodesic flow is simply the angular momentum about the axis of symmetry. In these cases, the Gromov width of the disk bundle was computed in \cite{FRV23} using embedded contact homology for toric domains, and the resulting value agrees with our computation of the Hofer--Zehnder capacity.\\

To illustrate which of the two quantities in Theorem~\ref{ellipsoid} determines the capacity, we numerically compared $\ell(\eta)$ with $2\mathrm{sys}$ over a large sample of ellipsoids, normalized by setting $a=1$. Figure~\ref{colormap} shows the region in the parameter space $1\geq b\geq c>0$ where the minimum is attained by $2\mathrm{sys}$ and the region where it is attained by $\ell(\eta)$. The values of $\ell(\eta)$ were computed using Charles F.\ F.\ Karney's program \texttt{Geod3Solve} \cite{Kar25,Kar25b}.

\begin{figure}[h]
  \centering
  \includegraphics[width=0.5\textwidth]{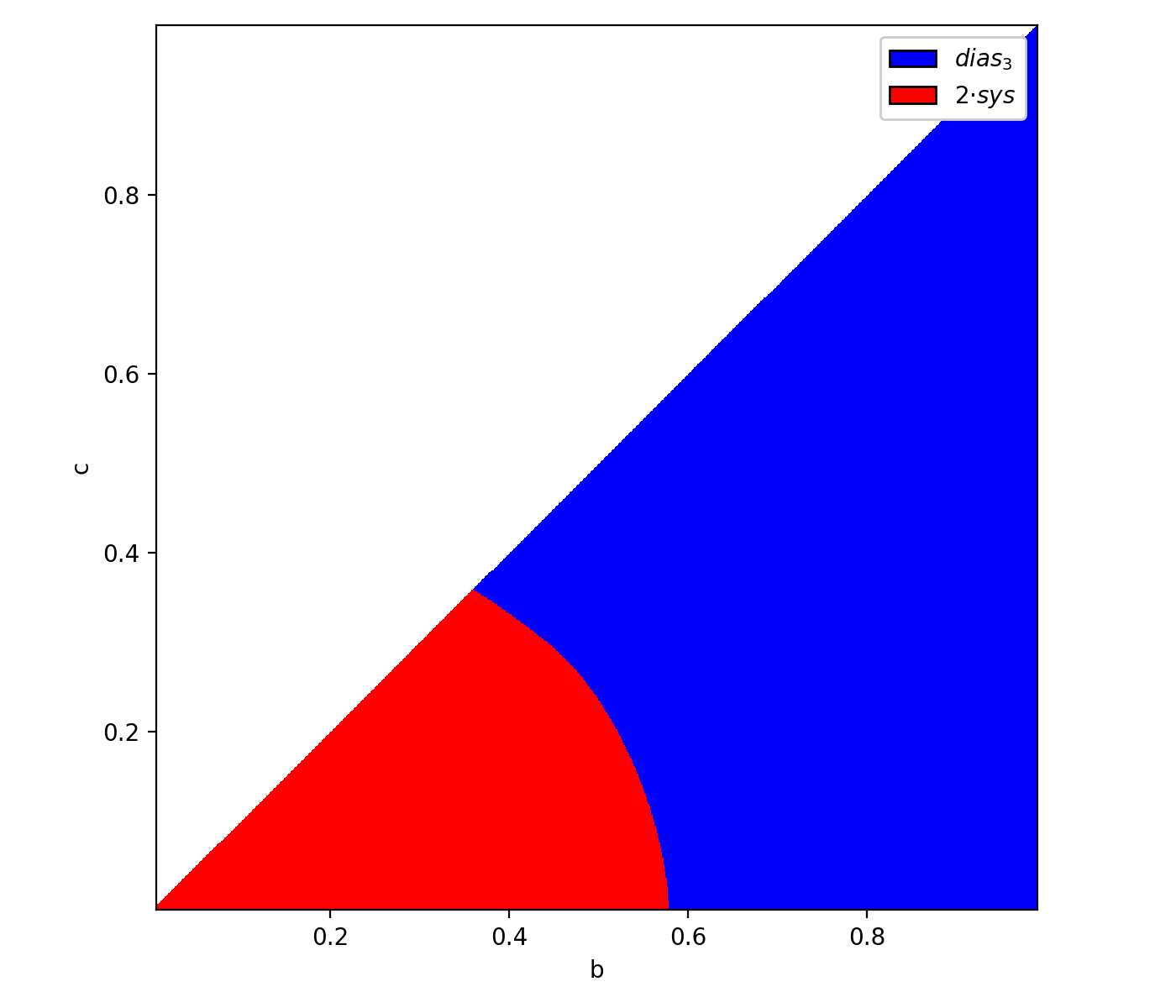} 
  \caption{Any ellipsoid can be scaled as $E(a,b,c)=a\cdot E(1,b/a,c/a)$, so that we may assume $a=1$. The plot shows for which values of $a=1\geq b\geq c$ twice the systole is shorter (red), and for which values $\ell(\eta)$ is shorter (blue). The value of $\mathrm{sys}$ is given by an elliptic integral, namely the perimeter of the minor principal ellipse $y^2/b^2+z^2/c^2=1$, and $\ell(\eta)$ is four times the Riemannian distance between $(a,0,0)$ and $(0,b,0)$, which can be computed using a program written by Karney \cite{Kar25,Kar25b}.}
  \label{colormap}
\end{figure}

\vspace{1em}
\noindent
\textbf{Structure of the proof.}
The proof proceeds in three steps, consisting of one lower bound and two complementary upper bounds. In Step~1, we establish the lower bound using Riemannian billiards. In Step~2, we prove the upper bound $c_{HZ}(DE,\dd\lambda)\leq\ell(\eta)$. To do so, we symplectically embed $DE$ into $\cp^1\times\cp^1$ and combine diagonal neck-stretching with a Hofer--Viterbo-type argument \cite{HV92} that derives capacity bounds from pseudoholomorphic curves. This can be viewed as a first step toward extending the methods of \cite{HV92,Lu06} to the noncompact setting of finite-energy curves. Finally, in Step~3, we use symplectic homology to prove the upper bound $c_{HZ}(DE,\dd\lambda)\leq2\mathrm{sys}$. Each of the three steps extends beyond the setting of ellipsoids and yields a result of independent interest. We therefore briefly outline the three arguments and state the corresponding results in their more general form.

\vspace{1em}
\noindent
\textbf{Billiards, Step 1.}
Our lower bound is based on Riemannian billiards and applies more generally to unit disk tangent bundles of Riemannian manifolds. Let $(N,g)$ be a Riemannian manifold and let $\Omega\subset N$ be an open domain with compact closure and smooth boundary. The main idea is to approximate the billiard dynamics on $(\overline{\Omega},g)$ by smooth Hamiltonian systems whose potentials diverge near $\partial\Omega$. If $(\overline{\Omega},g)$ has no short periodic billiard trajectories, the approximating Hamiltonian systems have no short periodic orbits. They therefore give rise to admissible Hamiltonians and hence to a lower bound for the Hofer--Zehnder capacity. The resulting estimate is expressed in terms of the shortest admissible periodic billiard trajectory.

\begin{theorem}\label{thmbilliard}
Let $(N,g)$ be a Riemannian manifold, and let $\Omega\subset N$ be an open domain with compact closure and smooth boundary. Then
\[
c_{HZ}(D_gN,\dd\lambda)\geq c_{HZ}(D_g\Omega,\dd\lambda)\geq\Omega\text{-}\mathrm{sys}(g),
\]
where
\[
\Omega\text{-}\mathrm{sys}(g):=\inf\left\{\ell_g(\gamma)\mid \gamma\text{ is an admissible periodic billiard trajectory of }(\overline{\Omega},g)\right\}.
\]
\end{theorem}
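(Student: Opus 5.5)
The first inequality is monotonicity. $D_g\Omega$ is an open subset of $D_gN$, so the inclusion is a codimension-zero symplectic embedding. More directly, an admissible Hamiltonian on $D_g\Omega$ extends by its maximum to an admissible Hamiltonian on $D_gN$ without creating new periodic orbits.

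For the second inequality, fix $\epsilon>0$ and aim to build admissible Hamiltonians on $D_g\Omega$ with oscillation at least $\Omega\text{-}\mathrm{sys}(g)-\epsilon$. The building blocks are the kinetic energy $|p|$ and a barrier potential. Choose a family $V_k\colon\Omega\to[0,\infty)$ with $V_k=0$ on $\Omega_k=\{x\in\Omega : d(x,\partial\Omega)>1/k\}$, with $V_k\to\infty$ at $\partial\Omega$, and growing steeply and monotonically in the normal direction. Set $K_k(x,p)=\tfrac12|p|^2+V_k(x)$. Let $f\colon[0,\infty)\to[0,m]$ be a smooth function with $f=0$ near $0$, $f=m$ for large arguments, and $0\le f'<T^{-1}$ for a suitable $T$. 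Then take
\[
H_k=f\circ\sqrt{2K_k}.
\]
By the barrier and the cutoff, $H_k$ equals $m$ near $\partial D_g\Omega$, and it vanishes near the zero section over $\Omega_k$.

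The orbits of $H_k$ are reparametrized orbits of $K_k$ on level sets $\{K_k=e\}$. With the square-root normalization, a nonconstant orbit of $H_k$ of period $\le 1$ corresponds to a closed orbit of $K_k$ at speed parameter $s=\sqrt{2e}$ whose "length" (the action-type quantity $\int |p|\,\dd t$) is at most $f'(s)\le m+\delta$. Choosing $m=\Omega\text{-}\mathrm{sys}(g)-\epsilon$ therefore makes $H_k$ admissible, provided every closed orbit of $K_k$ has length at least $\Omega\text{-}\mathrm{sys}(g)-\epsilon/2$ for $k$ large, uniformly over the levels $s\in[s_0,1]$ on which $f'\neq0$. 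We also need to rule out equilibria and trajectories that stay inside $\Omega_k$. Inside $\Omega_k$ these are geodesics. A closed geodesic contained in $\Omega$ is itself an admissible billiard trajectory with no reflections, which I expect the paper's definition of admissibility to include, so it has length at least $\Omega\text{-}\mathrm{sys}$. Equilibria have $p=0$, which lies in the region where $f=0$, so they are harmless.

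The main obstacle is the limiting argument. Suppose, for contradiction, that for a sequence $k\to\infty$ there are closed $K_k$-orbits of length $\le \Omega\text{-}\mathrm{sys}-\epsilon/2$. Then I would extract a limit that is a periodic billiard trajectory of length $\le\Omega\text{-}\mathrm{sys}-\epsilon/2$ and check that it is admissible. The tools are Arzelà–Ascoli together with the standard convergence of soft-wall motion to billiard reflection, as in Kozlov–Treshchev or the smooth approximations of billiards used in Benci–Giannoni. Energy conservation bounds the speed, and the steep potential forces the time spent in the collar $\Omega\setminus\Omega_k$ to go to zero, with the exit velocity equal to the reflection of the entry velocity in the limit. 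The number of reflections is bounded because the length is bounded and the boundary is smooth and compact, so consecutive bounces are separated.

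The delicate cases are degenerate limits: grazing (tangential) reflections, sequences where the number of bounces accumulates, and orbits collapsing onto the boundary, for example gliding along a geodesic of $\partial\Omega$. I would first try to define "admissible" broadly enough that every such limit counts, e.g.\ as a generalized billiard trajectory in the sense of Benci–Giannoni, including gliding segments. The length lower-semicontinuity then gives the contradiction. Failing that, I would try to show directly that, with a suitably convex choice of $V_k$, grazing limits cannot have length below the infimum. Letting $\epsilon\to0$ yields $c_{HZ}(D_g\Omega,\dd\lambda)\ge\Omega\text{-}\mathrm{sys}(g)$.
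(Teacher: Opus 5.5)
Your proposal follows essentially the same route as the paper: approximate the billiard by $E+V$ with a barrier potential blowing up at $\partial\Omega$, use the radial reparametrization $f\circ\sqrt{2K}$ (the paper takes $C\sqrt{2H_\varepsilon}$ on the sublevel set $\{H_\varepsilon<1/2\}$ and smooths it as in \cite{Bim23}), and argue by contradiction that short periodic orbits of the approximations converge, via Vocke's convergence result \cite[Prop.~3.6]{V21}, to a short periodic billiard trajectory in the weak sense of Definition~\ref{billiard}, which already allows grazing and gliding. The degenerate limits you worry about are handled by your first option, since the paper defines admissibility as being such a limit for Vocke's specific family $H_\varepsilon=E+\varepsilon V$; so use that family rather than your own $V_k$ (the paper notes admissibility may depend on the scheme), and drop the claim that consecutive bounces are uniformly separated, which fails for near-grazing orbits where $\Omega$ is locally convex.
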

\noindent
The precise notion of an \emph{admissible periodic billiard trajectory} will be introduced in Section~\ref{secbilliard}.\\

For the proof of our main theorem, we apply this theorem to a billiard table obtained by cutting the ellipsoid along a geodesic segment. More precisely, let $\gamma_y=\lbrace y=0\rbrace$ be the median principal ellipse and let $L\subset\gamma_y$ be the segment joining the two neighboring umbilical points $U_+$ and $U_-$ that contains the north pole. We consider the billiard table $\Omega:=E(a,b,c)\setminus L$. Since $\Omega$ has two nonsmooth boundary points, we approximate it by smooth billiard tables and use a generalized reflection law at $U_+$ and $U_-$. By classifying the resulting admissible periodic billiard trajectories and estimating their lengths, we obtain the lower bound in our main theorem.

\begin{proposition}\label{proplowerbound}
Let $E=E(a,b,c)$ with $a\geq b\geq c>0$, and let $\eta$ be a shortest closed geodesic on $E$ of Morse index $3$. Then\
$$
c_{HZ}(DE,\dd\lambda)\geq\min\lbrace\ell(\eta),2\mathrm{sys}\rbrace.
$$
\end{proposition}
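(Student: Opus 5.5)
We apply Theorem~\ref{thmbilliard} to the slit table $\Omega:=E\setminus L$, where $L\subset\gamma_y$ is the arc of the median principal ellipse joining the umbilical points $U_\pm$ through the north pole. Since $\Omega$ is not a smooth domain (its boundary is a doubled arc with two cusp-like endpoints), we first approximate it by smooth domains $\Omega_\varepsilon\subset\Omega$, obtained by removing thin neighbourhoods of $L$ shrinking to $L$. Monotonicity gives
\[
c_{HZ}(DE)\geq c_{HZ}(D\Omega_\varepsilon)\geq\Omega_\varepsilon\text{-}\mathrm{sys}.
\]
It then suffices to show
\[
\liminf_{\varepsilon\to0}\Omega_\varepsilon\text{-}\mathrm{sys}\geq\min\{\ell(\eta),2\mathrm{sys}\}.
\]
The plan is a compactness argument: a sequence of admissible periodic billiard trajectories of $\Omega_\varepsilon$ with uniformly bounded length subconverges to a generalized billiard trajectory of $\Omega$. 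Away from $U_\pm$ this limit reflects specularly off $L$, while at $U_\pm$ it obeys the generalized reflection law. It then remains to bound from below the length of every limiting trajectory.

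\textbf{Classification via integrability.} We use Jacobi's integrability. Geodesics on $E$ are tangent to a confocal caustic (a line of curvature) or pass through umbilics. Specular reflection off an arc of $\gamma_y$ between umbilics preserves the Joachimsthal/Liouville integral, because $\gamma_y$ is itself a coordinate line of the elliptic coordinates. So a billiard trajectory in $\Omega$ is a geodesic unfolded across $L$, staying on a single level set of the integral. Passing to the double cover of $E$ branched along $L$ (a torus-like surface), admissible periodic billiard trajectories correspond to closed geodesics on this cover. I would split into cases by caustic type.

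\begin{itemize}
\item \emph{Caustic a hyperbola-type line of curvature crossing $L$ (oscillating around the middle ellipse direction).} The trajectory hits $L$. I would show by symmetry that its length is at least twice the length of an orbit of the minor ellipse family, giving $\geq 2\,\mathrm{sys}$. The intuition is that the reflection "doubles" the path, as in the planar slit billiard.
\item \emph{Caustic of the other type.} The trajectory winds around the $z$-axis and may avoid $L$, or meet it only near the umbilics. Here the trajectory is a closed geodesic of $E$ disjoint from $L$, or a limit of such. Closed geodesics disjoint from the slit are those in the family that winds around, and among them a length minimiser has Morse index at least $3$: the slit kills the index-$1$ minor ellipse, since the north pole lies on it, and it kills the index-$\le 1$ families. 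This gives length $\geq\ell(\eta)$.
\item \emph{Trajectories through $U_\pm$.} All such geodesics pass through the antipodal umbilic at half-period, and they have equal length between umbilics. Their length is therefore a multiple of the umbilic-to-umbilic distance, which is half the length of $\gamma_y$. I would compare this with $2\mathrm{sys}$.
\end{itemize}

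\textbf{Main obstacle.} The hardest step is making the $\varepsilon\to0$ limit rigorous near the endpoints $U_\pm$. Approximating trajectories can wrap tightly around the tip of the slit, and such trajectories could be short. I would handle this by showing that near an umbilic the smoothed tip behaves like a planar slit tip. Trajectories grazing it converge to geodesics through $U_\pm$ satisfying the generalized reflection law, so no length is lost. The second obstacle is the case analysis bounding lengths of the non-crossing trajectories by $\ell(\eta)$. Here I would lean on Karney-type explicit formulas only qualitatively, using the monotonicity of the length as a function of the Liouville integral within each family.
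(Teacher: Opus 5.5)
Your overall frame is the paper's: the slit table $\Omega=E\setminus L$, smooth approximating tables, a split by the first integral, unfolding the trajectories that cross $L$ to get $2\,\mathrm{sys}$, and closed circumpolar geodesics that avoid $L$. The gap is that your case analysis never deals with the orbits living on $\gamma_y$, and these are exactly the ones that threaten the bound.

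\textbf{The short orbits on $\gamma_y$.} The closed geodesic $\gamma_y$ contains $L$, so with $\mu=0$ it is a gliding periodic billiard trajectory of $\Omega$ in the sense of Definition~\ref{billiard}. Likewise, every geodesic loop based at a tip $U_\pm$ is a candidate one-bounce trajectory. Both have length $\ell(\gamma_y)$, i.e.\ twice the umbilic-to-umbilic distance. Since the half-winding time $T$ is monotone and tends to $\tfrac12\ell(\gamma_y)$ at the umbilical level, one always has $\ell(\gamma_y)<\ell(\eta)$. For nearly round ellipsoids one also has $\ell(\gamma_y)<2\,\mathrm{sys}$. So your third-bullet plan to ``compare with $2\,\mathrm{sys}$'' fails precisely here. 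A compactness argument cannot help, because these are genuine limit objects.

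The paper uses two ingredients that your proposal lacks.
\begin{enumerate}
\item The tips obey a \emph{restricted} reflection law (3)$'$: the normal must lie in the cone $H_{\mathrm{in}}\cap H_L$, which is what limits of normals of $\partial\Omega_\delta$ give. Combined with the monotone decrease (Hart) of the angle between an umbilical geodesic and $L$ under successive returns, this rules out a single loop. You mention a ``generalized reflection law'' at $U_\pm$ but never specify or use it; allowing arbitrary angles at the tips would make these loops legitimate.
\item The gliding orbit $\gamma_y$ must be shown \emph{non-admissible}, and this depends on the approximating family. The paper takes $\partial\Omega_\delta=\{\cos\beta=\delta\}$, a line of curvature, so $F$ is preserved at every reflection. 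An approximating orbit with winding $\pm1$ about the $y$-axis is then forced to be a one-bounce circumpolar geodesic loop tangent to $\{\cos\beta=\delta\}$. Since $\upomega>1$, such a loop violates the reflection law.
\end{enumerate}
With unspecified ``thin neighbourhoods of $L$'' you cannot run the second argument; note that your integral-preservation remark applies to $L$ itself, not to $\partial\Omega_\varepsilon$. Your stated main obstacle, that ``no length is lost'' near the tips, also points the wrong way: the danger is short limits, not lost length.

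\textbf{Two further gaps.} First, your claim that trajectories through $U_\pm$ have length a multiple of the umbilic-to-umbilic distance is false. The two-bounce orbit running along $\gamma_y\setminus L$ from $U_+$ to $U_-$ and back, allowed by (3)$'$ with $\nu=l$, has length $2\ell(\gamma_y\setminus L)$. Bounding this below by $\ell(\eta)$ is a genuinely quantitative estimate: the appendix proves $\ell(\gamma_y\setminus L)\ge T(\gamma_o)=\tfrac12\ell(\eta)$ via the period map. As $b\to c$, both $2\ell(\gamma_y\setminus L)$ and $\ell(\eta)$ tend to $\ell(\gamma_y)$, so no soft argument will do.

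Second, in your second bullet, ``Morse index $\ge3$, hence length $\ge\ell(\eta)$'' is not a valid inference. What is needed is this: a closed circumpolar geodesic with $\upomega=m/n$ has length $2nT$, and $T$ and $\upomega$ are both monotone in $F$. Winding $1$ then forces $\upomega\ge2$ and length $\ge\ell(\eta)$. Winding $\ge2$ gives length $\ge2\ell(\gamma_y)\ge2\,\mathrm{sys}$.

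A minor point: the unfolding should use the reflection $y\mapsto-y$, which fixes $L$. The double cover of $E$ branched at the endpoints of $L$ is a sphere with two cone points, not a torus. Moreover, geodesics on that cover project to curves that pass through $L$ rather than reflect off it.
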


Indeed, for the billiard table $\Omega=E\setminus L$, the analysis of admissible periodic billiard trajectories gives $\Omega\text{-}\mathrm{sys}(g)=\min \lbrace(\ell(\eta),2\mathrm{sys}\rbrace$. Proposition \ref{proplowerbound} therefore follows from Theorem \ref{thmbilliard}.

\vspace{1em}
\noindent
\textbf{Pseudoholomorphic curves, Step 2.}
The Hofer--Viterbo argument \cite{HV92}, later generalized by Lu \cite{Lu06}, bounds the Hofer--Zehnder capacity of a uniruled symplectic manifold in terms of the energy of pseudoholomorphic spheres. To obtain an analogous bound for codisk bundles of $S^2$, we embed the codisk bundle into $\cp^1\times\cp^1$ and combine this argument with neck-stretching. The resulting finite-energy planes are asymptotic to closed Reeb orbits, whose action agrees in our setting with the length of the corresponding closed geodesics on $S^2$. This yields an upper bound given by the length of a distinguished closed geodesic, characterized by its index and an additional action bound.\\

To state the general theorem, let us first fix some notation. Let $\Lambda S^2:=W^{1,2}(S^1,S^2)$ denote the free loop space, and let $\sE_g:\Lambda S^2\to\mathbb{R}$ be the energy functional. For a positively curved Riemannian metric $g$, let $R(S^2,g)$ denote the circumradius, namely, the radius of the smallest  Euclidean ball containing the convex body in $\mathbb{R}^3$ whose boundary is isometric to $(S^2,g)$. We denote by $\mathrm{sys}(g)$ the systole, i.e., the length of its shortest closed geodesic.

\begin{theorem}
\label{Theorem: Upper bound from index 3 - refined + geodesic flow version}
Let $g$ be a Riemannian metric of positive curvature on $S^2$. Then
\[
c_{HZ}(D_g^*S^2,\dd\lambda)
\leq
\sup_{\gamma\in\mathcal{P}_{HZ}(g)}\ell_g(\gamma),
\]
where $\mathcal{P}_{HZ}(g)$ is the set of closed geodesics of Morse index $3$ with respect to $\sE_g$ that satisfy one of the following conditions:
\begin{itemize}
    \item $\gamma$ has an even number of self-intersections and $\ell_g(\gamma)\leq2\pi R(S^2,g)$;
    \item $\gamma$ has an odd number of self-intersections and $\ell_g(\gamma)+\mathrm{sys}(g)\leq2\pi R(S^2,g)$.
\end{itemize}
\end{theorem}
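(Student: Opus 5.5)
We follow the Hofer--Viterbo strategy, run through a neck-stretching limit. The first step is to embed $D_g^*S^2$ symplectically into $\cp^1\times\cp^1$. Since $g$ has positive curvature, Weyl's problem (Nirenberg, Pogorelov) realises $(S^2,g)$ as the boundary of a convex body $K\subset\R^3$. Let $R=R(S^2,g)$ be its circumradius, and assume $K$ lies in the round ball of radius $R$. The round sphere $S^2_R$ has $D^*S^2_R\cong \cp^1\times\cp^1\setminus\bar\Delta$ with each factor of area $2\pi R$, where $\bar\Delta$ is the antidiagonal. Radial projection from the centre gives a fiberwise map, and convexity together with the containment $K\subset B_R$ shows that it is distance non-decreasing onto $S^2_R$. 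Hence $D_g^*S^2$ embeds into $D^*S^2_R$, and so into $(\cp^1\times\cp^1,\,2\pi R\,\omega_{FS}\oplus 2\pi R\,\omega_{FS})$. The complement of $D_g^*S^2$ contains the antidiagonal sphere $\bar\Delta$. Its class $A-B$ (or the diagonal class $A+B$) has Gromov--Witten invariant through two points equal to one, and $\omega(A+B)=4\pi R$.

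Next comes the Hofer--Viterbo/Lu argument. Take an admissible $H$ with oscillation $m$, extended by its maximum outside $D_g^*S^2$. The standard argument compares the Hamiltonian perturbed Cauchy--Riemann equation on spheres in class $A+B$ passing through a point where $H=0$ and a point in the region where $H=\max$. If $H$ has no fast orbits, then these curves exist for all homotopy parameters. Bubbling/energy then forces $m\le$ the energy of the curve component that crosses $\partial D_g^*S^2$.

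The key refinement is to stretch the neck along $S^*_gS^2$. In the SFT limit, the $A+B$ curve through the two points breaks. On the top level, in $\cp^1\times\cp^1\setminus D^*S^2$, it leaves punctured curves (planes or cylinders) asymptotic to Reeb orbits, that is, closed geodesics. On the bottom level, in $T^*S^2$, it leaves a plane or cylinder through the inner point. The Hofer--Viterbo energy bound then reads: $m$ is at most the action of the orbit(s) at which the bottom component attaches, which equals $\ell_g(\gamma)$.

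Index considerations come next. For a generic (nondegenerate, bumpy) metric, the SFT Fredholm index of the bottom component, constrained through a point, must be nonnegative. Compactness of the two-point moduli space also forces equality in the total index count. Together with $c_1(A+B)=4$, this pins the Conley--Zehnder index of the asymptotic orbit to Morse index $3$ (via $\mu_{CZ}=\mathrm{ind}+$const). Two topological cases arise for the asymptotic data. If the geodesic $\gamma$ has an even number of self-intersections, its lift is homologically compatible with a single plane in the top level, whose positive area gives $\ell_g(\gamma)\le\omega(\cdot)\le 2\pi R$. If the number is odd, the building needs an additional asymptotic orbit, and monotonicity of action forces its action to be at least $\mathrm{sys}$. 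This gives $\ell_g(\gamma)+\mathrm{sys}\le 2\pi R$. Degenerate metrics are handled by approximating $g$ by bumpy metrics and using continuity of $c_{HZ}$ and of the relevant lengths.

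The main obstacle is this middle step. We must make the Hofer--Viterbo argument compatible with neck-stretching, where the Hamiltonian term lives in the bottom level and the curves are finite-energy and noncompact. We must also rule out buildings with several bottom components or multiply covered orbits, so that the index and the self-intersection parity of the relevant geodesic are controlled. For this, we would first try automatic transversality in dimension four together with Siefring intersection theory, which would exclude the unwanted configurations.
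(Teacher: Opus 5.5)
Your overall architecture matches the paper's: embed into $\cp^1\times\cp^1$, run Hofer--Viterbo with Floer spheres through the minimum of $H$, stretch the neck along $S^*_gS^2$, and read off $-\min H<\mathcal A(\gamma)$ from the bottom Floer plane. But the curve problem you chose cannot deliver the statement. Spheres in class $A+B$ through two points are not rigid: $\dim_{\mathbb R}\mathcal M_{0,2}(A+B)=4+2c_1(A+B)+2\cdot 2-6=10$, while two point constraints have codimension $8$. So there is no Gromov--Witten number ``through two points'' (three points are needed), and your step ``compactness forces equality in the total index count'' fails. With your constraints the index count gives $\mu_{\rm CZ}(\gamma)-1\ge 2$ for the bottom plane but only $5-\mu_{\rm CZ}(\gamma)\ge 0$ for the top level, i.e.\ $3\le\mu_{\rm CZ}(\gamma)\le 5$. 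Even if you restore rigidity with a third point, the top level has energy $\omega(A+B)-\sum\mathcal A=4\pi R-\sum\mathcal A$, so you only get $\ell_g(\gamma)\le 4\pi R$. And since $(A+B)\cdot\Sigma=2$ is even, nothing forces an additional negative end. So neither the $2\pi R$ in your final inequalities nor the even/odd dichotomy follows from your set-up; ``homologically compatible'' and ``monotonicity of action'' do not supply a mechanism.

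The missing idea is the paper's choice of the $\omega$-minimal line class $A=[\cp^1\times\{\mathrm{pt}\}]$, with $\omega(A)=2\pi R$. The paper counts Floer spheres from the (perturbed, nondegenerate) minimum $p$ to the divisor at infinity $\Sigma$. Note that $\Sigma$ is the \emph{diagonal}; the antidiagonal is the Lagrangian zero section, of class $A-B$ and zero area, so it carries no holomorphic curves. With this choice:
\begin{itemize}
\item the mod-$2$ count is $1$, and minimality excludes bubbling and multiple covers in $M$;
\item the problem has virtual dimension $2$ with a free $\mathbb C^*$-action, so the bottom bound $\mu_{\rm CZ}-1\ge 2$ and the top bound $3-\mu_{\rm CZ}\ge 0$ force $\mu_{\rm CZ}=3$;
\item intermediate levels and bubbles are handled by genericity and index monotonicity along the chain of levels, showing they are branched covers of trivial cylinders or have positive-index ends. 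This replaces automatic transversality or Siefring theory.
\end{itemize}
The dichotomy comes from parity. $A\cdot\Sigma=1$ is odd, whereas any sphere in $V_a\simeq\Sigma$ meets $\Sigma$ an even number of times, since $[\Sigma]^2=2$. Hence the top curve must have a negative end at an orbit that is non-contractible in $S^*S^2\cong\rp^3$, i.e.\ at a geodesic with an even number of self-intersections, whose action is at least $\mathrm{sys}$. Together with $E(w)=2\pi R-\sum\mathcal A>0$, this gives exactly the two cases.

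Two smaller points. First, radial projection from the circumcenter is not length non-decreasing; it collapses the long direction of a thin ellipsoid. Use instead the inverse of the nearest-point projection $S^2_R\to\partial K$, which is $1$-Lipschitz. Second, the bumpy approximation only produces limit geodesics with ${\rm ind}\le 3\le{\rm ind}+{\rm nul}$, which is the form the paper actually proves.
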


For ellipsoids, there is precisely one ($S^1$-family of) closed geodesics of index 3, satisfying the conditions in Theorem~\ref{Theorem: Upper bound from index 3 - refined + geodesic flow version}, all members of the family have the same length. This gives the first upper bound in our main theorem.

\begin{corollary}\label{cor:ellipsoid-index-three-upper-bound}
Let $E=E(a,b,c)$ with $a\geq b\geq c>0$, and let $\eta$ be a shortest closed geodesic on $E$ of Morse index $3$. Then
\[
c_{HZ}(D_1E,\dd\lambda)\leq\ell(\eta).
\]
\end{corollary}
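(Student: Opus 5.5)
The corollary will follow from Theorem~\ref{Theorem: Upper bound from index 3 - refined + geodesic flow version}, because ellipsoids have positive curvature. What remains is to identify $\mathcal{P}_{HZ}(g)$ for $g$ the induced metric on $E=E(a,b,c)$ and to show that every element of this set has length at most $\ell(\eta)$. If we can show that $\mathcal{P}_{HZ}(g)$ is nonempty and consists only of geodesics of length $\ell(\eta)$, then the supremum in the theorem equals $\ell(\eta)$ and we are done. An empty $\mathcal{P}_{HZ}(g)$ would give a vacuous or degenerate bound, so nonemptiness has to be checked as well.

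\textbf{Step 1: classify the index-$3$ closed geodesics.} We use the Jacobi integrability of the geodesic flow on $E$. Away from the umbilics, the closed geodesics come in three types:
\begin{itemize}
    \item the three principal ellipses;
    \item rational invariant tori of oscillating (``umbilic-avoiding'') type, which cross the major ellipse;
    \item rational invariant tori of rotating type, which wind around the $z$-axis or the $x$-axis.
\end{itemize}
For the principal ellipses the Morse index follows from counting conjugate points via Jacobi fields, that is, from the rotation number of the linearized flow. The minor ellipse has index $1$. The median ellipse passes through the umbilics and is hyperbolic, with index $2$ (or the appropriate even index). The major ellipse has index $3$ unless it is long compared to the return time of the nearby tori, which happens in the strongly oblate regime. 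On a rational torus with rotation number $p/q$, all members of the $S^1$-family have equal length, and the index is computed from the winding data: index $3$ singles out the simplest torus, which winds once around the $z$-axis and meets the major ellipse four times. This is the dichotomy stated in Theorem~\ref{ellipsoid}. The outcome of this step is that in each parameter regime the index-$3$ closed geodesics are exactly one $S^1$-family (or the major ellipse alone), all of length $\ell(\eta)$. Nonconstant multiple covers of the minor ellipse also need to be checked: by Bott's iteration formula, the double cover of the minor ellipse has index $\ge 3$, and we must check whether it is exactly $3$. If it is, it has an odd number of self-intersections (in the sense of the theorem, as a doubly covered curve) and must be excluded by the action condition, or else shown to have index $>3$.

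\textbf{Step 2: verify the side conditions.} The family $\eta$ is simple, so it has $0$ self-intersections, which is even. We therefore need $\ell(\eta)\le 2\pi R(S^2,g)$. The convex body is the solid ellipsoid, whose circumradius is $R=a$. The major ellipse has semi-axes $a$ and $b\le a$, so its perimeter is at most $2\pi a$. In the oblate case, $\eta$ is shorter than the major ellipse, since Figure~\ref{dias} and Karney's description make $\ell(\eta)$ four times the distance from $(a,0,0)$ to $(0,b,0)$, which is at most a quarter of the major ellipse's perimeter. Either way $\ell(\eta)\le 2\pi a$. Any other index-$3$ geodesics either do not exist or lie in the same family, so $\sup_{\mathcal{P}_{HZ}(g)}\ell_g=\ell(\eta)$, and Theorem~\ref{Theorem: Upper bound from index 3 - refined + geodesic flow version} gives the corollary. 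If some regime has further index-$3$ geodesics, we would compare their lengths with $\ell(\eta)$ directly or exclude them using the $\mathrm{sys}$ condition.

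\textbf{Main obstacle.} The main difficulty is the complete index classification in Step~1. It needs uniform control of the Morse index along all rational tori, including degenerate Bott families, where the index should be read as the index of the family. Bott iteration gives the index of the major ellipse, and the tori indices can be read off from the Maslov index of the Liouville--Arnold tori. For this I would first try Klingenberg's and Morse's classical computations for triaxial ellipsoids, which are close to the round sphere, and then a continuation argument in $(b,c)$: the index changes only when a family bifurcates off the major ellipse, and that bifurcation is exactly the onset of the oblate regime.
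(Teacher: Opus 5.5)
Your check that $\eta\in\mathcal P_{HZ}(g)$ is fine and matches the paper: $\eta$ is simple, $R=a$, and $\ell(\eta)\le\ell(\gamma_z)\le 2\pi a$. The gap is in Step~1. The claim that the index-$3$ closed geodesics are exactly the family of $\eta$ is false, and the exceptions are not handled.

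\emph{Covers of the minor ellipse.} Near the oblate end, $\gamma_x^2$ has index $3$. In the prolate case $E(1,c,c)$, the cover $\gamma_x^q$ has index $2\lfloor qc\rfloor+1$. So for $c<\tfrac12$ the double cover has index $1$, and your Bott claim ``$\geq 3$'' is wrong. Meanwhile higher covers $\gamma_x^q$ with $q\geq 3$ do have index $3$.

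\emph{Degenerate families.} The theorem is actually proved with the condition $\operatorname{ind}\le 3\le \operatorname{ind}+\operatorname{nul}$, and every nonprincipal closed geodesic on $E$ lies in a degenerate $S^1$-family. Hence the transpolar families $\gamma_{x,q}$ bifurcating from $\gamma_x^q$ (index $2$, nullity $1$) are also candidates.

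\emph{The action condition alone.} Saying these are ``excluded by the action condition'' is not an argument. Using $\ell(\gamma_{x,q})>\ell(\gamma_x^q)=q\,\mathrm{sys}$ with $q\geq 2$, plus the extra $\mathrm{sys}$ in the contractible case $q=2$, the action bound only gives the necessary condition $3\,\mathrm{sys}\le 2\pi a$. That condition does hold for many ellipsoids, whenever $b$ is small compared to $a$, e.g.\ $E(1,0.3,0.1)$. In that regime your proposal gives no control over whether $\sup_{\mathcal P_{HZ}(g)}\ell_g$ exceeds $\ell(\eta)$.

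\emph{Missing idea.} Use the independent bound $c_{HZ}(D_1E)\le 2\,\mathrm{dias}=2\,\mathrm{sys}$ from Theorem~\ref{thmSH} (together with Abbondandolo--Mazzucchelli) to reduce to the case $\ell(\eta)<2\,\mathrm{sys}$; otherwise there is nothing to prove. In that case $4\sqrt{a^2+b^2}\le\ell(\eta)<2\,\mathrm{sys}\le 4\pi b$ gives $a^2<(\pi^2-1)b^2$. If in addition $3\,\mathrm{sys}\le 2\pi a$, then $\ell(\eta)<\tfrac23\cdot 3\,\mathrm{sys}\le\tfrac{4\pi a}{3}$, hence $b^2<((\pi/3)^2-1)a^2$. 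Together these give $a^2<(\pi^2-1)((\pi/3)^2-1)a^2\approx 0.857\,a^2$, a contradiction. So no transpolar candidate satisfies the action bound, $\eta$ realizes $\sup_{\mathcal P_{HZ}(g)}\ell_g$, and the theorem gives $c_{HZ}\le\ell(\eta)$.

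This route also removes what you call the main obstacle. You never need a complete index classification, only the list of possible transpolar candidates ($\gamma_x^q$ and $\gamma_{x,q}$, $q\geq 2$) and the length estimate $\ell(\gamma_{x,q})>q\,\mathrm{sys}$ from the monotonicity of the half-winding length.
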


\vspace{1em}
\noindent
\textbf{Symplectic homology, Step 3.}
Our second upper bound is obtained using the pair-of-pants product in symplectic homology. Under the Viterbo isomorphism, this product corresponds to the Chas--Sullivan loop product on the free loop space. We construct suitable homology classes from sweepouts of $S^2$ and show that their loop product represents the fundamental class of the constant loops. Keeping track of the length filtration then gives an upper bound for the Hofer--Zehnder capacity in terms of twice the maximal length of the sweepout. Minimizing over all such sweepouts leads to the diastole, a min--max invariant of the length functional on the free loop space. More precisely, for a Riemannian metric $g$ on $S^2$, define
\[
\mathrm{dias}(g):=\inf_{\Gamma}\sup_{t\in[-1,1]}\ell_g(\Gamma(t)),
\]
where $\Gamma:([-1,1],\{\pm1\})\to(\Lambda S^2,S^2)$ ranges over families of loops representing a generator of $\pi_1(\Lambda S^2,S^2)\simeq\pi_2(S^2)$. Here, $S^2\subset\Lambda S^2$ denotes the subspace of constant loops.

\begin{theorem}\label{thmSH}
Let $g$ be any Riemannian metric on $S^2$. Then
\[
c_{HZ}(D_g^*S^2,\dd\lambda)\leq2\,\mathrm{dias}(g).
\]
\end{theorem}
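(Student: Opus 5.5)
The bound will come from the standard symplectic-homology upper bound for the Hofer--Zehnder capacity. If $W$ is a Liouville domain and the unit $1\in \SH(W)$ is killed at action level $a$, then $c_{HZ}(W)\le a$. More precisely, if the image of $1$ under $H(W)\cong \HF^{<\epsilon}\to \HF^{<a}$ vanishes, we get $c_{HZ}(W,\dd\lambda)\le a$. Equivalently, we can use the fundamental class of $W$ relative to the boundary in the filtered setting; this is the argument of Hofer--Viterbo and Irie/Abbondandolo--Kang type bounds. We apply it to $W=D_g^*S^2$. By the Viterbo isomorphism with length filtration, $\SH^{<a}(D^*_gS^2)\cong H_{*}(\Lambda^{<a}S^2)$, where $\Lambda^{<a}$ consists of loops of length less than $a$. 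This isomorphism intertwines the pair-of-pants product with the Chas--Sullivan product and sends the unit to the fundamental class $[S^2]$ of the constant loops. So it suffices to show that $[S^2]$ vanishes in $H_2(\Lambda^{<2\mathrm{dias}(g)+\delta}S^2)$ for every $\delta>0$. Since $\SH(T^*S^2)\neq0$, we need the more refined form: the unit must vanish in the quotient $\SH^{<a}/\text{image of the constants}$, or rather the map $H_*(S^2)\to H_*(\Lambda^{<a},S^2)$ must be compared. I will state the criterion precisely below.

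The criterion I plan to use is the following. $c_{HZ}\le a$ holds whenever the point class $[\mathrm{pt}]\in H_0(S^2)$ (or the unit $[S^2]$, depending on grading conventions) lies in the image of the connecting map $\partial\colon H_{*+1}(\Lambda^{<a},S^2)\to H_*(S^2)$. This is the loop-space version of the statement that the unit is hit by a nonconstant orbit class of action $<a$. Fix $\delta>0$ and choose a sweepout $\Gamma$ with $\sup_t\ell(\Gamma(t))<\mathrm{dias}(g)+\delta/2$. After reparametrizing each loop proportionally to arc length, $\Gamma$ defines a relative class $A\in H_1(\Lambda^{<\mathrm{dias}+\delta/2},S^2)$. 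Using the evaluation map it also gives a map $S^1\times[-1,1]\to S^2$ of degree one, after collapsing the ends. Sweeping by rotating the basepoint yields a class $\tilde A\in H_2(\Lambda,S^2)$, or a family class, whose boundary is the fundamental class $[S^2]\in H_2(S^2)$ up to the evaluation degree. For the point class we instead use the relative Chas--Sullivan product $A\ast A$. The product of two relative 1-cycles lands in $H_{1+1-2}=H_0$ with loops obtained by concatenating $\Gamma(s)$ and $\Gamma(t)$ at a common point. Concatenation has length at most $\ell(\Gamma(s))+\ell(\Gamma(t))<2\mathrm{dias}+\delta$, which is where the factor $2$ enters.

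The key computation is that $\partial(A\ast A)$, or equivalently the product restricted to constant loops, equals the point class, or $\pm[S^2]$ in the shifted grading. Geometrically, the intersection of $\ev\circ\Gamma$ with itself counts pairs $(s,t)$ and times at which the two loops share a point. By degree one of the sweepout map, the Chas--Sullivan intersection number is $1$. The main obstacle is making the relative product rigorous. The Chas--Sullivan product is not well defined on $H_*(\Lambda,S^2)$ in general, so I would work with the filtered pair-of-pants product on $\SH$, or with Goresky--Hingston style relative products. Alternatively, and this is what I would try first, I would deform $\Gamma$ so that the loops near $t=\pm1$ are tiny circles. Then an explicit chain-level product of the 2-parameter family $\Gamma(s)\#\Gamma(t)$ gives a 2-chain in $\Lambda^{<2\mathrm{dias}+\delta}$ whose boundary consists of constant loops and loops near the trivial ones. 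Contracting these by a length-decreasing flow should exhibit a class in $H_*(\Lambda^{<2\mathrm{dias}+\delta},S^2)$ mapping to the generator under $\partial$. Transferring this through the filtered Viterbo isomorphism gives the vanishing of the unit at level $2\mathrm{dias}+\delta$. Hence $c_{HZ}\le2\mathrm{dias}+\delta$, and letting $\delta\to0$ proves the theorem.
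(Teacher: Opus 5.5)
There is a genuine gap: the capacity criterion you plan to use can never be satisfied for $D^*_gS^2$. The inclusion of constant loops $e\colon S^2\hookrightarrow\Lambda^{<a}S^2$ has the retraction $\ev_0$, so $e_*$ is injective for every $a$. Hence the unit $[S^2]$ never vanishes in $H_*(\Lambda^{<a}S^2)\cong\SH^{<a}$. This is exactly why Irie's vanishing criterion never applies to cotangent bundles with untwisted coefficients. By exactness, the connecting map $\partial\colon H_{*+1}(\Lambda^{<a},S^2)\to H_*(S^2)$ is identically zero, so neither $[\mathrm{pt}]$ nor $[S^2]$ is ever in its image; your ``refined form'' is just the vanishing criterion again.

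The missing idea is a criterion in which the unit is \emph{factored} rather than killed. The paper proves the following: if $x\in\SH^{<a}$ and $y\in\SH^{<b}$ satisfy $\Delta x*y=F_c$ (or $x*y=F_c$ with $\deg x<n$), where $F_c$ is the image of the unit in $\SH^{<c}$ and $c>a+b$, then $c_{HZ}\le c$. The proof uses Irie's spectral invariants and goes in three steps.
\begin{enumerate}
\item For an admissible $H$ with $-\min H>c$, one has $\rho(H_{1,\nu}:F_c)=-\min H$.
\item The filtered pair-of-pants product gives $\rho(H_{1,\nu}:F_c)\le\rho(c_aH_{1,\nu}:\Delta x)+\rho(c_bH_{1,\nu}:y)$, where $c_a=a/c$, $c_b=b/c$ and $c_a+c_b<1$.
\item A class of degree $\neq n$ has spectral value strictly below the top action level $-c_\bullet\min H$, because the only orbit at that level is the minimum, which has degree $n$. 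Together with $\rho(\Delta x)\le\rho(x)$, this yields a contradiction.
\end{enumerate}

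Your product computation also does not produce the unit. The Chas--Sullivan product only uses the basepoint map $\ev_0$. For your $1$-cycle $A$, the map $\ev_0\circ A$ is a closed curve in $S^2$, whose self-intersection number is $0$. Moreover, $A*A$ lives in degree $1+1-2=0$, which is not the degree of the unit anyway. The degree-one property of the sweepout concerns all points of the loops and is invisible to this product. Likewise, a relative $2$-class has boundary in $H_1(S^2)=0$, so $\tilde A$ cannot have boundary $[S^2]$; in fact rotating $A$ gives an absolute $2$-cycle, since rotating constant loops is degenerate.

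What the paper does is apply the BV operator first.
\begin{itemize}
\item Replace the sweepout by a monotone one, $\gamma_z(s)=\Phi(p_z(s))$ with $\Phi$ a diffeomorphism (Chambers et al.).
\item Then $\Delta[\gamma_\Phi]$ is represented by $B_\Phi\colon S^2\to\Lambda$ with $\ev_0\circ B_\Phi=\Phi$. For the \emph{reversed} sweepout, $\Delta[\bar\gamma_\Phi]$ is represented by $\bar B_\Phi$ with $\ev_0\circ\bar B_\Phi=\bar\Phi$.
\item These evaluation maps are transverse, and the fibre product is a copy of $S^2$.
\item Each product loop is $\gamma\#\bar\gamma$, which contracts to a constant through loops of length at most $2\ell(\gamma)$.
\end{itemize}
Hence $\Delta[\gamma_\Phi]*\Delta[\bar\gamma_\Phi]=1$ in $H_2(\Lambda^{<2\mathrm{dias}+2\varepsilon})$, consistent with the degree count $2+2-2=2$. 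The filtered Viterbo isomorphism together with the criterion above then gives $c_{HZ}\le 2\mathrm{dias}+2\varepsilon$.

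Your source of the factor $2$, namely concatenating two loops of length $<\mathrm{dias}+\delta/2$, is the right one. However, it has to enter through this factorization of the unit, and the reversal of the second sweepout is what makes the concatenations short-homotopic to constants.
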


For positively curved Riemannian metrics on $S^2$, Abbondandolo--Mazzucchelli showed that $\mathrm{dias}(g)=\mathrm{sys}(g)$ \cite[App.~A]{BK22}. Theorem~\ref{thmSH} therefore gives $c_{HZ}(D_g^*S^2,\dd\lambda)\leq2\,\mathrm{sys}(g)$ and, in particular, yields the second upper bound in our main theorem.

\vspace{1em}
\noindent
\textbf{Acknowledgements.}
We thank Alberto Abbondandolo, Peter Albers, Filip Broćić, Dylan Cant, Charles Karney, Alexandru Oancea, Alexander Ritter, Felix Schlenk and Simon Vialaret for helpful discussions and valuable input. We are also grateful to the organizers of the workshop \emph{Billiards and Quantitative Symplectic Geometry}, held at Heidelberg University, for their hospitality and for providing a stimulating environment. The first author was supported by the Engineering and Physical Sciences Research Council [grant number EP/Z535977/1]. The second author was supported by Basic Science Research Program through the National Research Foundation of Korea(NRF) funded by the Ministry of Education(RS-2025-02317642).

The authors used ChatGPT and Claude during the preparation of this article for proofreading, language refinement, and mathematical exploration. The authors takes full responsibility for all mathematical statements, proofs, and conclusions in this article.

\section{Step 1: Billiards}\label{secbilliard}

Let $\Omega\subset N$ be an open subset with compact closure
$\bar\Omega$ and smooth boundary $\partial\Omega$. Consider Riemannian billiards on
$(\bar\Omega,g)$. Ideally, one would like a statement of the form
\begin{equation}\label{ideal lower bound}
    c_{HZ}(D_gN,\dd\lambda)\geq
    \inf\left\{\ell_g(\gamma)\mid
    \gamma \text{ is a periodic billiard trajectory of }(\bar\Omega,g)\right\}.
\end{equation}
The crux lies in defining a suitable notion of periodic billiard trajectory for which such a lower bound can be proved. The following definition is adapted to the approximation schemes developed in \cite{BG89,AM11,V21}.

\begin{definition}\label{billiard}
A $\tau$-periodic billiard trajectory of $(\bar\Omega,g=\langle\cdot,\cdot\rangle)$
is an element $\Gamma\in H^1(S^1,\bar\Omega)$ such that there exists a finite Borel
measure $\mu$ on
\[
\mathcal C:=\{t\in \R/\tau\Z\mid \gamma(t)\in\partial\Omega\},
\qquad \gamma(t):=\Gamma(t/\tau),
\]
with the following properties:
\begin{enumerate}
    \item for every $\psi\in H^1(S^1,\gamma^*T\bar\Omega)$,
    \[
    \int_0^\tau \langle \dot\gamma,\nabla_{\dot\gamma}\psi\rangle\,\dd t
    =
    \int_{\mathcal C}\langle \nu(\gamma),\psi\rangle\dd\mu(t),
    \]
    where $\nu$ denotes the outer unit normal along $\partial\Omega$;

    \item the curve $\gamma$ is a smooth geodesic on
    $(\R/\tau\Z)\setminus \supp(\mu)$ and has constant energy
    \[
    E(\gamma):=\frac12|\dot\gamma|_g^2;
    \]

    \item $\gamma$ has left and right derivatives $\dot\gamma_\pm$, which are
    left- and right-continuous on $\R/\tau\Z$, respectively. Moreover, at each
    time $t\in\mathcal C$ which is an isolated point of $\supp(\mu)$, the curve
    satisfies the law of reflection
    \[
    \langle \dot\gamma_+,\nu\rangle=-\langle \dot\gamma_-,\nu\rangle\neq 0,\qquad \dot\gamma_+-\langle \dot\gamma_+,\nu\rangle\nu
    =
    \dot\gamma_--\langle \dot\gamma_-,\nu\rangle\nu,
    \]
    where $\nu\in T_{\gamma(t)}N$ is the inner unit normal to $\partial\Omega$
    at $\gamma(t)$.
\end{enumerate}
\end{definition}

\begin{remark}
This notion of billiard trajectory is intentionally weak. It includes not only the
usual piecewise geodesic trajectories with isolated reflections, but also more
degenerate boundary behaviour. The set $\mathcal C$ records all boundary contact times, whereas $\supp(\mu)\subset \mathcal C$ records
those times at which a nontrivial normal reaction occurs. If $\supp(\mu)$ is finite, we call $\gamma$ a \emph{bounce orbit}. In this case,
every time $t\in \supp(\mu)$ is an isolated bounce time and satisfies the usual
(non-glancing) law of reflection. Nevertheless, $\mathcal C$ may contain additional
isolated times in $\mathcal C\setminus \supp(\mu)$ at which $\gamma$ meets $\partial\Omega$
tangentially; we refer to such behaviour as \emph{glancing}. Finally, $\mathcal C$ may contain nontrivial intervals, corresponding to trajectories
that move along the boundary for a positive amount of time, or are even entirely
contained in the boundary. We refer to such behaviour as \emph{gliding} along $\partial\Omega$. If $\gamma$ is smooth on an interval contained in $\partial\Omega$, then $\gamma$ is a constant-speed geodesic of $\partial\Omega$ with respect to the induced metric.
\end{remark}

The idea of proof for the lower bound \ref{ideal lower bound} is using the approximation of billiards by Hamiltonian dynamics introduced by \cite{BG89, AM11, V21}.

\begin{proof}
Consider an approximation scheme as introduced by Vocke \cite{V21}, namely
\[
H_\varepsilon:=E+\varepsilon V:T\bar\Omega\to\R,
\]
where $V:\bar\Omega\to\R$ is constant outside a neighbourhood of $\partial\Omega$ and tends to infinity as one approaches $\partial\Omega$. For the precise construction of $V$, see \cite[Sec.~3.2.1]{V21}. Let $C>0$ be strictly smaller than the minimal length of a periodic billiard trajectory of $(\bar\Omega,g)$. Suppose that there exists a sequence $(\Gamma_\varepsilon,\tau_\varepsilon)$ of periodic orbits for $H_\varepsilon$ of energy $0<\delta<H_\varepsilon(\Gamma_\varepsilon)\leq 1/2$ with $\tau_\varepsilon\leq C$. Then the convergence result for approximate solutions, \cite[Prop.~3.6]{V21}, yields a non-constant $\tau$-periodic billiard
trajectory of period $\tau\leq C$ and speed $\leq 1$, a contradiction. Hence, for $\varepsilon>0$ sufficiently small, every periodic orbit of $H_\varepsilon$ has period larger than $C$. Now define the sublevel set $U_\varepsilon:=\{H_\varepsilon<1/2\}\subset D_g\bar\Omega\subset D_gN.$ The Hamiltonian $K_{C,\varepsilon}:=\left.C\sqrt{2H_\varepsilon}\right|_{U_\varepsilon}$ attains its maximum value $C$ on $\partial U_\varepsilon$ and its minimum $0$ on
the zero section. Although $K_{C,\varepsilon}$ is only continuous near the minimum, one can choose, for every $\delta>0$, a function $f:\lbrack 0,C\rbrack\to\lbrack 0,C-\delta\rbrack$ such that $f\circ K_{C,\varepsilon}$ is admissible; see for instance \cite{Bim23}. Therefore,
\[
C-\delta\leq c_{HZ}(U_\varepsilon,\dd\lambda)\leq c_{HZ}(D_gN,\dd\lambda)
\qquad \forall\,\varepsilon,\delta>0.
\]
Letting $\delta,\varepsilon\to 0$ and choosing $C=C(\varepsilon)$ approaching the minimal length of a periodic billiard trajectory, we obtain the claim.
\end{proof}

A closer inspection of the proof shows that the above lower bound can be improved.
Indeed, if a periodic billiard trajectory is not the limit of a sequence of
approximate solutions, then the argument does not detect it. This motivates the
following notion of admissibility.

\begin{definition}
A billiard orbit $\gamma$ is called \emph{admissible} if there exists a sequence of
periodic orbits $(\Gamma_\varepsilon,T_\varepsilon)\in H^1(S^1,\bar\Omega)\times\R_{>0}$ for the approximation Hamiltonians $H_\varepsilon$ converging, as $\varepsilon\to 0$, to $(\Gamma,T)$ such that $\gamma(t)=\Gamma(t/T)$.
\end{definition}
Up to this point we have only considered domains $\Omega$ with smooth boundary $\partial\Omega$. However, later we will need to treat billiard tables whose boundary is not smooth. In that situation, we approximate $\Omega$ by a family of open subsets $\Omega_\delta\subset \Omega$ with smooth boundary such that
\[
\bigcup_{\delta>0}\Omega_\delta=\Omega.
\]
We then call a billiard orbit of $\Omega$ \emph{admissible} if it arises as a limit, as $\delta\to 0$, of admissible billiard orbits in $\Omega_\delta$. Note that for a domain with non-smooth boundary, the reflection law in Definition~\ref{billiard} is in general not well defined at singular boundary points, since there need not be a unique outer normal vector. In the specific situation considered here, we will address this issue by requiring the normal vector to belong to a suitable normal cone, which we will define explicitly (Sec.\ \ref{altreflection}).
\begin{remark}
The notion of admissibility may depend on the chosen approximation scheme. Throughout this paper, for domains with smooth boundary we use the approximation scheme of \cite{V21}. For domains with non-smooth boundary, we fix in each case a specific approximating family of smooth domains, which will be described explicitly when needed. At present, we do not know whether, or in what sense, the resulting notion of admissibility is independent of these choices.
\end{remark}
It seems plausible that all bounce orbits are admissible and that non-admissibility
can occur only for certain gliding trajectories, but the authors are unaware of a
general result to this effect. A better understanding of non-admissible billiard
orbits would be very useful. At present, we can only exclude such trajectories by
hand. For example, the lower bound $c_{HZ}(D\rp^n,\dd\lambda)=2\cdot\mathrm{sys}$ from \cite{Bim23} is obtained by considering billiards on a hemisphere, viewed as
the complement of a totally geodesic copy of $\rp^{n-1}$. Thus the boundary of the
billiard table in $\rp^n$ is $\rp^{n-1}$, and it contains many gliding orbits of
length $\mathrm{sys}$, but none of them is admissible. A similar argument was used in
\cite{BM24} to show that non-contractible geodesics on lens spaces $L(p,1)$ for $p\geq 3$ odd, are invisible to the Hofer--Zehnder capacity. More precisely we computed $c_{HZ}(DL(p,1),\dd\lambda)=p\cdot\mathrm{sys}$.\\

The above proof immediately implies the following theorem.

\begin{theorem}[Thm.~\ref{thmbilliard}]
Let $\Omega\subset N$ be an open domain with compact closure and smooth boundary
$\partial\Omega$. Then
\[
c_{HZ}(D_gN,\dd\lambda)\geq c_{HZ}(D_g\Omega,\dd\lambda)\geq \Omega\text{-}\mathrm{sys}(g),
\]
where
\[
\Omega\text{-}\mathrm{sys}(g):=
\inf\left\{\ell_g(\gamma)\mid
\gamma \text{ is an admissible periodic billiard trajectory of }(\bar\Omega,g)\right\}.
\]
\end{theorem}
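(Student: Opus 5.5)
The plan is to rerun the approximation argument from the proof above, but to use admissibility to control the limits. First, the inequality $c_{HZ}(D_gN,\dd\lambda)\geq c_{HZ}(D_g\Omega,\dd\lambda)$ is just monotonicity. The inclusion $D_g\Omega\subset D_gN$ is an open, codimension-zero symplectic embedding, and the symplectic form on $D_g\Omega$ is the restriction of $\dd\lambda$.

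For the second inequality, fix $0<C<\Omega\text{-}\mathrm{sys}(g)$ and consider Vocke's Hamiltonians $H_\varepsilon=E+\varepsilon V$ on $T\bar\Omega$. The key claim is the following: for $\varepsilon$ small enough, every nonconstant periodic orbit of $H_\varepsilon$ with energy in $(\delta,1/2]$ has period greater than $C$. To prove it, suppose not. Then there is a sequence $\varepsilon_k\to0$ and orbits $(\Gamma_{\varepsilon_k},\tau_{\varepsilon_k})$ with $\tau_{\varepsilon_k}\leq C$ and energies in $(\delta,1/2]$. By the compactness statement \cite[Prop.~3.6]{V21}, a subsequence converges to $(\Gamma,\tau)$, where $\Gamma$ is a $\tau$-periodic billiard trajectory with $\tau\leq C$ and speed at most $1$. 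It is nonconstant, since the energy is bounded below by $\delta$.

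By construction, this limit is a limit of periodic orbits of the approximating Hamiltonians, so it is \emph{admissible} in the sense of the definition above. Its length is $\ell_g(\gamma)\leq \tau\cdot\sup|\dot\gamma|\leq C<\Omega\text{-}\mathrm{sys}(g)$, which contradicts the definition of $\Omega\text{-}\mathrm{sys}$. One point needs care here. The energy normalisation must match, so that the speed is at most $1$ and hence the length is at most the period. I would handle this by rescaling the limit orbit to unit speed; the length is invariant under this, and admissibility is preserved because the approximating sequence can be reparametrised in the same way.

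With the key claim in hand, the rest proceeds exactly as before. Set $U_\varepsilon=\{H_\varepsilon<1/2\}\subset D_g\Omega$ and $K=C\sqrt{2H_\varepsilon}$. Orbits of $K$ are reparametrisations of orbits of $H_\varepsilon$. Concretely, on the level set $\{H_\varepsilon=e\}$ the vector field $X_K$ equals $C(2e)^{-1/2}X_{H_\varepsilon}$, so an $H_\varepsilon$-orbit of period $\tau$ becomes a $K$-orbit of period $\tau\sqrt{2e}/C$. At this point I would check that the key claim is really needed in length form: a $K$-orbit is fast exactly when $\tau\sqrt{2e}\leq C$, and $\tau\sqrt{2e}$ is precisely the length of the projected curve. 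Smoothing with a function $f$ that is flat near $0$, as in \cite{Bim23}, removes the orbits with small energy (those with $e\leq\delta$) and the non-smoothness at the zero section. This gives admissible Hamiltonians with oscillation $C-\delta'$. Letting $\delta'\to0$ and $C\to\Omega\text{-}\mathrm{sys}(g)$ gives $c_{HZ}(D_g\Omega)\geq\Omega\text{-}\mathrm{sys}(g)$. If $\Omega\text{-}\mathrm{sys}(g)=\infty$, the same argument works for every $C$.

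I expect the main obstacle to be the compactness step and the bookkeeping that relates period to length. Specifically, I need Vocke's convergence result to apply uniformly for energies in $[\delta,1/2]$ and to yield a limit whose length is at most $C$. The length bound is cleanest if I formulate the contradiction hypothesis directly in terms of the length $\tau\sqrt{2e}$ rather than the period. That quantity is what becomes the fast-orbit condition for $K$, and since the speed along each orbit converges, it passes to the limit as the length of the billiard trajectory.
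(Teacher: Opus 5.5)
Your proposal is correct and follows the paper's own argument. Both use monotonicity for the first inequality. For the second, both run Vocke's penalized Hamiltonians $H_\varepsilon=E+\varepsilon V$, apply the compactness result \cite[Prop.~3.6]{V21} (any limit obtained this way is admissible by definition), and finish with the smoothed Hamiltonians $f\circ C\sqrt{2H_\varepsilon}$ on $U_\varepsilon=\{H_\varepsilon<1/2\}$.

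Running the contradiction argument with $\tau\sqrt{2e}$ rather than the bare period $\tau$ is the right bookkeeping, since it is exactly the fast-orbit condition for $K$; the paper's write-up glosses over this scaling. One small correction: for $\varepsilon>0$, $\tau\sqrt{2e}$ is not precisely the length but only an upper bound for it, because $\varepsilon V\geq 0$ takes part of the energy. That is the inequality direction you need anyway.
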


\vspace{1em}

\subsection{Geodesic flow on ellipsoids}

To prove the lower bound in Theorem~\ref{ellipsoid} using Theorem~\ref{thmbilliard}, we need to choose a suitable billiard table and analyze its admissible periodic trajectories. We therefore begin by recalling the relevant features of the geodesic flow on ellipsoids, following Klingenberg \cite{Kl95} and Moser \cite{Mos80}. We use Karney's notation and refer to \cite{Kar25b} for a complementary overview. We further recommend the reader to have a look at Karney's Wikipedia article \href{https://en.wikipedia.org/wiki/Geodesics_on_an_ellipsoid}{\emph{Geodesics on an ellipsoid}}.
\\

\noindent
For three real numbers $ a \geq b \geq c >0$ define the ellipsoid
$$
E = E(a,b,c) := \left\{ \frac{x^2}{a^2} + \frac{y^2}{b^2} + \frac{z^2}{c^2} = 1 \right\}\subset \R^3
$$
and equip it with the metric induced by the Euclidean metric on $\mathbb{R}^3$. 
The intersections with the three coordinate planes yield three simple closed geodesics, called \emph{principal ellipses}. 
We denote them by $\gamma_i \equiv E(a,b,c) \cap \{ i = 0 \}$ for $i = x, y, z$.\\

\noindent
\textbf{Ellipsoidal coordinates.}
Assume for the moment that $a>b>c>0$, and define
\[
k:=\frac{\sqrt{b^2-c^2}}{\sqrt{a^2-c^2}},
\qquad
k':=\frac{\sqrt{a^2-b^2}}{\sqrt{a^2-c^2}},
\qquad
k^2+k'^2=1.
\]
Following Karney \cite{Kar25b}, we parametrize the ellipsoid by the ellipsoidal latitude $\beta$ and longitude $\omega$:
\[
\begin{pmatrix}
x(\beta,\omega)\\
y(\beta,\omega)\\
z(\beta,\omega)
\end{pmatrix}
=
\begin{pmatrix}
\displaystyle a\cos\omega\sqrt{k^2\cos^2\beta+k'^2}\\[0.6ex]
\displaystyle b\cos\beta\sin\omega\\[0.6ex]
\displaystyle c\sin\beta\sqrt{k^2+k'^2\sin^2\omega}
\end{pmatrix}.
\]
The curves on which either $\beta$ or $\omega$ is constant are lines of curvature. The coordinate system is singular at the four umbilics, characterized by $\cos\beta=\sin\omega=0$. The parameter torus $(\omega,\beta)\in[-\pi,\pi]\times[-\pi,\pi]$ covers the ellipsoid twice. Equivalently, one may use $[-\pi,\pi]\times[-\tfrac{\pi}{2},\tfrac{\pi}{2}]$ as a principal sheet, with branch cuts along $\beta=\pm\tfrac{\pi}{2}$. Figure~\ref{coordinates} illustrates the resulting coordinate grid.

\begin{figure}[ht]
  \centering
  \includegraphics[width=0.5\textwidth]{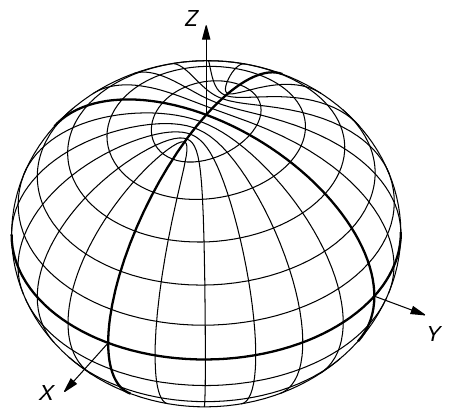}
  \caption{The ellipsoidal coordinate grid, from \cite{Kar25b}. The thick curves are the principal ellipses: the minor principal ellipse $\gamma_x$, given by $x=0$ or $\cos\omega=0$; the median principal ellipse $\gamma_y$, given by $y=0$ or $\cos\beta\sin\omega=0$; and the major principal ellipse $\gamma_z$, given by $z=0$ or $\sin\beta=0$.}
  \label{coordinates}
\end{figure}

\noindent
\textbf{Integrability.}
The geodesic flow on the triaxial ellipsoid $E=E(a,b,c)$ is Liouville integrable. Let $e_\beta(p)$ and $e_\omega(p)$ denote the unit tangent vectors in the directions of increasing $\beta$ and $\omega$, respectively. Thus, $e_\beta$ is tangent to the coordinate curve $\omega=\mathrm{const}$, while $e_\omega$ is tangent to the coordinate curve $\beta=\mathrm{const}$. Following Karney \cite{Kar25b}, for a unit vector $v\in T_pE$ we define its azimuth $\alpha\in\mathbb{R}/2\pi\mathbb{Z}$ by
\[
v=e_\omega(p)\sin\alpha+e_\beta(p)\cos\alpha.
\]
Equivalently, $\sin\alpha=\langle v,e_\omega(p)\rangle$ and $\cos\alpha=\langle v,e_\beta(p)\rangle$. On the unit tangent bundle $T^1E$, an additional first integral is given by
\[
F(p,v)
=
k^2\cos^2\beta\sin^2\alpha
-
k'^2\sin^2\omega\cos^2\alpha,
\qquad
F:T^1E\longrightarrow[-k'^2,k^2].
\]
Equivalently, its homogeneous quadratic extension to $TE$ is
\[
\widetilde F(p,v)
=
k^2\cos^2\beta\,\langle v,e_\omega(p)\rangle^2
-
k'^2\sin^2\omega\,\langle v,e_\beta(p)\rangle^2.
\]
The functions $\tfrac{1}{2}| v|^2$ and $\widetilde F$ Poisson commute and are functionally independent on an open dense subset of $TE$. Hence the geodesic flow is Liouville integrable.\\
\ \\
\noindent
\noindent
\textbf{Invariant tori.}
We now describe the regular invariant sets of the geodesic flow \cite[Thm.~3.5.7]{Kl95}. Fix the unit energy level $\tfrac12|v|_g^2=\tfrac12$ and let $f$ be a regular value of $F$. For both $f\in(-k'^2,0)$ and $f\in(0,k^2)$, the joint level set $\{\tfrac12|v|_g^2=\tfrac12,\ F=f\}$ is the disjoint union of two embedded invariant $2$-tori.

Let $N:=(0,0,c)$ be the north pole, and let $U_+$ and $U_-$ be the two umbilical points adjacent to $N$ along the median principal ellipse $\gamma_y=E(a,b,c)\cap\{y=0\}$. We denote by $L\subset\gamma_y$ the closed geodesic segment with endpoints $U_+$ and $U_-$ whose interior contains $N$. In particular, $L$ intersects the minor principal ellipse $\gamma_x=E(a,b,c)\cap\{x=0\}$ precisely at $N$.

For $f\in(-k'^2,0)$, the geodesics wind monotonically around the $x$-axis, while the longitude $\omega$ oscillates between two turning lines characterized by $\sin^2\omega=-f/k'^2$. The two invariant tori correspond to the two possible orientations of the winding around the $x$-axis. As $f\to-k'^2$, they degenerate to the two orientations of $\gamma_x$. Following Karney \cite{Kar25b}, we call these geodesics \emph{transpolar}. Every transpolar geodesic intersects the interior of $L$.

For $f\in(0,k^2)$, the geodesics wind monotonically around the $z$-axis, while the latitude $\beta$ oscillates between the turning lines $\beta=\pm\beta_f$, where $\cos^2\beta_f=f/k^2$. Again, the two invariant tori correspond to the two possible orientations of the winding. As $f\to k^2$, they degenerate to the two orientations of the major principal ellipse $\gamma_z$. We call these geodesics \emph{circumpolar}. No circumpolar geodesic intersects the interior of $L$.

Finally, the singular level $F=0$ consists of the \emph{umbilical} geodesics. Every such geodesic passes repeatedly through a pair of opposite umbilical points \cite[Thm.~3.5.16]{Kl95}. The median principal ellipse $\gamma_y$ lies in this level and, up to orientation and iteration, is its only closed geodesic.\\

\noindent
\textbf{Closed geodesics.}
Closed geodesics other than the principal ellipses can be described using the period map introduced in \cite[Def.~3.5.9]{Kl95}. We express this map in terms of Karney's first integral and write it as $f\mapsto\upomega(f)$, where $F=f$.\footnote{Klingenberg denotes both the period map and its parameter differently. We use Karney's first integral $F$ as the parameter and write $\upomega$ for the period map to distinguish it from the ellipsoidal longitude $\omega$.} We recall the precise definition and the relation between the two conventions in Appendix~\ref{appendix: period map}. Roughly speaking, $\upomega(f)$ measures the ratio between the winding and oscillation frequencies of a geodesic on the invariant tori with $F=f$. In suitable coordinates, the geodesic flow on each such torus is conjugate to a linear flow of slope $\upomega(f)$ on the flat square torus \cite[Thm.~3.5.10]{Kl95}. A geodesic is closed precisely when $\upomega(f)$ is rational and simple precisely when $\upomega(f)$ is an integer. The period map extends continuously across $f=0$ and is strictly increasing, with $\upomega(0)=1$. Thus $\upomega(f)>1$ for circumpolar geodesics and $\upomega(f)<1$ for transpolar geodesics. It follows that nonprincipal simple closed geodesics occur on the circumpolar side $f>0$. More precisely, such geodesics exist only if $\lim_{f\to k^2}\upomega(f)>2$. The first such family is characterized by $\upomega(f)=2$ and consists of simple closed geodesics winding once around the $z$-axis and intersecting the major principal ellipse four times. Since the limiting value of the period map is bounded above by $a/c$, an ellipsoid with $a/c<2$ has no simple closed geodesics other than its principal ellipses. For fixed pinching $a/c$, this limiting value, and hence the number of possible nonprincipal simple closed geodesics, is equal to $a/c$ in the oblate limit $a=b$ and smallest in the prolate limit $b=c$.\\

\noindent
\textbf{Morse indices.}
On a triaxial ellipsoid $E(a,b,c)$ with $a>b>c>0$, the minor and major principal ellipses $\gamma_x$ and $\gamma_z$ are stable closed geodesics, whereas the median principal ellipse $\gamma_y$ is unstable \cite[Thm.~3.5.8]{Kl95}. The minor principal ellipse $\gamma_x$, which realizes the systole, has Morse index $1$, while $\gamma_y$ has Morse index $2$. The Morse index of the major principal ellipse $\gamma_z$ is odd and at least $3$ \cite[Lem.~3.5.17]{Kl95}.\\

As the ellipsoid is deformed, the index of $\gamma_z$ increases by two whenever a new family of simple circumpolar closed geodesics bifurcates from $\gamma_z$. In terms of Karney's first integral, these bifurcations occur when the limiting value $\lim_{f\to k^2}\upomega(f)$ passes through an integer.

\begin{lemma}\label{index closed geodesics}
The minor principal ellipse $\gamma_x$ is the only simple closed geodesic of Morse index $1$. If $\lim_{f\to k^2}\upomega(f)<2$, then the major principal ellipse $\gamma_z$ has Morse index $3$. If $\lim_{f\to k^2}\upomega(f)>2$, then there is an $S^1$-family of simple circumpolar closed geodesics characterized by $\upomega(f)=2$. Each member of this family winds once around the $z$-axis, intersects $\gamma_z$ four times, and has Morse index $3$ and nullity $1$. We denote by $\eta$ one such geodesic in either case. Moreover, the length of such a geodesic satisfies $\ell(\eta) \le \ell(\gamma_z)$.
\end{lemma}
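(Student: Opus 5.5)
The plan is to combine the Klingenberg description of the geodesic flow recalled above with the Morse index theorem, computing indices via conjugate points along each family. I treat the four claims in order.

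\emph{Index $1$.} Every simple closed geodesic is a principal ellipse or a circumpolar geodesic with $\upomega(f)$ an integer $\geq 2$, since transpolar geodesics have $\upomega<1$ and the umbilical level contains only $\gamma_y$. It therefore suffices to show that all circumpolar simple closed geodesics have index $\geq 3$; together with $\mathrm{ind}(\gamma_y)=2$ and $\mathrm{ind}(\gamma_z)\geq3$, this leaves $\gamma_x$ as the only one of index $1$. For a circumpolar geodesic with $\upomega(f)=m$, I count conjugate points with the index theorem for closed geodesics (the Morse index of a closed geodesic equals the number of conjugate points in one period up to a bounded correction, as in Ballmann--Thorbergsson--Ziller and Klingenberg). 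On the invariant torus the Jacobi field from the flow of $F$ (variation within the torus family) vanishes exactly at the turning points of $\beta$ relative to the linearised flow, which gives roughly $2m$ zeros per period. This yields index $2m-1$, i.e.\ $3$ for $m=2$, with nullity $1$ coming from the rotation within the $S^1$-family on the resonant torus.

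\emph{Index of $\gamma_z$.} The linearised Poincaré map of $\gamma_z$ is a rotation whose rotation number is the limit $\lim_{f\to k^2}\upomega(f)$, since nearby circumpolar tori shrink to $\gamma_z$. The Morse index of a stable elliptic closed geodesic on a surface is $2\lfloor\rho\rfloor+1$, where $\rho$ is the rotation number (Klingenberg Lem.~3.5.17 gives oddness). So $\rho<2$ gives index $3$; each time $\rho$ crosses an integer the index jumps by $2$ and a family bifurcates, consistent with the remark preceding the lemma.

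\emph{Existence and shape of $\eta$.} If $\lim\upomega>2$, then since $\upomega$ is continuous and strictly increasing with $\upomega(0)=1$, there is a unique $f_2\in(0,k^2)$ with $\upomega(f_2)=2$. The level set is two tori, one per orientation; on each, the linear flow of integer slope $2$ foliates the torus by closed curves, which gives the $S^1$-family. Each curve winds once around $z$ and makes two $\beta$-oscillations, so it crosses $\beta=0$ (that is, $\gamma_z$) four times. Its index follows from the conjugate-point count above.

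\emph{Length comparison.} This is the main obstacle. I would use a minimax argument. In the oblate-type case, consider the circumpolar family $\{F=f\}$ for $f\in[f_2,k^2]$. The length of a closed geodesic on a resonant torus equals a period integral, and by the action--angle formalism $\frac{d}{df}$ of the action along closed orbits is related to $\upomega$. More concretely, the length $L(f)$ of the $(1,\upomega)$ curve satisfies $L'(f)\propto(\upomega(f)-2)$-type sign information only at resonances. As an alternative I would argue variationally: $\eta$ arises as a minimax over $2$-parameter sweepouts of curves winding once around $z$ that intersect $\gamma_z$ four times; $\gamma_z$ itself lies in such a sweepout, which bounds the minimax by $\ell(\gamma_z)$. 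If that fails, I would fall back on explicit elliptic-integral expressions, writing $\ell(\eta)=4\,d((a,0,0),(0,b,0))$ via symmetry and bounding this distance by a quarter of $\gamma_z$.
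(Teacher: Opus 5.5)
\textbf{Length comparison.} The genuine gap is the inequality $\ell(\eta)\le\ell(\gamma_z)$: none of your three routes works as stated.

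\emph{Minimax.} ``Intersecting $\gamma_z$ four times'' is not a homotopy-invariant constraint. You have not shown that $\eta$ realizes any minimax value. Also, a sweepout containing $\gamma_z$ only bounds the minimax by its \emph{longest} curve, not by $\ell(\gamma_z)$.

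\emph{Elliptic integrals.} The fallback $\ell(\eta)=4\,d((a,0,0),(0,b,0))$ is circular. Symmetry at best makes a quarter of $\eta$ a geodesic arc between these points, and that this arc is minimizing is essentially the inequality you want.

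\emph{Action--angle.} This idea is the right one, but the relation you guessed (sign information only at resonances, tied to $\upomega-2$) is not. The missing idea, which the paper implements with the function $T(\gamma)$ in Appendix~A, is to interpolate the length per winding over \emph{all} circumpolar tori. With winding and oscillation actions $I_1,I_2$, set $\Lambda(f)=2\pi\bigl(I_1(f)+\upomega(f)I_2(f)\bigr)$. This equals the length of any closed geodesic on the level $F=f$ that winds once, so $\ell(\eta)=\Lambda(f_2)$. On the unit energy level, $dI_1+\upomega\,dI_2=0$ because $\upomega$ is the frequency ratio, hence $d\Lambda=2\pi I_2\,d\upomega$ with $I_2>0$. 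So $\Lambda$ is strictly increasing in $f$, and $\Lambda(f)\to\ell(\gamma_z)$ as $f\to k^2$ since the tori collapse onto $\gamma_z$. This gives $\ell(\eta)=\Lambda(f_2)<\ell(\gamma_z)$. In Klingenberg's coordinates this is the paper's identity
\[
\frac{dT}{d\gamma}=\omega_{\mathrm K}^{\prime}(\gamma)\int_{a_0}^{\gamma}\frac{\sqrt{u_1(\gamma-u_1)}}{\sqrt{(u_1-a_0)(a_1-u_1)(a_2-u_1)}}\,du_1,
\]
in which the $\gamma$-derivatives of the two elliptic integrals cancel exactly.

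\textbf{Index of $\eta$.} This also needs more than a conjugate-point count. Put the base point at a zero of the periodic Jacobi field $J$ generated by the $S^1$-family. Its $2m$ zeros per period give $2m-1$ conjugate points in the open period, and the Morse index theorem for closed geodesics adds a concavity term in $\{0,1\}$. Equivalently, in the periodic Sturm--Liouville problem an eigenfunction with $2m$ zeros belongs to the pair $\lambda_{2m-1}\le\lambda_{2m}$. So you only get $\mu(\eta)\in\{2m-1,2m\}$.

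That suffices for your first claim (index $\ge 3$ for every nonprincipal simple closed geodesic), but not for $\mu(\eta)=3$, $\nu(\eta)=1$. The difference matters later: index $4$ would exclude $\eta$ from $\mathcal P_{HZ}(g)$. Which value occurs is governed by the sign of the shear of the linearised Poincaré map, i.e.\ by the twist $\upomega^{\prime}(f_2)$. That twist must also be nonzero for the nullity to be exactly $1$.

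The paper settles this by continuation (Appendix~B). The family is born from $\gamma_z$ when $\lim_{f\to k^2}\upomega(f)$ crosses $2$, the index of $\gamma_z$ jumps from $3$ to $5$, and invariance of local Morse homology forces the new Morse--Bott family to have index $3$.

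By contrast, your computation of $\mu(\gamma_z)$ via $2\lfloor\rho\rfloor+1$ with $\rho=\lim_{f\to k^2}\upomega(f)$ is a legitimate and more direct alternative to the paper's deformation argument. You still need to check that $\rho$ is the rotation measured in the parallel normal frame, i.e.\ that zeros of normal Jacobi fields correspond to crossings of $\beta=0$ by nearby geodesics.
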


This lemma follows from Klingenberg's description of the geodesic flow on ellipsoids together with the index computation recalled in Appendix~\ref{appendix: morse indices}.  The required length estimates for the latter case, when $\eta\neq\gamma_z$, are established in Appendix~\ref{appendix: period map}.

\subsection{Proof of Proposition~\ref{proplowerbound}} Denote by $L\subset \gamma_y$ the geodesic segment joining the two neighboring umbilical points $U_+$ and $U_{-}$ containing the north pole (see Figure \ref{Omega}). Our billiard table will be $\Omega:=E(a,b,c)\setminus L$. Note that $\Omega$ has no smooth boundary, we therefore need to generalize our notion of billiard trajectory in Definition \ref{billiard}, by clarifying what the reflection law at the tips $U_{+}$ and $U_{-}$ of $L$ should be\footnote{The easiest way would be to require no reflection law and allow any in- and out-going angle, but we need a slightly stronger generalized reflection law to prove our lower bound.}.

\subsubsection{Generalized reflection law}\label{altreflection} We say that a billiard trajectory in $\Omega$ satisfies the generalized reflection law at the tips if the conditions of Definition \ref{billiard} hold with (3) replaced by:

\begin{itemize}
  \item[(3)'] At each time $t\in\mathcal C$ which is an isolated point of $\mathrm{supp}(\mu)$ and for which $\gamma(t)=U_{\pm}$, the curve $\gamma$ admits left and right derivatives $\dot\gamma_{\pm}$ that are left- and right-continuous on $\R/T\Z$, respectively, and obey the reflection law
  \[
  \langle \dot\gamma_+,\nu\rangle=-\langle \dot\gamma_-,\nu\rangle \neq 0,\qquad
  \dot\gamma_+-\langle \dot\gamma_+,\nu\rangle\nu=\dot\gamma_--\langle \dot\gamma_-,\nu\rangle\nu
  \]
  for some unit vector $\nu\in H_{\mathrm{in}}\cap H_L$. Here $H_{\mathrm{in}}:=\{v\in T_{U_{\pm}}E\mid v\cdot \dot\gamma_+\le 0\}$ and $H_L:=\{v\in T_{U_{\pm}}E\mid v\cdot l\ge 0\}$, where $l\in T_{U_{\pm}}E$ is the unit tangent of $\mathrm{int}\,L$ pushed to the tip $U_{\pm}$ and pointing out of $L$. The half-spaces $H_{\mathrm{in}}$, $H_L$, and the cone $H_{\mathrm{in}}\cap H_L$ are shown in Figure \ref{reflection}.
\end{itemize}

\begin{figure}[h]
  \centering
  \includegraphics[width=0.6\textwidth]{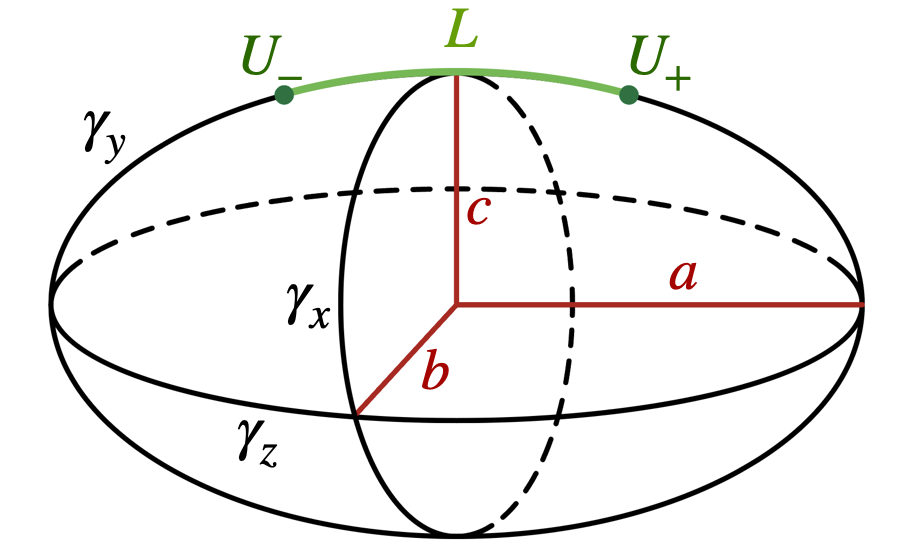} 
  \caption{A sketch of the ellipsoid $E(a,b,c)$ with $a\geq b\geq c\geq 0$ and the geodesic segment $L$ connecting the umbilical points $U_{\pm}$. }
  \label{Omega}
\end{figure}

\begin{proof} The idea is to approximate $\Omega$ by a sequence of smooth billiard tables $\Omega_\delta\subset \Omega$ with smooth boundary. We then show that if a sequence of periodic $\Omega_\delta$-billiard trajectories converges, the limit is a generalized billiard trajectory of $\Omega$, hence satisfies (3)'.\\

\noindent
\textit{Choice of $\Omega_\delta$.} We define a $\delta$-thickening of $L$ as the interior of the $\cos(\beta)=\delta$ -parameter line (see Figure \ref{1bounce}). 
Now its complement $\Omega_\delta$ has smooth boundary 
$\partial \Omega_\delta = \lbrace \cos(\beta)= \delta\rbrace$
hence the approximation scheme in \cite{V21,AM11} applies. Sending $\delta \to 0$ recovers the original billiard table $\Omega = E(a,b,c) \setminus L.$ \\

\noindent
\textit{Compactness.} For each $\delta > 0,$ let $(\Gamma_\delta, T_\delta)$ be a periodic bounce orbit\footnote{Note that $\Omega_\delta$ is the complement of a strongly geodesically convex set, hence all billiards trajectories are bounce orbits.} of $\Omega_\delta$ with energy $E_\delta \le 1$ and period $T_\delta < C$, for some constant $C>0$ Clearly, $\nabla_{\dot\Gamma_\delta}\dot \Gamma_\delta$ is uniformly bounded in $L^1$, hence $\Gamma_\delta$ is uniformly bounded in $W^{2,1}$. The compact embedding
$$
W^{2,1}(S^1, E)\hookrightarrow W^{1,2}(S^1, E)=H^1(S^1, E)\footnote{View $E\subset \R^3$ isometrically to define the Sobolev spaces.}
$$
yields, after passing to a subsequence, $\Gamma_\delta\to \Gamma$ in $H^1$ as $\delta\to 0$. All bounce orbits are longer than some positive constant and their velocity is bounded as $E_\delta<1$, this implies $T_\delta>c$ for some constant $c>0$. Together with the assumption $T_\delta<C$, up to taking a subsequence, we may assume $T_\delta\to T,\ c\leq T\leq C$. The sequence of finite Borel measures $\mu_\delta$ supported on $\sC_\delta=\lbrace t\in S^1\mid \Gamma_{\delta}(t)\in\partial\Omega_\delta\rbrace$ converges weakly to $\mu$ supported on $\sC=\lbrace t\in S^1\mid \Gamma_{\delta}(t)\in L\rbrace$. \\

\noindent
\textit{Convergence of outer normals.} For every 
sequence of points $p_\delta \in \partial \Omega_\delta$ that converges to a point $p\in\mathrm{int} L$, the outer normals of $\Omega_\delta$ at $p_\delta$ converge to the outer normal of $p\in \mathrm{int} L$ at $\Gamma(t)$. This outer normal is well defined up sign. For a sequence of bounce points this sign is determined by the fact that $\langle \lim_{t\uparrow t_o}\dot \Gamma(t),\nu(\Gamma(t_o))\rangle>0$. Hence, at $t\in \sC$ such that $\Gamma(t)\in \mathrm{int}(L)$, $\Gamma$ satisfies the standard reflection law.\\

\noindent
\textit{Reflection law at the tips.} In order to prove (3)' at $U_{\pm}$, note that when a sequence of points $p_\delta \in \partial \Omega_\delta$ converges to one of the tips of $L$, their normals $\nu(p_\delta, \Omega_\delta)$ converge (up to taking a subsequence) to an element in the half-space $H_L$. Further, for any sequence of bounce points $p_\delta=\gamma_\delta(t_\delta)$, $t_\delta\in\mathrm{supp}(\mu_\delta)$, the normal always lies in $H_{\mathrm{in}}^\delta:=\lbrace v\in T_{p_{\delta}}E \mid v\cdot \dot\gamma_\delta ^+\leq 0\rbrace$, hence their normals need to converge to an element in $H_{\mathrm{in}}$. Together we find $\nu_\delta(\Gamma_\delta(t_\delta)) \to \nu(\Gamma(t))$ up to taking subsequences, and $\nu(\Gamma(t)) \in H_{\mathrm{in}} \cap H_L$.
\end{proof}

\begin{figure}[h]
  \centering
  \includegraphics[width=1\textwidth]{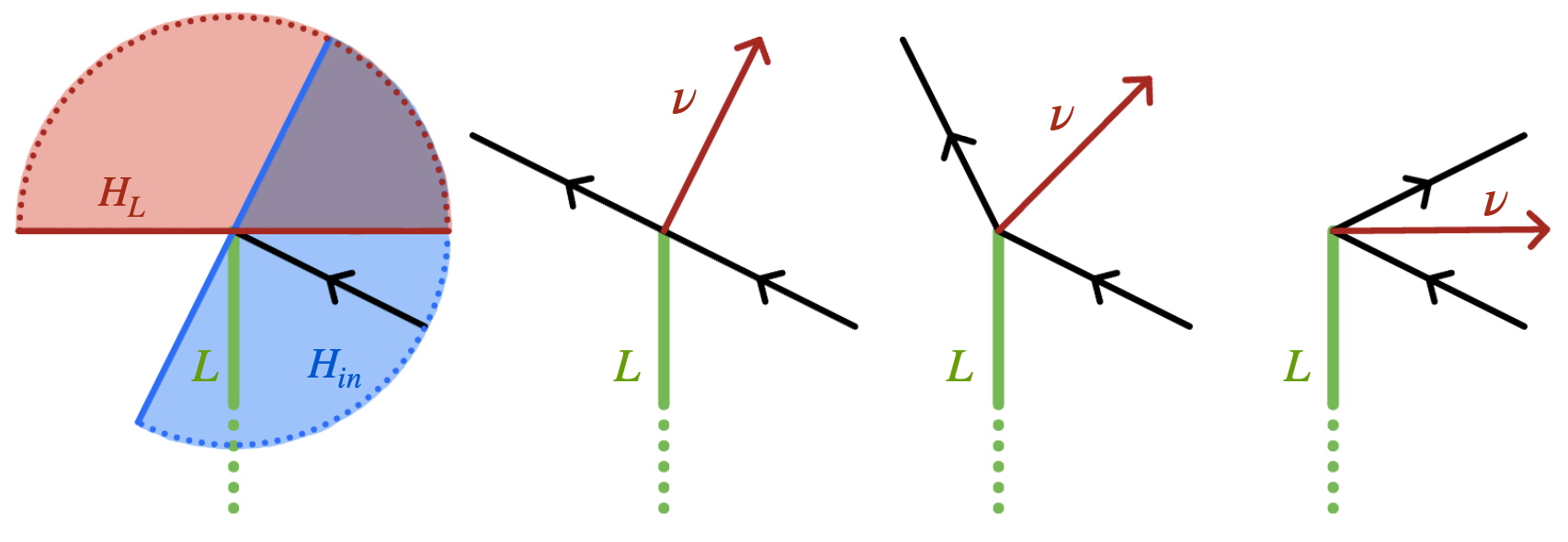} 
  \caption{Denote by $H_L=\lbrace v\in T_{U_{\pm}}E \mid v\cdot l\geq 0\rbrace$ the outer half space at one of the tips ${U_{\pm}}$ of $L$ and $H_{in}=\lbrace v\in T_{U_{\pm}}E \mid v\cdot \dot\gamma_+\leq 0\rbrace$. The left picture shows the half spaces $H_{in}$ and $H_L$, the other pictures show possible reflections with $\nu\in H_{in}\cap H_L$, hence allowed under the generalized reflection law (3)'.}
  \label{reflection}
\end{figure}

\subsubsection{Admissible closed billiard trajectories.} We now go through all types of billiard trajectories (transpolar, circumpolar, umbilical) to show that 
$$
\Omega\text{-}\mathrm{sys}=\min\lbrace 2\mathrm{sys},\ell(\eta)\rbrace.
$$

\noindent
\textit{Circumpolar billiard trajectories.} If a billiard trajectory contains a \emph{circumpolar} segment, then all its segments are circumpolar, as these always hit the interior of \(L\) and thus satisfy the standard reflection law. As \(\Omega\) is invariant under reflection in the \(xz\)-plane, every periodic circumpolar billiard orbit unfolds to a closed circumpolar geodesic with winding number around the \(z\)-axis at least \(2\). The shortest among these covers \(\gamma_z\) twice; hence its length is $2\mathrm{sys}$.\\

\noindent
\textit{Transpolar billiard trajectories.} Transpolar geodesics do not intersect \(L\) at all. Hence all closed transpolar geodesics are closed billiard trajectories. The shortest among these is either \(\gamma_x\) (if \(\omega(a)\le 2\)) or \(\eta\) (if \(\omega(a)\ge 2\)).\\

\noindent
\textit{Umbilical billiard trajectories.}
All umbilical geodesics pass through the tips \(U_\pm\) of \(L\). Indeed, every
geodesic joining two antipodal umbilical points has the same length. In particular,
every geodesic loop based at an umbilical point has the same length as \(\gamma_y\) \cite{Kl95}.

\begin{lemma}
Every umbilical billiard trajectory that does not cover a segment of \(\gamma_y\)
contains at least two geodesic loops based at \(U_+\) or \(U_-\). In particular, its length is at least the length $2\,\ell(\gamma_y)$, and hence at least $2\,\mathrm{sys}$.
\end{lemma}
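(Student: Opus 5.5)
Throughout, a closed umbilical billiard trajectory $\gamma$ has all its segments in the singular level $F=0$. Each such segment is a geodesic arc through the umbilical points, and every umbilical geodesic runs from an umbilic to the opposite one and back.

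\textbf{Step 1: locate the contacts.} Consider a bounce (or contact) of $\gamma$ at an interior point of $L$. The standard reflection law there is reflection in the geodesic $\gamma_y\supset L$. This reflection is an isometry of $E$ (the $xz$-plane symmetry) and preserves $F$. So after unfolding, the trajectory continues as an umbilical geodesic. The unfolded geodesic must pass through an umbilic before returning. If it does not cover a segment of $\gamma_y$, the unfolded geodesic crosses $\gamma_y$ only at umbilical points, except for transversal crossings. Using the classification of umbilical geodesics from \cite{Kl95}, I would check that these transversal crossings with $L$ cannot occur before the curve reaches $U_\pm$. Hence every segment of $\gamma$ is a geodesic arc whose endpoints lie in $\{U_+,U_-\}$ or at the opposite umbilics. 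Equivalently, contacts with $L$ happen only at the tips.

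\textbf{Step 2: decompose into arcs.} Cut $\gamma$ at its visits to $U_+$ and $U_-$. Each piece is an umbilical geodesic arc. Any arc leaving $U_\pm$ in a direction not along $\gamma_y$ reaches the opposite umbilic after half the length of $\gamma_y$. It then returns to $U_\pm$, since the opposite umbilics are not on $L$ and no reflection occurs there. This happens after total length $\ell(\gamma_y)$, and the arc from $U_\pm$ back to $U_\pm$ is a geodesic loop. Could an arc instead go from $U_+$ to $U_-$ directly? The shortest geodesic between the adjacent umbilics $U_+$ and $U_-$ is $L$ itself or the complementary arc of $\gamma_y$, both of which lie on $\gamma_y$. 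These are excluded by hypothesis, or the arc crosses $L$, which is impossible by Step 1. So every arc is a full geodesic loop based at $U_+$ or $U_-$, of length $\ell(\gamma_y)$ by \cite{Kl95}.

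\textbf{Step 3: count loops.} A single loop based at $U_+$ cannot close up as a billiard trajectory. Its incoming and outgoing directions at $U_+$ differ, except for $\gamma_y$ itself. They are related by the generalized reflection law (3)' with $\nu\in H_{\mathrm{in}}\cap H_L$. I would show this law forces the reflected direction to head into the other side of $\gamma_y$ (the other sheet, with respect to the $xz$-symmetry). Hence a closed trajectory needs at least one more loop to return to the initial side. This gives length at least $2\ell(\gamma_y)$. Finally, $\ell(\gamma_y)\ge\mathrm{sys}$ because the systole is realized by $\gamma_x$, which gives the length at least $2\,\mathrm{sys}$.

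The main obstacle is Step 3, the parity/side argument with the cone $H_{\mathrm{in}}\cap H_L$. I would first try a sign computation of the normal component relative to $l$. The condition $\nu\cdot l\ge0$ together with $\nu\cdot\dot\gamma_+\le0$ should force $\dot\gamma_+$ and $\dot\gamma_-$ to lie on opposite sides of $\gamma_y$ after unfolding. From that, an odd number of loops would be inconsistent with closing up.
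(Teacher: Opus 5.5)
Your Steps 1--2 reach the same reduction as the paper: each such trajectory is a concatenation of geodesic loops of length $\ell(\gamma_y)$, all based at one tip. The cleanest justification is that on $\{F=0\}$ a geodesic through a non-umbilical point of $\gamma_y$ must be tangent to $\gamma_y$. Hence an umbilical geodesic other than $\gamma_y$ meets $\gamma_y$ only at umbilics and never touches $\mathrm{int}\,L$. Your remark about shortest geodesics between $U_+$ and $U_-$ is beside the point.

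The genuine gap is Step 3: the side/parity mechanism you propose does not exist. Write $\dot\gamma_\pm=\cos\phi_\pm\,l+\sin\phi_\pm\,e$, with $e$ the unit normal to $\gamma_y$ at the tip. Since $\dot\gamma_+-\dot\gamma_-$ is parallel to $\nu$, the condition $\nu\in H_{\mathrm{in}}\cap H_L$ reduces to a single inequality between $\cos\phi_+$ and $\cos\phi_-$. It places no restriction on the sign of $\sin\phi_+$, as two examples show:
\begin{itemize}
\item Take $\dot\gamma_+$ to be the mirror image of $\dot\gamma_-$ in $\gamma_y$ (the equality case, $\nu=\pm e$). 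This satisfies (3)' and sends the trajectory back into the half it came from.
\item Take $\dot\gamma_+$ to be a slight rotation of $\dot\gamma_-$ in the admissible direction. This also satisfies (3)' and lets the trajectory continue into the half it was heading into.
\end{itemize}
So (3)' forces no change of side, and no parity count follows.

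Moreover, for a single loop the side data are automatically consistent. The loop leaves $U_+$ into one half and crosses $\gamma_y$ transversally at the opposite umbilic. It then returns through the other half, so $\dot\xi_{\mathrm{in}}$ points into the same half as $\dot\xi_{\mathrm{out}}$. Whether the one-loop trajectory satisfies (3)' is therefore decided entirely by comparing the angles that $\dot\xi_{\mathrm{out}}$ and $\dot\xi_{\mathrm{in}}$ make with $L$. This is a quantitative statement about how that angle changes after one return to the tip.

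That is exactly the ingredient the paper supplies and you are missing. The angle changes monotonically under successive returns (Hart's theorem). The paper phrases this as the next intersection of $\xi$ with $\gamma_x$ lying further along $\gamma_x$, using that $\gamma_x$ has Morse index $1$. This places $\dot\xi_{\mathrm{out}}-\dot\xi_{\mathrm{in}}$ outside the admissible cone. Without such an input, your Step 3 cannot exclude a single loop, and the bound $2\,\ell(\gamma_y)$ does not follow.
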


\begin{proof}
The only periodic umbilical geodesic is \(\gamma_y\). Every other umbilical geodesic
passes repeatedly through antipodal pairs of umbilical points, so any umbilical
billiard trajectory is obtained by concatenating geodesic loops based at one of the tips
\(U_\pm\) of \(L\). Each such loop has length \(\ell(\gamma_y)\). Consider one such loop, given by a segment of an umbilical geodesic \(\xi\). As
illustrated in Figure~\ref{umbilical}, the two ends of the loop determine the
incoming and outgoing directions of a possible one-bounce trajectory at a tip of
\(L\). Now continue \(\xi\) beyond the incoming branch. Since \(\gamma_x\) has Morse
index \(1\), the second intersection of \(\xi\) with \(\gamma_x\) occurs farther
along \(\gamma_x\) in the same direction. Equivalently, the angle \(\alpha\) between \(\dot\xi_{\mathrm{out}}\) and \(L\) at the tip of \(L\) is smaller than the angle \(\alpha'\) between \(\dot\xi_{\mathrm{in}}\) and \(L\). Indeed, this angle decreases monotonically under successive returns to the tip of \(L\); for a precise description of this angle change, see \cite{Har49}. It follows that the change in momentum between the incoming and outgoing directions does not lie in the admissible cone. Hence a single loop cannot satisfy the generalized reflection law~(3)'. Therefore, any umbilical billiard trajectory not covering a segment of \(\gamma_y\) must contain at least two such loops. Consequently, its length is at least $2\,\ell(\gamma_y)\ge 2\,\mathrm{sys}$. In fact, the same argument can be iterated to show that no periodic concatenation of
umbilical geodesic loops satisfies the reflection law at all bounce points. Hence
there are no periodic umbilical billiard trajectories that do not cover a
segment of \(\gamma_y\).
\end{proof}

\begin{figure}[h]
  \centering
  \includegraphics[width=0.6\textwidth]{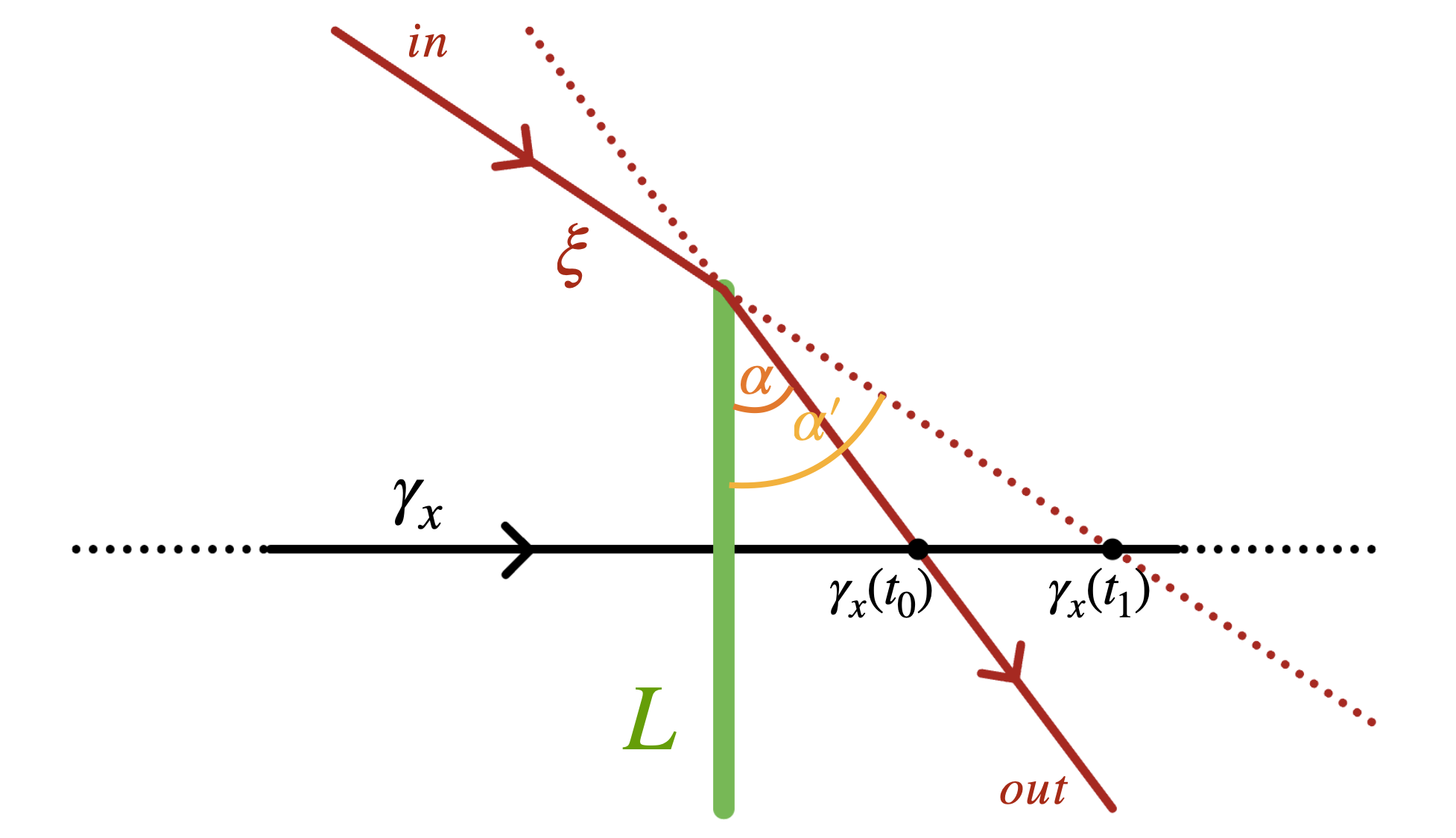} 
  \caption{A one-bounce umbilical loop based at a tip of \(L\). The incoming and outgoing
branches, labeled \(in\) and \(out\), are segments of the same umbilical geodesic
\(\xi\). The dashed continuation indicates that \(\xi\) meets \(\gamma_x\) again
farther along in the same direction, so the angle at the tip is incompatible with
the generalized reflection law.}
  \label{umbilical}
\end{figure}

\noindent
Finally we need to deal with periodic billiard orbits covering (parts of $\gamma_y)$. There is the gliding orbit covering $\gamma_y$ precisely once and the 2-bounce orbit traversing $\gamma_y\setminus L$ twice. The former has length strictly shorter than $\eta$, and violates our lower bound. We claim that this orbit is not admissible, hence can be ignored for the lower bound.

\begin{lemma}
    The gliding orbits that cover $\gamma_y$ exactly once are not admissible.
\end{lemma}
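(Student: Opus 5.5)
We argue by contradiction. Suppose the gliding orbit $\gamma$ covering $\gamma_y$ once, of length $\ell(\gamma_y)$, is admissible. By the definition of admissibility for the non-smooth table $\Omega=E\setminus L$, there are $\delta_n\to0$ and admissible periodic billiard orbits $\gamma_n$ of $\Omega_{\delta_n}$ with $\gamma_n\to\gamma$ in $H^1$ and periods converging. Each $\gamma_n$ is itself a limit of Hamiltonian orbits of $H_\varepsilon$. Since $\Omega_{\delta}$ is the complement of a strongly geodesically convex set, every $\gamma_n$ is a bounce orbit. It is therefore a finite concatenation of geodesic segments in $\Omega_{\delta_n}$ with standard reflections on the line of curvature $\{\cos\beta=\delta_n\}$. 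The plan is to show that such bounce orbits cannot converge to a single traversal of $\gamma_y$.

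\emph{Step 1: locate the limit segments.} Away from $L$, $\gamma$ runs along the arc $\gamma_y\setminus L$, which passes through the two other umbilics. Near $L$, $\gamma$ glides along one side of $L$. By $H^1$ (hence $C^0$) convergence, for large $n$ the segments of $\gamma_n$ are nearly tangent to $\gamma_y$ and hug the thin boundary of $\Omega_{\delta_n}$ along one side of $L$.

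\emph{Step 2: apply the integral of motion.} Along each geodesic segment, Karney's integral $F$ is conserved. The key claim to verify in the coordinates $(\beta,\omega)$ is that reflection at the curve $\cos\beta=\delta$ also preserves $F$. This holds because $\beta=\mathrm{const}$ is a line of curvature, and reflection flips the sign of $\cos\alpha$ while leaving $\sin^2\alpha$ and $\cos^2\alpha$ unchanged. Hence $F(\gamma_n)=f_n$ is constant along the whole orbit.
\begin{itemize}
\item If $f_n<0$, the segments are transpolar and never meet $\{\cos\beta\le\delta\}$ in the relevant sense, so they cannot glide along $L$.
\item If $f_n>0$, the segments are circumpolar and are confined to the band $\cos^2\beta\ge f_n/k^2$.
\item As $n\to\infty$, convergence to $\gamma_y$ forces $f_n\to0$, i.e. the umbilical level.
\end{itemize}

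\emph{Step 3: unfold the circumpolar case.} For circumpolar $\gamma_n$ we use the $xz$-reflection symmetry of $\Omega_{\delta}$, as in the circumpolar case above. Unfolding the reflections turns $\gamma_n$ into a closed circumpolar geodesic on $E$ of rotation number $\upomega(f_n)$, which is close to $1$, and of length $2\ell(\gamma_n)$ up to $o(1)$. In the limit, this unfolded curve traverses $\gamma_y$ twice in a way that winds once around the $z$-axis. A single gliding traversal of $\gamma_y$ would instead have to unfold to a curve of winding number $1/2$, which is impossible. The same conclusion should follow from parity: the unfolding of a closed bounce orbit must have even winding, while one traversal of $\gamma_y$ has odd winding. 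The limits of closed circumpolar billiard orbits therefore have length at least $2\ell(\gamma_y)$, contradicting convergence to $\gamma$.

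\emph{Main obstacle.} The hardest step will be the near-umbilical regime $f_n\to0$, where segments pass close to $U_\pm$. There the period map degenerates, and the sign of $f_n$ could in principle vary between different $\Omega_{\delta}$-orbits. I would first try to show directly that $F$ is invariant under reflection on $\{\cos\beta=\delta\}$. I would then use Klingenberg's description \cite{Kl95} that near-umbilical geodesics alternate sides of $\gamma_y$ between successive umbilic passages. This should force any closed bounce orbit near $\gamma_y$ to traverse the median ellipse an even number of times.
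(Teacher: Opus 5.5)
Your Step~2 (invariance of $F$ under reflection in the line of curvature $\{\cos\beta=\delta\}$) is correct and is also where the paper starts. The case analysis built on it, however, has two genuine gaps.

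\textbf{Step~3 does not work.} Unfolding by the isometry $\sigma\colon y\mapsto -y$ turns a reflection into geodesic continuation only for reflections in $\operatorname{Fix}\sigma=\gamma_y$ itself, e.g.\ in $L$. The wall $\partial\Omega_\delta$ is merely $\sigma$-invariant. So after a bounce at $p\in\partial\Omega_\delta$ your unfolded curve jumps from $p$ to $\sigma(p)$, and it is not a geodesic of $E$ with rotation number $\upomega(f_n)$.

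Moreover, by your own second bullet a circumpolar geodesic stays in $\{\cos^2\beta\ge f/k^2\}$ and never meets $L$, so there is nothing to unfold even in the limit. You seem to have combined the definition $f>0$ with the labels of the billiard classification paragraph, where the two names are used the other way round.

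The parity claim is also false. A circumpolar curve $C^0$-close to one traversal of $\gamma_y$ passes $N$ and $S$ on opposite sides and winds exactly once about the $z$-axis, so no ``winding $1/2$'' obstruction exists.

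The missing idea is the one the paper uses. Since $\partial\Omega_\delta=\{\beta=\beta_\delta\}$ is a coordinate line, a bounce flips only the $\beta$-component. In Klingenberg's separated parametrization the $\omega$-motion is therefore undisturbed, while the $\beta$-oscillation is cut short. As $\upomega(f)>1$, the longitude advances by strictly less than $2\pi$ between consecutive bounces. Hence a closed circumpolar bounce orbit winding $p\ge1$ times about the $z$-axis has more than $p$ bounces. That means at least two separate dips towards the arc of $\gamma_y$ between the two southern umbilics, which is incompatible with shadowing $\gamma_y$ once.

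In the paper this appears as follows: winding $\pm1$ about the $y$-axis forces exactly one bounce, and the resulting one-corner loop violates the reflection law because successive tangency points drift against the winding. The argument works for each $f_n>0$ separately, so the near-umbilical regime you flag is not the real obstacle here.

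\textbf{The levels $f_n\le 0$ are not handled.} Your claim that transpolar segments never meet $\{\cos\beta\le\delta\}$ is false. They have monotone $\beta$, wind about the $x$-axis, and cross $\operatorname{int}L=\{\beta=\pi/2,\ \sin\omega\neq0\}$, as stated in the description of the invariant tori. So they do bounce off $\partial\Omega_\delta$. As $f_n\to0^-$, their turning lines $\sin^2\omega=-f_n/k'^2$ approach the arcs $\omega=0,\pi$ of $\gamma_y$, so they can lie $C^0$-close to $\gamma_y$. Umbilical bounce orbits ($f_n=0$) are not addressed at all.

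These cases need an argument like the paper's. Each bounce reverses $\dot\beta$ while $\beta$ is monotone in between, so a closed transpolar bounce orbit has an even, nonzero number of bounces. Each bounce-to-bounce segment crosses the southern arc of $\gamma_y$, giving at least two southern passages. The paper packages this as ``at least two bounces, hence winding number $\neq\pm1$ about the $y$-axis''.

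Finally, the bound ``length at least $2\ell(\gamma_y)$'' that closes Step~3 is asserted rather than derived.
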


\begin{proof}
The idea is to use the approximating billiard tables $\Omega_\delta\to \Omega$ with smooth boundary and show that there is no sequence of billiard trajectories for $\Omega_\delta$ converging to a gliding orbit that covers $\gamma_y$ exactly once.

Assume, for contradiction, that there exists such a sequence $\Gamma_\delta$. For $\delta>0$ small enough, the curve $\Gamma_\delta$ avoids $(0,\pm b,0)$ and therefore has a well-defined winding number $\pm1$ with respect to the $y$-axis. By our particular choice of $\Omega_\delta$, the value of the first integral $F$ is preserved at each reflection. Hence periodic bounce orbits can again be divided into the same three classes as geodesics, namely \emph{transpolar}, \emph{circumpolar}, and \emph{umbilical}.

Now transpolar and umbilical periodic bounce orbits necessarily have at least two bounces, and therefore cannot have winding number $\pm1$ with respect to the $y$-axis. Thus the only remaining possibility is that $\Gamma_\delta$ is a circumpolar billiard orbit with $F(\gamma_\delta,\dot\gamma_\delta)\to 0$. Moreover, the complement of $\Omega_\delta$ is geodesically convex, so a geodesic segment cannot return to $\partial\Omega_\delta$ without winding once around the $y$-axis. It follows that $\Gamma_\delta$ must be a geodesic loop, that is, a closed curve with exactly one non-smooth point $\Gamma_\delta(t)\in \partial\Omega_\delta$, and that these loops converge in $H^1$ to the gliding orbit $\xi$ covering $\gamma_y$ once.

Such geodesic loops do indeed exist; see Figure~\ref{1bounce}. A circumpolar geodesic with $F=k^2\delta^2$ never enters the region $\{\cos\beta<\delta\}$ and intersects the parameter line $\{\cos\beta=\delta\}$ tangentially. Since $\upomega(F)>1$ for circumpolar geodesics, it oscillates faster than it winds, and successive tangency points with $\{\cos\beta=\delta\}$ wind around the north pole in the direction opposite to the winding of $\Gamma_\delta$. Thus the resulting geodesic loop has a single non-smooth point, but it fails the billiard reflection law there; see Figure ~\ref{1bounce}. Hence it is not a billiard trajectory of $\Omega_\delta$, a contradiction.
\end{proof}

\begin{figure}[h]
  \centering
  \includegraphics[width=0.6\textwidth]{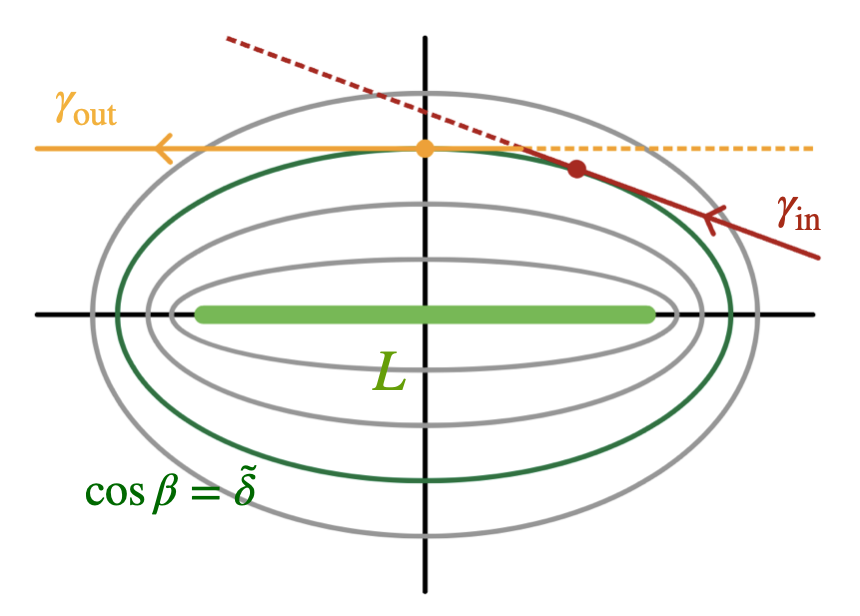} 
  \caption{The $\beta$-parameter lines (shown in grey) bound geodesically convex neighborhoods of $L$ (light green). A circumpolar geodesic with $F=k^2\tilde\delta^2$ gives rise to a geodesic loop (yellow and red are two ends of the same geodesic loop) with a single non-smooth point (the intersection of the red and yellow curves), but this loop does not satisfy the billiard reflection law.}
  \label{1bounce}
\end{figure}

Finally, we need to deal with the $2$-bounce orbit that traverses
$\gamma_y\setminus L$ twice. Its length is $2\ell(\gamma_y\setminus L)$.
The length estimate proved in Lemma~\ref{Lemma: 2-bounce orbit vs index-3 orbit} in Appendix~\ref{appendix: period map} shows that
\[
\ell(\gamma_y\setminus L)\geq \frac{1}{2}\ell(\eta).
\]
Indeed, in the notation of Appendix~\ref{appendix: period map}, the half-length of the shortest closed geodesic $\eta$ of Morse index $3$ is $T(\gamma_0)$, while
Lemma~\ref{Lemma: 2-bounce orbit vs index-3 orbit} gives
$\ell(\gamma_y\setminus L)\geq T(\gamma_0)$. Hence the $2$-bounce orbit has length at least $\ell(\eta)$. This concludes the proof of the lower bound 
$$
c_{HZ}(D^*E(a,b,c),\dd\lambda)\geq\min \lbrace 2\mathrm{sys},\ell(\eta)\rbrace.
$$

\newpage

\section{Step 2: Pseudoholomorphic curves}

In this section, we prove the following theorem on the upper bound of the Hofer--Zehnder capacity. It is a refined version of Theorem~\ref{Theorem: Upper bound from index 3 - refined + geodesic flow version} and we will deduce Theorem \ref{Theorem: Upper bound from index 3 - refined + geodesic flow version} in Section~\ref{Section: Positive curvature}.

\begin{theorem}
\label{Theorem: Upper bound from index 3 - nondegenerate W}
Let $W \subset \operatorname{int}(D^*_{g_{\rm 0}}S^2)$ be a fiberwise starshaped domain in $T^*S^2$ whose Reeb flows on the boundary is nondegenerate. Then, the Hofer--Zehnder capacity satisfies
\begin{equation}
    c_{\rm HZ} (W) \le \max_{\gamma \in \mathcal{P}_{\rm HZ}(W)} \mathcal{A}(\gamma),
\end{equation}
where $\mathcal{P}_{\rm HZ}(W)$ denotes the set of closed Reeb orbits of Conley--Zehnder index $3$ satisfying either
\begin{itemize}
    \item $\gamma$ is non-contractible in $\partial W$ and satisfies $\mathcal{A}(\gamma) \le 2 \pi$, or
    \item $\gamma$ is contractible in $\partial W$ and
    there exists a non-contractible closed Reeb orbit $\gamma'$ satisfying
    \[{\rm CZ}(\gamma')=1 \quad\text{and}\quad\mathcal{A}(\gamma) + \mathcal{A}(\gamma') \le 2\pi.\]
\end{itemize}
\end{theorem}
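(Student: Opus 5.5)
We plan to run a Hofer--Viterbo argument inside $\cp^1\times\cp^1$ and combine it with neck-stretching along $\partial W$. First we fix the symplectic embedding. The open round codisk bundle $\operatorname{int}(D^*_{g_0}S^2)$ is symplectomorphic to $\cp^1\times\cp^1\setminus\overline{\Delta}$, where $\Delta$ is the antidiagonal. We normalise each factor to have area $2\pi$, so that the class $A=[\cp^1\times\mathrm{pt}]$ has $\omega(A)=2\pi$ and the zero section becomes the diagonal. The assumption $W\subset\operatorname{int}(D^*_{g_0}S^2)$ then gives an embedding $W\hookrightarrow M:=\cp^1\times\cp^1$ that misses $\overline\Delta$. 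Gromov--Witten invariants with point constraints through $A$ (and through $A+B$) are nonzero, so $M$ is uniruled in the sense of Lu.

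Let $H$ be admissible on $W$ with $\max H-\min H>\max_{\mathcal P_{HZ}(W)}\mathcal A$; we want a contradiction. Following \cite{HV92,Lu06}, we extend $H$ by its maximum, use the Floer/Gromov trick on the graph or cylinder $S^2\times M$, and obtain for each large $c$ a $J$-holomorphic section through a point of $\{H=\min H\}$. The energy identity then bounds $\max H-\min H$ by the symplectic area of the curve class. In the naive version this bound is $\omega(A)=2\pi$.

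To improve the bound, we stretch the neck along $\partial W$, which is contact of restricted type because $W$ is fiberwise starshaped. In the limit, the sphere through the point constraint $p\in W$ splits into a holomorphic building. The building has a top level in $M\setminus W$, symplectization levels over $\partial W$, and a bottom level in $\widehat W$ containing $p$. Each bottom component is a finite-energy curve in $\widehat W\cong T^*S^2$ asymptotic to Reeb orbits of $\partial W$, and the Hamiltonian/energy estimate localises onto the component $u_0$ through $p$. This should give $\max H-\min H\le\sum_\pm\mathcal A(\gamma_\pm)$ over the positive ends of $u_0$.

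The core of the argument is to classify $u_0$ by index and homology, and this is the step we expect to be the main obstacle. With $J$ generic and the orbits nondegenerate, the point constraint forces $\operatorname{ind}(u_0)\ge 2$. By automatic regularity in dimension four (Wendl), the limit building has total index $\operatorname{ind}(A)-2=2$ after imposing the point constraint. We will use the trivialisation on $T^*S^2$ and the relation $c_1(A)=2$, together with the fact that the top-level curve in $M\setminus W$ must intersect $\Delta$ nonnegatively, once in class $A$.

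From this we expect the following dichotomy.
\begin{itemize}
\item $u_0$ is a plane asymptotic to a single orbit $\gamma$ with $\mathrm{CZ}(\gamma)=3$. The top level is then a plane in $M\setminus W$ with area $2\pi-\mathcal A(\gamma)\ge0$. When $\gamma$ is contractible in $\partial W$, it caps off inside $\partial W$, and homological bookkeeping relative to $\Delta$ forces the top part to carry an extra non-contractible orbit $\gamma'$ with $\mathrm{CZ}=1$. This gives $\mathcal A(\gamma)+\mathcal A(\gamma')\le2\pi$.
\item $u_0$ is a cylinder with two ends of Conley--Zehnder index $1$. This case has to be excluded or absorbed, either by an intersection-theoretic argument (Siefring intersections with the leaves of the foliation coming from $\Delta$), or by showing the resulting bound is no better.
\end{itemize}

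Making the index count, the parity/contractibility condition and the exclusion of multiple-cover buildings rigorous is the delicate part. Lower-index orbits ($\mathrm{CZ}=1$, which are non-contractible for $T^*S^2$) will be ruled out as single asymptotics of $u_0$, because the point constraint needs index $\ge2$. Once this is done, the energy bound yields $\max H-\min H\le\mathcal A(\gamma)$ with $\gamma\in\mathcal P_{HZ}(W)$, which contradicts our choice of $H$.
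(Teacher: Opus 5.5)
Your architecture matches the paper's. Both run a Hofer--Viterbo argument in $\cp^1\times\cp^1$, stretch the neck along $\partial W$, and analyse the limit building by energy, index and intersection with the divisor, with the parity argument in the complement for the contractible case. The paper uses Floer cylinders from a nondegenerate minimum $p$ to the divisor rather than the Gromov trick, which is an inessential difference. Your labels are also swapped: for the product form the divisor is the diagonal and the zero section the antidiagonal, which is harmless. The real problem is that the step you call the core is left open, and neither of your remedies closes it.

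\textbf{First gap: the two-ended bottom component.} Your second case, a bottom component through $p$ with two positive ends of index $1$, is not ``no better''. It would only give $-\min H<\mathcal A(\gamma_1)+\mathcal A(\gamma_2)$ with index-$1$ orbits, and that is not controlled by $\max_{\mathcal P_{\rm HZ}(W)}\mathcal A$.

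The missing idea is topological and needs no Siefring theory.
\begin{itemize}
\item Take $J$ integrable near the divisor $\Sigma$, so $\Sigma$ is $J$-holomorphic.
\item A component $v$ in $\widehat V$ has $E(v)=2\pi\,(v\cdot\Sigma)-\sum\mathcal A(\gamma_z)$. The single intersection of the original curves with $\Sigma$ ends up in one component. Any other component in $\widehat V$ would have $v\cdot\Sigma=0$, hence nonpositive energy. So the top level is a single curve $w$.
\item Every component in $\widehat W$ or $\mathbb R\times\partial W$ has at least one positive end, by exactness and the $d\alpha$-energy.
\item Following positive ends upward from any positive end of the bottom component climbs strictly through finitely many levels, so it must terminate at $w$.
\item Two positive ends would give two distinct paths to $w$ in a genus-zero tree, which is impossible.
\end{itemize}
Hence the bottom component is a plane from $p$ to a single orbit $\gamma_1$. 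All other branches hang below the chain from $p$ to $w$, as bubble trees capped by planes in $\widehat W$ (Claims 1--2 in the paper).

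\textbf{Second gap: the symplectization levels.} You pair the end of $u_0$ directly with the top level, ignoring the levels in between. In general $\gamma_1$ is joined to the distinguished negative end $\gamma_k$ of $w$ through possibly multiply covered curves in $\mathbb R\times\partial W$ that carry extra negative ends. Your total-index count does not fix this. It is miscounted: with the point constraint the problem is rigid, not of index $2$. More importantly, index additivity gives nothing componentwise once multiple covers of negative index are allowed.

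The paper instead runs an inductive argument:
\begin{itemize}
\item[(i)] Every orbit at the top of a bubble tree has $\mathrm{CZ}\ge1$, by induction from the leaves using the underlying simple curves.
\item[(ii)] An $m$-fold covered bottom plane covers a simple Floer plane for $H'/m$. Regularity then gives $\mathrm{CZ}(\gamma_1)\ge 2m+1$, with equality $3$ only if $u_0$ is simple.
\item[(iii)] Through each symplectization level the chain orbit keeps index $\ge3$, with equality only for branched covers of a trivial cylinder.
\item[(iv)] $w$ is simple because $w\cdot\Sigma=1$. Its index inequality $2-(\mathrm{CZ}(\gamma_k)-1)-\sum_z(\mathrm{CZ}(\gamma_z)-1)\ge0$ then forces $\mathrm{CZ}(\gamma_k)\le3$.
\end{itemize}
Together these force every equality. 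The middle levels are covers of a trivial cylinder, $\mathcal A(\gamma_1)\le\mathcal A(\gamma_k)$, $\mathrm{CZ}(\gamma_k)=3$, and the other negative ends of $w$ have index exactly $1$. Only then do the energies of $u_0$ and $w$ give $-\min H<\mathcal A(\gamma_k)$ with $\gamma_k\in\mathcal P_{\rm HZ}(W)$.

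\textbf{Smaller points.}
\begin{itemize}
\item $\mathrm{CZ}(\gamma')=1$ comes from (iv). Non-contractibility of $\gamma'$ comes from the parity argument, not from your unjustified claim that index-$1$ orbits are non-contractible.
\item Since $H$ is degenerate, you must also show the positive end of $u_0$ is a Reeb orbit rather than a critical point of $H$. The paper does this by an action argument using admissibility and the nondegenerate minimum.
\end{itemize}
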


Here, the Conley--Zehnder index is computed with respect to a global symplectic trivialisation of the contact manifold $(\partial W,\ker\lambda_{\rm taut})$. More precisely, the contact boundary $\partial W$ is contactomorphic to $(\partial D^*_{g_0}S^2,\ker\lambda_{\rm taut})$ via the fiberwise radial map, which is naturally identified with $(\mathbb{RP}^3,\xi_{\rm univ})$, where $\xi_{\rm univ}$ denotes the universally tight contact structure. That is, the pullback of $\xi_{\rm univ}$ under the double covering $S^3\to\mathbb{RP}^3$ is the standard tight contact structure on $S^3$. Consequently, the contact distribution admits a global symplectic trivialisation
\[
    \Phi:(\mathbb{R}^2,\omega_0)\times \partial W \longrightarrow
    \bigcup_{y\in \partial W} (\xi_y,d\lambda_{\rm taut}|_{y}).
\]

With respect to this trivialisation, the Conley--Zehnder index of a nondegenerate closed Reeb orbit $\gamma$ is defined as the Conley--Zehnder index of the associated path of $2\times2$ symplectic matrices obtained from the linearised Reeb flow. For degenerate closed Reeb orbits, there are two possible extensions of the Conley--Zehnder index: the lower semicontinuous extension $\mu^-$ and the upper semicontinuous extension $\mu^+$. When $\gamma$ is a closed geodesic, $\mu^-$ agrees with the Morse index of $\gamma$ with respect to the energy functional, while $\mu^+$ agrees with the sum of the Morse index and the nullity. We will use these facts without further discussion and refer the reader to \cite{Salamon-Zehnder92} for the definition of the Conley--Zehnder index, and to \cite{Biran-Polterovich-Salamon03,Weber06} for its relation to the Morse index of closed geodesics with respect to the energy functional $\mathcal{E}_g$; see also \cite{Duistermaat76}. Theorem~\ref{Theorem: Upper bound from index 3 - nondegenerate W} extends to the degenerate case as follows.

\begin{theorem}
\label{Theorem: Upper bound from index 3 - degenerate W}
Let $W \subset \operatorname{int}(D^*_{g_{\rm 0}}S^2)$ be a fiberwise starshaped domain in $T^*S^2$ whose Reeb flows on the boundary is possibly degenerate. Then, Hofer--Zehnder capacity satisfies
\begin{equation}
    c_{\rm HZ} (W) \le \sup_{\gamma \in \mathcal{P}_{\rm HZ}(W)} \mathcal{A}(\gamma),
\end{equation}
where $\mathcal{P}_{\rm HZ}(W)$ denotes the set of closed Reeb orbits $\gamma$ of $\mu^-(\gamma) \le 3 \le \mu^+(\gamma)$ satisfying either
\begin{itemize}
    \item $\gamma$ is non-contractible in $\partial W$ and
    $\mathcal{A}(\gamma) \le 2\pi$, or
    \item $\gamma$ is contractible in $\partial W$ and there exists a
    non-contratible closed Reeb orbit $\gamma'$ satisfying
    \[
        \mu^-(\gamma') \le 1 \le \mu^+(\gamma')
        \quad\text{and}\quad
        \mathcal{A}(\gamma) + \mathcal{A}(\gamma') \le 2\pi .
    \]
\end{itemize}
\end{theorem}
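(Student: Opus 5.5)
The plan is to deduce Theorem~\ref{Theorem: Upper bound from index 3 - degenerate W} from the nondegenerate version, Theorem~\ref{Theorem: Upper bound from index 3 - nondegenerate W}, by approximation. Given a possibly degenerate fiberwise starshaped $W\subset\operatorname{int}(D^*_{g_0}S^2)$, I choose a sequence of fiberwise starshaped domains $W_n\supset W$, still in $\operatorname{int}(D^*_{g_0}S^2)$, whose boundaries are $C^\infty$-close to $\partial W$ (graphs of $(1+\epsilon_n h_n)$ times the radial function of $\partial W$). The Reeb flows on $\partial W_n$ should be nondegenerate; this is possible by the standard genericity of nondegenerate contact forms, since perturbing the starshaped hypersurface is the same as perturbing the contact form $f\lambda_{\rm taut}$. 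Monotonicity gives $c_{HZ}(W)\le c_{HZ}(W_n)$, and Theorem~\ref{Theorem: Upper bound from index 3 - nondegenerate W} yields orbits $\gamma_n\in\mathcal P_{HZ}(W_n)$ with $c_{HZ}(W_n)\le\mathcal A(\gamma_n)$. If $\mathcal P_{HZ}(W_n)$ is empty, the nondegenerate theorem already forces the corresponding bound, so I take the conventions for $\max\emptyset$ and $\sup\emptyset$ to agree.

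Next I pass to the limit. All actions are bounded by $2\pi$, and the contact forms converge in $C^\infty$. By Arzelà--Ascoli, after reparametrising by the periods, a subsequence $\gamma_n$ converges to a closed Reeb orbit $\gamma$ of $\partial W$ with $\mathcal A(\gamma)=\lim\mathcal A(\gamma_n)$. The periods are bounded below uniformly because Reeb periods on nearby forms are bounded away from $0$, so $\gamma$ is nonconstant. The limit may be a multiple cover, but the Conley--Zehnder data handle this.

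The conditions then transfer as follows.
\begin{itemize}
\item Contractibility in $\partial W\cong\mathbb{RP}^3$ is a $\mathbb Z/2$ homotopy class, and it is preserved under $C^0$ convergence via the radial identification.
\item The non-strict inequalities $\mathcal A\le2\pi$ and $\mathcal A(\gamma)+\mathcal A(\gamma')\le2\pi$ pass to limits. In the contractible case I also extract a convergent subsequence of the companion orbits $\gamma_n'$; these have actions in $(0,2\pi)$, so they converge to a non-contractible $\gamma'$.
\item The index conditions follow from semicontinuity. The linearised return maps converge, and the global trivialisation $\Phi$ is fixed (transported by the radial map), so the paths of symplectic matrices converge. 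By definition of the lower and upper semicontinuous extensions, $\mu^-(\gamma)\le\liminf{\rm CZ}(\gamma_n)=3\le\limsup{\rm CZ}(\gamma_n)\le\mu^+(\gamma)$, and similarly $\mu^-(\gamma')\le1\le\mu^+(\gamma')$.
\end{itemize}
Hence $\gamma\in\mathcal P_{HZ}(W)$. This gives $c_{HZ}(W)\le\lim\mathcal A(\gamma_n)=\mathcal A(\gamma)\le\sup_{\mathcal P_{HZ}(W)}\mathcal A$. If instead one approximates from inside, one also needs continuity of $c_{HZ}$ under such $C^0$-small fiberwise scalings. This follows from conformality, since $(1+\epsilon)W$ is symplectomorphic to $W$ with the form scaled by $1+\epsilon$.

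The main obstacle is the index semicontinuity when $\gamma$ is a $k$-fold cover, or when $\gamma_n$ converges to an iterate of a shorter orbit. Then ${\rm CZ}(\gamma_n)$ must be compared with the extensions $\mu^\pm$ of the iterate rather than of the underlying simple orbit. I expect this to hold as stated, since $\mu^\pm$ are defined for the full path over the period $k\cdot T$ and the path-level semicontinuity of Robbin--Salamon/Conley--Zehnder indices does not care about covering. I would verify it by writing the limiting path explicitly and invoking the characterisation of $\mu^-$ and $\mu^+$ as the minimum and maximum of ${\rm CZ}$ over nearby nondegenerate paths. A secondary point is ensuring that the nondegenerate perturbations $W_n$ can be taken inside $\operatorname{int}(D^*_{g_0}S^2)$, which is clear since $W$ is compactly contained there.
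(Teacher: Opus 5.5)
Your proposal is correct and follows essentially the same route as the paper. In both, you approximate $W$ from outside by nested fiberwise starshaped domains $W_n\supset W$ with nondegenerate Reeb flow, combine monotonicity with the nondegenerate theorem to get index-$3$ orbits $\gamma_n$ (and, in the contractible case, companion orbits $\gamma_n'$) with $c_{HZ}(W)\le\mathcal A(\gamma_n)$, and pass to a convergent subsequence. Contractibility, the non-strict action bounds, and $\mu^-\le 3\le\mu^+$ (resp.\ $\mu^-\le 1\le\mu^+$) then transfer to the limit by semicontinuity. Your extra points fill in details the paper leaves implicit without changing the argument: the uniform lower bound on periods that keeps the limit nonconstant, and the treatment of limits that are iterates of shorter orbits.
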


We first sketch the main ideas of the proof. The full proofs of the two theorems will be given at the end of this section. The rough idea is to view $W$ as a subdomain of $\cp^1 \times \cp^1$ which is disjoint from the diagonal $\Sigma$. Based on the ideas of \cite{HV92}, we pass from pseudoholomorphic spheres in $\cp^1 \times \cp^1$ to Floer spheres, from $p$ to $\Sigma$, for a suitable perturbation of a HZ-admissible Hamiltonian (Corollary \ref{Corollary: existence of Floer spheres}).

We then apply a neck-stretching argument in a geometric setup similar to that of \cite{HWZ03}. This produces Floer planes, from $p$ to a closed Reeb orbit $\gamma$, in the symplectic completion $\widehat W$ (Proposition~\ref{Proposition: Bottom level}). Theorem~\ref{Theorem: Upper bound from index 3 - nondegenerate W} is obtained by analyzing the $L^2$-energy together with the Fredholm indices of the curves arising as SFT-limits. Theorem~\ref{Theorem: Upper bound from index 3 - degenerate W} then follows by taking a limit of non-degenerate contact forms.

\subsection{From pseudo-holormophic spheres to Floer spheres}

The aim of this subsection is to prove Proposition \ref{Proposition: moduli space of Floer spheres}, which establishes the existence of a Floer cylinder associated to a given HZ-admissible Hamiltonian. We begin by setting up the geometric framework.

Throughout this section, we assume that $W$ is a fiberwise starshaped domain in $T^*S^2$, contained in the disk cotangent bundle $D^*_{g_0} S^2$ of the round sphere of radius $1$. By embedding the interior ${\rm int}(D_{g_0}^* S^2)$ to $\cp^1 \times \cp^1$, or equivalently, after applying a symplectic cut \cite{ler} along the boundary $\partial D_{g_0}^* S^2$, such domain $W$ symplectically embeds into $\cp^1 \times \cp^1$ via
\[
(W, \omega_{\rm std}) \hookrightarrow ({\rm int} (D^*_{g_0} S^2), \omega_{\rm std}) \to (\cp^1 \times \cp^1, 2\omega_{\rm FS} \oplus 2\omega_{\rm FS}).
\]
Hence, we may regard $(W, \omega_{\rm std})$ as a subdomain of $(\cp^1 \times \cp^1, 2 \omega_{\rm FS} \oplus 2 \omega_{\rm FS})$. We will often abbreviate the symplectic manifold $(\cp^1 \times \cp^1, 2\omega_{\rm FS} \oplus 2\omega_{\rm FS})$ by $(M,\omega)$, and the restriction of $\omega|_W$ to the subdomain $W$ also simply by $\omega$.

The boundary $\partial W$ then embeds as a separating hypersurface of contact type. More precisely, there exists a collar neighborhood $U_{\partial W}$ of $\partial W$ with a canonical identification
\begin{equation}\label{Equation: nbhd of partial W}
(U_{\partial W}, 2 \omega_{\rm FS}\oplus 2 \omega_{\rm FS}) \cong \left( (-\epsilon, \epsilon)_r \times \partial W , d(e^r \cdot \lambda_{\rm taut}|_{\partial W}) \right).
\end{equation}
Here, seperating means that, denoting $\cp^1 \times \cp^1 \setminus \rm{int}(W)$ by $V$, we have
\[
\cp^1 \times \cp^1 \setminus \partial W = {\rm int}(W) \sqcup\, {\rm int}(V).
\]

The contact hypersurface $\partial W$ is a convex contact boundary of the symplectic manifold $W$ and a concave contact boundary of the symplectic manifold $V$. The $0$-section embeds as an anti-diagonal Lagrangian on the interior of $W$, and symplectic submanifold $\Sigma$ defined as a diagonal lies on the interior of $V$. 

In this subsection, we consider two moduli spaces associated with the line class
\begin{equation}
\label{Equation: Line class A}
    A := [\mathbb{CP}^1\times\{\mathrm{pt}\}]
    \in H_2(\mathbb{CP}^1\times\mathbb{CP}^1).
\end{equation}
For a given Hofer--Zehnder admissible Hamiltonian, the first moduli space consists of pseudoholomorphic spheres passing the fixed minimum point $p$ to the divisor $\Sigma$ (problem (M-J)), while the second consists of Floer spheres with the same asymptotic conditions (problem (M)). We begin with the pseudoholomorphic spheres.\\

\noindent \textbf{The problem (M-J)}: For an $\omega$-compatible almost complex structure $J$ on $(M,\omega)$, denote by $\mathcal{M}(p,\Sigma;J,A)$ the moduli space of $J$-holomorphic spheres representing the line class $A$ and passing through
a point $p\in W$ and the divisor $\Sigma$. More precisely, it consists of maps $u:(\cp^1,i)\to(M,J)$ satisfying
\begin{equation}
\label{Equation: Cauchy-Riemann equation - sphere vers}
    du + J \circ du \circ i = 0\quad\text{and}\quad[u]=A
\end{equation}
together with the point constraints
\begin{equation}
u(0) = p \quad \text{and} \quad u(\infty) \in \Sigma.
\end{equation}

Since the line class $A$ is $\omega$-\textit{minimal}, i.e., 
\begin{equation}
\label{Equation: omega-minimality}
\omega(A) > 0 \quad \text{and} \quad \omega(A) = \inf \lbrace{w(B)>0:B \in H_2(M;\mathbb{Z})}\rbrace,
\end{equation}

any $J$-holomorphic sphere representing $A$ cannot be multiply-covered, or equivalently is \textit{somewhere injective}, i.e., there exists a point $z \in \cp^1$ such that 
\begin{equation}
\label{Equation: Somewhere injecitivty}
du(z) \ne 0 \qquad \text{and} \qquad u^{-1}(u(z))=\lbrace z \rbrace.
\end{equation}
Such a point $z$ is called a \textit{somewhere injective point}. The set of somewhere injective points is an open dense subset of $\cp^1$.

The virtual dimension of the moduli space is given by
\begin{equation}
\label{Equation: vdim constrained}
    \operatorname{vdim}\,\mathcal{M}(p,\Sigma;J,A)
    = 2c_1(A) + \dim M - \operatorname{codim}(p) - \operatorname{codim}(\Sigma) = 2.
\end{equation}
Since every solution is somewhere injective, a generic $\omega$-compatible almost complex structure $J$ is \textit{regular}. Recall that $J$ is called regular if, for every $u \in \mathcal{M}(p,\Sigma;J,A)$, the associated linearised operator $D_u$ is surjective. For such a regular $J$, the moduli space $\mathcal{M}(p,\Sigma;J,A)$ is a smooth $2$-dimensional manifold by the implicit function theorem.

\begin{remark}[Automatic transversality]
\label{Remark: Automatic transversality}
For $u \in \mathcal{M}(p, \Sigma;J,A)$, the adjunction formula 
\[
2\cdot \delta(u) - \chi(S^2) \le A \cdot A - c_1(A),
\]
implies that $u$ is always an embeddeding. Then, it follows from \cite[Thm.~1]{HLS97} that every $\omega$-compatible $J$ is regular. 
For the remainder of this section, we only require somewhere injectivity, and hence regularity for generic $J$, together with the fact that $i\oplus i$ on $\cp^1 \times \cp^1$ is regular for the line class $A$, \cite[Cor.~3.3.5]{MS12}
\end{remark}

The moduli space $\mathcal{M}(p, \Sigma;J,A)$ admits a $\mathbb{C}^*$-action defined by
\begin{equation}
\label{Equation: C*-action on B}
\lambda \cdot u(z) \to u(\lambda z)    
\end{equation}
where we identify $\mathbb{CP}^1$ with $\mathbb{C}\cup\{\infty\}$. Since this action fixes both $0$ and $\infty$, the map $\lambda\cdot u$ also belongs to
$\mathcal{M}(p,\Sigma;J,A)$. The $\mathbb{C}^*$-action free due to the somewhere injectivity, and thus the unparametrised moduli space
\[
\widehat{\mathcal{M}}(p, \Sigma;J,A) = \mathcal{M}(p, \Sigma;J,A) / \mathbb{C}^*
\]
is a smooth $0$-dimensional manifold for generic $\omega$-compatible $J$.

Moreover, $\widehat{\mathcal{M}}(p,\Sigma;J,A)$ is compact with respect to the smooth topology by the $\omega$-minimality of the line class $A$.
This follows from Gromov compactness for pseudoholomorphic curves together with an argument using the minimality of the $L^2$-energy. Recall that the $L^2$-energy of a curve $u:\mathbb{CP}^1\to M$ is defined as
\begin{equation}
\label{equation: L2-energy of the holomorphic curves}
E(u):=\frac{1}{2}\int_{\cp^1}\|du(z)\|_J^2\,{\rm dvol},
\end{equation}
where $\|du(z)\|_J$ denotes the operator norm with respect to some metric on $\cp^1$ and the metric $g_J$ on $M$ induced by $J$ as
\begin{equation}
\label{Equation: Metric from compatible almost complex structure}
    g_J(\cdot,\cdot):=\omega(\cdot,J\cdot),
\end{equation}
The energy is independent of the choice of auxiliary metric. In particular, when $u$ is a pseudoholomorphic curve, it only depends on the homology class that $u$ represents, i.e.
\[E(u)= \omega([u]).\]

The discussion above can be summarized in the following lemma.
\begin{lemma}
\label{Lemma: unparametrised moduli space}
For a generic $\omega$-compatible almost complex structure $J$, the unparametrized moduli space of $J$-holomorphic spheres,
\[
\widehat{\mathcal{M}}(p,\Sigma;J,A),
\]
is a compact $0$-dimensional manifold. In particular, it consists of finitely many points.
\end{lemma}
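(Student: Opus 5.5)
The plan is to prove the three assertions in order: first that $\mathcal{M}(p,\Sigma;J,A)$ is a smooth $2$-manifold for generic $J$, then that the quotient by $\mathbb{C}^*$ is a $0$-manifold, and finally that this quotient is compact, which gives finiteness.

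\emph{Transversality.} Since $\omega(A)$ is the minimal positive value of $\omega$ on $H_2(M;\mathbb{Z})$, every $J$-sphere in class $A$ is simple. Indeed, a $k$-fold cover of a sphere in class $B$ would give $\omega(A)=k\,\omega(B)$ with $\omega(B)>0$, which contradicts minimality when $k\ge 2$. For simple spheres with the evaluation constraint $(\mathrm{ev}_0,\mathrm{ev}_\infty)$ hitting $\{p\}\times\Sigma$, the standard universal moduli space argument (as in McDuff--Salamon, Ch.~3 and~6) shows two things for $J$ in a comeagre set:
\begin{itemize}
\item the linearized operator $D_u$ is surjective;
\item the evaluation map is transverse to $\{p\}\times\Sigma$.
\end{itemize}
The index computation \eqref{Equation: vdim constrained} then gives a smooth $2$-manifold. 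Alternatively, Remark~\ref{Remark: Automatic transversality} gives this for every $J$; one still needs transversality of the evaluation map, but in dimension four this can be arranged by perturbing $p$ or using genericity of $J$.

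\emph{The $\mathbb{C}^*$-action.} The action \eqref{Equation: C*-action on B} preserves both constraints, because it fixes $0$ and $\infty$. It is free: if $u(\lambda z)=u(z)$ with $\lambda\neq1$, then $u$ factors through the quotient by $\langle\lambda\rangle$ or is constant along orbits, which contradicts somewhere injectivity \eqref{Equation: Somewhere injecitivty}. It is also proper, since a sequence $\lambda_n\to0$ or $\infty$ would force energy concentration at a point. A free, proper, smooth action of the $2$-dimensional group $\mathbb{C}^*$ on a $2$-manifold has a $0$-dimensional smooth quotient.

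\emph{Compactness (the main step).} Take a sequence $[u_n]$ in the quotient and normalize each representative in its $\mathbb{C}^*$-orbit, for instance by requiring that half the energy $\omega(A)$ lies in the unit disk. Gromov compactness applies because $E(u_n)=\omega(A)$ is fixed, so a subsequence converges to a stable map of total class $A$. Every nonconstant bubble component has positive energy, and since the components sum to $A$, minimality of $\omega(A)$ forces exactly one nonconstant component, which carries class $A$. The remaining ghost components are constant and are collapsed by stability, except possibly those carrying the marked points $0$ and $\infty$. If these two marked points landed on ghost components attached at a single point of the nonconstant sphere $v$, then $v$ would still pass through both $p$ and $\Sigma$. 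Since $p\notin\Sigma$, the two constraints occur at distinct points of $v$. So after reparametrizing we get $v\in\mathcal{M}(p,\Sigma;J,A)$ and $[u_n]\to[v]$ in $C^\infty_{loc}$, and, with no bubbling, in $C^\infty$. The care needed here is in ruling out the marked points merging, which uses $p\notin\Sigma$ (as $p\in W$ and $\Sigma\subset\operatorname{int}V$), and in fixing the normalization so that limits stay inside the $\mathbb{C}^*$-quotient. A compact $0$-manifold is a finite set, which concludes the proof.
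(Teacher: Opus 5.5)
Your proposal is correct and follows the same route as the paper: every class-$A$ sphere is simple by $\omega$-minimality, so generic $J$ is regular and the index count gives a smooth $2$-manifold; somewhere injectivity makes the $\mathbb{C}^*$-action free; and Gromov compactness together with energy minimality gives compactness. Your checks that the $\mathbb{C}^*$-action is proper, and that the two marked points cannot collide because $p\notin\Sigma$, supply details that the paper's sketch leaves implicit.
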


\begin{remark}[Orientation]
Note that the lemma can be strengthened by asserting that $\widehat{\mathcal{M}}(p,\Sigma;J,A)$ is a compact oriented $0$-dimensional manifold. This allows us to consider the oriented count $\#\widehat{\mathcal{M}}(p,\Sigma;J,A).$ However, to prove the existence of pseudoholomorphic curves, it suffices to consider only the mod $2$ count $\#_2\widehat{\mathcal{M}}(p,\Sigma;J,A)$.
\end{remark}

The moduli space may apriori be empty. We next prove that it is not the case.

\begin{proposition}
\label{Proposition: moduli space of J-holomorphic spheres}
For any regular almost complex structure $J$, the mod 2 count
\[
\#_2\widehat{\mathcal{M}}(p,\Sigma;J,A)=1.
\]
In particular, they are not empty.
\end{proposition}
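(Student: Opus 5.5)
The plan is to show that the mod $2$ count is independent of the regular almost complex structure, and then to evaluate it for the integrable structure $J_0=i\oplus i$.

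\emph{Invariance.} Take two regular structures $J_0,J_1$ and join them by a generic path $\{J_t\}_{t\in[0,1]}$ of $\omega$-compatible almost complex structures. The parametrised moduli space $\bigcup_t \{t\}\times\widehat{\mathcal M}(p,\Sigma;J_t,A)$ is a $1$-dimensional manifold with boundary $\widehat{\mathcal M}(p,\Sigma;J_0,A)\sqcup\widehat{\mathcal M}(p,\Sigma;J_1,A)$. This uses three facts:
\begin{itemize}
\item every curve in class $A$ is somewhere injective, by $\omega$-minimality;
\item a generic path is therefore regular in the parametrised sense;
\item the $\mathbb C^*$-action is free.
\end{itemize}

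\emph{Compactness of the cobordism.} The energy of every curve is $\omega(A)$, which is the minimal positive symplectic area. A Gromov limit of spheres with energy $\omega(A)$ therefore cannot split into a stable map with two or more nonconstant components, since each component would carry positive area that is an integer multiple of a quantity at least $\omega(A)$. Bubbling is thus excluded. The marked points $0,\infty$ with the constraints $u(0)=p$ and $u(\infty)\in\Sigma$ cannot collide, because $p\notin\Sigma$ ($p\in W$ and $\Sigma\subset\operatorname{int}V$). Hence after reparametrisation by $\mathbb C^*$ a sequence converges in $C^\infty$. So the parametrised space is a compact $1$-manifold, its boundary has an even number of points, and the two mod $2$ counts agree.

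\emph{Computation at the integrable structure.} By Remark~\ref{Remark: Automatic transversality}, $J_0=i\oplus i$ is regular for the class $A$; equivalently one can invoke automatic transversality for embedded spheres with $c_1(A)=2$. The $J_0$-holomorphic spheres in class $A$ are exactly the maps $z\mapsto(\phi(z),q)$ with $\phi\in\mathrm{Aut}(\cp^1)$ and $q$ fixed. Write $p=(p_1,p_2)$. The condition $u(0)=p$ forces $q=p_2$ and $\phi(0)=p_1$. The condition $u(\infty)\in\Sigma$ forces $u(\infty)=(p_2,p_2)$, that is, $\phi(\infty)=p_2$. Here $p_1\neq p_2$ because $p\notin\Sigma$. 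Möbius maps with $\phi(0)=p_1$ and $\phi(\infty)=p_2$ form a single $\mathbb C^*$-orbit, so $\widehat{\mathcal M}(p,\Sigma;J_0,A)$ is exactly one point.

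\emph{Transversality at the constraints.} This is the one point requiring care: the evaluation map must be transverse to $\{p\}\times\Sigma$ at this curve. The variations of $u$ are given by varying $\phi$ and $q$. At $z=\infty$ they span the tangent space of the sphere $\cp^1\times\{p_2\}$ together with the vertical direction from moving $q$. This is transverse to $\Sigma$ at $(p_2,p_2)$, since the diagonal meets the horizontal line transversally with intersection number $A\cdot\Sigma=1$. The point constraint at $0$ is handled in the same way.

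Combining invariance with this computation gives $\#_2\widehat{\mathcal M}(p,\Sigma;J,A)=1$ for every regular $J$.

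The main obstacle I expect is the compactness of the parametrised moduli space. In particular, one must rule out nodal limits in which the points $0$ and $\infty$ lie on different components or on a ghost bubble; a ghost bubble would have to be stable, which requires at least three special points. I would handle this using the minimality of $\omega(A)$ together with the disjointness of $p$ and $\Sigma$.
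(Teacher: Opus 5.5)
Your proposal is correct and follows essentially the same route as the paper. Both arguments join $i\oplus i$ to $J$ by a generic path, get a compact $1$-dimensional parametrised moduli space from $\omega$-minimality, and evaluate the count as one at the product structure. Your identification of the unique curve as $\cp^1\times\{p_2\}$ is the right one (the paper's $\operatorname{pr}_1^{-1}\circ\operatorname{pr}_1(p)$ should use $\operatorname{pr}_2$ for the class $A=[\cp^1\times\{\mathrm{pt}\}]$). Your checks of transversality at the constraints, and your exclusion of ghost bubbles via $p\notin\Sigma$, supply details the paper leaves implicit.
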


When the moduli space $\mathcal{M}(p,\Sigma;J,A)$ is empty, the almost complex structure $J$ is regular by definition. For completeness, we state the following corollary on the existence of pseudoholomorphic spheres, although it will not be used in the rest of the paper. As discussed in Remark~\ref{Remark: Automatic transversality}, this corollary can also be deduced from Proposition~\ref{Proposition: moduli space of Floer spheres} by automatic transversality.

\begin{corollary}
\label{Corollary: existence of J-holomorphic spheres}
The moduli space $\widehat{\mathcal{M}}(p,\Sigma;J,A)$ is non-empty for any choice of $J$.
\end{corollary}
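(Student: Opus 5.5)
The plan is to deduce the corollary from Proposition~\ref{Proposition: moduli space of J-holomorphic spheres} by approximation and Gromov compactness, using the $\omega$-minimality of the line class $A$ to rule out bubbling. Fix an arbitrary $\omega$-compatible $J$. Since regular almost complex structures are generic (Lemma~\ref{Lemma: unparametrised moduli space}), I would choose a sequence of regular $\omega$-compatible $J_n$ with $J_n\to J$ in $C^\infty$. By Proposition~\ref{Proposition: moduli space of J-holomorphic spheres}, $\#_2\widehat{\mathcal{M}}(p,\Sigma;J_n,A)=1$, so for each $n$ there is a $J_n$-holomorphic sphere $u_n$ in class $A$ with $u_n(0)=p$ and $u_n(\infty)\in\Sigma$. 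Each $u_n$ has energy $E(u_n)=\omega(A)$, independent of $n$.

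Next I would fix the parametrisation using the $\mathbb{C}^*$-action \eqref{Equation: C*-action on B}. I would replace $u_n$ by $\lambda_n\cdot u_n$ with $\lambda_n\in\mathbb{C}^*$ chosen so that the energy of $u_n$ on the unit disk $\{|z|\le1\}$ equals $\tfrac12\omega(A)$; the residual $S^1$-rotation is compact and harmless. Such a $\lambda_n$ exists because the energy on $\{|z|\le r\}$ increases continuously from $0$ to $\omega(A)$ as $r$ runs over $(0,\infty)$. This normalisation forbids energy from concentrating entirely at $0$ or at $\infty$.

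By Gromov compactness, a subsequence converges to a stable $J$-holomorphic map representing $A$. Every nonconstant component has positive energy equal to $\omega$ of its class, and these classes sum to $A$. By \eqref{Equation: omega-minimality}, only one component can be nonconstant. A bubble tree with a single nonconstant component and marked points $0$, $\infty$ could still concentrate at one point. The disk normalisation excludes this: a bubble at $z_0$ would carry all of the energy $\omega(A)$, which contradicts having half the energy on each side of the unit circle, in particular when $z_0\in\{0,\infty\}$. I expect this bookkeeping to be the main (mild) obstacle.

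With bubbling excluded, $u_n\to u$ in $C^\infty$ on all of $\mathbb{CP}^1$, where $u$ is a $J$-holomorphic sphere with $[u]=A$ and $E(u)=\omega(A)>0$. The constraints $u(0)=p$ and $u(\infty)\in\Sigma$ pass to the limit because they are closed conditions and $\Sigma$ is compact. Hence $u\in\mathcal{M}(p,\Sigma;J,A)$, and $\widehat{\mathcal{M}}(p,\Sigma;J,A)\neq\emptyset$.
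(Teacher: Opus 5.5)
Your route is genuinely different from the paper's. The paper argues in one line: if $\mathcal{M}(p,\Sigma;J,A)$ were empty, then $J$ would be regular by definition, since the surjectivity condition is vacuous. Proposition~\ref{Proposition: moduli space of J-holomorphic spheres} would then give $\#_2\widehat{\mathcal{M}}(p,\Sigma;J,A)=1$, a contradiction. Alternatively, automatic transversality makes every $J$ regular.

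Approximating by regular $J_n$ and passing to a Gromov limit is a legitimate alternative, but the step you flag as the main obstacle is handled incorrectly. The half-energy normalisation excludes concentration only at points $z_0$ with $|z_0|\neq 1$. If $|z_0|<1$, the energy on the unit disk would tend to at least $\omega(A)$. If $|z_0|>1$ (including $z_0=\infty$), its complement would. For $z_0$ on the unit circle, however, all the energy $\omega(A)$ can concentrate at $z_0$ while splitting evenly between the two sides of the circle. That is fully compatible with $E(u_n;\{|z|\le 1\})=\tfrac12\omega(A)$. So your claim that a bubble at $z_0$ contradicts having half the energy on each side fails in exactly this case.

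The missing ingredient is the point constraint together with $p\notin\Sigma$, which holds because $p\in W$ while $\Sigma\subset\operatorname{int}V$. Suppose the energy bubbles off at some $z_0$ with $|z_0|=1$. By $\omega$-minimality, the $C^\infty_{\rm loc}$-limit on $\mathbb{CP}^1\setminus\{z_0\}$ has zero energy, so it is a constant $q$. Since $0,\infty\neq z_0$, you get $q=\lim u_n(0)=p$ and $q=\lim u_n(\infty)\in\Sigma$, contradicting $p\notin\Sigma$.

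Alternatively, you can drop the normalisation altogether. In the stable-map limit, every component other than the unique nonconstant one $v$ is constant. By connectedness of the tree, each such component maps to a point of $\operatorname{im}(v)$. Hence there are domain points $z'_0,z'_\infty$ of $v$ with $v(z'_0)=p$ and $v(z'_\infty)\in\Sigma$, and they are distinct because $p\notin\Sigma$. Precomposing $v$ with a M\"obius map sending $0\mapsto z'_0$ and $\infty\mapsto z'_\infty$ gives an element of $\mathcal{M}(p,\Sigma;J,A)$.

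With either fix your argument goes through. Note that it essentially redoes, along a sequence, the compactness input that the paper's one-line argument takes from the proof of the proposition.
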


\begin{proof}[Proof of Proposition \ref{Proposition: moduli space of J-holomorphic spheres}]
    For the product complex structure $i\oplus i$, there exists a unique unparametrized holomorphic sphere representing the line class $[\cp^1 \times \lbrace {\mathrm pt} \rbrace ]$ that passes through $p$ and intersects $\Sigma$, namely,
    \[
    \operatorname{pr}_1^{-1} \circ \operatorname{pr}_1 (p))
    \]
    where $\operatorname{pr}_1:\cp^1\times\cp^1\to\cp^1$ denotes the projection onto the first factor. Therefore, the moduli space $\widehat{\mathcal{M}}(p,\Sigma; i \oplus i,A)$ is a singleton.

    Consider a smooth homotopy $\mathcal{J}=\{J_{\tau}\}_{\tau\in[0,1]}$ connecting the two regular almost complex structures $J_0=i\oplus i$ and $J_1=J$. For a generic choice of $\mathcal{J}$, the associated parametrised moduli space
    \[
        \mathcal{M}(p,\Sigma;\mathcal{J},A) :=
        \{(\tau,u)\mid \tau \in [0,1],~u\in\mathcal{M}(p,\Sigma;J_\tau,A)\}
    \]
    is a smooth manifold with boundary. The boundary is then given by
    \[
        \partial\mathcal{M}(p,\Sigma;\mathcal{J},A)
        = \mathcal{M}(p,\Sigma;i\oplus i,A) \sqcup \mathcal{M}(p,\Sigma;J,A).
    \]
    
    As in the case of $\widehat{\mathcal{M}}(p,\Sigma;J,A)$, the parametrized moduli space $\mathcal{M}(p,\Sigma;\mathcal{J},A)$ admits a $\mathbb{C}^*$-action given by \eqref{Equation: C*-action on B}. The $\omega$-minimality of the class $A$ also implies that the parametrised moduli space of unparametrised pseudoholomorphic spheres,
    \[
    \widehat{\mathcal{M}}(p,\Sigma;\mathcal{J}, A) = \mathcal{M}(p,\Sigma;\mathcal{J}, A) / \mathbb{C}^*
    \]
    is a compact smooth $1$-dimensional manifold with boundary. Again, the $\omega$-minimality implies that the boundary is precisely
    \[
    \widehat{\mathcal{M}}(p,\Sigma;i \oplus i, A) \sqcup \widehat{\mathcal{M}}(p,\Sigma;J, A)
    \]
    Since the mod $2$ count of the boundary points of a compact $1$-dimensional manifold is always even, we conclude that
    \[
    \#_2\widehat{\mathcal{M}}(p,\Sigma;J,A) =\#_2\widehat{\mathcal{M}}(p,\Sigma;i\oplus i,A) = 1.
    \]
    This completes the proof.
\end{proof}

So far, we have considered almost complex structures without any additional assumptions. For our purposes, we impose several assumptions, starting with (J-1). We denote by $\mathcal{J}(\Sigma)$ the set of $2\omega_{\rm FS}\oplus 2\omega_{\rm FS}$-compatible almost complex structures $J$ on $\mathbb{CP}^1\times\mathbb{CP}^1$ satisfying the following condition:
\begin{itemize}
    \item[(J-1)] $J$ coincides with the complex structure $i\oplus i$ on a
    neighborhood $U_\infty$ of $\Sigma$.
\end{itemize}

The neighborhood $U_\infty$ of $\Sigma$ is fixed once and for all. For example, we choose $U_\infty$ as a tubular neighborhood of $\Sigma$, identified with a neighborhood of the zero section in its complex normal line bundle. This choice does not affect the arguments that follow.

Under (J-1), $\Sigma$ defines a $J$-holomorphic sphere representing the diagonal class
\[
[\mathbb{CP}^1 \times \{\mathrm{pt}\}] + [\{\mathrm{pt}\}\times \mathbb{CP}^1] \in H_2(\cp^1 \times \cp^1).
\]
Since the algebraic intersection number between $u$ and $\Sigma$ is given by
\[
u \cdot \Sigma = [\mathbb{CP}^1 \times \{\mathrm{pt}\}] \circ 
\left(
[\mathbb{CP}^1 \times \{\mathrm{pt}\}]
+
[\{\mathrm{pt}\}\times \mathbb{CP}^1]
\right)
=1,
\]
every curve $u\in \mathcal{M}(p,\Sigma;J,A)$ intersects $\Sigma$ at exactly one point, and the intersection is transverse by positivity of intersections; see, for example, \cite[App~E.2]{MS12}.

Similarly, for $u_1,u_2\in \mathcal{M}(p,\Sigma;J,A)$, the algebraic intersection number between them is zero. This implies that either the images of $u_1$ and $u_2$ are disjoint or the two curves agree. Since both curves pass the point $p$ at $z=0$, it follows that there exists a holomorphic automorphism $\phi:\mathbb{CP}^1\to \mathbb{CP}^1$ satisfying
\[u_1 = u_2 \circ \phi.\]
Furthermore, since $u_1(\infty)$ and $u_2(\infty)$ are the unique intersection points with $\Sigma$, the map $\phi$ fixes $z=\infty$. As $u_1$ and $u_2$ are embeddings by the adjunction formula, the map $\phi$ also fixed $z=0$. The reparametrization $\phi$ is then given by the $\mathbb{C}^*$-action. This in fact implies that, for a regular $J$ satisfying {\rm (J-1)}, not only
the mod $2$ count but also the unoriented count is equal to one, i.e.,
\[
|\widehat{\mathcal{M}}(p,\Sigma;J,A) |=1.
\]
Nevertheless, in our argument, it suffices to consider only the mod $2$ count.

Lastly, we note that the moduli space $\mathcal{M}(p, \Sigma;J, A)$ is naturally identified with the moduli space of $J$-holomorphic cylinders with prescribed asymptotic behavior:
\begin{equation}
\label{Equation: Cauchy-Riemann equation - cylinder vers}
M_Z(p,\Sigma;J,A):=\left\{
u:\mathbb{R}\times S^1 \to M~\middle|~~
\begin{aligned}
&\quad \partial_s u + J \partial_t u = 0, \qquad[\bar{u}]=A\\ &\lim_{s \to- \infty} u(s,t)=p, \quad \lim_{s \to \infty} u(s,t)\in \Sigma,
\end{aligned}
\right\}.
\end{equation}
Here $\bar{u}$ denotes the extension of $v$ to a map $\bar{u}:\mathbb{CP}^1\to M$ that follows from the asymptotic behavior. By the removal of singularities, $\bar{u}$ defines a sphere map in $\mathcal{M}(p,\Sigma;J,A)$, and this construction gives a one-to-one correspondence between $J$-holomorphic cylinders and spheres. Accordingly, we will freely regard an element $u\in\mathcal{M}(p,\Sigma;J,A)$ as either a cylinder or a sphere, and use the same notation for the moduli space of (unparametrised) pseudoholomorphic cylinders, omitting the subscript $Z$.

\vspace{1cm}

\noindent \textbf{The problem (M)}: To describe the problem, we begin with a  deformation of a Hofer--Zehnder admissible Hamiltonian, corresponding to the $C^0$-small perturbation appearing in \cite[Prop.~3.3]{Irie14}. The purpose of this perturbation is to keep the almost complex structure autonomous for Floer equations as in \cite[Sec.~7]{FHS95}, rather than considering a compactly supported $S^1$-invariant perturbation as in \cite{HV92}.

By slight abuse of notation, we also denote by $H: M \to \mathbb{R}_{\le 0}$\footnote{Here, unlike in the introduction, we adopt the convention that admissible Hamiltonians are non-positive,  so that the maximum is replaced by the minimum. This is more convenient for the Floer-theoretic setup.} an extension of the Hofer-Zehnder admissible Hamiltonian $H: W \to \mathbb{R}_{\le 0}$ compactly supported in the interior $\operatorname{int}(W)$, namely
\begin{equation}
H(x) =
\begin{cases}
H(x) & x \in W,\\
0 & x \in V.
\end{cases}
\end{equation}

For a given HZ--admissible Hamiltonian $H$, let $U_{\rm min}$ be a Darboux ball centered at a point $p$ where $H$ attains its minimum value and is constant, i.e.,
\[ H(x)=\min H \qquad \forall\,x\in U_{\rm min}. \]
Fix a Darboux chart
\begin{equation}
\label{Equation: Darboux Chart i}
    \iota:(B^4(r_0),\omega_0)\longrightarrow (U_{\rm min},\omega|_{U_{\rm min}}).
\end{equation}
Using this chart, we perturb $H$ inside $U_{\rm min}$ by
\begin{equation}
\label{Equation: C0-perturbation inside Darboux Ball}
H^\prime(z) = 
\begin{cases}
    H(z) & \text{if}~z\notin U_{\rm min},\\
    H(z)+ F \circ \iota^{-1}(z) &\text{if}~z \in U_{\rm min},
\end{cases}
\end{equation}
where $F:B^4(r_0)\to \RR_{\le 0}$ is a smooth function such that there exist positive constants $0<r_-<r_+<r_0$ satisfying
\[
F(x)=
\begin{cases}
S(x)-b, & x\in B^4(r_-),\\
0, & x\notin B^4(r_+),
\end{cases}
\]
for some positive-definite quadratic form $S:\mathbb{R}^4\to\mathbb{R}$ and some $b>0$. We also identify $S$ with its associated positive-definite symmetric matrix and impose further conditions on this matrix; see \eqref{Equation: Admissible equation}. At this point, it suffices to note that the perturbed Hamiltonian $H'$ can be chosen to satisfy the following properties, as illustrated in \cite{Irie14}, for any such symmetric matrix $S$ with eigenvalues less than $\pi$:
\begin{itemize}
    \item Any nonconstant contractible periodic orbit of $X_{H^\prime}$ has period larger than 1.
    \item $\min H^\prime < \min H$
    \item $\min H^\prime$ is isolated in the set of critical values of $H^\prime$.
    \item $\min H^\prime$ is attained at the point $p$, the center of a Darboux ball $U_0$.
\end{itemize}

The constant loop at $p$ is nondegenerate as a $1$-periodic orbit of $X_{\tau H^\prime}$ for $\tau \in (0,1]$ and the Conley--Zehnder index of $p$ is precisely
\[
\mu_{\rm CZ}(p)= 2 - \mu_{\rm Morse}(p) = 2.
\]
The index can also be verified directly, since the associated path of symplectic matrices is generated by the positive definite symmetric matrix with eigenvalues less than $\pi$.

For an $\omega$-compatible almost complex structure $J$, denote by $\mathcal{M}(p,\Sigma;J,H^\prime,A)$ the moduli space of Floer cylinder asymptotic to $p, \Sigma$ representing the line class $A$. More
precisely, it consists of maps $u:\mathbb{R}\times S^1\to M$ satisfying
\begin{equation}
    \label{Equation: Floer equation}
    \partial_s u + J(u(s,t))\partial_t u = \nabla H^\prime(u(s,t)).
\end{equation}
with asymptotic conditions
\[
\lim_{s\to-\infty}u(s,t)=p,
\qquad
\lim_{s\to+\infty}u(s,t)\in\Sigma,
\]
and such that the continuous extension $\bar{u}$ obtained by identifying
$\mathbb{R}\times S^1\cup\{-\infty,+\infty\}$ with $\mathbb{CP}^1$ represents the line class $A$. The gradient $\nabla H^\prime$ is taken with respect to the Riemannian metric $g_J$ defined as \eqref{Equation: Metric from compatible almost complex structure}, or equivalently, $\nabla H^\prime = JX_{H^\prime}$.

We refer to such
solutions as \textit{Floer spheres} from $p$ to $\Sigma$. 
Since $H'$ vanishes near $\Sigma$, the extension $\bar{u}$ is smooth at $+\infty$ and, by the removal of singularities theorem, defines a $J$-holomorphic disk near $+\infty$. More precisely, identifying $+\infty$ with the north pole of $\mathbb{CP}^1$, the extension defines a $J$-holomorphic disk on a neighborhood of the north pole.

\medskip

One can view the Floer equation as a perturbed Cauchy-Riemann equation in the case of \eqref{Equation: Cauchy-Riemann equation - cylinder vers}, and moduli spaces of Floer spheres can also be understood as moduli space associated to  Hamiltonian perturbations in the sense of \cite[Ch.\ 8]{MS12}. Note that the nondegeneracy of $p$ as $1$-periodic orbit of $H^\prime$ implies that the moduli space is a Fredholm problem. In particular, its virtual dimension is given by
\[
    \operatorname{vdim}\mathcal{M}(p,\Sigma;J,{H^\prime},A)=
    \dim_{\mathbb{C}}M\cdot\chi(\mathbb{CP}^1\setminus\{0\})
    +2c_1(A)-\mu_{\mathrm{CZ}}(p)-\operatorname{codim}(\Sigma)
    =2.
\]

As in \eqref{equation: L2-energy of the holomorphic curves}, the $L^2$-energy of a Floer spheres $u \in \mathcal{M}(p,\Sigma;J,H^\prime, A)$ is given by
\begin{equation}
E(u):=\frac{1}{2}\int_{\mathbb{R}\times S^1}\left\|\frac{\partial u}{\partial s} \right\|_J^2 + \left\|\frac{\partial u}{\partial t}-X_{H^\prime}(u) \right\|_J^2\,{\rm ds\,dt},
\end{equation}
where the norm is taken with respect to the metric $g_J$ on $M$ induced by $J$. As before, the energy is independent of the choice of $J$, and satisfies
\begin{equation}
\label{Equation: L2-energy of Floer spheres}
\begin{aligned}
    E(u) = \omega(A) + H^\prime(p).
\end{aligned}
\end{equation}
This follows from the fact that the Floer equation \eqref{Equation: Floer equation} is equivalent to the negative gradient flow of the Hamiltonian action functional along the capped loops $\{(\gamma_\sigma, \bar{u}_\sigma)\}_{\sigma \in \mathbb{R}}$
\[
\mathcal{A}_{H^\prime}(\gamma_\sigma,\bar{u}_\sigma)
=
\int_{\bar{u}_\sigma}\omega
-
\int_{S^1}H^\prime(\gamma_\sigma(t))\,dt,
\]
where the loop $\gamma_\sigma$ is defined as
\[
\gamma_\sigma(t)=u(\sigma,t),
\]
and $\bar{u}_\sigma$ is the capping disk obtained by extending $u|_{\{s\le\sigma\}}$ by the point $\bar{u}(-\infty)=p$. Since $H^\prime(p)<0$, the $L^2$-energy of every Floer sphere is strictly smaller than the minimal energy of a nonconstant $J$-holomorphic sphere, namely,\[ E(u)<\omega(A). \]

Unlike the case of pseudoholomorphic spheres in the line class or that of Floer cylinders for time-dependent almost complex structures, achieving transversality here can be more subtle. In this setting, the framework developed in \cite[Sec.~7]{FHS95} applies and provides abundant \textit{somewhere injective regular points}, namely, points $z$ in the domain of a Floer cylinder $u$ satisfying somewhere injectivity \eqref{Equation: Somewhere injecitivty} and additionally satisfying that the four tangent vectors
\[
\partial_su,\qquad \partial_tu,\qquad X_{H^\prime}(u),\qquad \nabla {H^\prime}(u)
\]
are linearly independent. Since the autonomous setting is less standard, we briefly recall the relevant details for completeness.

We say Hamiltonian $H^\prime$ is \textit{regular} at the critical point $p$ if there exists a compatible almost complex structure $J$ such that the symmetric matrix
\begin{equation}
\label{Equation: Symmetric matrice on the local chart}
S = J_0 \circ \Phi^{-1}\circ dX_H(p)\circ \Phi    
\end{equation}
for a unitary frame $\Phi:(\mathbb{R}^4 , i) \to (T_pM,J)$ satisfies the following:
for any four non trivial tuple of real numbers $(\alpha,\beta,\hat{\alpha},\hat\beta)$, there is no nonzero solution $\zeta \in \mathbb{R}^4$ of the equations
\begin{equation}
\label{Equation: Admissible equation}
\begin{aligned}
    &(SJ_0 - J_0 S - \alpha - \beta J_0)\zeta = 0, \qquad \text{and} \\
    &(SJ_0 - J_0 S - \alpha - \beta J_0)S\zeta -\hat \alpha \zeta - \hat \beta J_0 \zeta= 0.
\end{aligned}
\end{equation}
Note that this definition is independent of the choice of unitary frame at $p$. Moreover, the set of symmetric matrices satisfying the above conditions is
open and dense in the space of all symmetric matrices
\cite[Thm.~6.1]{FHS95}. Conversely, if $H'$ is regular at $p$, we say that $J\in\mathcal{J}(M)$ is \textit{admissible} if there exists such a unitary frame. 

For $J$ that agrees with the standard complex structure at $p$ with respect to $\iota$, the symmetric matrix in the unitary frame \eqref{Equation: Symmetric matrice on the local chart} agrees with the symmetric matrix chosen in the construction. We always assume that the symmetric matrix $S$ is chosen so as to ensure that $H'$ is regular at $p$. The almost complex structure $J$ is automatically admissible under the following additional assumption:
\begin{itemize}
    \item[(J-2)] The pullback $\iota^*J$ agrees with the standard complex structure $i$ at the origin in the Darboux chart $\iota:(B^4(r_0),\omega_0)\to(U_{\rm min},\omega|_{U_{\rm min}})$.
\end{itemize}
We denote by $\mathcal{J}(p,\Sigma)$ the set of compatible almost complex structures satisfying (J-1), (J-2).

\medskip

Under these assumptions, applying \cite[Lem.~7.8]{FHS95} yields the following lemma, which provides an analogue of the abundance of somewhere injective points for Floer cylinders with autonomous Floer data $(H',J)$.

\begin{lemma}
\label{Lemma: Somewhere injectivity of Floer spheres}
Let $H$ be a Hofer--Zehnder admissible Hamiltonian compactly supported in the
interior $\operatorname{int}(W)$. With respect to the Darboux chart $\iota$ in \eqref{Equation: Darboux Chart i} centered at the minimum $p$, let $H'$ be the
perturbation of $H$ as in \eqref{Equation: C0-perturbation inside Darboux Ball}, and let $J\in\mathcal{J}(p,\Sigma)$. Then there exists a neighborhood $U=U(H', J)\subset U_{\rm min}$ of the center $p$ such that, for every Floer cylinder $u\in\mathcal{M}(p,\Sigma;J,{H^\prime},A)$, the set of regular somewhere injective points $z\in u^{-1}(U)$ is open and dense in $u^{-1}(U)$.
\end{lemma}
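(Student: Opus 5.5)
This lemma should reduce to \cite[Lem.~7.8]{FHS95}, so the work is to check that its hypotheses hold in our setting. The plan has three steps.

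\textbf{Step 1: local structure near $p$.} Near $p$, in the Darboux chart $\iota$, the Hamiltonian is the quadratic form $H'=S-b+\min H$ on $B^4(r_-)$. By (J-2), $J$ agrees with $i$ at the origin. Hence the linearisation of the Floer equation at the constant solution $p$ is
\[
\partial_s\zeta+J_0\partial_t\zeta=S\zeta .
\]
The matrix $S$ has eigenvalues in $(0,\pi)$, so the constant orbit $p$ is nondegenerate. Every Floer cylinder $u\in\mathcal{M}(p,\Sigma;J,H',A)$ converges exponentially to $p$ as $s\to-\infty$.

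\textbf{Step 2: asymptotic eigenvector.} Standard asymptotic analysis (Hofer--Wysocki--Zehnder, or the version in \cite[Sec.~7]{FHS95}) gives, in the chart,
\[
u(s,t)=e^{\lambda s}\bigl(\zeta(t)+r(s,t)\bigr),
\]
where $\zeta$ is an eigenvector of the asymptotic operator $-J_0\partial_t-S$ with positive eigenvalue $\lambda$, and $r\to0$ with all derivatives. Since $S<\pi$, the eigenvectors have the form $e^{2\pi k J_0 t}\zeta_0$ perturbed by $S$. The leading term therefore has a controlled winding.

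Next I would show that the four vectors $\partial_s u$, $\partial_t u$, $X_{H'}(u)=J_0S u$ and $\nabla H'(u)=S u$ are linearly independent at leading order. This is exactly where regularity of $S$ enters. A linear dependence
\[
a\,\partial_s u+b\,\partial_t u=\alpha\,\nabla H'+\beta\,X_{H'}
\]
holding on an open set of $t$ forces, after inserting the asymptotic expansion and using the eigenvalue equation, an identity of the form $(SJ_0-J_0S-\alpha-\beta J_0)\zeta=0$. Differentiating in $t$ once more gives the second equation in \eqref{Equation: Admissible equation}. Regularity of $S$ rules out nonzero solutions, so dependence can hold only on a nowhere-dense set of $t$, and hence only on a nowhere-dense set of points for $s$ very negative.

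The neighbourhood $U=U(H',J)$ is then chosen small enough that every point of every Floer cylinder lying in $U$ is in this asymptotic regime. This requires a uniform choice over the moduli space. I would get it from compactness of the unparametrised moduli space (energy $E(u)<\omega(A)$ excludes bubbling) together with uniform exponential decay.

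\textbf{Step 3: somewhere injectivity.} It remains to show that points of $u^{-1}(U)$ are injective points, i.e.\ $u^{-1}(u(z))=\{z\}$. Near $-\infty$, the map $t\mapsto u(s,t)$ is an embedded loop by the leading eigenvector term. For two different values $s\neq s'$, the loops are disjoint because $|u(s,\cdot)|\approx e^{\lambda s}|\zeta|$ is strictly monotone in $s$. Points outside $u^{-1}(U)$ do not map into $U$ once $U$ is shrunk, since $u$ leaves a neighbourhood of $p$ and cannot return: the action decreases and $p$ is the strict minimum of $H'$. Openness is automatic, because both conditions are open.

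\textbf{Main obstacle.} I expect the hardest step to be the algebraic one in Step 2: deriving \eqref{Equation: Admissible equation} from a linear dependence among the four vectors. This is the content of \cite[Lem.~7.6–7.8]{FHS95}. I would follow their computation verbatim, treating $\alpha,\beta,\hat\alpha,\hat\beta$ as the dependence coefficients at two consecutive orders of the expansion.
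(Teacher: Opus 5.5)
\textbf{There is a genuine gap: you never show that $u$ is \emph{simple}.} This is the one hypothesis of \cite[Lem.~7.8]{FHS95} that actually has to be checked here, and checking it is the entire content of the paper's proof.

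Because $(H',J)$ is $t$-independent, nothing in your Steps~1--2 excludes a cylinder with $u(s,t+1/m)=u(s,t)$ for some $m\ge 2$ and all $(s,t)$. Such a $u$ still converges to $p$ and has an asymptotic expansion like any other solution. Yet it has no injective point at all, since $u(s,t)=u(s,t+1/m)$. So your assertion in Step~3, that $t\mapsto u(s,t)$ is embedded for $s\ll 0$, cannot come from local asymptotics.

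The exclusion is global and homological. If $u$ were not simple, then $v(s,t):=u(s/m,t/m)$ would be a well-defined cylinder with the same limits, and $\bar u=\bar v\circ(z\mapsto z^m)$. Hence $A=[\bar u]=m[\bar v]$, which is impossible because the line class is primitive in $H_2(\cp^1\times\cp^1;\Z)$. Once simplicity is known, the other hypotheses of \cite[Lem.~7.8]{FHS95} hold by construction: $H'$ is regular at $p$ by the choice of $S$, and $J$ is admissible by (J-2). The lemma then follows by citation, and there is no need to redo the local analysis.

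Your sketch of that local analysis would not work as written in any case. Substituting $e^{\lambda s}\zeta(t)$ into $\partial_s\zeta+J_0\partial_t\zeta=S\zeta$ gives $(-J_0\partial_t+S)\zeta=\lambda\zeta$, not $-J_0\partial_t-S$. Since $0<S<\pi$, the eigenvalues of this operator in $(0,\pi)$ are exactly those of $S$, with \emph{constant} eigenvectors. These are the slowest-decaying modes at $-\infty$, so the typical leading term is $e^{\sigma s}\zeta_0$ with $S\zeta_0=\sigma\zeta_0$. To leading order, the loop $u(s,\cdot)$ is then a point, $\partial_t u$ vanishes, and $\partial_s u$ is parallel to $\nabla H'(u)$. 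So neither embeddedness nor linear independence of the four vectors can be read off from the leading eigenvector. One has to analyse the first $t$-dependent part of $u$, for instance the asymptotics of $\partial_t u$, which solves the linearised equation. That finer analysis is what \cite{FHS95} supplies.

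Two smaller points also fail:
\begin{itemize}
\item Monotonicity of the action along $s\mapsto u(s,\cdot)$ controls loops, not points. It does not prevent $u(s,t)$ with $s$ large from re-entering $U$.
\item A uniform $U$ cannot be extracted from compactness using only $E(u)<\omega(A)$. That bound rules out bubbling but not breaking, and the paper excludes breaking only later, in the proof of Proposition~\ref{Proposition: moduli space of Floer spheres}.
\end{itemize}
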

\begin{proof}
    Since a curve representing the line class $A$ cannot be multiply covered, every solution $u$ is \textit{simple} in the sense of \cite[Sec.~7]{FHS95}; that is, for every integer $m>1$, there exists a point $(s,t)\in\mathbb{R}\times S^1$ such that
    \[ u(s,t+1/m)\neq u(s,t). \]

    Indeed, suppose that $u$ is not simple. Then there exists $m>1$ such that the map
    \[
        \check{v}:\mathbb{R}\times[0,1]\to M, \qquad \check{v}(s,t)=u(s/m,t/m),
    \]
    descends to a cylinder map $v:\mathbb{R}\times S^1\to M$. The limits of $v$ at $\pm\infty$ agree with those of $u$. Hence, the extensions to $\mathbb{CP}^1$ satisfy
    \[ \bar{u}=\bar{v}\circ\phi \]
    where $\phi:\mathbb{CP}^1\to\mathbb{CP}^1$ is given by $\phi(z)=z^m$. Therefore, we have
    \[ [\bar{u}]=m[\bar{v}] \in H_2(M;\mathbb{Z}).\]
    This is impossible since $[\bar{u}]$ cannot be represented as a multiple of another class. Hence every solution is simple, and the conclusion follows by applying \cite[Lem.~7.8]{FHS95}.
\end{proof}

As in the case of $J$-holomorphic spheres, an $\omega$-compatible almost complex structure will be called \textit{regular for a Hamiltonian $H^\prime$}, or simply \textit{$H^\prime$-regular}, if the associated linearised operator is surjective for Floer cylinders in $\mathcal{M}(p,\Sigma;J,H',A)$. For $H'$-regular $J$, the moduli space $\mathcal{M}(p,\Sigma;H^\prime,J,A)$ is a smooth $2$-dimensional manifold carrying a free smooth $\mathbb{C}^*$-action, and the moduli space of unparametrised Floer cylinders
\[
\widehat{\mathcal{M}}(p, \Sigma;J,H^\prime,A) := \mathcal{M}(p, \Sigma;J,H^\prime,A) / \mathbb{C}^*
\]
is a smooth $0$-dimensional manifold, i.e., a discrete collection of points.

As a consequence of Lemma \ref{Lemma: Somewhere injectivity of Floer spheres}, the notion of $H^\prime$-regular is generic on $\mathcal{J}(p,\Sigma)$, in the sense that the set of $H^\prime$-regular $J$ is residual in $\mathcal{J}(p,\Sigma)$. However, compactness and non-emptiness of the moduli space have not yet been established.  Both properties essentially follow from the arguments of \cite{HV92}. Nevertheless, our moduli space is slightly different from the one considered there, where Floer equations for $(H,J)$ are considered with $\mathbb{R}$-parametrised $J$ satisfying only the assumption (J-1). We therefore establish both properties as the following proposition.

\begin{proposition}
\label{Proposition: moduli space of Floer spheres}
Let $H$ be a Hofer--Zehnder admissible Hamiltonian compactly supported in the interior $\operatorname{int}(W)$. With respect to the Darboux chart $\iota$ in \eqref{Equation: Darboux Chart i} centered at the minimum $p$, let $H'$ be the
perturbation of $H$ as in \eqref{Equation: C0-perturbation inside Darboux Ball}, and let $J\in\mathcal{J}(p,\Sigma)$. Then, the moduli space $\widehat{\mathcal{M}}(p,\Sigma;J, H^\prime, A)$ is compact. Moreover if $J$ is $H^\prime$-regular, then $\widehat{\mathcal{M}}(p,\Sigma;J, H^\prime, A)$ is a smooth compact $0$-dimensional manifold satisfying
\[
\#_2 \widehat{\mathcal{M}}(p,\Sigma;J, H^\prime, A) = 1.
\]
\end{proposition}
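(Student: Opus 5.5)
Compactness and the count of $1$ will both come from the energy identity \eqref{Equation: L2-energy of Floer spheres}, $E(u)=\omega(A)+H'(p)<\omega(A)$, combined with a homotopy argument like the one in the proof of Proposition~\ref{Proposition: moduli space of J-holomorphic spheres}. I will interpolate from the unperturbed problem (M-J) to (M) by the family of Hamiltonians $\tau H'$, $\tau\in[0,1]$, keeping $J\in\mathcal{J}(p,\Sigma)$ fixed (or a generic path in $\mathcal{J}(p,\Sigma)$ starting at an $H'$-independent regular $J_0$). As noted after the definition of $H'$, $p$ is a nondegenerate $1$-periodic orbit of $X_{\tau H'}$ for all $\tau\in(0,1]$, with $\mu_{\rm CZ}(p)=2$. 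At $\tau=0$, however, $p$ is degenerate, and there I will instead identify the moduli space with $\mathcal{M}(p,\Sigma;J,A)$ from (M-J), whose count is $1$ by Proposition~\ref{Proposition: moduli space of J-holomorphic spheres}. To avoid the degeneracy at $\tau=0$, I will start from small $\tau_0>0$ and show by a perturbation (implicit function) argument that the Floer problem for $\tau_0 H'$ has the same mod $2$ count as (M-J). Alternatively, following \cite{HV92}, I will phrase the $\tau=0$ end via the point constraint $u(0)=p$, which is equivalent to asymptotics at a Morse minimum with $C^2$-small Hamiltonian.

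Compactness is the main step. Consider a sequence $u_n$ in the parametrised moduli space with $\tau_n\to\tau$. Energy is uniformly bounded by $\omega(A)$, so Floer--Gromov compactness yields a limit broken configuration. This configuration consists of Floer cylinders for $\tau H'$ connecting $1$-periodic orbits, together with possible sphere bubbles attached to the components. Sphere bubbling is excluded because each nonconstant $J$-sphere has energy at least $\omega(A)$, by the $\omega$-minimality \eqref{Equation: omega-minimality}, while the total energy is strictly less than $\omega(A)$. Note that $E(u_n)=\omega(A)+\tau_nH'(p)$ is strictly less than $\omega(A)$ only for $\tau_n>0$; the endpoint $\tau=0$ is handled as above.

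Breaking is the harder part. A broken limit would pass through a nonconstant $1$-periodic orbit $x$ of $\tau H'$, or through a constant critical point $q\neq p$. Since $W$ has $\pi_2$ mapping onto a subgroup and $x$ bounds a capping from the cylinder, the first component is a Floer cylinder from $p$ to $x$. That cylinder is contractible in $M$, so $x$ is a contractible periodic orbit of $X_{\tau H'}$. Its period, rescaled, equals $\tau^{-1}\cdot$(period for $H'$), and a nonconstant orbit of period at most $1$ for $\tau H'$ gives an orbit of $H'$ of period at most $\tau\le 1$. This is excluded by the property that every nonconstant contractible periodic orbit of $X_{H'}$ has period larger than $1$. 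The remaining case is a constant critical point $q$. The component from $p$ to $q$ then has energy $\tau(H'(q)-H'(p))>0$ and no topology, so $q$ must be a critical point with $H'(q)>\min H'$. I will rule this out by index considerations together with the $\mathbb{C}^*$-symmetry. Concretely, I will prove the whole statement directly in the autonomous $S^1$-invariant setting: the limit is a chain of a gradient flow line of $H'$ from $p$, which is impossible because $p$ is the strict isolated minimum (no negative gradient flow lines leave a minimum), followed by a Floer sphere. Isolating this $S^1$-invariant breaking via Morse flow lines is the obstacle I expect to need the most care. I would handle it by using regularity from Lemma~\ref{Lemma: Somewhere injectivity of Floer spheres} near $p$ and dimension counting: the broken configuration has virtual dimension strictly below that of the unbroken ones after quotienting by $\mathbb{C}^*$.

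Finally, $\mathcal{M}(p,\Sigma)$ is a compact $1$-manifold along the homotopy once the parameter family is chosen generically, which is possible by the transversality discussion above. The convergence $u(+\infty)\in\Sigma$ is preserved in the limit because $H'$ vanishes near $\Sigma$ and $J$ is integrable on $U_\infty$ by (J-1), so the intersection with $\Sigma$ persists by positivity of intersections. Counting boundary points mod $2$ then gives $\#_2\widehat{\mathcal{M}}(p,\Sigma;J,H',A)=\#_2\widehat{\mathcal{M}}(p,\Sigma;J,A)=1$. Compactness for non-regular $J$ is the same argument with $\tau$ fixed at $1$.
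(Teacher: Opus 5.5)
The overall architecture matches the paper's proof. The energy identity $E=\omega(A)+H'(p)<\omega(A)$ rules out sphere bubbling, and a parametrised moduli space for $(J_\tau,\tau H')$ connects the problem to (M-J), whose count is $1$.

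\textbf{The gap: breaking at a constant orbit.} The problem is the step you flag yourself, excluding breaking at a constant orbit $q\neq p$ of $H'$. Your assertion that the piece from $p$ to $q$ has ``no topology'' is unjustified, and it is exactly the dangerous case. A Floer cylinder between two constant orbits closes up to a sphere whose area can be any element of $2\pi\mathbb{Z}_{\ge 0}$.

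Your energy formula also has the wrong sign. With the paper's energy identity (action increasing in $s$), a zero-area piece from $p$ to $q$ would have energy $\tau(H'(p)-H'(q))\le 0$, so it cannot occur at all. Your value $\tau(H'(q)-H'(p))>0$ also contradicts your own remark that no flow line leaves the minimum.

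Neither proposed remedy works:
\begin{itemize}
\item Nothing forces the piece emanating from $p$ to be $t$-independent. That reduction holds for $C^2$-small autonomous Hamiltonians, and $H'$ is not small.
\item Dimension counting is unavailable. The critical points $q\neq p$ are degenerate in general (for example, $H'\equiv 0$ on a neighbourhood of $V$, and $H$ is an arbitrary admissible Hamiltonian), so pieces asymptotic to $q$ carry no Fredholm index.
\item Any transversality-based argument applies only to generic $J$. The proposition asserts compactness for every $J\in\mathcal{J}(p,\Sigma)$, so your closing sentence (``the same argument with $\tau$ fixed at $1$'') does not deliver it.
\end{itemize}

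\textbf{The missing idea: area quantization.} The paper's argument uses only energy and $\omega$-minimality.
\begin{itemize}
\item Normalise so that $u_n(0,0)\in\partial U_0$ and $u_n(\mathbb{R}_{<0}\times S^1)\subset U_0$. Let $\sigma_n$ be the last exit time of $u_n$ from $U_\infty$.
\item A minimal escape energy lemma gives constants $E_\pm>0$ that bound from below the energy of $u_n$ on $(-\infty,0]\times S^1$ and on $[\sigma_n,\infty)\times S^1$.
\item If $\sigma_n$ stays bounded, the ends near $\Sigma$ converge as $J$-holomorphic discs centred on $\Sigma$, and the limit lies in the moduli space.
\item If $\sigma_n\to\infty$, the energy bound yields $s_n\in[0,\sigma_n]$ with $u_n(s_n,\cdot)$ $L^2$-close to a $1$-periodic orbit, hence, by admissibility, close to a critical point $q$.
\item Cap this loop by $u_n|_{s\le s_n}$ and close it with a tiny disc near $q$. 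Thanks to the margins $E_\pm$, the resulting sphere has area strictly between $H'(q)-H'(p)\ge 0$ and $\omega(A)+H'(q)\le\omega(A)$. This contradicts $\omega(\pi_2(M))=\omega(A)\mathbb{Z}$.
\end{itemize}
This handles degenerate $q$ and arbitrary $J$ with no index theory. The same type of $\omega$-minimality argument, as in \cite{HV92}, controls the parametrised family including the limit $\tau_n\to 0$.
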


Since $J$ is automatically $H^\prime$-regular whenever $\mathcal{M}(p,\Sigma;J,H^\prime,A)$ is empty, the proposition implies the
existence of a Floer cylinder for every $J \in \mathcal{J}(p,\Sigma)$:
\begin{corollary}
\label{Corollary: existence of Floer spheres}
In the setting of Proposition~\ref{Proposition: moduli space of Floer spheres}, the moduli space $\widehat{\mathcal{M}}(p,\Sigma;J,H^\prime,A)$ is non-empty for every $J\in\mathcal{J}(p,\Sigma)$, not necessarily $H^\prime$-regular.
\end{corollary}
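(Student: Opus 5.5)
The plan is to deduce the corollary from Proposition~\ref{Proposition: moduli space of Floer spheres} by approximating $J$ by regular almost complex structures and passing to a limit with the same compactness mechanism used for the proposition. The argument needs no regularity of the limit $J$, only compactness.

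\textbf{Step 1: approximation.} Fix $J\in\mathcal{J}(p,\Sigma)$. By Lemma~\ref{Lemma: Somewhere injectivity of Floer spheres} and the genericity statement following it, the set of $H'$-regular structures is residual, hence dense, in $\mathcal{J}(p,\Sigma)$. I would choose $H'$-regular $J_k\in\mathcal{J}(p,\Sigma)$ with $J_k\to J$ in $C^\infty$. One should check that they can be taken to agree with $i\oplus i$ on the fixed neighbourhood $U_\infty$ and to satisfy (J-2) at $p$. This should hold because both conditions are built into $\mathcal{J}(p,\Sigma)$, within which genericity is asserted. For each $k$, Proposition~\ref{Proposition: moduli space of Floer spheres} gives $\#_2\widehat{\mathcal{M}}(p,\Sigma;J_k,H',A)=1$. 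In particular there is a Floer sphere $u_k$ for $(J_k,H')$ from $p$ to $\Sigma$ in the class $A$.

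\textbf{Step 2: uniform bounds and normalisation.} By \eqref{Equation: L2-energy of Floer spheres}, $E(u_k)=\omega(A)+H'(p)<\omega(A)$, independently of $k$. I would use the $\mathbb{C}^*$-action, i.e.\ $s$-translation and $t$-rotation, to normalise each $u_k$, for instance by requiring that $u_k(0,0)$ lies on the boundary of a small fixed ball around $p$. This prevents the whole curve from sliding into either end.

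\textbf{Step 3: Gromov--Floer compactness.} Apply Floer--Gromov compactness for the converging data $(J_k,H')$ to obtain a subsequence converging to a broken configuration of Floer cylinders for $(J,H')$, possibly with sphere bubbles. I would exclude the degenerations as follows.
\begin{itemize}
\item Sphere bubbles are $J$-holomorphic spheres of energy at least $\omega(A)$, the minimal positive symplectic area. Since the total energy is strictly less than $\omega(A)$, no bubble can occur.
\item Breaking along a nonconstant $1$-periodic orbit of $H'$ is impossible, because any such orbit would be contractible and of period $1$, which is excluded by the choice of $H'$.
\item Breaking at constant orbits, i.e.\ critical points $q$ of $H'$, is ruled out by the same action/energy and intersection argument used in the compactness part of Proposition~\ref{Proposition: moduli space of Floer spheres}. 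That argument uses only (J-1), (J-2), the minimality of $\min H'$ at $p$, and the energy identity, all of which are stable under $J_k\to J$. Concretely, the piece ending on $\Sigma$ carries the full intersection number $1$ with $\Sigma$. The remaining pieces then have zero symplectic area, so their energy equals a difference of values of $H'$, and positivity of that energy contradicts $p$ being the strict, isolated minimum.
\end{itemize}

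The limit is therefore a single Floer sphere $u\in\mathcal{M}(p,\Sigma;J,H',A)$. The normalisation from Step~2 makes it nonconstant and keeps it in the class $A$. Hence $\widehat{\mathcal{M}}(p,\Sigma;J,H',A)\neq\emptyset$.

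The main obstacle is the uniformity in Step~3. The proposition's compactness is proved for a fixed $J$, whereas here it must be re-run along a sequence $J_k$. I expect this to be routine, since every ingredient is an energy or topological bound independent of $k$. The only delicate point is ensuring that the approximating $J_k$ stay in $\mathcal{J}(p,\Sigma)$ with the same neighbourhood $U_\infty$.
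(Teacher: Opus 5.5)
Your argument is correct, but it takes a longer route than the paper. The paper's proof is a one-line contradiction. If $\mathcal{M}(p,\Sigma;J,H',A)$ were empty, then $J$ would be $H'$-regular vacuously, so Proposition~\ref{Proposition: moduli space of Floer spheres} would apply to $J$ itself and give $\#_2\widehat{\mathcal{M}}(p,\Sigma;J,H',A)=1$, which is absurd. That needs neither density of regular structures nor any limit in $J$, since all the compactness work already sits inside the proposition's cobordism argument.

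You instead approximate $J$ by $H'$-regular $J_k\in\mathcal{J}(p,\Sigma)$ and take a Floer--Gromov limit. This costs two extra inputs: density of regular structures in $\mathcal{J}(p,\Sigma)$ with the fixed $U_\infty$, and compactness along a varying sequence $J_k\to J$. Both are available. The second is essentially the compactness the proposition already needs for its parametrised moduli space over a homotopy $\{J_\tau\}$.

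Your exclusion of breaking is a clean variant of the paper's action argument. Since $\omega(B)=2\pi\,(B\cdot\Sigma)$ for every $B\in H_2(\mathbb{CP}^1\times\mathbb{CP}^1)$, pieces with zero intersection with $\Sigma$ have zero area. A lower piece starting at $p$ would then have energy $H'(p)-H'(q)\le 0$, which is impossible.

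Two details need the care the paper takes.
\begin{itemize}
\item The critical set of $H'$ is degenerate; it contains all of $V$. So limits at the junctions need not exist, and the pieces should be handled by approximate cappings as in the paper's Claim.
\item Your normalisation should be the first exit from $U_0$, i.e.\ also impose $u_k(\mathbb{R}_{<0}\times S^1)\subset U_0$. This guarantees the limit is asymptotic to $p$ at $-\infty$.
\end{itemize}

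What your route buys is an actual limiting curve obtained from nearby regular data. This is the same mechanism the paper later relies on in the neck-stretching step, where the almost complex structures genuinely vary.
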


\begin{proof}[Proof of Proposition \ref{Proposition: moduli space of Floer spheres}]
We first prove the compactness of $\widehat{\mathcal{M}}(p,\Sigma;J,H^\prime,A)$ following Gromov-Floer compactification. Denote by $\lbrace u_n \rbrace_{i \in \mathbb{N}}$ a sequence of Floer cylinders with normalised as
\begin{equation}
\label{Equation: Normalisation for bottom building}
u_n(0,0) \in \partial U_0 \qquad \text{and} \qquad u_n(\mathbb{R}_{<0}\times S^1) \subset U_0,
\end{equation}
where $U_0$ is a Darboux ball centered at $p$ where $H'$ is quadratic.

Since the $L^2$-energy \eqref{Equation: L2-energy of Floer spheres} of each $u_n$ is strictly smaller than $\omega(A)$, bubbling cannot occur along the sequence. Consequently, 
up to passing to a subsequence, $\{u_n\}$ converges in the $C^\infty_{\mathrm{loc}}$-topology to a Floer cylinder $u_\infty$ satisfying
\[
u_\infty(0,0) \in \partial U_0\qquad \text{and} \qquad u_\infty(\mathbb{R}^{<0}\times S^1) \subset \overline{U}_0.
\]
Moreover, the $L^2$-energy of $u_\infty$ is finite, as given by
\[
E(u_\infty) \le \liminf_{i\to \infty} E(u_i) = \omega(A) + H^\prime (p).
\]

Finiteness of the $L^2$-energy implies that for any sequence $\{s_i\}_{i\in\mathbb{N}}$ converging either to $+\infty$ or $-\infty$, there exists a subsequence such that the loop
\begin{equation}
\gamma_i:S^1 \to M\qquad \gamma_i(t):=u_\infty(s_i, t)
\end{equation}
converges to a $1$-periodic orbit of $X_{H^\prime}$. For a sequence converging to $-\infty$, since the critical point $p$ is the only $1$-periodic orbit of $X_{H^\prime}$ in the closure of the Darboux ball $\bar{U}_0$, the only possible limit is the minimum $p$. Hence, we conclude that $u_\infty$ converges uniformly to $p$ at $-\infty$:
\begin{equation}
\label{Equation: negative asymptotic of limit curve}
    \lim_{s\to-\infty}u_\infty(s,t)=p.
\end{equation}
Since $u_\infty(0,0) \in \partial U$, we also conclude that the limit is nonconstant. 

\medskip

Next, we consider the asymptotic limit at $+\infty$. For each $u_n$ converges to some point in $\Sigma$, there exists $\sigma_n \in \mathbb{R}$ such that 
\begin{equation}
\label{Equation: Normalisation for top building}
u_n(\sigma_n,\tau_n) \in \partial U_\infty\qquad \text{and} \qquad u_n(\mathbb{R}_{> \sigma_n}\times S^1) \subset U_\infty.
\end{equation}
holds for some $\tau_n \in S^1$. The constant $\sigma_n$ is positive since $U_0$ and $U_\infty$ does not intersect, but may diverge to $+\infty$ as $n\to\infty$.

Suppose that $\sigma_i$ has a finite limit. Then, for sufficiently large $\sigma_0$, the restriction of the Floer cylinder $u_n|_{s \ge \sigma_0}$ extends to a $J$-holomorphic disk $w_n$ with $w_n(0)\in\Sigma$. We can set the extensions to have a common domain. More precisely, we may take the upper hemisphere of $\mathbb{CP}^1$ bounded by the circle $\log|z|=\sigma_0$, or equivalently, a disk of radius $1/ \log (|\sigma_0|)$ in $\mathbb{C}$ equipped with the complex structure $-i$.

The $L^2$-energy of $w_n$, defined analogously to \eqref{equation: L2-energy of the holomorphic curves}, is bounded above by $E(u_n)$, and hence is strictly smaller than $\omega(A)$. Therefore, bubbling can only occur at boundary points, i.e., points corresponding to $\{ \sigma_0 \} \times S^1$ for $u_n$. Since we have already shown that bubbling cannot occur for Floer cylinders, after passing to a subsequence, $\{w_n \}$ converges to a $J$-holomorphic disk $w_\infty$ in the $C^\infty$-topology. This implies
\[ u_\infty(+\infty) = w_\infty(0) \in \Sigma,\]
as $w_\infty(0)$ is the limit of a subsequence of points $w_n(0)$ that lies on $\Sigma$.

Therefore, when $\sigma_n$ has a finite limit, there exists a subsequence such that $u_n$ converges to a Floer cylinder $u_\infty$ in the $C^\infty_{\mathrm{loc}}$-topology, which also connects $p$ to $\Sigma$. Its extension $\bar{u}_\infty$ is the $C^0$-limit of the extensions $\bar{u}_n$ and hence represents the same homology class $A$. Consequently, we have
\[
    u_\infty\in\mathcal{M}(p,\Sigma;J,H',A).
\]

Hence, it remains to show that $\sigma_n$ does not tend to $+\infty$. The following lemma which corresponds to \cite[Thm.~4.3.(iii)]{HV92} is crucial for this step.
\begin{lemma}
\label{Lemma: Minimal escape energy}
    There exists a constant $E_->0$ such that every Floer cylinder $u\in \mathcal{M}(p,\Sigma;J,H^\prime,A)$ normalised as
    \begin{equation}
    \label{Equation: Proof of Minimal escape - 1}
        u(0,0)\in\partial U_0 \qquad\text{and}\qquad u(\mathbb{R}_{<0}\times S^1)\subset U_0
    \end{equation}
    satisfies
    \[
     E_- < \frac{1}{2}\int_{(-\infty,0] \times S^1}\left\|\frac{\partial u}{\partial s} \right\|^2 + \left\|\frac{\partial u}{\partial t}-X_{H^\prime}(u) \right\|^2\,{\rm ds\,dt}.
    \]
    
\end{lemma}
\begin{proof}
Assume that there exists a sequence of Floer cylinders $\{u_n\}\subset\mathcal{M}(p,\Sigma;J,H',A)$ such that the $L^2$-energy of half-cylinder
\[
E_n:=\frac{1}{2}\int_{(-\infty,0] \times S^1}\left\|\frac{\partial u_n}{\partial s} \right\|^2 + \left\|\frac{\partial u_n}{\partial t}-X_{H^\prime}(u_n) \right\|^2\,{\rm ds\,dt}
\]
tends to $0$, where each $u_n$ is normalised as \eqref{Equation: Proof of Minimal escape - 1}. 

Let $u_\infty$ be the $C^\infty_{\rm loc}$-limit of $\{u_n\}$. From the
discussion above, the limit satisfies
\[
u_\infty(0,0) \in \partial U_0\qquad \text{and} \qquad u_\infty(\mathbb{R}^{<0}\times S^1) \subset \overline{U}_0.
\]
and, in particular,
\[
\lim_{s\to-\infty} u_\infty(s,t) = p.
\]
Moreover, the $L^2$-energy of the negative half-cylinder $u_\infty|_{(-\infty,0]\times S^1}$ vanishes:
\[
\frac{1}{2}\int_{(-\infty,0]\times S^1}
\left\|\frac{\partial u_\infty}{\partial s}\right\|^2
+
\left\|\frac{\partial u_\infty}{\partial t}-X_{H^\prime}(u_\infty)\right\|^2
\,{\rm d}s\,{\rm d}t
\le
\liminf_{n\to\infty}E_n
=0.
\]

Since Floer cylinder with vanishing $L^2$-energy is constant in the
$\mathbb{R}$-coordinate, this contradicts
\[
u_\infty(0,0)\in\partial U
\qquad\text{and}\qquad
\lim_{s\to-\infty}u_\infty(s,t)=p.
\]
This completes the proof.
\end{proof}

Similarly, there exists a $E_+>0$ such that every $u \in \mathcal{M}(p,\Sigma;J,H',A)$ normalised as
\begin{equation}
\label{Equation: Proof of Minimal escape - 2}
    u(0,0) \in \partial U_\infty \qquad \text{and} \qquad u(\mathbb{R}_{>0}\times S^1) \subset U_\infty.
\end{equation}
satisfies
\[
     E_+ < \frac{1}{2}\int_{[0,\infty) \times S^1}\left\|\frac{\partial u}{\partial s} \right\|^2 + \left\|\frac{\partial u}{\partial t}-X_{H^\prime}(u) \right\|^2\,{\rm ds\,dt}.
\]
We then take
\[
E_\infty=\min\{E_-,E_+\}>0.
\]

Together with this constant, we prove the remaining step for the compactness.

\medskip

\noindent \textbf{Claim.} Sequence $\{ \sigma_n\}$ does not tends to $+\infty$.

\begin{proof}[Proof of Claim]
Assume the contrary. Passing to a subsequence, we may assume that
\[
\sigma_1<\sigma_2<\cdots
\]
with $\sigma_n\to+\infty$. Since the $L^2$-energy
\[
\frac{1}{2}\int_{[0, \sigma_n]\times S^1}\left\|\frac{\partial u_n}{\partial s} \right\|^2 + \left\|\frac{\partial u_n}{\partial t}-X_{H^\prime}(u_n) \right\|^2\,{\rm ds\,dt} \]
is bounded above as $\sigma_n\to+\infty$, there exists a sequence $\{ s_n \in [0,\sigma_n] \}$ such that loops
\[
\gamma_{n}:S^1 \to M\qquad \gamma_{n}(t):=u_n(s_n, t),
\]
satisfy
\[
\| \dot\gamma_n - X_H^{\prime}(\gamma_n) \|_{L^2} \to 0.
\]

After passing to a subsequence, $\gamma_n$ converges to a $1$-periodic orbit of $X_{H^\prime}$. By the Hofer--Zehnder admissibility of $H$, this orbit must be a constant orbit corresponding to a critical point $q$ of $H^\prime$. Note that the critical point $q$ may be degenerate and need not be uniquely determined, since it may depend on the choice of subsequence.

\medskip

For each $\gamma_n$, we define disk capping ${v}_{n}$ given by the completion of $u_n|_{\lbrace s \le s_n \rbrace}$ by adding $u_n(-\infty)=p$. 
Combining Lemma~\ref{Lemma: Minimal escape energy}, with the formula
\begin{equation}
\label{Equation: Proof of compactness - 1}
\mathcal{A}_{H^\prime}(\gamma_{n}, {v}_{n}) = \frac{1}{2}\int_{(-\infty, s_n]\times S^1}\left\|\frac{\partial u_n}{\partial s} \right\|^2 + \left\|\frac{\partial u_n}{\partial t}-X_{H^\prime}(u_n) \right\|^2\,{\rm ds\,dt} - (H^\prime(p)),
\end{equation}
the Hamiltonian action of $(\gamma_n, {v}_n)$ satisfies
\begin{equation}
\label{Equation: Proof of compactness - 2}
    E_\infty - H^\prime(p)< \mathcal{A}_{H^\prime}(\gamma_{n}, v_{n}) < \omega(A)-E_\infty.
\end{equation}

Finally, note that for sufficiently large $n$, one may assume that $\gamma_n$ admits a capping $v'_n$ contained in a Darboux ball centered at $q$, whose symplectic area is smaller than $E_\infty/2$. Moreover, we may assume that $n$ is sufficiently large so that
\[
|H^\prime (q) - H^\prime (\gamma_n(t)))|<E_\infty/2.
\]
for every $t\in S^1$. This implies that, for sufficiently large $n$, we have
\begin{equation}
\label{Equation: Proof of compactness - 3}
    \left|\mathcal{A}_{H^\prime}(\gamma_n, \check{v}_n)-\left(\omega([v_n \# v'_n])-H^\prime(p_\infty)\right)\right| < E_\infty
\end{equation}
Together with \eqref{Equation: Proof of compactness - 2}, we conclude that
\begin{equation}
\label{Equation: Proof of compactnes - 4}
    H'(p_\infty) -H^\prime(p) < \omega([v_n \# v'_n]) < \omega(A)+H^\prime(p_\infty).
\end{equation}

Since $p$ is the minimum of the non-positive Hamiltonian $H^\prime$, this
contradicts the $\omega$-minimality of the class $A$, as the sphere $v_n\#v'_n$ satisfies
\[
0<\omega([v_n\# v'_n])<\omega(A).
\]
This completes the proof of the Claim.
\end{proof}

Hence, the sequence $\{\sigma_n\}$ has no divergent subsequence, and this
proves the compactness of the moduli space $\widehat{\mathcal{M}}(p,\Sigma;J,H',A)$. Therefore, for $H^\prime$-regular $J$, we conclude that the moduli space is a smooth compact $0$-dimensional manifold, and its mod $2$ count is
well-defined. It remains to show that
\[
\#_2 \widehat{\mathcal{M}}(p,\Sigma;J, H^\prime, A) = 1.
\]

This follows the same idea as Lemma~\ref{Lemma: unparametrised moduli space}.
Let $\mathcal{J}=\lbrace J_{\tau} \rbrace_{\tau \in [0,1]}$ be a smooth homotopy of almost complex structures in $\mathcal{J}(p,\Sigma)$ from a regular almost complex structure $I\in\mathcal{J}(p,\Sigma)$ to the given $H^\prime$-regular almost complex structure $J$. We denote the associated parametrised moduli space by
\[
    \mathcal{M}(p,\Sigma;\mathcal{J}, H^\prime, A) \subset [0,1] \times W^{k,p}(\mathbb{R}\times S^1, M),
\]
consisting of pairs $(\tau,u)$ such that $u:\mathbb{R}\times S^1\to M$  satisfying the Floer equation
\[\partial_s u + J_\tau(u(s,t))\partial_t u = \tau \cdot \nabla H^\prime(u(s,t)),\]
with asymptotic conditions
\[
\lim_{s\to-\infty}u(s,t)=p,
\qquad
\lim_{s\to+\infty}u(s,t)\in\Sigma,
\]
and whose continuous extension $\bar u$ satisfies $[\bar{u}]=A$. For $\tau=0$, the Floer equation reduces to the Cauchy--Riemann equation for the almost complex structure $I$, while for $\tau=1$, it becomes the Floer equation associated with $(H^\prime,J)$.

The parametrised moduli space $\mathcal{M}(p,\Sigma;\mathcal{J},H^\prime,A)$ also admits a $\mathbb{C}^*$-action given by
\[
\lambda\cdot(\tau,u)=(\tau,\lambda\cdot u).
\]
The $\omega$-minimality in the case $\tau=0$, and Hofer-Zehnder admissibility of
$\tau H$ in the case $0<\tau \le 1$, implies the compactness of the moduli space of unparametrised curves
\[
\widehat{\mathcal{M}}(p,\Sigma;\mathcal{J},H^\prime,A)
= \mathcal{M}(p,\Sigma;\mathcal{J},H^\prime,A)/\mathbb{C}^*
\]
Here we use the fact that $p$ is the unique minimum of $\tau H'$ for every $\tau\in(0,1]$.

Moreover, for every $\tau\in(0,1]$, the Hamiltonian $\tau H'$ is regular at the critical point $p$, and Lemma~\ref{Lemma: Somewhere injectivity of Floer spheres} implies the existence of a somewhere injective regular point for every Floer cylinder at $\tau$. Thus, every curve $(\tau,u)\in\mathcal{M}(p,\Sigma;\mathcal{J},H',A)$ with $\tau \ne 0$ admits a somewhere injective regular point, while for $\tau=0$ it admits a somewhere injective point. This guarantees the surjectivity of the linearised operator for the parametrised moduli problem for a generic homotopy $\mathcal{J}$ from $I$ to $J$. Here we use the assumptions that $I$ is regular and $J$ is $H'$-regular.

Consequently, for generic $\mathcal{J}$, the parametrised moduli space is a smooth $1$-dimensional manifold with boundary, and its boundary is precisely 
\[
    \widehat{\mathcal{M}}(p,\Sigma;I, A) \sqcup \widehat{\mathcal{M}}(p,\Sigma;J,H^\prime, A).
\]

Since the mod $2$ count of boundary points of a compact $1$-dimensional manifold is always even, we conclude that
\[
\#_2\widehat{\mathcal{M}}(p,\Sigma;H^\prime,J,A) =\#_2\widehat{\mathcal{M}}(p,\Sigma;I,A) = 1,
\]
where the second equality follows from Lemma~\ref{Lemma: unparametrised moduli space}. This completes the proof.
\end{proof}

We note that proving compactness for a sequence $[(\tau_n,u_n)]\in \widehat{\mathcal{M}}(p,\Sigma;\mathcal{J},H',A)$ with
\[
\tau_n \searrow 0
\]
is slightly different from the compactness argument for $\widehat{\mathcal{M}}(p,\Sigma;J,H',A)$.  In particular, the negative asymptotic limit does not automatically converge to $p$ from the normalisation \ref{Equation: Normalisation for bottom building} alone for the limit curve. Nevertheless, the contradiction in this case follows from the same type of argument using the $\omega$-minimality of the line class for pseudoholomorphic curves. The argument is essentially the same as that in \cite[Thm.~3.4]{HV92}, and we omit the details here.

\noindent

\noindent \textbf{Apriori upper bound.} As a corollary of Proposition~\ref{Proposition: moduli space of Floer spheres},
we obtain an a priori bound for the Hofer--Zehnder capacity
$c_{\rm HZ}(W)$. This follows already from the monotonicity and the computation of $c_{\rm HZ}(D_{g_0}^* S^2)$ \cite{Bim23}; nevertheless, we state the result here for completeness.

\begin{corollary}
\label{Corollary: Apriori bound on HZ-capacity}
    Let $W$ be a fiberwise starshaped domain in $T^*S^2$, contained in the disk cotangent bundle $D^*_{g_0}S^2$ of the round sphere of radius $1$. Then the Hofer--Zehnder capacity $c_{\rm HZ}(W)$ satisfies
    \[
    c_{\rm HZ}(W) < 2\pi.
    \]
\end{corollary}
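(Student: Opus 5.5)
The strategy is the Hofer--Viterbo energy argument applied to the Floer spheres produced by Corollary~\ref{Corollary: existence of Floer spheres}. Fix an HZ-admissible Hamiltonian $H$ on $W$, compactly supported in $\operatorname{int}(W)$, with the sign convention $H\le 0$ of this section. Extend it by zero to $M=\cp^1\times\cp^1$ and pass to the perturbation $H'$ from \eqref{Equation: C0-perturbation inside Darboux Ball}. It suffices to show that $-\min H<2\pi$, and moreover that $\sup(-\min H)$ over all admissible $H$ stays a definite amount below $2\pi$. The key input is that the line class has area $\omega(A)=2\omega_{\rm FS}([\cp^1])=2\pi$ for the form $2\omega_{\rm FS}\oplus2\omega_{\rm FS}$; this is the normalization under which $D^*_{g_0}S^2$ embeds.

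Pick $J\in\mathcal{J}(p,\Sigma)$. By Corollary~\ref{Corollary: existence of Floer spheres} there is a Floer sphere $u\in\mathcal{M}(p,\Sigma;J,H',A)$. Since $u$ is nonconstant, its $L^2$-energy is positive. By \eqref{Equation: L2-energy of Floer spheres},
\[
0<E(u)=\omega(A)+H'(p)=2\pi+\min H'.
\]
Because $\min H'<\min H$ by construction, this gives $-\min H<-\min H'<2\pi$. Taking the supremum over all admissible $H$ yields $c_{\rm HZ}(W)\le 2\pi$.

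To get the strict inequality, I would use the a priori minimal energy from Lemma~\ref{Lemma: Minimal escape energy}. Since $W\subset\operatorname{int}(D^*_{g_0}S^2)$ is compact, there is a fixed closed set, disjoint from $\Sigma$ and independent of $H$, that contains $\operatorname{supp}H$. Every Floer sphere has to travel from this set to $\Sigma$, and on the complementary region it is genuinely $J$-holomorphic. Choosing $J$ to agree with $i\oplus i$ near $\Sigma$ (condition (J-1)), a monotonicity lemma for $J$-holomorphic curves crossing a fixed annular region around $\Sigma$ gives a lower bound $E(u)\ge\epsilon_0>0$. This bound depends only on the geometry near $\partial D^*_{g_0}S^2\cup\Sigma$, not on $H$. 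Consequently $-\min H\le 2\pi-\epsilon_0$ for every admissible $H$, and hence $c_{\rm HZ}(W)\le2\pi-\epsilon_0<2\pi$.

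The main obstacle is making $\epsilon_0$ uniform in $H$. Lemma~\ref{Lemma: Minimal escape energy} as stated depends on $H'$ and $J$, so I would instead apply the monotonicity lemma directly to the holomorphic portion of $u$ lying in $V\setminus U_\infty$. Alternatively, one can simply cite monotonicity of $c_{\rm HZ}$: shrink $D^*_{g_0}S^2$ slightly to a codisk bundle of radius $r<1$ that still contains $W$, and use the conformal property together with $c_{\rm HZ}(D^*_{g_0}S^2)=2\pi$ from \cite{Bim23}. This gives $c_{\rm HZ}(W)\le 2\pi r<2\pi$.
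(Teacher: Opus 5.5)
Your first two paragraphs are exactly the paper's proof. Corollary~\ref{Corollary: existence of Floer spheres} gives a Floer sphere, and $E(u)=\omega(A)+H'(p)>0$ gives $-\min H<-\min H'<2\pi$; the paper stops there.

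You are right that this alone only yields $c_{\rm HZ}(W)\le 2\pi$ after taking the supremum over admissible $H$, because the gap $2\pi+\min H'$ is not controlled uniformly in $H$. Your extra step is what actually proves the strict inequality. It uses exactly the hypothesis that $W$ is a compact subset of $\operatorname{int}(D^*_{g_0}S^2)$, and that hypothesis is genuinely needed: for $W=D^*_{g_0}S^2$ one has $c_{\rm HZ}=\mathrm{sys}(g_0)=2\pi$.

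Your second fix works and bypasses the Floer-theoretic argument entirely. It takes $W\subset D^*_r$ for some $r<1$ and then uses monotonicity, conformality and \cite{Bim23}. This is the precise form of the remark the paper makes just before the corollary.

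Your first fix is the Floer-theoretic alternative, in the spirit of the explicit escape energy $E_+$ the paper attributes to \cite{Shelukhin22}. To make it uniform in $H$, apply monotonicity in a ball of fixed radius $r<\operatorname{dist}(\Sigma,W)$ centred at $u(+\infty)\in\Sigma$ and contained in $U_\infty$. There (J-1) forces $J=i\oplus i$ and $H'=0$, so the constant depends on neither $H$ nor $J$ elsewhere. Do not use $V\setminus U_\infty$, where $J$ is not pinned down. Alternatively, keep $J$ fixed independently of $H$ and adapt the Darboux chart at $p$, as in the discussion of (J-2').

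With either fix your argument is complete.
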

\begin{proof}
    Let $H$ be a Hofer--Zehnder admissible Hamiltonian on $W$. Regarding $W$ as a subdomain of $(\mathbb{CP}^1\times\mathbb{CP}^1, 2\omega_{\rm FS} \oplus 2\omega_{\rm FS})$, and $H$ as a Hamiltonian defined on $\cp^1 \times \cp^1$. Corollary \ref{Corollary: existence of Floer spheres} implies that, for any almost complex structure $J\in\mathcal{J}(p,\Sigma)$, there exists a Floer sphere $u\in\mathcal{M}(p,\Sigma;J,H',A)$. The $L^2$-energy of $u$ satisfies
    \[
    E_{H',J}(u)=\omega(A)+H'(p),
    \]
    and hence, we obtain
    \[
    -\min H<-\min H'<\omega(A)=2\pi.
    \]
    This proves the upper bound for $c_{\rm HZ}(W)$.
\end{proof}

\vspace{.5cm}

\noindent\textbf{Discussion of almost complex structures.} We finish the subsection with some discussion on almost complex structures. First, the assumption (J-1) ensures that the intersection of pseudoholomorphic/Floer spheres with the symplectic divisor $\Sigma$ is transverse and occurs precisely at a single point. Such condition has not played any role so far in the existence results for
pseudoholomorphic or Floer spheres, such as Corollary \ref{Corollary: existence of Floer spheres}. For example, in the proof of Proposition~\ref{Proposition: moduli space of Floer spheres}, the choice of the product complex structure $i\oplus i$ was not essential and can be replaced by an arbitrary almost complex structure $J$. There, we only used the fact that $H$ vanishes near $\Sigma$, so that Floer spheres locally define $J$-holomorphic disks near the positive end and Gromov compactness can be applied in this region. This condition will be used in the next step, where the fact that the intersection with $\Sigma$ occurs at a single point will play a crucial role. We also note that, in this case, the version of Lemma \ref{Lemma: Minimal escape energy} used to obtain the energy bound $E_+>0$ for the normalization \eqref{Equation: Proof of Minimal escape - 2} can be made explicit, as in the proof of \cite[Prop.~19]{Shelukhin22}.

\medskip
Next, assumption(J-2) can alternated as follows:
\begin{itemize}
    \item[(J-2')] $J$ agrees with a fixed $\omega_p$-compatible almost complex
    structure $I_p$ on $T_pM$.
\end{itemize}

Indeed, for any choice of Darboux chart around $p$, the pullback of $I_p$ defines an $\omega_0$-compatible almost complex structure $J_0$ on $\mathbb{R}^4$ with $I_0=\iota^*I_p$. By \cite[Prop.~2.5.4]{MS17},  there exists a linear symplectomorphism $\Psi:(\mathbb{R}^4,\omega_0)\to(\mathbb{R}^4,\omega_0)$ such that $\Psi^*I_0=i$.

After restricting to a sufficiently small $4$-ball $B^4(r)$, we may assume that $\Psi(B^4(r))$ is contained in the domain of the original Darboux chart. We can then compose the Darboux chart with $\Psi$ to obtain a new Darboux chart for which the pullback of $I_p$ at the origin is the standard complex structure $i$. For such a $J$ satisfying (J-2'), alternatively, one may choose the Darboux ball $U_0$ adapted to $J_p$ and make a $C^0$-perturbation of $H$.

Next, in order to apply the Sard--Smale theorem to the space of almost complex structures $J(p,\Sigma)$ and prove that a generic $J$ is regular (or $H^\prime$-regular), we need the space of almost complex structures to admit a Banach manifold structure. This can still be done under the requirement that a given almost complex structure agrees with a fixed almost complex structure on a prescribed subset  allowing us to achieve transversality under (J-1) and (J-2).

\newpage
\subsection{From Floer spheres in $\cp^1 \times \cp^1$ to Floer planes in $\widehat{W}$.}

In the second step, we state Proposition \ref{Proposition: Bottom level}, which describes the existence of a Floer plane in the symplectic completion $\widehat{W}$ arising as a limit of Floer spheres in $\mathbb{C}P^1\times\mathbb{C}P^1$. More precisely, we regard the closed contact hypersurface $(\partial W,\lambda_{\rm taut}|_{\partial W})$ as a closed contact manifold $(Y,\alpha)$. Then there exists a collar neighborhood $U_{\partial W}$ with the canonical identification
\[
    (U_{\partial W},\omega|_{U_{\partial W}}) \cong
    \left((-\varepsilon,\varepsilon)_r\times Y, d(e^r \cdot \alpha) \right).
\]
We always assume that $\varepsilon>0$ is sufficiently small so that $U_{\partial W}$ is disjoint from $\Sigma$, and disjoint from the support of the given Hofer--Zehnder admissible Hamiltonian.

For the almost complex structures, we impose the following additional condition:
\begin{itemize}
    \item[(J-3)] $J$ is SFT-like on the collar neighborhood $U_{\partial W}$.
\end{itemize}
Here, being SFT-like means that $J$ is invariant in the $r$-coordinate and, with respect to the splitting on the neck
\begin{equation}
\label{Equation: Spliting of the tangent space}
    T_{(r,p)}M
    \cong T_r\mathbb{R}\oplus T_pY
    \cong T_r\mathbb{R}\oplus \mathbb{R}\langle R_\alpha(p)\rangle\oplus\xi_p,
\end{equation}
satisfies
\begin{equation}
\label{Equation: SFT-like J}
    J(\partial_r)=R_\alpha,
    \qquad
    J({\xi})=\xi
\end{equation}
where $R_\alpha$ is the Reeb vector field and $\xi=\ker\alpha$ is the contact structure of $(Y,\alpha)$. We denote by $\mathcal{J}(p,Y,\Sigma)$ the subspace of compatible almost complex structures on $(M,\omega)$ satisfying all three conditions (J-1), (J-2), and (J-3).

\medskip

With this setup, we introduce the neck-stretching process, sketch how a
Floer plane may arise as a limit, and be used to prove Theorem
\ref{Theorem: Upper bound from index 3 - nondegenerate W}. For each real number $R>0$, take a smooth strictly increasing function
\[ \varphi_R:(-R-\varepsilon, R+\varepsilon)_a \to (-\varepsilon, \varepsilon)_r\] satisfying
\begin{equation*}
    \varphi_R(a) =\begin{cases}
        a+R & \text{ if } a \in (-R-\varepsilon, -R-\varepsilon/2),\\
        a-R & \text{ if } a \in (R+\varepsilon/2, R+\varepsilon).
    \end{cases}
\end{equation*}
Such a function defines a neck-stretched symplectic manifold and the
corresponding almost complex structure, which we denote by $(M_R,\omega_R)$
and $J_R$, respectively. More precisely, using the identification
$U_{\partial W}\cong(-\varepsilon,\varepsilon)\times Y$, we define $M_R$ as follows:
\[
M_R := \Big( M \setminus (-\varepsilon/2,\varepsilon/2) \times Y \Big) \bigcup_{\varphi_R} \Big( (-R-\varepsilon,\, R+\varepsilon)\times Y \Big),
\]
where the gluing is defined by identifying $U_{\partial W}\setminus\big((-\varepsilon/2,\varepsilon/2)\times Y\big)$ with the corresponding subset of $(-R-\varepsilon,R+\varepsilon)\times Y$ using the function $\varphi_R$. We denote the induced diffeomorphism by \[\Phi_R:M_R \to M.\]

We define the symplectic form $\omega_R$ on $M_R$ by pulling back $\omega$ by $\Phi_R$, i.e.,
\[
\omega_R:=\Phi_R^*\,\omega.
\]
The symplectic form $\omega_R$ then coincides with $\omega$ on $M\setminus U_{\partial W}$, and it is given by
\[
\omega_R = d\big(e^{\varphi_R(a)}\cdot \alpha \big) = e^{\varphi_R(a)} \left(d \alpha + \varphi^\prime _R(a) \cdot da \wedge \alpha\right),
\]
on the neck. Here, we regard $\alpha$ and $d\alpha$ as differential forms defined on the neck via the splitting of the tangent space \eqref{Equation: Spliting of the tangent space}. Under condition (J-3), we define an almost complex structure $J_R$ on the symplectic manifold $(M_R,\omega_R)$. More precisely, we define $J_R$ to agree with $J$ outside the neck
\[
M\setminus U_{\partial W}
\cong_{\Phi_R}
M_R\setminus\big((-R-\varepsilon,R+\varepsilon)\times Y\big),
\]
and extend it over the neck $(-R-\varepsilon,R+\varepsilon)\times Y$ using \eqref{Equation: SFT-like J}, i.e.,
\begin{equation}
\label{Equation: Almost complex structure on the neck}
    J_R(\partial_a)=R_\lambda,\qquad
    J_R|_{\xi}=J|_{\xi}.
\end{equation}
In particular, one readily checks that $J_R$ is $\omega_R$-compatible. We also denote by \[H_R:M_R\to\mathbb{R}\] the Hamiltonian obtained by extending $H'$ to $M_R$.

\medskip

By Corollary \ref{Corollary: existence of Floer spheres}, one can take a sequence of Floer spheres for $\left(H, (\Phi_n^{-1})^* J_n\right)$. This induces a sequence of Floer spheres for $(H_n, J_n)$ on $M_n$. After passing to a subsequence, with a normalisation \eqref{Equation: Normalisation for bottom building}, Floer spheres then converge to a Floer plane \[ u_\infty : \mathbb{R} \times S^1 \to \widehat{W}\] asymptotic to $p$ at $-\infty$, and converges to a closed Reeb orbit $\gamma$ at $\infty$. Then, as in Corollary~\ref{Corollary: Apriori bound on HZ-capacity}, the positivity of the $L^2$-energy $E(w_\infty)$ implies that
\[
-\min H < \mathcal{A}(\gamma).
\]

Note that apriori the the limit $u_\infty$ has a domain with possible non-empty set of punctures. Studying the full SFT limit, we will prove that this is not the case. Also, considering Fredholm index related to the curves in limit building, we can narrow down the conditions that $\gamma$ satisfies and  as a result we would prove Theorem \ref{Theorem: Upper bound from index 3 - nondegenerate W}.

\medskip

We now introduce moduli space problems arising as the SFT limits of the Floer spheres $\{u_n\}$. The domains of the curves appearing in a limit building are contained in one of the following symplectic manifolds:
\begin{itemize}
    \item the symplectic completion of $W$
    \[ \widehat{W} := W \cup_Y \big([0,\infty)\times Y\big), \]
    \item the symplectization of $Y$
    \[ \mathbb{R}\times Y, \]
    \item the symplectic completion of $V$
    \[ \widehat{V} := \big((-\infty,0]\times Y\big)\cup_Y V. \]
\end{itemize}
Using the collar neighborhood
\[
    U_{\partial W}\cong\big((-\varepsilon,\varepsilon)\times Y,\; d(e^a \alpha)\big),
\]
the symplectic forms on the completions $\widehat{W}$ and $\widehat{V}$ are obtained by extending $\omega|_W$ and $\omega|_V$ with $d(e^a \alpha)$ along the cylindrical ends. The symplectisation $\mathbb{R}\times Y$ is equipped with the symplectic form $d(e^a\alpha)$. We denote the symplectic forms by $\widehat{\omega}_\bigstar$ for $\bigstar\in\{V,W,Y\}$, and omit the subscript when it is clear from the context.

SFT-like almost complex structure $J$ on $(M,\omega)$ extends, as in \eqref{Equation: Almost complex structure on the neck}, to compatible almost complex structures $J_\bigstar$ on each of these symplectic manifolds. We also omit the subscript when it is clear from the context. Hamiltonian compactly supported in $\operatorname{int}(W)$ also extends to $\widehat{W}$. With such Hamiltonian and almost complex structures, we consider the following moduli spaces problems:
\begin{itemize}
    \item Problem (W): Floer spheres in $\widehat{W}$
    \item Problem (W-J): pseduoholomorphic spheres in $\widehat{W}$
    \item Problem (Y): pseudoholomorphic spheres in $\RR \times Y$
    \item Problem (V): pseudoholomorphic spheres in $\widehat{V}$.
\end{itemize}
Here, by spheres we mean that the domain is a punctured $\mathbb{CP}^1$ equipped with the canonical complex structure $i$.

\medskip

Before describing each of these moduli space problems, we make a few technical remarks. First, throughout this subsection, we assume that the Reeb flow $R_\alpha$ is nondegenerate. This assumption will be used at several points; for example, it ensures the uniqueness of asymptotic limits. See \cite{Siefring17} for an example showing that such uniqueness may fail in the case of a degenerate Reeb flow. Although the extended Hamiltonian necessarily possesses degenerate critical points, precise moduli space problem appearing among Problem (W) would still be a Fredholm problem since all the asymptotic limits are nondegenerate; see Lemma \ref{Lemma: Nondegeneracy for limit curves}.

The Fredholm theory underlying these moduli space problems is slightly different from that in the previous subsection due to the noncompactness of the symplectic manifolds. For example, one has to consider Sobolev spaces with exponential weight. We also emphasize that, for pseudoholomorphic spheres, the Fredholm operator is defined using the Banach manifold of unparametrized immersed surfaces.

In the following, we recall some fundamental properties of pseudoholomorphic and Floer spheres in symplectisation and completions. The study of pseudoholomorphic curves in these symplectic manifolds was initiated by Hofer \cite{Hofer93} and further developed in numerous works, for example \cite{HWZ-1,HWZ-3, HWZ03}; for the Floer case, see \cite{BO13}. In particular, we refer to \cite[Sec.~7.4]{AEK24} for Problem (W). We begin with Problem (W).

\newpage

\noindent \textbf{The problem (W)}: Let $H: \widehat{W} \to \mathbb{R}_{\le 0}$ be an extension of a Hofer-Zehnder admissible Hamiltonian compactly supported in the interior $\operatorname{int}(W)$. Let $U_{\rm min}$ be a Darboux ball centered at $p$, where $H$ attains its minimum value and is constant, and denote by
\[H':\widehat{W} \to \mathbb{R}_{\le 0}\] the $C^0$-perturbation supported in $U_{\rm min}$ as in \eqref{Equation: C0-perturbation inside Darboux Ball}. We consider finite energy punctured Floer curves in $\widehat{W}$ for $J$,
namely maps \[ u:\mathbb{R}\times S^1 \setminus \Gamma \to \widehat{W}, \] where $\Gamma$ is a finite subset of $\mathbb{R}\times S^1$, satisfying
\begin{equation}
    \label{Equation: Floer equation on W}
    \partial_s u + J(u(s,t))\partial_t u = \nabla H^\prime(u(s,t)).
\end{equation}
and 
\begin{equation}
\label{Equation: Finite Hofer-energy condition on W}
0<E(u)<\infty.
\end{equation}
Since $H'$ vanishes on the cylindrical end $[0,+\infty)_a\times Y$, the Floer equation reduces to the Cauchy--Riemann equation there. The finite energy condition is defined by the Hofer energy
\begin{equation}
\label{Equation: Hofer Energy in W}
E(u):=\sup_{\varphi} \left(\int_{\mathbb{R} \times S^1} \| \partial_s u \|_{J, \varphi}^2 \right)
\end{equation}
where the supremum is taken over all strictly increasing smooth functions
\begin{equation}
\label{Equation: Profile function on W}
    \varphi:(-\varepsilon,\infty) \to (-1,0),\quad\lim_{a \to \infty}\varphi(a)=0
\end{equation}
such that the induced symplectic form
\[
\omega_\varphi := d(e^\varphi \alpha)
\]
agrees with $\widehat{\omega}$ on $(-\varepsilon,-\varepsilon/2) \times Y \subset W$. The norm $\| \cdot \|_{J, \varphi}$ is induced by the metric $g_{J,\varphi}(\cdot ,\cdot) = \omega_\varphi(\cdot , J\cdot)$.

The finiteness of the Hofer energy implies that the positive asymptotic limit is either a $1$-periodic orbit of Hamiltonian vector field $X_{H'}$ or a closed Reeb orbit. More precisely, for a sequence of real numbers $\{s_i\}$ with $s_i\to \pm\infty$, the loops $ \gamma_i(t) = u(s_i, t) $
have a subsequence converging to one of the following:
\begin{itemize}
    \item a $1$-periodic orbit of $X_{H'}$ or
    \item a closed Reeb orbit on $Y$. In this case, for sufficiently large $i$, we have \[\gamma_i \in  \widehat{W}\setminus W.\] Writing $\gamma_i=(a_i,\bar{\gamma}_i)$,  there exists a closed Reeb orbit $\gamma_\infty$ and $t_0\in S^1$ such that
    \[
        \lim_{i\to\infty}a_i(t)=+\infty \quad\text{and}\quad
        \lim_{i\to\infty}\bar{\gamma}_i(t)=\gamma_\infty\bigl(\pm T_{\infty}(t-t_0)\bigr)
    \]
    where $T_\infty$ denotes the period of the Reeb orbit
    $\gamma_\infty$. 
\end{itemize}

In the first case, the limiting Hamiltonian orbit may depend on the choice of the sequence
$\{s_i\}$ with $s_i\to+\infty$ and of the subsequence, due to the degeneracy of the extended Hamiltonian $H'$.
We nevertheless use the notation $u(+\infty)\in \widehat{W}$ to indicate that the positive end remains in the compact part of $\widehat{W}$, without implying that
$u$ has a unique limit as $s\to+\infty$.\\

On the other hand, in the second case, the asymptotic Reeb orbit is uniquely determined as a consequence of the nondegeneracy of the Reeb flow. The choice of the sequence ${s_i}$ also does not affect the limit. More precisely, writing $u=(a,\bar u)$ on a domain near $+\infty$, we have, 
\begin{equation}
\label{Equation: Description at positive puncture for problem W}
\begin{aligned}
    \lim_{s \to +\infty} a(s,t) = +\infty\quad \text{and} \quad
    \lim_{s \to +\infty} \bar{u}(s,t) =\gamma_\infty (T_{\infty}(t+t_0)).
\end{aligned}
\end{equation}
We denote this case by \[ u(+\infty)=\gamma_\infty, \] and refer to $+\infty$ as a \textit{positive puncture}.

At each puncture $z\in\Gamma$, identifying $\mathbb{R}\times S^1$ with $\mathbb{C}^*$ in the usual way, the loops \[\gamma_{z,s}(t)=u(z+ \exp(-2 \pi (s+it))\] either converge to a point in $\widehat{W}$ or to a Reeb orbit $\gamma_z$ in the sense of \eqref{Equation: Description at positive puncture for problem W}, as $s \to -\infty$. We refer to these two cases as a \textit{removable puncture} and a \textit{positive puncture}, respectively, and denote the latter case by $u(z)=\gamma_z$. We note that, unlike the discussion at $+\infty$, the limit at a removable puncture is well-defined, and the Floer curve extends smoothly over the puncture and satisfies the Floer equation at $z$ as well.

The Hofer energy can be written as
\begin{equation}
\label{Equation: Hofer energy on W in action}    
E(u)= \mathcal{A}_{H'}(u(+\infty))-\mathcal{A}_{H^\prime}(u(-\infty))+\sum_{z \in \Gamma^+} \mathcal{A}(\gamma_z)
\end{equation}
where $\Gamma^+$ is the set of positive punctures. The action $\mathcal{A}(\gamma_z)$ is the period of $\gamma_z$. The Hamiltonian action at infinity is defined as
\begin{equation}
\label{Equation: Hamiltonian action at +infty}
    \mathcal{A}_{H'}(u(+\infty)) = \begin{cases}
    \mathcal{A}_{H'}(p_\infty) &\text{if $u(+\infty) \in \widehat{W}$}\\
    \mathcal{A}(\gamma_\infty) &\text{if $u(+\infty)=\gamma_\infty$}.
    \end{cases}
\end{equation}
The action $\mathcal{A}_{H'}(u(-\infty))$ is defined similarly, except that
in the second case it is given by $-\mathcal{A}(\gamma_\infty)$. Here, the limit $p_\infty$ in the case $u(+\infty)\in\widehat{W}$ may not be well-defined, but note that the Hamiltonian action of any limit along a subsequence is well-defined.

We note that the Hamiltonian action functional depends on the choice of
the function $\varphi$, while by exactness it is independent of the choice
of a capping. It can be written as
\[
    \mathcal{A}_{H'}(\gamma) = \int_{\gamma} e^\varphi \cdot \alpha - \int_{S^1} H'(\gamma(t)).
\]
Nevertheless, the Hofer--Zehnder admissibility of $H$ implies that every $1$-periodic orbit of $H'$ is constant, and hence
\[
    \mathcal{A}_{H'}(p_\infty)=-H'(p_\infty),
\]
which is independent of the choice of $\varphi$. The Hamiltonian action of the asymptotic orbit is also consistent with this expression, since $\varphi(\infty)=0$ with vanishing $H'$.

\medskip

We now focus on the case when
\begin{equation*}
    \quad u(+\infty)=\gamma, \quad u(-\infty) = p , \quad \text{and}\quad 
    \Gamma = \varnothing.
\end{equation*}
In this case, the convergence to $p$ is indeed a limit, since $p$ is nondegenerate, i.e., \[ \lim_{s \to -\infty} u(s,t)=p. \] We denote by $\mathcal{M}(p,\gamma;J,H')$ the moduli space of these finite-energy Floer cylinders.\

\medskip

\noindent\textbf{Fredholm index.} As in Problem (M), the moduli space admits a natural $\mathbb{C}^*$-action, but in this case the action need not be free. More precisely, there may exist $\tau\in S^1$ such that
\[ u(s,t)=u(s,t+\tau) \]
for every $(s,t)\in\mathbb{R}\times S^1$. In this case, $\tau$ can be chosen to be a rational number $1/m$ for some natural number $m>1$, and $\gamma$ is an $m$-fold cover of a closed Reeb orbit $\gamma'$, which is not necessarily a prime orbit. Then the curve $v:\RR \times S^1 \to \widehat{W}$ defined by \[ v(s,t)=u\left( \frac{s}{m},\frac{t}{m}\right) \] defines a cylinder map which also solves the Floer equation \eqref{Equation: Floer equation on W} but for a Hamiltonian $H'/m$ with asymptotic conditions
\[
    v(+\infty)=\gamma' \quad \text{and} \quad v(-\infty) = p.
\]
We say that $u$ is multiply covered in this case, and otherwise call it \textit{simple}, as in Problem (M). Recall that such a case does not occur for curves in $\mathcal{M}(p,\Sigma;J,H',A)$, since the line class $A$ cannot be written as a multiple of another homology class.

Denote the subset of simple Floer cylinder by 
\[\mathcal{M}^*(p,\gamma;J,H') \subset \mathcal{M}(p,\gamma;J,H').\]
The virtual dimension of the moduil space is given as
\begin{equation}
    \operatorname{vdim}\mathcal{M}^*(p,\gamma;J, {H^\prime})
    = \mu_{\mathrm{CZ}}(\gamma)+1 - \mu_{\mathrm{CZ}}(p)= \mu_{\mathrm{CZ}}(\gamma)-1.
\end{equation}
Here, we have used the fact that \[ c_1(\widehat{W}) = 0 \quad \text{and} \quad \chi(\mathbb{R}\times S^1)=0.\] The number $+1$ comes from the freedom of $t_0$ in \eqref{Equation: Description at positive puncture for problem W}. 

Lemma \ref{Lemma: Somewhere injectivity of Floer spheres} also applies in this setting. More precisely, for every simple Floer plane $u\in\mathcal{M}^*(p,\gamma;J,H')$ for a closed Reeb orbit $\gamma$, there exists a neighborhood $U=U(H',J)$ of $p$ whose inverse image $u^{-1}(U) \subset \mathbb{R}\times S^1$ contains regular somewhere injective points. Therefore, for a generic $J\in\mathcal{J}(p,Y,\Sigma)$, the moduli space $\mathcal{M}^*(p,\gamma;J,H')$ is a smooth manifold of dimension $\mu_{\rm CZ}(\gamma)-1$ with a smooth free $\mathbb{C}^*$-action. We say that $J\in\mathcal{J}(p,Y,\Sigma)$ is \textit{$H'$-regular on $\widehat{W}$} if, for every Reeb orbit $\gamma$ and every simple Floer plane $u\in\mathcal{M}^*(p,\gamma;J,H')$, the linearised operator $D_u$ is surjective.

Since the contact form $\lambda$ is nondegenerate, there are countably many choices of $\gamma$. Therefore, since a countable intersection of residual sets is still residual, the set of $H'$-regular almost complex structures forms a residual subset of $\mathcal{J}(p,Y,\Sigma)$. For $H'$-regular $J$, the moduli space $\mathcal{M}^*(p,\gamma;J,H')$ is non empty only if
\begin{equation}
\label{Equation: Obstruction on simple Floer cylinders in W - 1}
    \mu_{\rm CZ}(\gamma)-1 \ge 2.    
\end{equation}

We also note that Lemma~\ref{Lemma: Minimal escape energy} also holds for normalisation given by \eqref{Equation: Normalisation for bottom building}. From the energy identity \eqref{Equation: Hofer energy on W in action}, we conclude that the moduli space is non empty only if
\begin{equation}
\label{Equation: Obstruction on simple Floer cylinders in W - 2}
\mathcal{A}(\gamma) > -\min H'.
\end{equation}
This conclusion does not require the curve to be simple.

\medskip

As in the case of Floer spheres in $(M, \omega)$, the following proposition gives the existence of a finite-energy Floer plane in $\widehat{W}$ for every $J\in\mathcal{J}(p,Y,\Sigma)$,  without assuming that $J$ is $H'$-regular.

\begin{proposition}
\label{Proposition: Bottom level}
    For every $J \in \mathcal{J}(p,Y,\Sigma)$, there exists a finite-energy Floer plane asymptotic to the minimum $p$ at $-\infty$ and to a closed Reeb orbit at $+\infty$. More precisely, there exists a closed Reeb orbit $\gamma$ such that
    \[
        \mathcal{M}(p,\gamma;J,H')\neq\varnothing.
    \]
\end{proposition}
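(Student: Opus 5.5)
The plan is to run the neck-stretching argument sketched before the statement. Fix $J\in\mathcal{J}(p,Y,\Sigma)$ and, for each $n\in\mathbb{N}$, form the stretched manifold $(M_n,\omega_n,J_n)$ together with the extended Hamiltonian $H_n$. Since $\Phi_n:M_n\to M$ is a diffeomorphism with $\omega_n=\Phi_n^*\omega$, the structure $(\Phi_n^{-1})^*J_n$ lies in $\mathcal{J}(p,\Sigma)$. Conditions (J-1) and (J-2) are untouched because the neck is disjoint from $U_\infty$ and $U_{\rm min}$, and $H'$ vanishes on the neck. Corollary~\ref{Corollary: existence of Floer spheres} therefore gives Floer spheres $u_n\in\mathcal{M}(p,\Sigma;J_n,H_n,A)$ for every $n$, each of energy $\omega(A)+H'(p)$. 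Normalise them as in \eqref{Equation: Normalisation for bottom building}, so that $u_n(0,0)\in\partial U_0$ and the negative half-cylinder lies in $U_0$.

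Next, apply SFT compactness for Hamiltonian-perturbed curves in the neck-stretching limit, in the sense of Bourgeois--Eliashberg--Hofer--Wysocki--Zehnder adapted to Floer data as in \cite{BO13,AEK24}. The energy is uniformly bounded, and the Hofer energy on the neck is controlled by the $\omega$-energy. After passing to a subsequence, $u_n$ converges to a holomorphic building with a bottom level in $\widehat W$ carrying the Hamiltonian, some middle levels in $\mathbb{R}\times Y$, and a top level in $\widehat V$. The component containing the normalised point is a finite-energy Floer curve $u_\infty:\mathbb{R}\times S^1\setminus\Gamma\to\widehat W$. Exactly as in the proof of Proposition~\ref{Proposition: moduli space of Floer spheres}, it satisfies $u_\infty(-\infty)=p$, because $p$ is the only $1$-periodic orbit in $\bar U_0$ and the minimal escape energy Lemma~\ref{Lemma: Minimal escape energy} applies. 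Since $\Sigma\subset V$ and every $u_n$ meets $\Sigma$, the building must actually reach $\widehat V$. I then need to rule out two possibilities: that $u_\infty(+\infty)$ stays in the compact part of $\widehat W$, and that $\Gamma\neq\varnothing$.

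Suppose first that $u_\infty(+\infty)\in\widehat W$ is a critical point $q$. Then the argument in the Claim of Proposition~\ref{Proposition: moduli space of Floer spheres} applies verbatim: the remaining part of the building would have to carry a sphere-like piece of $\omega$-area strictly between $0$ and $\omega(A)$, which contradicts the minimality of $A$. Concretely, the loops $u_n(s_n,\cdot)$ near $q$ are capped, and their action is squeezed by \eqref{Equation: Proof of compactness - 2}.

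Suppose next that $\Gamma$ is nonempty. This is where I expect the main obstacle. My first approach is to use the divisor $\Sigma$ together with (J-1). Each $u_n$ meets $\Sigma$ exactly once and transversally, and positivity of intersections passes to the limit. Hence exactly one curve of the top level in $\widehat V$ meets $\Sigma$, and it does so once; every other top-level component is disjoint from $\Sigma$. A holomorphic curve in $\widehat V$ with only negative ends that misses $\Sigma$ lives in $V\setminus\Sigma$, which is exact, since it is the complement of the diagonal and the anti-diagonal neighbourhood picture gives it a primitive. Such a curve has energy equal to minus the sum of its negative-end actions, which is impossible for a nonconstant curve. So every top-level component intersects $\Sigma$, and there is exactly one top-level component. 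The building comes from a genus-zero curve with a single positive end at $\Sigma$. If there is a single top-level component, then by genus zero and connectedness, extra positive punctures on $u_\infty$ would have to be capped by planes in the intermediate levels and in $\widehat W$. In $\widehat W$, which is exact with $H'$ vanishing near infinity, planes without the Hamiltonian would have positive energy equal to their action. That by itself does not contradict anything, so here I would instead invoke the energy identity: the total action bookkeeping against $\omega(A)+H'(p)<\omega(A)$ must be checked. If this turns out to be insufficient, I would fall back on the fact that the statement only asks for existence of $\mathcal{M}(p,\gamma;J,H')\neq\varnothing$. One could then alter the argument by capping: glue the planes at the extra punctures abstractly, or note that the component through $p$ has a distinguished positive end at $+\infty$, namely the one connecting to the $\Sigma$-component in the tree. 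Disconnecting the remaining punctures in the tree structure forces them to be capped below by planes in $\widehat W$, each with positive action. I would then argue via the cylindrical structure (the $\mathbb{C}^*$-normalisation at $0$ and $\infty$ is preserved in the limit, since the marked point at $\infty$ maps to $\Sigma$) that these bubbles split off at points of the cylinder which lie in the interior of the domain. Since the $u_n$ have no interior punctures and the limit convergence on compact sets of $\mathbb{R}\times S^1$ away from bubbling points is $C^\infty_{\rm loc}$, such punctures can only arise as bubbling points. Bubbling into $\widehat W$ is then excluded by exactness, and bubbling into $\mathbb{R}\times Y$ or $\widehat V$ is excluded by the $\Sigma$-intersection count above. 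This leaves $\Gamma=\varnothing$ and $u_\infty\in\mathcal{M}(p,\gamma;J,H')$.
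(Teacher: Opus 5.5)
\textbf{There is a genuine gap, in the step $\Gamma=\varnothing$.} The first part of your proposal is sound. Neck-stretching, the Floer spheres from Corollary~\ref{Corollary: existence of Floer spheres}, the convergence $u_\infty(-\infty)=p$, excluding a critical point at $+\infty$, and the $\Sigma$-count showing that $\widehat V$ carries a single component $w_\infty$ all work. Your area-quantization argument at $+\infty$ is valid, though the paper uses the shorter action-monotonicity argument of Lemma~\ref{Lemma: Nondegeneracy for limit curves}. In the $\Sigma$-count you must also exclude components whose image lies in $\Sigma$; these have energy at least $\omega([\Sigma])=4\pi>2\pi$.

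The failure is in ruling out punctures of $u_\infty$, and it comes from reversing the orientation. A positive puncture of $u_\infty$ at a bubbling point is matched with a \emph{negative} puncture of a curve in a level above. It cannot be ``capped by planes in $\widehat W$''; such planes cap negative ends from below. As a result, your final justifications do not hold:
\begin{itemize}
\item Exactness of $\widehat W$ excludes only bubbles with no positive puncture. It does not exclude a $\widehat W$-bubble attached at a node that has positive punctures.
\item The $\Sigma$-count says nothing about bubbles in $\mathbb{R}\times Y$.
\item Energy bookkeeping cannot rescue this. Suppose $u_\infty$ had an extra positive end $\gamma_z$ joined, like $\gamma_1$ at $+\infty$, to $w_\infty$ through trivial cylinders. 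Then $E(u_\infty)=\mathcal{A}(\gamma_1)+\mathcal{A}(\gamma_z)+H'(p)$ and $E(w_\infty)=2\pi-\mathcal{A}(\gamma_1)-\mathcal{A}(\gamma_z)$, which is fully consistent with total energy $\omega(A)+H'(p)$.
\end{itemize}
Your fallback (``glue the planes abstractly'', ``disconnect the remaining punctures'') does not produce an element of $\mathcal{M}(p,\gamma;J,H')$ either.

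\textbf{The missing idea is topological; it is the paper's Claim~2.} Every nonconstant finite-energy curve in $\widehat W$ has a positive puncture, by exactness. Every curve in $\mathbb{R}\times Y$, even one entered through a negative puncture, also has a positive puncture, by the $d\alpha$-energy identity. So start at a bubbling point of $u_\infty$, whether positive or removable, and always leave the current curve through a further positive puncture. This gives a path in the bubble subtree moving strictly away from $u_\infty$, and it can only terminate at a component of $\widehat V$. By your $\Sigma$-count, the only such component is $w_\infty$. But $w_\infty$ is already joined to $u_\infty$ through the chain of principal curves starting at $+\infty$. Reaching it again would close a cycle in the genus-zero building; equivalently, gluing would produce genus at least $1$. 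Never reaching it contradicts finiteness of the tree. This argument is what forces $\Gamma^0=\varnothing$, and without it your proof does not establish the proposition.
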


The proof will be given in the section~\ref{section: SFT-limit}. As in the sketch, such a
Floer plane arises as a limit of a sequence of Floer spheres
\[
    u_n\in\mathcal{M}(p,\Sigma;J_n,H_n,A_n)
\]
parametrised by the normalisation \eqref{Equation: Normalisation for bottom building}, although the limiting curve may apriori have punctures. Later, in the proof of Theorem \ref{Theorem: Upper bound from index 3 - nondegenerate W}, we will in fact show that, for generic $J\in\mathcal{J}(p,Y,\Sigma)$, the resulting curve is simple and its asymptotic Reeb orbit $\gamma$ has Conley--Zehnder index $3$. In other words, for generic $J$, we obtain
\[
    \mathcal{M}^*(p,\gamma;J,H')\neq\varnothing.
\]

To prove the proposition, or equivalently, to describe the full SFT-limit building, we now review the remaining moduli problems satisfied by the limit curves, with Problem (W) having been discussed above. The equations under consideration are genuine Cauchy--Riemann equations, since the Hamiltonian $H'$ vanishes outside $W$, while the Hamiltonian term is scaled out during the bubbling analysis.

\vspace{.5cm}

\noindent \textbf{The problem (W-J)}: Let $J$ be an almost complex structure on $\widehat{W}$ that is SFT-like on the cylindrical end $\mathbb{R}_{\geq 0}\times Y$. Consider finite-energy punctured $J$-holomorphic spheres in $\widehat{W}$, namely maps \[ u:\cp^1 \setminus \Gamma \to \mathbb{R}\times \widehat{W}, \] where $\Gamma$ is a finite subset of $\cp^1$, satisfying
\begin{equation}
    \label{Equation: J-holomorphic equation on W-J}
    du + J \circ du \circ i = 0,
\end{equation}
and
\begin{equation}
\label{Equation: Finite Hofer-energy condition on W-J}
0<E(u)<\infty.
\end{equation}
As in problem (W), the finite energy condition is defined in terms of the Hofer energy
\begin{equation}
\label{Equation: Hofer Energy in W-J}
E(u):=\sup_{\varphi} \left(\frac{1}{2} \int_{\cp^1 \setminus \Gamma} \| d u \|_{J, \varphi}^2 \right)
\end{equation}
where the supremum is taken over all strictly increasing smooth functions
\[\varphi:[-\varepsilon,\infty) \to (-1,0), \quad \lim_{a \to \infty} \varphi(a)=0\]
such that the induced symplectic form $\omega_\varphi := d(e^\varphi \alpha)$. The operator norm $\|du\|_{J,\varphi}$ is taken with respect to the induced metric $g_{J,\varphi}(\cdot,\cdot)=\omega_\varphi(\cdot,J\cdot)$.

\medskip

As the behaviors near punctures for Problem (W), the finite energy condition implies that one of the following mutually exclusive cases holds on each puncture $z \in \Gamma$:
\begin{itemize}
    \item Positive puncture: Every sequence $\{z_n\}_{n\in\mathbb{N}}$ converging to $z$ eventually leaves every compact subset of $\widehat{W}$. Equivalently, writing $u=(a,\bar{u})$, we have
    \[ \lim_{z'\to z}a(z')=+\infty. \]
    \item Removable puncture: Every sequence
    $\{z_n\}_{n\in\mathbb{N}}$ converging to $z$ is eventually contained in
    a compact subset of $\widehat{W}$.
\end{itemize}
In the positive puncture case, the map $u$ converges to a closed Reeb orbit $\gamma_z$ on $Y$ as in \eqref{Equation: Description at positive puncture for problem W}. In the removable puncture case, $u$ converges to a point in $\widehat{W}$ and extends smoothly over $z_0$ as a $J$-holomorphic curve by the removal of singularities.

By the exactness of the symplectic form, there are no nonconstant $J$-holomorphic planes without positive punctures. Also the Hofer energy can be written as
\begin{equation}
\label{Equation: Hofer energy on W-J in action}    
E(u)= \sum_{z \in \Gamma^+} \mathcal{A}(\gamma_z)
\end{equation}
where $\Gamma^+$ is the set of positive punctures.

\medskip

\noindent\textbf{Fredholm index.} As in Problem (W), a finite energy $J$-holomorphic curve $u$ may be multiply covered. Nevertheless, for a generic $J$, the linearized Fredholm operator associated to every simple $J$-holomorphic curve is surjective. Consequently, the moduli space of simple $J$-holomorphic curves with the same set of punctures and the same asymptotic conditions is a smooth manifold whose dimension at $u$ is equal to the Fredholm index
\begin{equation}
\label{Equation: Fredholm index of J-holomorphic curve in W}
    \operatorname{Ind}(u) = \sum_{z\in \Gamma^+} \mu_{\rm CZ}(\gamma_z) - \# \Gamma^+,
\end{equation}
where $\Gamma^+$ denotes the set of positive punctures. 

In the case of a single positive puncture, i.e., for $J$-holomorphic planes, we have
\begin{equation}
\label{Equation: Obstruction of J-holomorphic plane in W - 1}
    \mu_{\rm CZ}(\gamma_\infty)\geq 1
\end{equation}
for generic $J$. Indeed, for a simple $J$-holomorphic plane, the Fredholm index is non-negative for generic $J$, which gives \eqref{Equation: Obstruction of J-holomorphic plane in W - 1}. Every non-simple $J$-holomorphic plane is a multiple cover of a simple $J$-holomorphic plane. Its asymptotic orbit is therefore a multiple cover of a Reeb orbit with positive Conley--Zehnder index, and hence the same conclusion holds for non-simple planes.

\newpage

\noindent \textbf{The problem (Y)}: Let $J$ be a $\mathbb{R}$-invariant almost complex structure on $\mathbb{R}\times Y$. Consider finite energy punctured $J$-holomorphic sphere in $\RR \times Y$, namely maps \[ u:\cp^1 \setminus \Gamma \to \mathbb{R}\times Y, \] where $\Gamma$ is a finite subset of $\cp^1$, satisfying
\begin{equation}
    \label{Equation: J-holomorphic equation on Y}
    du + J \circ du \circ i = 0,
\end{equation}
and
\begin{equation}
\label{Equation: Finite Hofer-energy condition on Y}
0<E(u)<\infty
\end{equation}
Here, the finite energy condition is defined in terms of the Hofer energy
\begin{equation}
\label{Equation: Hofer Energy in Y}
E(u):=\sup_{\varphi} \left(\frac{1}{2} \int_{\cp^1 \setminus \Gamma} \| d u \|_{J, \varphi}^2 \right)
\end{equation}
where the supremum is taken over all strictly increasing smooth functions
\[\varphi:(-\infty,\infty) \to (-\infty,0)\]
such that the induced symplectic form $\omega_\varphi := d(e^\varphi \alpha)$. The operator norm $\|du\|_{J,\varphi}$ is taken with respect to the induced metric $g_{J,\varphi}(\cdot,\cdot)=\omega_\varphi(\cdot,J\cdot)$.

Writing $u = (a, \bar{u})$, the finite Hofer energy condition implies that one of the following mutually exclusive cases holds on each puncture $z \in \Gamma$:
\begin{itemize}
    \item positive puncture: $\lim_{z' \to z} a(z') = +\infty$
    \item negative puncture: $\lim_{z' \to z} a(z') = -\infty$
    \item removable puncture: $\lim_{z' \to z} a(z') = a(z)$ exists in $\RR$.
\end{itemize}
In the third case, one can show that $u$ smoothly extends over $z$. For the first case, there exists a closed Reeb orbit $\gamma_{z}$ with the convergence is given in the sense of \eqref{Equation: Description at positive puncture for problem W}. The negative puncture case is similar, and the convergence is described by
\begin{equation}
\label{Equation: Description at negative puncture for problem Y}
    \lim_{s \to -\infty} \bar{u}\left(z_0+ \exp ({2\pi (s+i t})\right) = \gamma_{z_0} (T_{\gamma_{z_0}}(t+t_0))~\text{ for some }t_0\in S^1,
\end{equation}
where the addition is taken with respect to a holomorphic coordinate on $\mathbb{C}P^1$ near $z$. Similarly, the Hofer energy can be written as
\begin{equation}
\label{Equation: Hofer energy on Y in action}    
    E(u)= \sum_{z \in \Gamma^+} \mathcal{A}(\gamma_z)
\end{equation}
where $\Gamma^+$ is the set of positive punctures. For the symplectisation $\mathbb{R}\times Y$, there is another type of $L^2$-energy, called the $d\alpha$-energy. Although $d\alpha$ is not symplectic, it still defines a pseudonorm $d\alpha(\cdot,J\cdot)$ on tangent spaces. We define the $d\alpha$-energy by
\[
E_{d\lambda}(u) := \frac{1}{2}\int_{\cp^1 \setminus \Gamma} \| d u \|^2.
\]
By Stokes' theorem, we have
\begin{equation}
\label{Equation: dlambda energy on Y in action}
    E_{d\alpha}(u)
    =\sum_{z\in\Gamma^+}\mathcal{A}(\gamma_z)
    -\sum_{z\in\Gamma^-}\mathcal{A}(\gamma_z),
\end{equation}
where $\Gamma^-$ is the set of negative punctures. In particular, the non-negativity of the $d\alpha$-energy implies that either $\Gamma^+$ is non-empty or both $\Gamma^\pm$ are empty. The latter case is again excluded by the exactness of $\omega_\varphi$, and we conclude that $\Gamma^+$ is never empty.

We note that the $d\alpha$-energy vanishes if and only if the tangent planes of the curve $u$ are contained in $T_r\mathbb{R}\oplus\mathbb{R}\langle R_\alpha(p)\rangle$ with respect to the splitting \eqref{Equation: Spliting of the tangent space}, i.e.,
\[
    \pi_\xi\circ du=0,
\]
where
\[
    \pi_\xi:T_{(a,p)}(\mathbb{R}\times Y)\to\xi_p
\]
is the projection onto the contact plane with respect to this splitting. In this case, the image of the curve is contained in a trivial cylinder
\[
    u_\gamma(s,t)=(Ts,\gamma(Tt))
\]
over a prime closed Reeb orbit $\gamma$ of period $T$. In other words, $u$ is a branched cover of a trivial cylinder $u_\gamma$: there exists a holomorphic branched covering map
\[
    \Phi:(\cp^1\setminus\Gamma , i )\to (\mathbb{R}\times S^1, i)
\]
such that
\[
    u=u_\gamma\circ\Phi.
\]

Again, note that $J$-holomorphic curves $u$ with finite Hofer energy may be multiply covered, and may even be a branched cover of a trivial cylinder
$u_\gamma$. Nevertheless, for a generic choice of SFT-like almost complex structure on $\mathbb{R}\times Y$, the linearised Fredholm operator associated to every simple $J$-holomorphic curve is surjective. Consequently, the moduli space of simple $J$-holomorphic curves with the same set of punctures and the same asymptotic conditions is a smooth manifold whose dimension at $u$ is equal to the Fredholm index
\begin{equation}
\label{Equation: Fredholm index of J-holomorphic curve in Y}
    \operatorname{Ind}(u) = \sum_{z\in \Gamma^+} \mu_{\rm CZ}(\gamma_z) - \sum_{z \in \Gamma^-} \mu_{\rm CZ}(\gamma_z) -2+ \# \Gamma.
\end{equation}

If $u_\gamma$ is the trivial cylinder over a prime closed Reeb orbit, then its Fredholm index is zero. Up to the automorphisms of the cylinder, the trivial cylinder is the only simple $J$-holomorphic curve for which every $\mathbb{R}$-translation represents the same unparametrised curve. Hence, in all other cases, $\mathbb{R}$-translation of a simple $J$-holomorphic curve $u$ produces a distinct $J$-holomorphic curve in $\mathbb{R}\times Y$, and therefore
\begin{equation*}
    \operatorname{Ind}(u) \ge 1.
\end{equation*}
If there is precisely one positive puncture, namely $\Gamma^+=\{\infty\}$, then the index can be rewritten as
\begin{equation}
\label{Equation: Fredholm index of J-holomorphic curve in Y - single + pucture}
    \operatorname{Ind}(u) = (\mu_{\rm CZ}(\gamma_\infty)-1) - \sum_{z \in \Gamma^-}(\mu_{\rm CZ}(\gamma_z) -1).
\end{equation}

\medskip

\noindent \textbf{The problem (V)}: Let $J$ be a complex strucure on $\widehat{V}$ that agrees with product complex structure $i \oplus i$ near $\Sigma$ and that is $\mathbb{R}$-invariant on the cylindrical ends
\[
(-\infty, \epsilon)\times Y \subset \widehat{V}.
\]
Consider finite energy punctured $J$-holomorphic sphere in $\widehat{V}$, namely maps \[ u:\cp^1 \setminus \Gamma \to \widehat{V}, \] where $\Gamma$ is a finite subset of $\cp^1$, satisfying
\begin{equation}
    \label{Equation: J-holomorphic equation on V}
    du + J \circ du \circ i = 0,
\end{equation}
and
\begin{equation}
\label{Equation: Finite Hofer-energy condition on V}
0<E(u)<\infty.
\end{equation}

The energy condition is defined by the Hofer energy
\begin{equation}
\label{Equation: Hofer Energy in V}
E(u):=\sup_{\varphi} \left( \frac{1}{2}\int_{\cp^1 \setminus \Gamma} \| d u \|_{J, \varphi}^2 \right)
\end{equation}
where the supremum is taken over all strictly increasing smooth functions
\[
\varphi:(-\infty,\varepsilon] \rightarrow (0,1),\quad \lim_{a \to -\infty}\varphi(a)=0,
\]
such that the induced symplectic form $\omega_\varphi := d(e^\varphi \alpha)$ agrees with $\widehat{\omega}$ on $(\varepsilon/2,\varepsilon) \times Y \subset \widehat{V}$. The operator norm $\|du\|_{J,\varphi}$ is taken with respect to the induced metric \[g_{J,\varphi}(\cdot,\cdot)=\omega_\varphi(\cdot,J\cdot).\]

The finite energy condition implies that one of the following mutually exclusive cases holds on each puncture $z \in \Gamma$:
\begin{itemize}
    \item negative puncture: Every sequence $\{z_n\}_{n\in\mathbb{N}}$ converging to $z$ eventually leaves every compact subset of $\widehat{V}$. Equivalently, writing $u=(a,\bar{u})$, we have
    \[ \lim_{z'\to z}a(z')=-\infty. \]
    \item Removable puncture: Every sequence
    $\{z_n\}_{n\in\mathbb{N}}$ converging to $z$ is eventually contained in
    a compact subset of $\widehat{V}$.
\end{itemize}
At the negative puncture, there exists a closed Reeb orbit $\gamma_{z}$ with the convergence is given in the sense of \eqref{Equation: Description at negative puncture for problem Y}. At the removable puncture, $u$ smoothly extends over $z$ as a $J$-holomorphic sphere. The Hofer energy is given by the following lemma.
\begin{lemma}
\label{Lemma: formula Hofer energy in V}
Let $u:\cp^1\setminus \Gamma \to \widehat{V}$ be a nonconstant finite energy $J$-holomorphic curve in $\widehat{V}$. Let $\Gamma^-$ denote the set of negative punctures, and let $\gamma_z$ denote the asymptotic closed Reeb orbit at each $z\in\Gamma^-$. Then the Hofer energy \eqref{Equation: Hofer Energy in V} satisfies
\begin{equation}
    \label{Equation: Hofer energy on V in action}
    E(u) = 2\pi\cdot  (u \cdot \Sigma) - \sum_{z \in \Gamma^-} \mathcal{A}(\gamma_z),
\end{equation}
where $u \cdot \Sigma $ denotes the signed intersection number between $u$ and $\Sigma$.
\end{lemma}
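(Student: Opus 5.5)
The plan is to compute the energy for each fixed admissible $\varphi$, and then take the supremum. The symplectic form on $\widehat V$ has no global primitive, since $V$ contains the diagonal $\Sigma$. We therefore compare $\omega_\varphi$ with a form that is exact away from $\Sigma$.

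\textbf{Step 1: a primitive on the complement of $\Sigma$.} Since $J$ is $\omega_\varphi$-compatible and $u$ is $J$-holomorphic, $\tfrac12\|du\|^2_{J,\varphi}\,\mathrm{dvol} = u^*\omega_\varphi$. Hence
\[
E_\varphi(u) = \int_{\cp^1\setminus\Gamma} u^*\omega_\varphi ,
\]
where $\omega_\varphi$ equals $d(e^{\varphi}\alpha)$ on the negative end and $\omega$ on $V$. In $M=\cp^1\times\cp^1$, the class $[\omega]=2[\omega_{\rm FS}]\oplus 2[\omega_{\rm FS}]$ is Poincaré dual to $2\pi[\Sigma]$. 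This holds because $\Sigma\cdot A=1$ and $\omega(A)=2\pi$ for both line classes, and the two line classes generate $H_2$. Consequently $\omega$ is exact on $M\setminus\Sigma$. More concretely, we aim to construct a $1$-form $\beta$ on $\widehat V\setminus\Sigma$ with the following properties:
\begin{itemize}
\item $d\beta=\omega_\varphi$;
\item $\beta=e^{\varphi}\alpha$ on the cylindrical end, which is possible because $\lambda_{\rm taut}$ is a primitive on $W\subset M\setminus\Sigma$ and the class of $\omega$ on $M\setminus\Sigma$ is zero;
\item near $\Sigma$, in the unit normal-bundle coordinates of the tubular neighbourhood $U_\infty$, $\beta$ behaves like $-\,\tfrac{2\pi}{2\pi}\,d\theta$ up to smooth terms, i.e.\ its integral over a small positively oriented linking circle of $\Sigma$ tends to $-2\pi$.
\end{itemize}
The constant in the last property is fixed by the duality: for the fibre sphere $u_0$ of the class $A$, the integral of $\beta$ over the linking circle must equal $-\omega(A)=-2\pi$.

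\textbf{Step 2: Stokes on a truncated domain.} Remove from $\cp^1\setminus\Gamma$ small disks around the points of $u^{-1}(\Sigma)$ and around each negative puncture. Applying Stokes' theorem gives
\[
\int u^*\omega_\varphi = -\sum_{z\in\Gamma^-}\int_{\partial D_z}u^*(e^{\varphi}\alpha) - \sum_{q\in u^{-1}(\Sigma)}\int_{\partial D_q}u^*\beta .
\]
At a negative puncture, $a\to-\infty$ and the boundary loops converge to $\gamma_z$. Since $\varphi\to 0$ as $a\to-\infty$, we have $e^{\varphi}\to1$, so the boundary term tends to $-\mathcal A(\gamma_z)$ (the orientation of the boundary circle is reversed relative to the orbit). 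At an intersection point with $\Sigma$ of local multiplicity $m_q$, the boundary loop winds $m_q$ times around $\Sigma$ in the normal direction. Here (J-1) makes $J=i\oplus i$ near $\Sigma$, so the local intersections are positive and the local degree equals the local intersection number. The boundary term therefore tends to $2\pi m_q$, and summing gives $2\pi\,(u\cdot\Sigma)$. Letting the disks shrink, the Stokes identity becomes
\[
E_\varphi(u)=2\pi\,(u\cdot\Sigma)-\sum_{z\in\Gamma^-}\mathcal A(\gamma_z).
\]
This is independent of $\varphi$, so taking the supremum over $\varphi$ yields the lemma.

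\textbf{Main obstacle.} The delicate step is Step 1: producing $\beta$ compatible with the specific profile $\varphi$ on the end, and with the correct singular coefficient at $\Sigma$. I would first build it on $M\setminus\Sigma$ for $\omega$, by extending $\lambda_{\rm taut}$ from $W$ across $V\setminus\Sigma$; this is possible because the obstruction is $[\omega]$ restricted to $V\setminus\Sigma$, which vanishes by the duality. I would then modify it on the collar by replacing $e^{r}\alpha$ with $e^{\varphi}\alpha$, which are cohomologous there. The residue at $\Sigma$ is then computed by evaluating the Stokes identity on the explicit sphere $\cp^1\times\{\mathrm{pt}\}$. A secondary technical point is justifying the limit at negative punctures using the exponential convergence given by nondegeneracy.
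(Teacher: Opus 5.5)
Your argument is correct, but it extracts the $\Sigma$-term differently from the paper. The paper applies Stokes only on the collar $u^{-1}([a,\delta]\times Y)$. It then caps the circles of $u^{-1}(\{\delta\}\times Y)$ by disks in $W_\delta=\cp^1\times\cp^1\setminus\operatorname{int}(V_\delta)$, which is possible because $W_\delta$ is simply connected and $\omega=d\lambda_{\rm taut}$ there. Finally it evaluates $\omega$ on the resulting closed surface by the global identity $\int_S\omega=2\pi\,(S\cdot\Sigma)$; since the caps avoid $\Sigma$, all intersections sit on the $u$-part.

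You instead keep the whole punctured domain, use a primitive $\beta$ of $\omega_\varphi$ on $\widehat V\setminus\Sigma$, and obtain $2\pi\,(u\cdot\Sigma)$ as a sum of local residues at the points of $u^{-1}(\Sigma)$. The paper's route needs only closed-surface homology in $M$, but it requires a generic slice and a capping step. Yours is local and avoids capping, at the price of controlling $\beta$ near $\Sigma$.

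That price is lower than your ``main obstacle'' suggests. Under $M\setminus\Sigma\cong\operatorname{int}(D^*_{g_0}S^2)$, the form $\lambda_{\rm taut}$ is already a global primitive equal to $e^r\alpha$ on the collar; the paper uses this identification in Lemma~\ref{Lemma: Nondegeneracy for limit curves}. Gluing it to $e^{\varphi}\alpha$ on the end therefore gives $\beta$ at once, since $\varphi(r)=r$ on the overlap. Moreover, only the residue matters. On a small punctured ball $B\setminus\Sigma$ around $u(q)$ one can write $\beta=\kappa-c\,d\theta+df$, with $\kappa$ a smooth primitive of $\omega$ on $B$. The term $df$ integrates to zero over closed loops, and $\int_{\partial D_q}u^*\kappa=\int_{D_q}u^*\omega\to0$. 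Your normalisation $c=1$ via the sphere $\cp^1\times\{\mathrm{pt}\}$ is correct, and $c$ is constant along the connected surface $\Sigma$.

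The one case your Stokes argument does not literally cover is $\operatorname{im}(u)\subset\Sigma$, where $u^{-1}(\Sigma)$ is not discrete. There $u$ is a closed sphere of some degree $d$ onto $\Sigma$, and the identity $4\pi d=2\pi\cdot 2d$ is immediate from homology.
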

\begin{proof}
    For generic $a<\epsilon$, the inverse image
    \[ u^{-1}(\{a\}\times Y) \]
    consists of finitely many circles. Moreover, when $-a$ is sufficiently large, we may assume that it consists of exactly $\#\Gamma$ circles, each of which encircles one puncture $z\in\Gamma$ once. This follows from the asymptotic formula proved in \cite{HWZ-1}.

    Let $\varphi: (-\infty,\epsilon]\to(0,1)$ be a strictly increasing smooth function which agrees with the identity function on $[\varepsilon/2,\varepsilon]$, and converge to $0$ at $-\infty$. By Stokes' theorem, the $L^2$-energy can be written as
    \begin{equation}
    \begin{aligned}
    \label{Equation: Proof on Hofer energy - 1}
        \frac{1}{2} \int_{u^{-1}(V_a)} \|du\|^2 &= \int_{u^{-1}(V_a)} u^*\omega_{\varphi}\\
        &= \int_{u^{-1}(V_\delta)} u^*\omega_{\varphi} + \int_{u^{-1}([a,\delta]\times Y)} u^*\omega_{\varphi}\\
        &= \int_{u^{-1}(V_\delta)} u^*\omega_{\varphi} + \int_{u^{-1}(\{\delta\}\times Y)} u^*(e^\varphi \alpha) - \int_{u^{-1}(\{a\}\times Y)} u^*(e^\varphi \alpha)
    \end{aligned}
    \end{equation}
    where $\delta\in[\varepsilon/2,\varepsilon]$ is chosen so that the preimage of $\{\delta\}\times Y$ consists of finitely many circles. Here, we denote by $V_a$ the subdomain
    \[
    V_a:=V\cup_Y[a,0]\times Y.
    \]
    
    As $r\to -\infty$, we have
    \[
    \lim_{a \to -\infty} \int_{u^{-1}(\{a\}\times Y)} u^*(e^\varphi \alpha) = \sum_{z \in \Gamma} \mathcal{A}(\gamma_z).
    \]
    Therefore, it remains to show that
    \begin{equation}
    \label{Equation: Proof on Hofer energy - 2}
     2\pi\cdot  (u \cdot \Sigma) = \int_{u^{-1}(V_\delta)} u^*\omega_{\varphi} + \int_{u^{-1}(\{\delta\}\times Y)} u^*(e^\varphi \alpha).
    \end{equation}

    Considering $V_\delta$ as a subdomain of $\cp^1\times\cp^1$. Since the complement
    \[ W_{\delta}:=\cp^1\times\cp^1\ \setminus \operatorname{int}(V_\delta) \]
    is contractible, each circle of $\operatorname{im}(u) \cap \{\delta\}\times Y$ can be capped by a disk $D_i$ in $W_{\delta}$. Then,
    \[
    \int_{u^{-1}(\{\delta\}\times Y)} u^*(e^\varphi \alpha)  = \sum \int_{D_i}\omega
    \]    
    holds by Stoke's Theorem, and the right hand side of equation \eqref{Equation: Proof on Hofer energy - 2} agree to integration of the symplectic form $\omega$ over the closed sphere obtained by gluing the capping disks with $u|_{u^{-1}(V_\delta)}$ in $\cp^1 \times \cp^1$.

    The integral of $\omega$ over the resulting closed sphere is equal to $2\pi$ times its signed intersection number with $\Sigma$. Since the capping disks are contained in $W_\delta$, which is disjoint from $\Sigma$, all intersections with $\Sigma$ occur on $u|_{u^{-1}(V_\delta)}$. Therefore, the equality \eqref{Equation: Proof on Hofer energy - 2} follows, completing the proof.
\end{proof}

We now focus on the case
\begin{equation}
\label{Equation: Obstruction on simple Floer cylinders in V - 1}
    u \cdot \Sigma =1 \quad \text{and} \quad u(\infty)\in \Sigma.
\end{equation}
Under the assumption (J-1), $\Sigma$ is also a $J$-holomorphic curve in $\widehat{V}$, positivity of intersections implies that $u$ and $\Sigma$ intersect transversely at the unique point $u(\infty)$. In particular, every finite energy $J$-holomorphic curve satisfying \eqref{Equation: Obstruction on simple Floer cylinders in V - 1} is simple.

Hence, for a generic choice almost complex structure SFT-like on the cylindrical end $\mathbb{R}_{\le 0}\times Y\subset\widehat{V}$, the linearised Fredholm operator associated to every such curve is surjective. Consequently, the moduli space of unparametrized $J$-holomorphic curves with the same puncture set and asymptotic conditions is a smooth manifold whose dimension at $u$ is given by the Fredholm index
\begin{equation}
\label{Equation: Fredholm index of J-holomorphic curve in V}
    \operatorname{Ind}(u) = 2 +\#\Gamma - \sum_{z \in \Gamma^-} \mu_{\rm CZ}(\gamma_z).
\end{equation}

There are two ways to see this index formula. First, following \cite[Prop.~5.2]{HWZ03}, assume that one can caps each puncture $\gamma_z$ by a smooth plane in $\widehat{W}$. The Fredholm index contribution of each cap is $\mu_{\rm CZ}(\gamma_z)-1$, and the total contribution is equal to the Fredholm index of the resulting glued unparametrised sphere map in $\cp^1 \times \cp^1$ with one marked point constrained to lie on $\Sigma$. Arguing as in \eqref{Equation: vdim constrained}, one obtains
\[
2c_1(A)+\dim M-\operatorname{codim}\Sigma -\dim(\operatorname{Aut}(\mathbb{C},i))=2.
\]

Alternatively, one can consider a one-to-one correspondence between such pseudoholomorphic curves in $\widehat{V}$ and pseudoholomorphic curves in the completion of the symplectic cobordism
\[ D^*_{g_0}S^2\setminus\operatorname{int}(W) \]
whose asymptotic Reeb orbits on $\partial D^*_{g_0}S^2$ all have multiplicity one; see \cite[Lem.~2.6]{Diogo-Lisi19}. Applying the index formula \eqref{Equation: Fredholm index of J-holomorphic curve in Y} to the latter curves yields exactly the same formula. In this setting, one should instead use a Morse--Bott formalism for $\partial D^*_{g_0}S^2$.

\medskip

We finish the subsection with the following lemma, which asserts the existence of a non-contractible Reeb orbit among the asymptotic orbits. Here, non contractibility is understood in $Y$, rather than in $V$ which is simply connected.

\begin{lemma}
\label{Lemma: Existence of a negative puncture with noncontractible asymptotic}
    For a nonconstant finite energy pseudoholomorphic curve $u$ satisfying
    \[ u \cdot \Sigma =1,\]
    there exists a negative puncture with non-contractible asymptotic Reeb orbit.
\end{lemma}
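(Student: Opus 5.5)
I would argue by contradiction using homology. Suppose every negative puncture $z\in\Gamma^-$ of $u$ has an asymptotic Reeb orbit $\gamma_z$ that is contractible in $Y\cong\mathbb{RP}^3$. I will show that then $u\cdot\Sigma$ is even, which contradicts $u\cdot\Sigma=1$. The relevant topology is the following. In $\cp^1\times\cp^1$ the region $W_\delta=\cp^1\times\cp^1\setminus \operatorname{int}(V_\delta)$ is a disk bundle over the anti-diagonal $\bar\Delta$, so it deformation retracts onto $\bar\Delta$. Dually, $V$ deformation retracts onto the diagonal $\Sigma$. The boundary $Y$ is the unit circle bundle of the normal bundle of $\Sigma$, which has Euler number $\Sigma\cdot\Sigma=2$; this is why $Y\cong\mathbb{RP}^3=L(2,1)$. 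The fibre circle of $Y\to\Sigma$ is the boundary of a normal disk to $\Sigma$, and it generates $\pi_1(Y)=\Z/2$.

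The key step is to compute the intersection number with $\Sigma$ through the boundary classes. First I truncate $u$ at a very negative level $a$, as in the proof of Lemma~\ref{Lemma: formula Hofer energy in V}. This gives a compact surface $S=u^{-1}(V_a)$ whose boundary circles $C_z$, one for each $z\in\Gamma^-$, are $C^0$-close to the orbits $\gamma_z$. I then push everything into a closed tubular neighbourhood $N$ of $\Sigma$, whose boundary is $\partial N\cong Y$. The intersection number $u\cdot\Sigma$ equals the relative intersection pairing of $[S,\partial S]\in H_2(N,\partial N)$ with $[\Sigma]$. By the Thom isomorphism, $H_2(N,\partial N)\cong H_0(\Sigma)\cong\Z$, and this group is generated by a fibre disk $D$, which satisfies $D\cdot\Sigma=1$. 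The boundary map $\partial\colon H_2(N,\partial N)\to H_1(\partial N)=H_1(\mathbb{RP}^3)=\Z/2$ sends $[D]$ to the class of the fibre circle, which is nontrivial. So the map $\Z\to\Z/2$ is reduction mod $2$. Hence
\[
u\cdot\Sigma \equiv \sum_{z\in\Gamma^-}[\gamma_z]\in H_1(Y;\Z)=\Z/2 \pmod 2 .
\]
If every $\gamma_z$ is contractible, the right-hand side vanishes, so $u\cdot\Sigma$ is even. This contradicts $u\cdot\Sigma=1$ and yields a non-contractible asymptotic orbit. The argument also covers the case $\Gamma^-=\varnothing$: then $u$ would be a closed curve with odd intersection with $\Sigma$, which is impossible because $\partial[S]=0$ forces $[S]$ to be even in $H_2(N,\partial N)\cong\Z$. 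Note that the orientation of $\gamma_z$ plays no role modulo $2$.

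I expect the main care to lie in the identification of the boundary map. One has to check that the pushed-in class $[S]$ is well defined. This holds because $V_a$ retracts onto $N$ by a map that restricts on the collar to a diffeomorphism $\{a\}\times Y\to\partial N$ homotopic to the identity of $Y$. One also has to check that the fibre circle is indeed the generator of $\pi_1(\mathbb{RP}^3)$. This follows from the Gysin sequence of the circle bundle with Euler class $2$: the sequence gives $H_1(Y)=\Z/2$, generated by the image of the fibre class. Everything else is the same Stokes and capping bookkeeping already used in Lemma~\ref{Lemma: formula Hofer energy in V}.
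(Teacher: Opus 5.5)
Your argument is correct and is essentially the paper's. The paper caps the contractible truncation circles by disks inside the slice $\{a\}\times Y$ to obtain a closed sphere in $V_a$, whose class lies in $H_2(V_a)\cong\mathbb{Z}[\Sigma]$ and therefore has even intersection with $\Sigma$ because $[\Sigma]\cdot[\Sigma]=2$; you encode the same parity obstruction through the boundary map $H_2(N,\partial N)\to H_1(\partial N)\cong\mathbb{Z}/2$ of the same long exact sequence, which additionally gives the slightly sharper statement that the number of non-contractible negative asymptotic orbits has the parity of $u\cdot\Sigma$.
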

\begin{proof}
    Assume, to the contrary, that every asymptotic Reeb orbit of $u$ is contractible in $Y$. As in the proof of Lemma~\ref{Lemma: formula Hofer energy in V}, choose $-a>0$ sufficiently large so that
    \[
    \operatorname{Im}(u)\cap\{a\}\times Y
    \] 
    consists of contractible loops in the slice $\{a\}\times Y$.
    
    Capping of these slice with disks in $\{a\}\times Y$, we obtain a sphere map
    \[ v:\cp^1\longrightarrow \widehat{V} \]
    whose image is contained in
    \[ V_a:=V\cup_Y[a,0]\times Y. \]
    These capping disks are disjoint from $\Sigma$, and hence $v$ intersects $\Sigma$ transversally, as does $u$. Moreover, the intersection number of $v$ with $\Sigma$ agrees with that of $u$, namely,
    \begin{equation}
    \label{Equation: Proof of noncontractible orbit}
       v\cdot \Sigma= u \cdot \Sigma =1. 
    \end{equation}

    On the other hand, $V_a$ deformation retracts onto $\Sigma$, and hence
    \[
        H_2(V_a)\cong H_2(\Sigma)\cong\mathbb{Z},
    \]
    is generated by $[\Sigma]$. Thus, the homology class represented by $v$ is an
    integer multiple of $[\Sigma]$. Then, the intersection number of $v$ with $\Sigma$ must also be even since $[\Sigma]\circ[\Sigma]=2.$ This contradicts the intersection number \eqref{Equation: Proof of noncontractible orbit}, completing the proof.
\end{proof}

\vspace{.5cm}

\subsection{SFT-limit building of Floer spheres}\label{section: SFT-limit}

In this section, we study the possible SFT-limit building arising from a sequence of Floer spheres through the neck-stretching procedure. We first recall the geometric setup and the neck-stretching construction, and then describe the possible limit curves and their matching asymptotic conditions in the resulting building. Due to the presence of limit Floer curves for the degenerate Hamiltonian $H'$ in the limit building, we include additional argument to prove the nondegeneracy of the asymptotic ends; see Lemma~\ref{Lemma: Nondegeneracy for limit curves}. 

\medskip

Recall that $H:W\to\mathbb{R}_{\le 0}$ is a Hofer--Zehnder admissible Hamiltonian compactly supported in $\operatorname{int}(W)$. We regard $(W,\omega)$ as a subdomain of $(\operatorname{int}(D^*_{g_0}S^2),\dd \lambda)$, or equivalently as a subdomain of
\[
(\cp^1\times\cp^1\setminus\Sigma,\;2\omega_{\rm FS}\oplus2\omega_{\rm FS}),
\]
where $\Sigma$ is a symplectic surface given by the diagonal of $\cp^1 \times \cp^1$. The boundary $\partial W$ is a contact-type hypersurface separating $M=\cp^1\times\cp^1$ into $W$ and $V$, where $V$ contains $\Sigma$. We regard $(\partial W,\lambda|_{\partial W})$ as a closed contact manifold $(Y,\alpha)$.

For such a Hamiltonian $H$, fix a point $p \in W$ at which $H$ attains its minimum and is constant nearby. We also fix the following three open subsets of $\cp^1\times\cp^1$:
\begin{itemize}
    \item $U_\infty$: an open neighborhood of the symplectic divisor $\Sigma$;
    \item $U_{\rm \min}$: a Darboux ball centered at $p$ with a chart
    \[
    \iota:(B^4(r_0),\omega_0)\longrightarrow (U_{\rm min},\omega|_{U_{\rm min}});
    \]
    \item $U_{\partial W}$: collar neighborhood of $\partial W$ with the canonical identification \[(U_{\partial W},\omega|_{\partial W}) \cong \left((-\epsilon,\epsilon)_r\times\partial W, d(e^r\cdot\lambda_{\rm taut}|_{\partial W})\right).\]
\end{itemize}
For $U_{\partial W}$, we further assume that $H$ vanishes identically.

\medskip

Let $\mathcal{J}(p,Y,\Sigma)$ denote the space of almost complex structures satisfying:
\begin{itemize}
    \item[(J-1)] $J$ coincide with the complex structure $i \oplus i$ on a neighborhood $U_\infty$ of $\Sigma$.
    \item[(J-2)] Under the parametrization $\iota$, the almost complex structure $J$ agrees with the standard complex structure $i$ at the origin.
    \item[(J-3)] $J$ is SFT-like in the collar neighborhood $U_{\partial W}$.
\end{itemize}
And let $H':\cp^1\times\cp^1\to\mathbb{R}^{\le0}$ denote the Hamiltonian obtained by perturbing $H$ inside the Darboux ball $U_0$ so that $p$ becomes the unique minimum of $H'$ and is nondegenerate as a $1$-periodic orbit of the Hamiltonian flow; see \eqref{Equation: C0-perturbation inside Darboux Ball}.

\medskip

With this setup, let $(M_R,\omega_R)$ denote the symplectic manifold obtained by replacing the collar neighborhood $U_{\partial W}$ with the longer neck $[-R-\varepsilon,R+\varepsilon]\times\partial W$. Recall that there exists a symplectomorphism
\[
\Phi_R:(M_R,\omega_R)\longrightarrow
(\cp^1\times\cp^1,\;2\omega_{\rm FS}\oplus2\omega_{\rm FS}),
\]
which agrees with the identity map outside the neck. Every $J\in\mathcal{J}(p,Y,\Sigma)$ induces an almost complex structure $J_R$ on $M_R$, not as the pullback, but by extending $J$ on the neck with the translation invariance. Hamiltonian $H'$ also naturally extends to a Hamiltonian $H_R$ defined on $M_R$. The almost complex structure $J\in\mathcal{J}(p,Y,\Sigma)$ also induces SFT-like almost complex structures on the symplectisation $\mathbb{R}\times Y$, as well as on the completions $\widehat{W}$ and $\widehat{V}$, while $H'$ extends naturally to a Hamiltonian on $\widehat{W}$. By abuse of notation, we continue to denote these by $J$ and $H'$, respectively.

\medskip

By Corollary \ref{Corollary: existence of Floer spheres}, for each neck-stretched symplectic manifold $(M_n,\omega_n)$, there exists a Floer
sphere
\[
u_n \in \mathcal{M}(p,\Sigma;J_n,H_n,A_n),
\]
i.e., a solution $u_n:\mathbb{R}\times S^1\to M_n$ of
\begin{equation}
    \partial_s u_n + J_n(u_n(s,t))\partial_t u = \nabla H_n (u_n(s,t)).
\end{equation}
with asymptotic conditions
\[ \lim_{s\to-\infty}u_n(s,t)=p, \qquad \lim_{s\to+\infty}u_n(s,t)\in\Sigma, \]
and whose continuous extension $\bar u_n$ obtained by identifying
\[ (\mathbb{R}\times S^1)\cup\{-\infty,+\infty\}\cong\cp^1 \]
satisfies $[\bar u_n]=A_n$, where $A_n$ denotes the pullback $(\Phi_n^{-1})^*A$ of the line class \[ A=[\cp^1\times\{\mathrm{pt}\}]\in H_2(\cp^1\times\cp^1). \]

Here, we can still apply Corollary~\ref{Corollary: existence of Floer spheres}
since, under the symplectomorphism
\[
\Phi_n:(M_n,\omega_n)\longrightarrow (\cp^1\times\cp^1,\;2\omega_{\rm FS}\oplus2\omega_{\rm FS}),
\]
the pulled-back Hamiltonian $(\Phi_n^{-1})^*H_n$ coincides with the perturbed
Hamiltonian $H'$, and the induced almost complex structure $(\Phi_n)_*J_n$ satisfies conditions (J-1) and (J-2).

\medskip

Next, we consider the SFT limit of the sequence of unparametrised Floer spheres
\[ \{[u_n] \in \widehat{M}(p,\Sigma;J_n,H_n;A_n)\}. \]
Within  the $\mathbb C^*$-family of parametrized Floer spheres, we choose a
representative satisfying the normalization \eqref{Equation: Normalisation for bottom building}, namely,
\begin{equation}
\label{Equation: Proof of the lowest building - 1}
    u_n(0,0) \in \partial U_0
    \qquad\text{and}\qquad
    u_n(\mathbb{R}^{<0}\times S^1)\subset U_0.
\end{equation}
Here, $U_0$ is a Darboux ball contained in $U_{\rm min}$ such that $p$ is the only critical point in the closed Darboux ball $\bar{U}_0$. By construction of $H'$, the critical point $p$ is $H'$-regular.

The SFT limit is described by a finite collection of finite-energy curves modelled on a finite graph. Each vertex of the graph corresponds to a finite-energy curve, which is obtained either as the $C^\infty_{\rm loc}$-limit of a sequence of a sequence of parametrised Floer spheres
\[
    \lambda_n\cdot u_n,\qquad \lambda_n\in\mathbb{C}^*,
\]
modulo bubbling, or as a $J$-holomorphic curve appearing from the bubbling analysis. 

The limit curves obtained as the $C^\infty_{\rm loc}$-limit of the sequence $\lambda_n\cdot u_n$ corresponds to one of the following
types of nonconstant finite-energy curves:
\begin{itemize}
    \item Floer curves in $\widehat{W}$ for Floer data $(J,H')$;
    \item $J$-holomorphic curves in $\mathbb{R}\times Y$; and
    \item $J$-holomorphic curves in $\widehat{V}$.
\end{itemize}
We will refer to these limits as \textit{principal curves}. The finite Hofer energy
property follows from the fact that the $L^2$-energy of $E(\lambda_n\cdot u_n)$ is
bounded above by $2\pi$.

If a principal curve lies in either $\mathbb{R}\times Y$ or $\widehat{V}$, it satisfies a Cauchy--Riemann equation since the Hamiltonian $H_n$ vanishes on the stretched necks and on $V$. When it lies in $\widehat{W}$, it satisfies the Floer equation on $\widehat{W}$. Since the domain of each $u_n$ was the cylinder $\mathbb{R}\times S^1$, the limit curves have domains of the form $\mathbb{R}\times S^1\setminus \Gamma$, where $\Gamma$ is a possibly non-empty set of punctures corresponding to the points where bubbling occurs. Each puncture is referred to as either \textit{positive}, \textit{negative}, or \textit{removable}.

Curves obtained from the bubbling analysis will be referred to as \textit{bubble curves}. These are given by the $C^\infty_{\rm loc}$-limits of reparametrised curves
\[
    z\mapsto \lambda_n\cdot u_n(z_n+r_n\cdot z),
\]
where $z_n\in\mathbb{R}\times S^1$ converging to some point $z_\infty$, positive real numbers $r_n$ satisfying $r_n\to\infty$, and $\lambda_n\in\mathbb{C}^*$ are chosen such that
\[
    \|d(\lambda_n\cdot u_n)(z_n)\|\to\infty.
\]
As in the usual bubbling analysis of Floer cylinders, the limiting equation associated with each bubble curve is also a Cauchy--Riemann equation, since the gradient term is divided by the factor $r_n$ and vanishes as $n\to\infty$. Since the domain of each reparametrised curve is obtained by rescaling the original domain near the point $z_n$, each bubble curve is also defined on a punctured sphere.

\medskip
We now discuss principal curves in more detail, including some details of our
setup that are slightly different from the standard pseudoholomorphic curve
case. Let $\sigma_n\in\mathbb{R}$ be a real number such that there exists $\tau_n\in S^1$ satisfying
\begin{equation}
\label{Equation: Proof of the lowest building - 3}
    u_n(\sigma_n,\tau_n)\in\partial U_\infty
    \qquad\text{and}\qquad
    u_n(\mathbb{R}_{>\sigma_n}\times S^1)\subset U_\infty.
\end{equation}
The following lemma gives an obstruction on the sequence $\lambda_n\in\mathbb{C}^*$ for principal curves, and characterises the top and bottom level level of principal curves.

\newpage

\begin{lemma}
\label{Lemma: Top and Bottom level}
    Let $\lambda_n\in\mathbb{C}^*$ be a sequence of nonzero complex numbers
    such that
    \[
    \lambda_n\cdot u_n
    \]
    converges, up to a subsequence, to a non-constant curve. Then
    there exists a constant $S>0$ such that
    \[
    -S< \log |\lambda_n|<S+\sigma_n.
    \]
\end{lemma}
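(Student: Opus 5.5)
The plan is to translate the $\mathbb{C}^*$-action into a shift on the cylinder and argue by contradiction at each end. Under $\mathbb{R}\times S^1\cong\mathbb{C}^*$, $z=e^{2\pi(s+it)}$, we have $(\lambda_n\cdot u_n)(s,t)=u_n\bigl(s+\tfrac{1}{2\pi}\log|\lambda_n|,\,t+\tfrac{1}{2\pi}\arg\lambda_n\bigr)$. Up to the harmless factor $2\pi$, which is absorbed into $S$, it suffices to rule out two cases after passing to a subsequence: $\log|\lambda_n|\to-\infty$ and $\log|\lambda_n|-\sigma_n\to+\infty$. The sets $U_0\subset U_{\min}$ and $U_\infty$ lie outside the neck for every $n$. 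So the normalisations \eqref{Equation: Proof of the lowest building - 1} and \eqref{Equation: Proof of the lowest building - 3} are statements in a fixed region of $M_n$, where $(J_n,H_n)$ coincide with $(J,H')$.

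\emph{Lower bound.} Suppose $\log|\lambda_n|\to-\infty$. Each compact $K\subset\mathbb{R}\times S^1$ is eventually sent by the shift into $\mathbb{R}_{<0}\times S^1$. Hence $\lambda_n\cdot u_n(K)\subset U_0$ for large $n$, and the $C^\infty_{\rm loc}$-limit $v$ is a Floer cylinder for $(J,H')$ defined on all of $\mathbb{R}\times S^1$ with image in $\overline U_0$. Its energy is bounded by $\omega(A)+H'(p)<2\pi$, so, as in the proof of Proposition~\ref{Proposition: moduli space of Floer spheres}, along sequences $s_i\to\pm\infty$ the loops $v(s_i,\cdot)$ subconverge to $1$-periodic orbits of $X_{H'}$ in $\overline U_0$. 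The only such orbit is $p$. The energy identity then gives $E(v)=\mathcal{A}_{H'}(p)-\mathcal{A}_{H'}(p)=0$, so $v$ is constant. This contradicts the assumption that the limit is nonconstant. One point needs care: the limit might be a punctured curve if bubbling occurs. However, bubbles inside a Darboux ball are excluded by exactness of $\omega$ on $U_0$, so no punctures arise here.

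\emph{Upper bound.} Suppose $\log|\lambda_n|-\sigma_n\to+\infty$. Each compact $K$ is eventually shifted into $\mathbb{R}_{>\sigma_n}\times S^1$, so $\lambda_n\cdot u_n(K)\subset U_\infty$. There $H'\equiv0$ and $J=i\oplus i$ by (J-1), so the limit $v$ is a genuine $J$-holomorphic cylinder in $\overline U_\infty$ with energy at most $2\pi$. Since the image is precompact, removal of singularities at both ends, and at any bubbling points, extends $v$ to a nonconstant $J$-holomorphic sphere $\bar v$ in $\overline U_\infty$. The tubular neighbourhood $\overline U_\infty$ retracts onto $\Sigma$, so $[\bar v]=k[\Sigma]$ with $k\geq1$ by positivity of energy. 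This gives $E(\bar v)=\omega(k[\Sigma])=4\pi k\geq4\pi$, which contradicts $E(\bar v)\le 2\pi$. Hence $\log|\lambda_n|-\sigma_n$ is bounded above.

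The main obstacle is the lower bound. There one must guarantee that the $C^\infty_{\rm loc}$-limit really lives on the whole cylinder inside $\overline U_0$ and has no escaping energy. I would reuse the asymptotic analysis from the compactness proof of Proposition~\ref{Proposition: moduli space of Floer spheres} together with Lemma~\ref{Lemma: Minimal escape energy}. The key inputs are that $p$ is the unique periodic orbit in $\overline U_0$ and that $p$ is nondegenerate.
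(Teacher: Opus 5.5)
Your proposal is correct and follows the paper's proof. Assuming the bound fails, the limit lies either in $\overline U_0$, where it is a Floer cylinder asymptotic to $p$ at both ends and hence constant, or in $\overline U_\infty$, where it is a nonconstant $i\oplus i$-holomorphic sphere of energy at least $4\pi>2\pi$. Your class computation $[\bar v]=k[\Sigma]$ simply makes explicit the paper's assertion that such curves have energy at least $4\pi$.

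One small correction: the $2\pi$ rescaling multiplies $\sigma_n$ as well, so it cannot be absorbed into $S$. Instead, read $\log|\lambda_n|$ as the shift in the $s$-coordinate, as the paper does when it sets $s_n=\log|\lambda_n|$.
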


\begin{proof}
    Let $s_n$ be a real number defined as $\log |\lambda_n|$. Then, $v_n=\lambda_n\cdot u_n$ satisfies
    \[
        v_n|_{(-\infty,-s_n)\times S^1}\subset U_0
        \quad\text{and}\quad
        v_n|_{(\sigma_n-s_n,\infty)\times S^1}\subset U_\infty.
    \]
    Assuming the contrary, we either obtain the limit of $v_n$ as a curve contained in $\bar{U}_0$ solving the Floer equation or contained in $\bar{U}_\infty$ solving the Cauchy--Riemann equation.

    In the first case, 
    since $p$ is the only $1$-periodic orbit on $\bar{U}_0$, the limit curve converges to $p$ at both $\pm\infty$. This implies that the curve is constant, contradicting the assumption that the limit of $v_n$ is a nonconstant principal curve.

    In the second case, the limit curve defines a nonconstant punctured $i\oplus i$-holomorphic sphere contained in $\bar{U}_\infty$. Such curves must have $L^2$-energy at least $4\pi$ if they are not constant. Hence, in either case, we obtain a contradiction, which finishes the proof.
\end{proof}

Yet, a principal curve
\[
u:\mathbb{R}\times S^1  \setminus \Gamma \to \widehat{W},
\]
a priori does not necessarily have a limit as the Hamiltonian $H'$ is always degenerate. Nevertheless, the limit is always well defined in our setup:

\begin{lemma}
\label{Lemma: Nondegeneracy for limit curves}
Every principal curve in $\widehat{W}$ is asymptotic either to the minimum $p$ of the Hamiltonian $H'$ or to a closed Reeb orbit. The minimum $p$ can only appear as the negative asymptotic limit of a curve arising from a limiting sequence with bounded $\lambda_n$.
\end{lemma}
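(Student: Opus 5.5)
The plan is to reuse the action–window argument from the Claim in the proof of Proposition~\ref{Proposition: moduli space of Floer spheres}, now applied along the neck-stretched sequence. Let $u:\mathbb{R}\times S^1\setminus\Gamma\to\widehat W$ be a principal curve, obtained as the $C^\infty_{\rm loc}$-limit of $v_n=\lambda_n\cdot u_n$, and fix an end of $u$, either $s\to\pm\infty$ or an interior puncture. By finiteness of the Hofer energy, for every sequence $s_i$ tending to that end, a subsequence of the loops $u(s_i,\cdot)$ converges to one of two things:
\begin{itemize}
\item[(i)] a closed Reeb orbit, reached in the cylindrical end;
\item[(ii)] a $1$-periodic orbit of $X_{H'}$.
\end{itemize}
In case (i), nondegeneracy of $\alpha$ gives a unique asymptotic orbit, by the asymptotic analysis recalled for Problem~(W). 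In case (ii), Hofer--Zehnder admissibility of $H$, together with the properties of the perturbation $H'$, forces the orbit to be a constant $q\in\operatorname{Crit}(H')$. So it remains to prove two things: that $q=p$, and that this can only happen at the negative end $s\to-\infty$ when $\lambda_n$ is bounded. Once $q=p$ is known, the nondegeneracy of $p$ upgrades subsequential convergence to honest convergence.

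\medskip

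\noindent\textbf{Step 1: a sphere of too small area.} Suppose some end converges along $s_i$ to a constant $q$. By diagonal extraction in the $C^\infty_{\rm loc}$-convergence, I obtain loops $\gamma_n(t)=u_n(s'_n,t)$ of the original Floer spheres that are $C^0$-close to $q$ and have $\|\dot\gamma_n-X_{H'}(\gamma_n)\|_{L^2}\to0$; here $s'_n=s_i-\log|\lambda_n|$, up to the puncture reparametrisation. I then split $u_n$ at $s'_n$ into two pieces: the piece from $p$ to $\gamma_n$, and the piece from $\gamma_n$ to $\Sigma$. Capping $\gamma_n$ by a disk $v'_n$ of tiny area inside a Darboux ball around $q$ and gluing it to the first piece gives a sphere in $M_n\cong\cp^1\times\cp^1$. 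Its area can be computed from the energy identity~\eqref{Equation: Proof of compactness - 1}, exactly as in~\eqref{Equation: Proof of compactness - 3}.

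\medskip

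\noindent\textbf{Step 2: energy on both sides.} If both pieces carry energy at least $E_\infty$ (Lemma~\ref{Lemma: Minimal escape energy} and its analogue $E_+$ near $\Sigma$), then, since $H'\le0$ and $H'(p)=\min H'$, the sphere satisfies $0<\omega([v_n\#v'_n])<\omega(A)$. This contradicts $\omega$-minimality of $A$. The piece from $\gamma_n$ to $\Sigma$ always contains the normalised region~\eqref{Equation: Proof of the lowest building - 3}, because $u\subset\widehat W$ stays away from $U_\infty$ and, by Lemma~\ref{Lemma: Top and Bottom level}, $s'_n<\sigma_n$. So that piece has energy $>E_+$. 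The piece from $p$ to $\gamma_n$ has energy $>E_-$ as soon as $s'_n\ge0$ eventually, i.e.\ as soon as the loop lies beyond the normalisation~\eqref{Equation: Proof of the lowest building - 1}.

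\medskip

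\noindent\textbf{Step 3: the remaining case.} The only escape from the contradiction in Step 2 is that $s'_n$ stays in $(-\infty,0)$, i.e.\ the loops lie in $U_0$. The only $1$-periodic orbit of $X_{H'}$ in $\bar U_0$ is $p$, so this forces $q=p$. I would then argue that this configuration is possible only at the negative end. At an interior puncture or at $+\infty$, the curve would have to return into $U_0$ after leaving it, and the energy between the exit and the return is bounded below by the same minimal-escape argument; this re-creates the two-sided situation of Step 2. I would also use that, at a positive end, the action must decrease toward the Σ side, while $-H'(p)$ is the maximal action among constant orbits. Finally, having the negative end in $U_0$ with $s_i\to-\infty$ while the loops stay at $s'_n<0$ forces $\log|\lambda_n|$ to be bounded above; combined with the lower bound in Lemma~\ref{Lemma: Top and Bottom level}, this shows $\lambda_n$ is bounded.

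\medskip

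The main obstacle I expect is Step 1, namely transferring "the end of the limit curve $u$ converges to $q$ along a subsequence" to genuine loops of $u_n$ with controlled energy on both sides. In particular this is delicate at interior punctures, which arise from bubbling, and for $q$ degenerate, where the limit may depend on the subsequence. I would handle it by choosing $s_i$ first and then $n=n(i)$ large via $C^\infty_{\rm loc}$-convergence on $[s_i-1,s_i+1]\times S^1$, so that the $C^0$-closeness and the small-area capping are uniform in $i$.
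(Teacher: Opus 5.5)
Your argument works for the ends $s\to\pm\infty$, which are all the lemma concerns. However, it runs through $\omega$-minimality of the line class, whereas the paper uses action monotonicity.

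\textbf{Your route versus the paper's.} You cap a slice $u_n(s'_n,\cdot)$ near $q$ and glue it to $u_n|_{(-\infty,s'_n]\times S^1}$. You then derive a contradiction from $0<\omega<\omega(A)$. This needs uniform energy thresholds on both sides of the cut, namely $E_-$ and $E_+$, followed by the case analysis of Step~3. At $+\infty$ that analysis closes most cleanly via Lemma~\ref{Lemma: Top and Bottom level}: the lower bound $-S<\log|\lambda_n|$ puts the corresponding slice of $u_n$ past the normalisation once $s_i$ is large, so Step~2 applies directly.

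The paper instead uses that $u_n$ meets $\Sigma$ only at $+\infty$. Hence $u_n$ lies in $M_n\setminus\Sigma$, where $\omega_n=d\lambda_n$ is exact. The action $\int\gamma_{n,s}^*\lambda_n-\int H_n(\gamma_{n,s})$ then equals $-H'(p)$ plus the energy of $u_n|_{(-\infty,s]\times S^1}$. Consequently:
\begin{itemize}
\item every asymptotic loop has action at least $-H'(p)$;
\item a constant orbit $q$ has action $-H'(q)\le -H'(p)$, with equality only at the unique minimum $p$;
\item past the normalisation the lower bound improves by $E_-$, which excludes $p$ unless the end is $-\infty$ and $\lambda_n$ is bounded.
\end{itemize}
That argument is one-sided and needs neither $E_+$ nor area quantisation. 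In fact your glued sphere also misses $\Sigma$. Its area is therefore exactly $2\pi$ times its intersection number with $\Sigma$, i.e.\ zero. With that observation your lower bound $\omega>0$ alone gives the contradiction, and Step~2 collapses to the paper's proof.

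\textbf{Slips to fix.}
\begin{itemize}
\item The side remark that ``the action must decrease toward the $\Sigma$ side'' has the wrong sign. The action increases from $-H'(p)$ at $p$ to $\omega(A)$ at $\Sigma$. It is exactly this increase that makes ``$-H'(p)$ is the maximal action among constant orbits'' bite.
\item Interior punctures should be dropped. Around such a $z$ the loops either converge to a Reeb orbit or shrink to the point $u(z)$, over which $u$ extends smoothly. Nothing forces $u(z)$ to be a critical point of $H'$, since the Hamiltonian term scales out. Those circles are also not $s$-slices of $u_n$, so your splitting into a piece from $p$ and a piece to $\Sigma$ is unavailable there. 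These are bubbling points, treated separately in Claim~2 of the proof of Proposition~\ref{Proposition: Bottom level}.
\item For the boundedness of $\lambda_n$, fix one large $s_{i_0}$ and let $n\to\infty$. Step~2 then forces the slice of $u_n$ corresponding to $s_{i_0}$ to lie before the normalisation for all large $n$, which bounds $\log|\lambda_n|$ from above. Arguing with $s_i\to-\infty$, as you wrote it, gives no bound.
\end{itemize}
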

\begin{proof}
Assume that $v_\infty$ is obtained as the limit of $v_n:=\lambda_n \cdot u_n$. Since the extension $[u_n]:\cp^1 \to M_n$ only intersects the symplectic divisor $\Sigma$ at the added point $+\infty$, the image of $u_n$ lies in the complement $M_n \setminus \Sigma$. The symplectic form on the complement is exact, given by the pullback of the tautological $1$-form $\lambda_{\rm taut}$ via the diffeomorphism
\[
\Phi_n : M_n\setminus \Sigma \to \operatorname{Int}(D^*_{g_0}S^2).
\]
We denote such $1$-form by $\lambda_n$. Then, for each loop $\gamma_{n,s}:S^1\to M_n\setminus\Sigma$ defined by
\[
    \gamma_{n,s}(t)=u_n(s,t),
\]
we choose the capping $\bar{u}_{n,s}:D^2\to M_n\setminus\Sigma$ obtained
by completing the half Floer cylinder $u_n|_{(-\infty,s]\times S^1}$ with $p$.  The Hamiltonian action of the capped orbit $(\gamma_{n,s},\bar{u}_{n,s})$ is given by
\begin{equation*}
    \begin{aligned}
        \mathcal{A}_{H_n}(\gamma_{n,s},\bar{u}_{n,s})
        &=\int_{D^2} \bar{u}_{n,s}^*\omega_n -\int_{S^1}H_n(\gamma_{n,s})\,dt\\
        &=\int_{S^1} \gamma_{n,s}^*\lambda_n -\int_{S^1}H_n(\gamma_{n,s})\,dt.
    \end{aligned}
\end{equation*}
Moreover, it satisfies
\begin{equation}
\label{Equation: Proof of the lowest building - 2}
    \mathcal{A}_{H_n}(\gamma_{n,s})-\mathcal{A}_{H_n}(p)
    = \int_{(-\infty,s]\times S^1}\|\partial_su_n\|^2>0.
\end{equation}
In particular, we have
\[
    \mathcal{A}_{H_n}(\gamma_{n,s})>-H'(p).
\]

Since every asymptotic loop of a principal curve is obtained as the limit of
some sequence of loops $u_n(s_n,-)$, possibly after a rotation in the $S^1$-direction, the action of such an asymptotic loop is at least $-H'(p)$. Here, the Hamiltonian action in $\widehat{W}$ should again be calculated with respect to a capping disk in $\widehat{W}$ and a strictly increasing function $\varphi$. By Hofer--Zehnder admissibility of $H$, every $1$-periodic orbit of $H$ is constant. Hence, if the limit converges to a critical point other than $p$, the Hamiltonian action, which is given by the negative of the Hamiltonian value, gives a contradiction. Therefore, it either converges to a closed Reeb orbit or to the minimum $p$.

Combining Lemma \ref{Lemma: Minimal escape energy}, we further conclude that $p$ can precisely occur as the negative asymptotic limit for a bounded sequence $\lambda_n$, since the equation
\eqref{Equation: Proof of the lowest building - 2} can be otherwise strengthened to
\[
\mathcal{A}_{H_n}(\gamma_{n,s})-\mathcal{A}_{H_n}(p)
=
\int_{(-\infty,s]\times S^1}\|\partial_s u_n\|^2>E_-.
\]
This finishes the proof of the Lemma.
\end{proof}

We note that the argument also rules out the possibility that $\pm\infty$ are removable punctures. Recall that, we also regard sequential convergence to a $1$-periodic orbit of $X_{H'}$ as a removable puncture, even though the limit may not be well-defined. The only exceptions are $(w_\infty,+\infty)$, where the puncture is removable and the curve extends smoothly over a point in $\Sigma$, and $(u_\infty,-\infty)$, where the curve converges to $p$.

Together with these lemmas, we can describe the graph formed principal curves in the limit building. The convergence to a building of principal curves (modulo bubbling) is characterised by the existence of a finite tuple of sequences $(\lambda^0_n,\ldots,\lambda^k_n)$ such that
\begin{equation}
\begin{aligned}
    0=\lambda^0_n<&\cdots<\lambda^k_n=\sigma_n,\\
    \lim_{n\to\infty}(\lambda^i_n-\lambda^{i-1}_n)=&\infty
\qquad\text{for }1\leq i\leq k.
\end{aligned}
\end{equation}
For each $i$, the reparametrised curves
\[
v^i_n:=\lambda^i_n\cdot u_n
\]
converge to a principal curve $v^i_\infty$ on $\mathbb{R}\times S^1\setminus\Gamma^i$, where $\Gamma^i$ is the set of bubbling points.

Note that if two sequences $\lambda_n\neq\lambda'_n$ in $\mathbb{C}^*$ both induce convergent subsequences to principal curves while satisfying
\[
\log|\lambda_n/\lambda'_n|\text{ is bounded},
\]
then, after passing to a subsequence, the limiting principal curves
$v_\infty$ and $v'_\infty$, each obtained as a limit of
$v_n=\lambda_n\cdot u_n$ and $v'_n=\lambda'_n\cdot u_n$,
are related by
\[
    v_\infty=\lambda_\infty\cdot v'_\infty
\]
for some $\lambda_\infty\in\mathbb{C}^*$ obtained from the boundedness. Hence, distinct levels in the limit building can only arise from sequences whose relative reparametrisation diverges. 

Hence, we take $\lambda_n^0=0$ and $\lambda_n^k=\sigma_n$; see
Lemma \ref{Lemma: Top and Bottom level}.  For this choice, the limits
\[
    u_\infty:=v_\infty^0 \qquad\text{and}\qquad w_\infty:=v_\infty^k
\]
are nonconstant and have energy at least $E_-$ and $E_+$, respectively,
by Proposition \ref{Proposition: moduli space of Floer spheres}. Thus, $u_\infty$ and $w_\infty$ are indeed principal curves in $\widehat{W}$ and $\widehat{V}$, respectively.

By Lemma \ref{Lemma: Nondegeneracy for limit curves}, the other punctures at $\pm\infty$ are either positive or negative punctures asymptotic to closed Reeb orbits. These punctures occur in matching pairs: curves converging to the same closed Reeb orbit $\gamma_i$ at punctures $(v_\infty^i,-\infty)$ and $(v_\infty^{i-1},+\infty)$, with one puncture positive and the other negative. At this stage, we do not determine which puncture is positive or negative. We will soon show that the puncture at $+\infty$ is always positive, while the puncture at $-\infty$ is always negative.

We omit the proof of the existence of such a tuple $(\lambda^0_n,\ldots,\lambda^k_n)$, and the matching condition. These follow by inductively reparametrising $u_n$ from either the top building $u_\infty$ or the bottom building $w_\infty$, and using the fact that the total energy is bounded above by $2\pi$, together with the non-degeneracy of the contact form $\lambda$, to obtain a minimal energy threshold preventing further breaking and bubbling in the half-cylinders.

The description of the limit building is completed by the bubbling-off analysis
at each bubbling point $z_i$. The resulting limit building is modeled on a
tree graph: each vertex corresponds either to a principal curve, obtained as a
limit of reparametrisations $\mathbb{R}\times S^1 \cong \mathbb{C}^*$ (principal curves) or obtained as a bubbling off analysis for the principal cuvres (bubbling cuves). Each edges implies that there are pair of punctures matched as either a pair of (positive, negative) or (removable, removable) punctures. Since the original Floer spheres have genus zero, the limit building has genus zero, and thus its underlying graph is a finite tree. Here, the finiteness follows from the fact that each curve has a positive minimal energy, together with the finiteness of Reeb orbits with period less than $2\pi$.

\medskip

\noindent\textbf{Proof of Proposition ~\ref{Proposition: Bottom level}.} We now prove Proposition ~\ref{Proposition: Bottom level}, which establishes the existence of the Floer planes, with consideration of Fredholm index of the limit curves. For the proof it is crucial that the intersection number \[u_n\cdot \Sigma=1.\]

\noindent
\begin{proof}[Proof of Proposition \ref{Proposition: Bottom level}.]

We prove the proposition by establishing the following claims.
\begin{itemize}
    \item[Claim 1]: There is no limit curve in $\widehat{V}$ other than
    $w_\infty$.
    \item[Claim 2]: For each $0\leq i\leq k$, the set $\Gamma^i$ contains no
    positive or removable punctures.
\end{itemize}

By Claim 2, the bottom principal curve
\[
u_\infty:\mathbb{R}\times S^1\setminus\Gamma^0\to\widehat{W}
\]
has no punctures, namely $\Gamma^0=\varnothing$, and is a Floer cylinder asymptotic to the minimum $p$ at $-\infty$. By Lemma \ref{Lemma: Nondegeneracy for limit curves}, it converges to a closed Reeb orbit $\gamma_1$ at $+\infty$. Hence, it defines a Floer plane in $\widehat{W}$, i.e.,
\[
    \mathcal{M}(p,\gamma_1;J,H')\neq\varnothing.
\]
It remains to prove the two claims.
\begin{proof}[Proof of Claim 1]
    Assume that there exists another non-constant limit curve
    \[ v:\mathbb{CP}^1\setminus\Gamma\to\widehat{V} \]
    other than $w_\infty$. Since the total intersection number with $\Sigma$ is preserved under the SFT limit building, the curve $v$ must satisfy
    \[
    v\cdot\Sigma \leq
    \Sigma\cdot u_n|_{(-\infty,\sigma_n]\times S^1}.
    \]
    Indeed, all bubbling points occurs outside the half-cylinder $(\sigma_n,\infty)\times S^1$, and hence the intersection contribution of $v$ is bounded above the interesections on $u_n\left({(-\infty,\sigma_n]\times S^1}\right)$. Since each Floer sphere intersects $\Sigma$ only at $+\infty$, we have \[
    \Sigma\cdot u_n|_{(-\infty,\sigma_n]\times S^1}=0, \] and hence $v\cdot\Sigma\leq 0$. If the image of $v$ is not contained in $\Sigma$, positivity of intersections gives
$v\cdot\Sigma\geq 0$. Thus $v\cdot\Sigma=0$, and $v$ is disjoint from
$\Sigma$. Consequently, there are only two possibilities:
\[
    \operatorname{im}(v)\cap\Sigma=\varnothing
    \qquad\text{or}\qquad
    \operatorname{im}(v)\subset\Sigma.
\]
The first possibility is excluded by the energy formula
\eqref{Equation: Hofer energy on V in action}, since $E(v)>0$ implies
$v\cdot\Sigma>0$. The second possibility is also impossible, since every
limit curve has energy less than $2\pi$, whereas a nonconstant
holomorphic curve contained in $\Sigma$ has energy at least $4\pi$. This completes the proof of Claim 1.
\end{proof}

\begin{proof}[Proof of Claim 2]
Claim 1 immediately proves Claim 2 for $i=k$. Suppose that there exists a principal curve with positive or removable puncture. Following the edge associated with this puncture, we obtain a non-constant finite-energy $J$-holomorphic curve in either
\[
\widehat{W} \quad \text{or} \quad \mathbb{R}\times Y.
\]
The possibility that this curve lies in $\widehat{V}$ is excluded by Claim 1. Such a curve must have a positive puncture. If all punctures are removable, one obtains a non-constant pseudoholomorphic sphere without punctures in either $\widehat{W}$ or $\mathbb{R}\times Y$, which is impossible by the exactness of the symplectic form. The remaining possibility, where the curve lies in $\mathbb{R}\times Y$ and has a negative puncture but no positive puncture, is also excluded by the $d\lambda$-energy formula \eqref{Equation: dlambda energy on Y in action}.

Precisely, the same argument implies that the adjacent pseudoholomorphic curve matched with the positive puncture of such a curve has another positive puncture. Repeating this argument inductively, we obtain a path extending further away from the starting principal curve. However, this contradicts the fact that the SFT limit is modeled on a finite tree graph. This finishes the proof of Claim 2.
\end{proof}

Combining the two claims with the argument above, we conclude that the bottom
principal curve defines a Floer plane in $\widehat{W}$. This completes the
proof of the proposition.

\end{proof}

The proof of Proposition~\ref{Proposition: Bottom level} further gives a more
precise description of the limit building: If an edge connects two principal curves, then the puncture at $+\infty$ of the lower curve is positive, while the matching puncture at $-\infty$ of the upper curve is negative. If an edge connects a bubbling curve to a principal curve, then the puncture on the bubbling curve corresponding to this edge is positive, while all its remaining punctures are negative.

To see this, recall that the $d\lambda$-energy of $J$-holomorphic curves in $\mathbb{R}\times Y$ implies that every limit curve in $\mathbb{R}\times Y$ must have a positive puncture. By Claim 2, for a principal curve in $\mathbb{R}\times Y$, either $+\infty$ or $-\infty$ is a positive puncture. Since $v_\infty^0$ has a positive puncture at $+\infty$ by Lemma~\ref{Lemma: Nondegeneracy for limit curves}, the adjacent principal curve $v_\infty^1$ lies in $\mathbb{R}\times Y$ and has a negative puncture at $-\infty$ by the matching condition. Therefore, the puncture at $+\infty$ of $v_\infty^1$ is positive. Applying the same argument inductively, we conclude that each intermediate principal curve $v_\infty^i$ lies in $\mathbb{R}\times Y$ with a positive puncture at $+\infty$ and a negative puncture at $-\infty$. The same argument also determines the remaining edges.

\medskip

\noindent\textbf{Proof of Theorem \ref{Theorem: Upper bound from index 3 - nondegenerate W}.} Recall that Theorem~\ref{Theorem: Upper bound from index 3 - nondegenerate W} gives an upper bound for the Hofer--Zehnder capacity $c_{\rm HZ}(W)$ by the period of closed Reeb orbits in $\mathcal{P}_{\rm HZ}(W)$. Here, $\mathcal{P}_{\rm HZ}(W)$ denotes the set of closed Reeb orbits $\gamma$ of Conley--Zehnder index $3$ satisfying either
\begin{itemize}
    \item $\gamma$ is non-contractible in $\partial W$ and satisfies $\mathcal{A}(\gamma)<2 \pi$, or
    \item $\gamma$ is contractible in $\partial W$ and
    there exists a non-contractible orbit $\gamma_1$ satisfying
    \[{\rm CZ}(\gamma')=1 \quad\text{and}\quad\mathcal{A}(\gamma) + \mathcal{A}(\gamma') \le 2\pi.\]
\end{itemize}
For the proof we consider the Fredholm indices of the limit curves. Up to this point, we have used only the $L^2$-energy and intersection with $\Sigma$. To this end, we restrict to almost complex structures $J\in\mathcal{J}(p,Y,\Sigma)$ such that the linearised operators associated to all curves of energy less than $2\pi$ are surjective:

\begin{enumerate}
    \item every simple $J$-holomorphic spheres in
    \[
    \widehat{W},\quad \mathbb{R}\times Y,\quad \widehat{V},
    \]
    \item every simple Floer planes for every Hamiltonian 
    \[H'/m, \quad m\in\mathbb{N}.\]
\end{enumerate}
Again, a generic almost complex structure in $\mathcal{J}(p,Y,\Sigma)$ satisfies the assumption due to the nondegeneracy of the contact form, and since the Lemma \ref{Lemma: Somewhere injectivity of Floer spheres} can be applied to Floer plane in $\widehat{W}$ associated to $H'/m$ for arbitrary $m \in \mathbb{N}$.

\begin{proof}[Proof of Theorem~\ref{Theorem: Upper bound from index 3 - nondegenerate W}]
We prove the theorem by establishing the following claim.
\begin{itemize}
    \item[Claim 3:] Every closed Reeb orbit arising as an asymptotic limit of a
    bubbling curve has positive Conley--Zehnder index.
\end{itemize}

We first complete the proof assuming Claim~3. Let $\gamma_i$ denote the closed
Reeb orbit corresponding to the edge connecting the principal curves $v_\infty^i$ and $v_\infty^{i-1}$. We inductively estimate the Conley--Zehnder indices of $\gamma_i$ using Claim~3. The initial orbit $\gamma_1$ satisfies
\[
\mu_{\rm CZ}(\gamma_1)\ge 3,
\]
with equality only if the bottom principal curve $v_\infty^0=u_\infty$ is simple.  Indeed, write $u_\infty$ as an $m$-fold cover of a simple Floer plane
\[
    u'_\infty\in\mathcal{M}^*(p,\gamma';J,H'/m),
\]
where $m\in\mathbb{N}$ and $\gamma'$ is a closed Reeb orbit satisfying $\gamma_1=(\gamma')^m$. Since $J$ is chosen so that every simple Floer plane is $H'/m$-regular, we have
\[
    \mu_{\rm CZ}(\gamma')\ge3
\]
by \eqref{Equation: Obstruction on simple Floer cylinders in W - 1}. It follows that
\[
    \mu_{\rm CZ}(\gamma_1) \ge 2m+1.
\]
Hence $\mu_{\rm CZ}(\gamma_1) \ge 3$, and equality can occur only when $u_\infty$ is simple.

Consider the next principal curve $v_\infty^1:\mathbb{R}\times S^1\setminus\Gamma^1$. By Claim 2 and 3, every bubbling point in $\Gamma^1$ is a negative puncture that converges to a closed Reeb orbit of positive Conley--Zehnder index. Again, the curve $v_\infty^1$ is not necessarily somewhere injective, and hence it is a branched cover of a somewhere injective curve
\[
v':\mathbb{R}\times S^1\setminus\Gamma'\to\mathbb{R}\times Y.
\]
The branched covering can be chosen so that $+\infty$ and $-\infty$ are mapped to $+\infty$ and $-\infty$, respectively. In particular, $v'$ still has a unique positive puncture at $+\infty$.

We distinguish two cases.
\begin{itemize}
    \item $v'$ is a trivial cylinder, i.e., its image is contained in a
    prime closed Reeb orbit $\gamma'$ under the projection $\pi:\mathbb{R}\times Y\to Y$.
    \item $v'$ is nontrivial, i.e., $v'$ is not preserved under $\mathbb{R}$-translation of $\RR\times Y$.
\end{itemize}
In the first case, the covering multiplicity of the positive puncture is the
sum of the covering multiplicities of the remaining punctures. Since, $\mu_{\rm CZ}(\gamma_1)\geq 3$, the underlying simple closed Reeb orbit $\gamma'$ has positive Conley--Zehnder index. This implies that
\begin{equation}
\label{Equation: Proof of the GW upper bound - 3}
    \mu_{\rm CZ}(\gamma_2)\geq \mu_{\rm CZ}(\gamma_1).
\end{equation}
Hence, we again obtain $\mu_{\rm CZ}(\gamma_2)\geq 3$ in this case.

In the second case, the Fredholm index of $v'$ is at least $1$ because of the $\mathbb{R}$-translation. Hence, the Fredholm index formula \eqref{Equation: Fredholm index of J-holomorphic curve in Y - single + pucture} implies
\begin{equation}
\label{Equation: Proof of the GW upper bound - 1}
    \mu_{\rm CZ}(v'(+\infty))-1 > \mu_{\rm CZ}(v'(-\infty))-1 + \sum_{z \in \Gamma}(\mu_{\rm CZ}(v'(z))-1).
\end{equation}
Here, $v'(z)$ denotes the asymptotic Reeb orbit corresponding to the puncture $z$.  Since every puncture in $\Gamma$ other than $-\infty$ corresponds to a bubbling puncture, Claim~3 implies that
\begin{equation}
\label{Equation: Proof of the GW upper bound - 2}
    \mu_{\rm CZ}(v'(+\infty)) \ge \mu_{\rm CZ}(v'(-\infty)) +1.
\end{equation}

If $v_\infty^1$ is simple, this implies that
\[
    \mu_{\rm CZ}(\gamma_2)\geq4.
\]
If $v_\infty^1$ is multiply covered, then the negative asymptotic orbit still
has positive Conley--Zehnder index, as in the first case. Hence,
\[
    \mu_{\rm CZ}(v_\infty'(+\infty))\geq2,
\]
and therefore
\[
    \mu_{\rm CZ}(\gamma_2) = \mu_{\rm CZ}(v_\infty'(+\infty)^m) \geq4,
\]
where $m$ is the covering multiplicity. Combining both cases, we conclude that
\[
\mu_{\rm CZ}(\gamma_2)\geq 3,
\]
with equality only if $v_\infty^1$ is a branched cover of a trivial cylinder. 

Inductively, we conclude that the closed Reeb orbit $\gamma_{k}$ corresponding
to the negative puncture $(w_\infty,-\infty)$ satisfies
\[\mu_{\rm CZ}(\gamma_{k}) \ge 3,\] and the equality holds only if
\begin{equation}
\label{Equation: Proof of the GW upper bound - 5}
    \mu_{\rm CZ}(\gamma_{1})=\mu_{\rm CZ}(\gamma_{1})=\cdots=\mu_{\rm CZ}(\gamma_{k})=3    
\end{equation}
and the intermediate principal curves $v^i_\infty$ is a branch cover of a trivial cylinder for $1 \le i \le k-1$ and the Floer plane $u_\infty = v^0_\infty$ is simple.

Finally, the top principal curve $w_\infty$ cannot be multiply covered, since its extension $\bar{w}_\infty$ obtained by adding $w_\infty(\infty)\in\Sigma$ satisfies
\[
    \bar{w}_\infty \cdot \Sigma=1.
\]
Therefore, the Fredholm index formula \eqref{Equation: Fredholm index of J-holomorphic curve in V} implies that
\begin{equation}
\label{Equation: Proof of the GW upper bound - 4}
    2 - (\mu_{\rm CZ}(\gamma_k)-1) - \sum_{z \in \Gamma^k} (\mu_{\rm CZ}(w_\infty(z))-1)\ge 0.
\end{equation}
By Claim~3, we conclude that \[ \mu_{\rm CZ}(\gamma_k)\leq 3, \] which, together with the previous inductive step, gives
\[
    \mu_{\rm CZ}(\gamma_k)=3.
\]

Hence, the Floer plane $u_\infty=v_\infty^0$ is simple, and every intermediate principal curve $v_\infty^i$, $1\leq i\leq k-1$, is a branched cover of a trivial cylinder  over the same simple closed Reeb orbit $\gamma_0$. Therefore, each $\gamma_i$ is a multiple cover of $\gamma_0$, namely,
\[
    \gamma_i=(\gamma_0)^{n_i},
\]
for some natural numbers satisfying $n_1\leq n_2\leq\cdots\leq n_k$. In particular, we have
\[
\mathcal{A}(\gamma_1)\leq\mathcal{A}(\gamma_k).
\]
By the $L^2$-energy $E(u_\infty)$, the orbit $\gamma_1$ satisfies
\[
    H'(p)+\mathcal{A}(\gamma_1) >0.
\]
Together with $\mathcal{A}(\gamma_1) \le \mathcal{A}(\gamma_k)$, we conclude that
\[
    \mathcal{A}(\gamma_k) > -\min H.
\]

Hence, it remains to verify that $\gamma_k\in\mathcal{P}_{\rm HZ}(W)$. If $\gamma_k$ is contractible, then it satisfies the first criterion in the definition of $\mathcal{P}_{\rm HZ}(W)$. Suppose now that $\gamma_k$ is non-contractible. By Lemma~\ref{Lemma: Existence of a negative puncture with noncontractible asymptotic}, the top building $w_\infty$ has an additional negative puncture $z$ asymptotic to a non-contractible closed Reeb orbit $\gamma_z$. Note that all other punctures of $w_\infty$ are asymptotic to closed Reeb orbits with Conley--Zehnder index equal to $1$. This follows from Claim~3 together with inequality \eqref{Equation: Proof of the GW upper bound - 4}. The $L^2$-energy for $w_\infty$ in \eqref{Equation: Hofer energy on V in action} gives
\[
    0 < E(w_\infty)  \le 2\pi-\mathcal{A}(\gamma_k)-\mathcal{A}(\gamma_z).
\]
Therefore, $\gamma_k$ satisfies the second criterion in the definition of
$\mathcal{P}_{\rm HZ}(W)$. This completes the proof assuming Claim~3. \end{proof}

We now prove the claim.
\begin{proof}[Proof of Claim 3]

We prove the claim by induction starting from the leaves of the building. By
Claim~1, bubble curves associated to leaves are non-constant finite-energy $J$-holomorphic curves in either $\widehat{W}$ or $\mathbb{R}\times Y$ with precisely one positive puncture. Although these curves may be multiply covered, the Fredholm index formula for the underlying simple curves implies that the asymptotic orbit at the positive puncture has positive Conley--Zehnder index. Moreover, if the curve lies in $\mathbb{R}\times Y$, this index is in fact at least $2$ due to the $\mathbb{R}$-translation.

Next, consider a $J$-holomorphic curve connected to a leaf curve through its positive puncture. Such a curve is necessarily contained in $\mathbb{R}\times Y$. Applying the same argument as in the inductive step for $\gamma_i$, we conclude that the asymptotic orbit at its unique positive puncture has positive Conley--Zehnder index, and it is precisely $1$ only if the curve is a branched cover of a trivial cylinder. Inductively, the same argument applies to every curve obtained by following the positive punctures up the bubbling tree. Hence, every closed Reeb orbit appearing as the asymptotic limit of a bubbling curve has positive Conley--Zehnder index.
\end{proof}

The proof of Theorem \ref{Theorem: Upper bound from index 3 - nondegenerate W} narrows down the possible SFT limits. First, there exists a prime closed Reeb orbit $\gamma_0$ such that, each principal curve in $\mathbb{R} \times Y$ is a branched cover of the trivial cylinder $u_{\gamma_0}$. Moreover, the bubbling tree attached to a bubbling point on these principal curves consists of either branched covers of trivial cylinders or possibly multiply-covered $J$-holomorphic planes in $\widehat{W}$.

The Conley--Zehnder indices of the Reeb orbits connecting the principal curves satisfy \eqref{Equation: Proof of the GW upper bound - 5}. This automatically implies that all Reeb orbits arising from the bubbling curves have Conley--Zehnder index $1$. Consequently, whenever bubbling occurs among the principal curves in
$\mathbb{R}\times S^1$, every asymptotic limit $\gamma_i$ is multiply covered.  Moreover, if $\gamma_1$ is a prime closed Reeb orbit, then no bubbling occurs, and the middle building consists of a single level given by the trivial cylinder $u_{\gamma_0}$.

For each puncture $z\in\Gamma$, there exists a prime closed Reeb orbit $\gamma_{z,0}$ such that the corresponding negative puncture is asymptotic to $\gamma_{z,0}$. The bubbling curves attached to these punctures have the same description as above, with $\gamma_{z,0}$ replacing $\gamma_0$. In this case, the Conley--Zehnder index of $\gamma_z$ is $1$, which again forces all Reeb orbits arising from the bubbling curves to have Conley--Zehnder index $1$. One difference is that, if $\gamma_z$ is a prime closed Reeb orbit, the middle building consists of a single level given by the trivial cylinder $u_{\gamma_z}$.

\medskip

\noindent\textbf{Proof of Theorem~\ref{Theorem: Upper bound from index 3 - degenerate W}.} We finish the section by proving Theorem \ref{Theorem: Upper bound from index 3 - degenerate W}.
\begin{proof}[Proof of Theorem \ref{Theorem: Upper bound from index 3 - degenerate W}]

Let $\alpha_i$ be a sequence of nondegenerate\footnote{We say that a contact form is nondegenerate if its Reeb flow is nondegenerate.} contact forms converging to
a degenerate contact form $\alpha$ in the $C^\infty$-topology. Each $\alpha_i$ determines a fiberwise star-shaped domain $W_i\subset\operatorname{int} D^*_{g_0}S^2)$ such that the contact boundary $(\partial W_i,\lambda_{\rm taut})$ is strictly contactomorphic to $(Y,\alpha_i)$. We further assume that
\[
    W_1\supsetneq W_2\supsetneq\cdots.
\]

By Theorem \ref{Theorem: Upper bound from index 3 - nondegenerate W}, for each $i$ there exists a closed Reeb orbit $\gamma_i$ satisfying
\[
c_{\rm HZ}(W_i) \le \mathcal{A}(\gamma_i)\quad\text{and}\quad \mu_{\rm CZ}(\gamma_i)=3,
\]
and either of the following two cases holds:
\begin{itemize}
    \item $\gamma$ is non-contractible in $\partial W$ and $\mathcal{A}(\gamma_i) \le 2\pi$;
    \item $\gamma$ is contractible in $\partial W$, and there exists a
    non-contractible orbit $\gamma'$ satisfying
    \[
        {\rm CZ}(\gamma_i')=1
        \qquad\text{and}\qquad
        \mathcal{A}(\gamma_i)+\mathcal{A}(\gamma_i')\leq 2\pi.
    \]
\end{itemize}

Passing to a subsequence, we may assume that the same one of the two cases holds for every $i$ and that $\gamma_i$ converges to a closed Reeb orbit $\gamma$ for the Reeb flow $R_\alpha$. Note that the limit of a sequence of closed Reeb orbits preserves whether the orbits are contractible or non-contractible, and that actions converge to the action of the limit. Hence, $\gamma$ satisfies the corresponding one of the two cases. Note that, in the second case, $\gamma_i'$ converges, after passing to a further subsequence, to a non-contractible index-$1$ Reeb orbit $\gamma'$ for $R_\alpha$.

By monotonicity, the Hofer--Zehnder capacity of $W$ is bounded above by
that of each $W_i$. Therefore,
\[
    c_{\rm HZ}(W) \le  c_{\rm HZ}(W_i)\leq\mathcal{A}(\gamma_i).
\]
Taking the limit along the chosen subsequence, we obtain
\[
    c_{\rm HZ}(W)\leq\mathcal{A}(\gamma).
\]
Finally, since $\gamma$ is obtained as a limit of the index-$3$ orbits
$\gamma_i$, the indices satisfy
\[
    \mu^-(\gamma)\leq 3\leq\mu^+(\gamma).
\]
and
\[
\mu^-(\gamma')\leq 1\leq\mu^+(\gamma')
\]
in the contractible case.
    
\end{proof}

\subsection{Proof of Theorem~\ref{Theorem: Upper bound from index 3 - refined + geodesic flow version} and Corollary~\ref{cor:ellipsoid-index-three-upper-bound}}
\label{Section: Positive curvature}

We now translate Theorem~\ref{Theorem: Upper bound from index 3 - degenerate W}
to disk cotangent bundles of positively curved Riemannian two-spheres.

For an immersed closed curve $\gamma$ on $S^2$ with only transverse self-intersections, let $s(\gamma)\in\mathbb Z_2$ denote the parity of
the number of self-intersections. After parametrizing $\gamma$ by unit speed, its tangent lift $(\gamma,\dot\gamma)\in T^1S^2\cong\mathbb{RP}^3$ is contractible if and only if $s(\gamma)=1$. This follows by regularly homotoping $\gamma$ to a multiple cover of the equator. 

We denote by $\mu(\gamma)$ its Morse index as a critical point of the energy functional $\mathcal E_g$ and by $\nu(\gamma)$ its reduced nullity. 
Then, it is known that
\[
    \mu_-(\gamma) = \mu(\gamma),
    \quad \text{and} \quad
    \mu_+(\gamma) = \mu(\gamma)+\nu(\gamma).
\]

Finally, by the solution of Weyl's isometric embedding problem
\cite{Nir53}, every positively curved Riemannian two-sphere $(S^2,g)$ admits an isometric embedding into $\mathbb R^3$ as the boundary of a strictly convex body. We denote by $R=R(S^2,g)$ the circumradius of this convex body. After translating its circumcenter to the origin and rescaling by $R^{-1}$, the convex body is contained in the unit ball, and the induced metric on its boundary is $R^{-2}g$. Hence,
\[
    D^*_{R^{-2}g}S^2 \subset D^*_{g_0}S^2.
\]
We can therefore apply Theorem~\ref{Theorem: Upper bound from index 3 - degenerate W}. Since
\[
    \ell_{R^{-2}g}(\gamma)=R^{-1}\ell_g(\gamma),
\]
the action bound $2\pi$ in that theorem becomes $2\pi R$ after
rescaling back to $g$. Together with the above correspondence between
Reeb orbits and closed geodesics, this immediately implies
Theorem~\ref{Theorem: Upper bound from index 3 - refined + geodesic flow version}.

\begin{theorem}[Thm.~\ref{Theorem: Upper bound from index 3 - refined + geodesic flow version}]
Let $g$ be a Riemannian metric of positive curvature on $S^2$. Then
\begin{equation}
    c_{\rm HZ}(D_g^*S^2)
    \leq
    \sup_{\gamma\in\mathcal P_{\rm HZ}(g)}
    \ell_g(\gamma),
\end{equation}
where $\mathcal P_{\rm HZ}(g)$ denotes the set of closed geodesics
$\gamma$ satisfying
\[
    \operatorname{ind}(\gamma)
    \leq 3
    \leq
    \operatorname{ind}(\gamma)+\operatorname{nul}(\gamma),
\]
and one of the following two conditions:
\begin{itemize}
    \item $s(\gamma)=0$, equivalently, $\gamma$ has even number of self-intersections, and
    \[
        \ell_g(\gamma)
        \leq
        2\pi R(S^2,g);
    \]
    \item $s(\gamma)=1$, equivalently, $\gamma$ has odd
    number of self-intersection, and
    \[
        \ell_g(\gamma)+\operatorname{sys}(S^2,g)
        \leq
        2\pi R(S^2,g).
    \]
\end{itemize}
\end{theorem}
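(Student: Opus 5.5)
The plan is to deduce the statement directly from Theorem~\ref{Theorem: Upper bound from index 3 - degenerate W} by a rescaling argument, followed by a dictionary translating Reeb-orbit data on $\partial D^*S^2$ into geodesic data on $(S^2,g)$.

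\emph{Rescaling.} First, using Nirenberg's solution of the Weyl problem, I realise $(S^2,g)$ as the boundary of a strictly convex body $K\subset\mathbb R^3$ with circumcenter at the origin. Fix $\varepsilon>0$ and set $R_\varepsilon:=(1+\varepsilon)R(S^2,g)$. Then $R_\varepsilon^{-1}K$ lies in the open unit ball, and its boundary carries the metric $R_\varepsilon^{-2}g$. I take as given the embedding $D^*_{R_\varepsilon^{-2}g}S^2\subset \operatorname{int}(D^*_{g_0}S^2)$ asserted in the text preceding the theorem; the role of $\varepsilon$ is only to place the domain in the \emph{interior}, as Theorem~\ref{Theorem: Upper bound from index 3 - degenerate W} requires. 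The domain $W_\varepsilon:=D^*_{R_\varepsilon^{-2}g}S^2$ is fiberwise starshaped, so that theorem applies to it.

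Fiberwise multiplication by $R_\varepsilon$ maps $W_\varepsilon$ onto $D^*_gS^2$ and scales $\dd\lambda$ by $R_\varepsilon$. By conformality of $c_{HZ}$,
\[
c_{HZ}(D^*_gS^2)=R_\varepsilon\, c_{HZ}(W_\varepsilon).
\]
The Reeb flow on $\partial W_\varepsilon$ is the reparametrised cogeodesic flow of $R_\varepsilon^{-2}g$. Its closed orbits are the unit-speed lifts $(\gamma,\dot\gamma)$ of closed geodesics, with action
\[
\mathcal A=\ell_{R_\varepsilon^{-2}g}(\gamma)=R_\varepsilon^{-1}\ell_g(\gamma).
\]
Multiplying all action inequalities by $R_\varepsilon$ therefore turns the threshold $2\pi$ into $2\pi R_\varepsilon$.

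\emph{Dictionary.} I translate the three pieces of Reeb-orbit data into geodesic data.
\begin{enumerate}
\item \emph{Indices.} I quote $\mu^-=\operatorname{ind}$ and $\mu^+=\operatorname{ind}+\operatorname{nul}$, computed in the global trivialisation of $\xi$ on $\mathbb{RP}^3$. These indices are scale invariant, so the condition $\mu^-(\gamma)\le 3\le\mu^+(\gamma)$ becomes exactly $\operatorname{ind}(\gamma)\le 3\le\operatorname{ind}(\gamma)+\operatorname{nul}(\gamma)$.
\item \emph{Contractibility.} By the parity remark preceding the statement, contractibility of the tangent lift in $T^1S^2\cong\mathbb{RP}^3$ is governed by $s(\gamma)$. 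I match the two bullets of the theorem accordingly: non-contractible lifts give the bullet with the bound $\ell_g(\gamma)\le 2\pi R_\varepsilon$, and contractible lifts give the bullet involving a second orbit $\gamma'$. In carrying this out I would double-check the parity convention against the equator and its double cover.
\item \emph{The companion orbit.} In the contractible case, $\gamma'$ is itself a closed geodesic, so $\ell_g(\gamma')\ge\operatorname{sys}(S^2,g)$. Hence
\[
\ell_g(\gamma)+\operatorname{sys}(S^2,g)\le\ell_g(\gamma)+\ell_g(\gamma')\le 2\pi R_\varepsilon .
\]
The index condition on $\gamma'$ is simply discarded.
\end{enumerate}
Together these give $c_{HZ}(D^*_gS^2)\le \sup_{\mathcal P_\varepsilon}\ell_g$, where $\mathcal P_\varepsilon$ is defined like $\mathcal P_{HZ}(g)$ but with $R$ replaced by $R_\varepsilon$.

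\emph{Limit $\varepsilon\to0$.} This is the step I expect to need the most care. Since the bound holds for every $\varepsilon>0$, I choose geodesics $\gamma_\varepsilon\in\mathcal P_\varepsilon$ whose lengths nearly realise $\sup_{\mathcal P_\varepsilon}\ell_g$; these lengths are uniformly bounded by $2\pi R_1$.
\begin{itemize}
\item By Arzelà--Ascoli applied to the geodesic equation, a subsequence converges to a closed geodesic $\gamma$, possibly an iterate.
\item The index conditions pass to the limit, since $\operatorname{ind}$ is lower semicontinuous and $\operatorname{ind}+\operatorname{nul}$ is upper semicontinuous.
\item The lifts converge, so homotopy classes stabilise and the contractibility type is preserved; the length inequalities pass to the limit with $R$ in place of $R_\varepsilon$.
\end{itemize}
Hence $\gamma\in\mathcal P_{HZ}(g)$ and $\limsup_{\varepsilon\to0}\sup_{\mathcal P_\varepsilon}\ell_g\le\sup_{\mathcal P_{HZ}(g)}\ell_g$.

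One subtlety is that $s(\gamma)$ is only defined for transverse self-intersections, whereas a limit geodesic may be non-generic. I would therefore carry the contractibility of the lift through the limit, rather than the intersection count, and read off $s$ only where it is defined.
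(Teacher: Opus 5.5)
Your proposal is correct and follows the paper's argument. It uses Nirenberg's embedding, rescales by the circumradius, and applies Theorem~\ref{Theorem: Upper bound from index 3 - degenerate W}. It then translates through the same dictionary: $\mu^-=\operatorname{ind}$, $\mu^+=\operatorname{ind}+\operatorname{nul}$, non-contractible lift $\Leftrightarrow s(\gamma)=0$, and $\ell_g(\gamma')\geq\operatorname{sys}(S^2,g)$ for the companion orbit.

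Your one addition is the $R_\varepsilon=(1+\varepsilon)R$ regularisation followed by the compactness and semicontinuity limit. The paper does not do this; it applies the theorem directly to $D^*_{R^{-2}g}S^2\subset D^*_{g_0}S^2$. Your extra step makes the hypothesis $W\subset\operatorname{int}(D^*_{g_0}S^2)$ evident. Under the natural identification, the unregularised domain touches $\partial D^*_{g_0}S^2$ over the points where the convex body meets its circumsphere, and the paper does not address this.
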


We now proceed proving Corollary \ref{cor:ellipsoid-index-three-upper-bound}.

\begin{corollary}[Cor.~\ref{cor:ellipsoid-index-three-upper-bound}]
Let $E=E(a,b,c)$ with $a\geq b\geq c>0$, and let $\eta$ be a shortest closed geodesic on $E$ of Morse index $3$. Then
\[
c_{HZ}(D_1E,\dd\lambda)\leq\ell(\eta).
\]
\end{corollary}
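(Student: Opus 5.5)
We will apply Theorem~\ref{Theorem: Upper bound from index 3 - refined + geodesic flow version} to the positively curved metric on $E=E(a,b,c)$, which is already embedded in $\mathbb R^3$ as the boundary of the convex body $\{x^2/a^2+y^2/b^2+z^2/c^2\le 1\}$. Its circumradius is $R(S^2,g)=a$, because the farthest points from the center are $(\pm a,0,0)$. It therefore suffices to show that every closed geodesic $\gamma$ in $\mathcal P_{HZ}(g)$ satisfies $\ell(\gamma)\le \ell(\eta)$. This amounts to classifying the closed geodesics $\gamma$ with $\operatorname{ind}(\gamma)\le 3\le \operatorname{ind}(\gamma)+\operatorname{nul}(\gamma)$ that satisfy the relevant length condition. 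By continuity of both sides in $(a,b,c)$ and monotonicity of $c_{HZ}$, we may work with triaxial $a>b>c$ and treat the degenerate cases $a=b$ or $b=c$ as limits.

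\emph{Step 1 (simple geodesics, $s(\gamma)=0$).} We use Lemma~\ref{index closed geodesics} together with Klingenberg's description of the geodesic flow. The simple closed geodesics are the three principal ellipses and the circumpolar families with $\upomega(f)=m\in\mathbb Z_{\ge2}$. Their indices are as follows: $\gamma_x$ has index $1$ and is nondegenerate; $\gamma_y$ has index $2$ and nullity $0$ in the triaxial case; the family with $\upomega=m$ has index $2m-1$ and nullity $1$; and $\gamma_z$ has odd index $\ge 3$, equal to $3$ exactly when $\lim_{f\to k^2}\upomega(f)<2$. Hence the only simple geodesics satisfying the index window are $\eta$ itself, namely the $\upomega=2$ family or $\gamma_z$ in the other case. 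All members of the $S^1$-family have the same length, since they lie on a common Lagrangian torus on which the action is constant. The length condition $\ell(\eta)\le 2\pi a$ is not even needed, since we only need an upper bound by $\ell(\eta)$.

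\emph{Step 2 (odd self-intersections, $s(\gamma)=1$).} Here we must show that any such closed geodesic with $\operatorname{ind}\le3\le\operatorname{ind}+\operatorname{nul}$ violates $\ell(\gamma)+\mathrm{sys}\le 2\pi a$, or else has length at most $\ell(\eta)$. We plan to use the length and index estimates of Appendices~\ref{appendix: morse indices} and~\ref{appendix: period map}. Nonsimple closed geodesics lie on tori with rational $\upomega=p/q$, $q\ge2$, or are iterates of principal ellipses. The iterates $\gamma_x^2$ and $\gamma_x^3$ have index $\ge 3$ by Bott's iteration formula, since $\gamma_x$ is elliptic with positive rotation, and $\gamma_y^2$ has index $\ge4$. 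The $m$-fold iterates have even self-intersection count $0$ on the nose as curves, which places them in the first case with length $m\,\mathrm{sys}\ge 2\,\mathrm{sys}$. This is the delicate point. A non-primitive torus geodesic winding $q\ge2$ times has index growing roughly like $2(\text{number of conjugate points})$; we expect its index to be $\ge 3$ only with length comparable to $2\ell(\gamma_x)$ or more. The inequality $\ell(\gamma)+\mathrm{sys}> 2\pi a$ should follow from the crude bounds $\ell(\gamma)\ge 2\,\mathrm{sys}$ and $3\,\mathrm{sys}\ge 2\pi a$ whenever the latter holds. Otherwise we compare directly with $\ell(\eta)$ using the monotonicity of the period map.

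\emph{Step 3 (conclusion).} With $\mathcal P_{HZ}(g)$ reduced to geodesics of length at most $\ell(\eta)$, Theorem~\ref{Theorem: Upper bound from index 3 - refined + geodesic flow version} gives $c_{HZ}(D_1E)\le \ell(\eta)$. The main obstacle is Step 2, which requires a complete index count for nonsimple geodesics on the triaxial ellipsoid, including the iterates $\gamma_x^2$ and $\gamma_z^k$. We would first try to settle it by Bott's iteration formula combined with the explicit rotation numbers from the period map. If that fails, we would instead bound from below the lengths of all geodesics whose $\mu^+\ge3$, using Lemma~\ref{Lemma: 2-bounce orbit vs index-3 orbit}-type comparisons, and show that these lengths exceed $\ell(\eta)$.
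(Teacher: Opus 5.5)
Your reduction, that it suffices to show $\ell(\gamma)\le\ell(\eta)$ for every $\gamma\in\mathcal P_{HZ}(g)$, is false in general. Step~2 therefore cannot be completed along the lines you sketch, whether by the period map or otherwise. Besides $\eta$, the geodesics in the index window are the iterates $\gamma_x^q$ and the $S^1$-families $\gamma_{x,q}$ bifurcating from them (index $2$, nullity $1$, characterized by $\upomega=1/q$); see Appendix~\ref{appendix: morse indices}. Their lengths exceed $q\,\mathrm{sys}$ but can be far below $2\pi a$. In that case they genuinely belong to $\mathcal P_{HZ}(g)$ while being longer than $\eta$. For instance, take a near-prolate ellipsoid $E(1,b,c)$ with $b\gtrsim c=0.1$. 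There $\eta=\gamma_z$ with $\ell(\eta)\approx 4.06$. The family with $\upomega=1/7$ (seven windings around the $x$-axis per oscillation) has length $\approx 4.9<2\pi a$ (a rough estimate from Clairaut's integral) and non-contractible tangent lift, so it lies in $\mathcal P_{HZ}(g)$.

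Several smaller errors feed into this. First, $s$ is determined by the class of the tangent lift in $\pi_1(T^1S^2)\cong\mathbb Z_2$. For $\gamma_x^q$ this class is $q$ times the generator, so $s(\gamma_x^q)=1$ exactly for even $q$. In particular $\gamma_x^2$ falls under the second condition, and you need that extra $\mathrm{sys}$. Second, $s=0$ does not mean simple: the $\gamma_{x,q}$ with $q$ odd are nonsimple with $s=0$. Third, $\gamma_x^2$ and $\gamma_x^3$ need not have index $\ge 3$. Near the prolate limit the iterates of $\gamma_x$ keep index $1$ until $q\approx a/c$.

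The missing ingredient is Theorem~\ref{thmSH}. Since $\mathrm{dias}=\mathrm{sys}$ in positive curvature, it gives $c_{HZ}\le 2\,\mathrm{sys}$. You may therefore assume $\ell(\eta)<2\,\mathrm{sys}$, and precisely in this regime the competitors disappear.

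Any $\gamma\neq\eta$ in $\mathcal P_{HZ}(g)$ forces $3\,\mathrm{sys}\le 2\pi a$. For $q\ge 3$ this follows from $3\,\mathrm{sys}\le\ell(\gamma)\le 2\pi a$. For $q=2$ it follows from $s=1$, which gives $3\,\mathrm{sys}\le\mathrm{sys}+\ell(\gamma)\le 2\pi a$.

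On the other hand, $\gamma_x$ is an ellipse with semi-axes $b,c$, so $4\sqrt{a^2+b^2}\le\ell(\eta)<2\,\mathrm{sys}\le 4\pi b$, giving $a^2<(\pi^2-1)b^2$. Also $\ell(\eta)<2\,\mathrm{sys}=\tfrac23\cdot 3\,\mathrm{sys}\le\tfrac{4\pi a}{3}$, giving $b^2<(\pi^2/9-1)a^2$. Together these yield $a^2<(\pi^2-1)(\pi^2/9-1)a^2\approx 0.857\,a^2$, a contradiction.

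Thus the corollary needs both upper-bound theorems. The index-$3$ theorem alone yields $\ell(\eta)$ only when $\ell(\eta)<2\,\mathrm{sys}$; in the complementary regime, Theorem~\ref{thmSH} gives $c_{HZ}\le 2\,\mathrm{sys}\le\ell(\eta)$ directly.
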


\begin{proof}
We apply Theorem~\ref{Theorem: Upper bound from index 3 - refined + geodesic flow version}.
Recall that $\eta$ is either the major principal ellipse $\gamma_z$ or
belongs to the family of closed geodesics intersecting $\gamma_z$ four
times characterized by $\upomega=2$. In particular,
$\eta\in\mathcal P_{\rm HZ}(g)$. Although $\eta$ need not be the only
element of $\mathcal P_{\rm HZ}(g)$. Using the independent upper bound
of Theorem~\ref{thmSH}, we may further assume that
\[
    \ell(\eta)<2\ell(\gamma_x)=2\operatorname{sys},
\]
since otherwise Theorem~\ref{thmSH} already gives the desired result.

Arguing by contradiction, suppose that the positive asymptote
$\gamma_o$ of the bottom level of the building projects to a closed
geodesic, again denoted by $\gamma_o$, different from $\eta$. As
discussed in Appendix~\ref{appendix: morse indices}, such a geodesic can
only be a multiple cover $\gamma_x^q$ of $\gamma_x$ or belong to the
$S^1$-family of closed geodesics $\gamma_{x,q}$ bifurcating from
$\gamma_x^q$. Note that
\[
    \ell(\gamma_{x,q})>\ell(\gamma_x^q)=q\operatorname{sys}.
\]
Thus, if $q\geq3$, then
\[
    3\operatorname{sys}
    \leq \ell(\gamma_o)
    \leq 2\pi a,
\]
where in the last inequality we used that the circumradius of
$E(a,b,c)$ is $a$. If $q=2$, then $s(\gamma_o)=1$, so there is an
additional negative puncture in the top level of the building.
Consequently, the action bound gives
\[
    3\operatorname{sys}
    \leq \operatorname{sys}+\ell(\gamma_o)
    \leq 2\pi a.
\]
Hence, in either case,
\[
    3\operatorname{sys}\leq2\pi a.
\]
We now show that this is incompatible with
$\ell(\eta)<2\operatorname{sys}$. Indeed,
\[
    4\sqrt{a^2+b^2}
    \leq \ell(\eta)
    <2\operatorname{sys}
    \leq4\pi b.
\]
The first inequality follows from the fact that $\eta$ connects
\[
    (a,0,0)\to(0,b,0)\to(-a,0,0)\to(0,-b,0)\to(a,0,0).
\]
It follows that
\[
    a^2\leq(\pi^2-1)b^2.
\]
On the other hand,
\[
    \ell(\eta)
    <\frac{2}{3}\,3\operatorname{sys}
    \leq\frac{4\pi a}{3}.
\]
Since again $\ell(\eta)\geq4\sqrt{a^2+b^2}$, we obtain
\[
    b^2
    <
    \left(\left(\frac{\pi}{3}\right)^2-1\right)a^2.
\]
Combining the two inequalities gives
\[
    a^2
    <
    (\pi^2-1)
    \left(\left(\frac{\pi}{3}\right)^2-1\right)a^2
    \approx0.857\,a^2,
\]
a contradiction. Hence the positive asymptote must project to $\eta$,
and the desired bound follows.

\end{proof}

\newpage

\section{Step 3: Symplectic homology}

There are several ways to obtain upper bounds for the Hofer--Zehnder
capacity of a Liouville domain $(W,\dd\lambda)$, and in particular of a disk cotangent bundle
$(D^*N,d\lambda)$, using symplectic homology. The basic mechanisms currently known are:
\begin{itemize}
  \item vanishing of $\SH_*(W;\mathcal L)$ for a suitable local system
        $\mathcal L$;
  \item the existence of a (suitable) pair-of-pants product $x * y = 1$ in $\SH_*(W)$;
  \item the existence of a dilation class $\Delta x = 1$ in $\SH_*(W)$.
  \item the existence of a higher dilation, i.e.\ the vanishing of the unit in a suitable completed $S^1$-equivariant/periodic version of symplectic cohomology.
\end{itemize}

For general Liouville domains, the fact that vanishing of symplectic homology
implies finiteness of the Hofer--Zehnder capacity is due to Irie
\cite{Irie14}. For cotangent bundles, Viterbo's isomorphism
\[
  \SH_*(D^*N;\mathbb Z_2) \cong H_{*}(\Lambda N;\mathbb Z_2)
\]
rules out vanishing with untwisted coefficients. However, results of
Albers--Oancea--Frauenfelder \cite{AFO17} show that in some cases (for example if
$\pi_2(N)\neq 0$) one can choose a local system $\sL$ on
$\Lambda N$ such that $\SH_*(D^*N;\sL)=0$, and finiteness then follows. The pair-of-pants argument is available only under additional
assumptions: it is proved when $x$ and $y$ are represented by orbits of non-contractible loops \cite{Irie14}, or in
the special case $x=\Delta(\tilde x)$ \cite{BC25}. The dilation class argument is the corresponding special case $y=1$ and $x=\Delta(\tilde x)$. Finally, Zhao introduced in her thesis \cite{Zhao16} the notion of a higher dilation, which roughly means that the unit vanishes in completed periodic symplectic cohomology. Through its relation with $S^1$-equivariant symplectic homology, this provides another route to finiteness of the Hofer--Zehnder capacity, closely related to the criterion of Frauenfelder--Pajitnov \cite{FP17}.\\

The rule of thumb seems to be that one looks for a structure on $\SH_*(W)$ that forces a form of \emph{uniruledness} of $(W,d\lambda)$, i.e.\ guarantees the existence of a pseudo-holomorphic curve through every point. In the spirit of Hofer--Viterbo \cite{HV92}, discussed in detail in the previous section, such a property is expected to yield an upper bound on the Hofer--Zehnder capacity and can be viewed as a non-compact analogue of a non-vanishing Gromov--Witten invariant. So far, these approaches have mostly been used to prove finiteness of $c_{HZ}$, but in fact they can be upgraded to quantitative upper bounds by incorporating the action filtration on symplectic homology. We now explain this more precisely in the pair-of-pants case. In the ellipsoid case, this will yield the second upper bound $2\mathrm{sys}$.

\begin{remark}
    We expect that vanishing of symplectic homology with local coefficients, as in \cite{AFO17}, can be used to recover the previous upper bound, and more generally to bound the Hofer--Zehnder capacity of $(D^*_FS^2, \dd\lambda)$ in terms of a diastolic quantity defined by a min--max over $3$-parameter families of loops. We will address this in future work.
\end{remark}

\noindent
\textbf{Symplectic homology.} We first recall the version of symplectic homology which will be used in the proof. We follow the convention of Irie \cite{Irie14}. Let $(W,\lambda)$ be a Liouville domain and let
\[
  \widehat W = W \cup_{\partial W} \bigl(\partial W \times [1,\infty)\bigr)
\]
be its completion. On the cylindrical end we write the completed Liouville form
as
\[
  \widehat\lambda = r \lambda|_{\partial W},
  \qquad r\in [1,\infty).
\]
In the case relevant to us, $W=D^*N$, this completion is canonically identified
with the full cotangent bundle $T^*N$ equipped with its standard Liouville form. We consider Hamiltonians $H \in C^\infty(S^1\times \widehat W)$, which are \emph{linear at infinity}: there exist constants $a_H>0$, $b_H\in
\mathbb R$, and $r_0\geq 1$ such that
\[
  H_t(z,r)=a_H r+b_H,
  \qquad (t,z,r)\in S^1\times \partial W\times [r_0,\infty).
\]
The number $a_H$ is called the slope of $H$. We always assume that $a_H\notin \operatorname{Spec}(W,\lambda)$, where $\operatorname{Spec}(W,\lambda)$ denotes the set of periods of Reeb orbits on $(\partial W,\lambda|_{\partial W})$. This condition ensures that no $1$-periodic Hamiltonian orbits appear on the cylindrical end. After a small compactly supported perturbation, we may also assume that all $1$ periodic orbits are nondegenerate.\\

\noindent
For such a Hamiltonian, the Floer chain complex is generated over
$\mathbb Z_2$ by the $1$-periodic Hamiltonian orbits of $H$. More precisely, for an interval $I=(a,b)\subset \mathbb R$, one sets
\[
  \CF^{(a,b)}_*(H)
  :=
  \bigoplus_{\substack{x\in \mathcal P(H)\\ \mathcal A_H(x)\in I \\ \operatorname{CZ}(x)=*}}
  \mathbb Z_2\langle x\rangle,
\]
where $\mathcal P(H)$ denotes the set of 1-periodic orbits, $\operatorname{CZ}(x)$ is the Conley--Zehnder index and the action functional is
\[
  \mathcal A_H(x)
  =
  \int_{S^1} x^*\widehat\lambda
  -
  \int_0^1 H_t(x(t))\,dt .
\]
Choosing a generic almost complex structures of contact type on the cylindrical end, counting solutions to the usual Floer equation defines a differential $\partial_H \colon \CF_*^{I}(H)\to \CF_{*-1}^{I}(H)$. Its homology will be denoted $\HF^{I}(H)$. We also write $\HF^{<a}(H):=\HF^{(-\infty,a)}(H)$ and $\HF(H)=\HF^\R(H)$. If $H_-\leq H_+$ are two admissible Hamiltonians with slopes
$a_{H_-}\leq a_{H_+}$, continuation maps give natural homomorphisms $\HF^{I}(H_-)\rightarrow \HF^{I}(H_+)$, compatible with composition. The symplectic homology of $(W,\lambda)$ is then
defined by taking the direct limit over Hamiltonians which are negative on
$W$ and whose slopes tend to infinity:
\[
  \SH^{I}(W,\lambda)
  :=
  \varinjlim_H \HF^{I}(H).
\]
In particular,
\[
  \SH^{<a}(W,\lambda)
  :=
  \varinjlim_H \HF^{<a}(H),
  \qquad
  \SH(W,\lambda)
  :=
  \varinjlim_a \SH^{<a}(W,\lambda).
\]
Irie's Lemma~2.5 shows that, if $H$ is admissible and has slope
$a_H$, then there is an isomorphism
\[
  \Psi_H\colon \HF(H)\xrightarrow{\cong} \SH^{<a_H}(W,\lambda)
\]
Thus the part of symplectic homology with action $<a_H$ may be represented by
the ordinary Floer homology of a single Hamiltonian with finite slope $a_H$.\\

\noindent
\textbf{Spectral invariants.} We next recall the spectral invariants used by Irie. Let $(W,\lambda)$ be a
Liouville domain, and let $H\in C^\infty (S^1\times \hat W)$ be linear at infinity of slope $a_H\notin \operatorname{Spec}(W,\lambda)$. For $c\in\mathbb R$, let
\[
  \iota_H^c: \HF_*^{<c}(H)\longrightarrow \HF_*(H)
\]
be the map induced by inclusion of the filtered Floer complex. If
$x\in \SH_*^{<a_H}(W,\lambda)$, define
\[
  \rho(H:x)
  :=
  \inf\left\{
    c\in\mathbb R
    \ \middle|\
    \Psi_H^{-1}(x)\in \operatorname{im}(\iota_H^c)
  \right\}.
\]
Equivalently, $\rho(H:x)$ is the smallest action level below which the class
$\Psi_H^{-1}(x)$ can be represented by a Floer cycle. The definition of $\rho(H:x)$ extends to Hamiltonians which are linear at
infinity but possibly degenerate. Namely, if $H$ is linear at infinity and $a_H\notin\operatorname{Spec}(W,\lambda)$, one chooses admissible Hamiltonians
$H_j$ such that $H_j-H$ is compactly supported and $\|H_j-H\|\to 0$. Then one defines
\[
  \rho(H:x):=\lim_{j\to\infty}\rho(H_j:x).
\]
The Lipschitz estimate below implies that this limit exists and is independent
of the chosen approximation.

\begin{lemma}[Irie, Lemma~3.2]
\label{lem:irie-spectral-invariants}
Let $H$ be a Hamiltonian on $\widehat W$ which is linear at infinity and whose
slope satisfies $a_H\notin \operatorname{Spec}(W,\lambda)$. Then the following hold.

\begin{enumerate}
  \item \emph{Spectrality.} If $0\neq x\in \SH_k^{<a_H}(W,\lambda)$, then $\rho(H:x)\in \operatorname{Spec}_k(H)$, where $\operatorname{Spec}_k(H)$ denotes the set of actions of $1$-periodic Hamiltonian orbits in the free homotopy class $\alpha$ of Conley--Zehnder index $k$.

  \item \emph{Lipschitz continuity.} Let $K$ be another Hamiltonian which is
  linear at infinity, with slope outside $\operatorname{Spec}(W,\lambda)$, and
  assume that $H-K$ is compactly supported. Then, for every nonzero
  $x\in \SH_*^{<a_H}(W,\lambda)$,
  \[
    \bigl|\rho(H:x)-\rho(K:x)\bigr|
    \leq
    \|H-K\|,
  \]
  where
  \[
    \|H-K\|
    :=
    \int_0^1
    \left(
      \max_{\widehat W}(H_t-K_t)
      -
      \min_{\widehat W}(H_t-K_t)
    \right)\,dt .
  \]
\end{enumerate}
\end{lemma}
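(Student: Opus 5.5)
We prove the two assertions separately, spectrality first for nondegenerate $H$ and then Lipschitz continuity, which also yields spectrality in the degenerate case by approximation. For nondegenerate $H$, the filtered complexes $\CF^{<c}_*(H)$ change only when $c$ crosses the action of some $1$-periodic orbit. Since $H$ is linear at infinity with slope $a_H\notin\operatorname{Spec}(W,\lambda)$, all such orbits lie in a compact region and are isolated, so the action spectrum $\operatorname{Spec}_k(H)$ is finite. If $c$ lies in a gap $(c_1,c_2)$ of $\operatorname{Spec}_k(H)\cup\operatorname{Spec}_{k\pm1}(H)$, then the relevant degrees of the complexes $\CF^{<c}$ are constant in $c$. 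Hence $\operatorname{im}(\iota^c_H)$ in degree $k$ is locally constant on such gaps, and the infimum $\rho(H:x)$ can only jump at an action value. To see that the value is attained in degree exactly $k$, write a representing cycle as a finite sum of degree-$k$ generators. The smallest maximal action among all representing cycles is then the maximum of finitely many degree-$k$ actions, so it lies in $\operatorname{Spec}_k(H)$. Finally, $x\neq0$ guarantees that $\rho(H:x)>-\infty$, since the complex is finite-dimensional below any level and $\Psi_H^{-1}(x)\ne0$.

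For Lipschitz continuity, we use a monotone continuation homotopy $H_s=(1-\beta(s))H+\beta(s)K$ with $\beta'\ge0$. Because $H-K$ is compactly supported, the homotopy is fixed at infinity, so the slopes agree and the maximum principle on the cylindrical end gives compactness. The standard energy identity for continuation Floer trajectories $u$ from $x_-$ to $x_+$ reads
\[
\mathcal A_{K}(x_+)-\mathcal A_H(x_-)=-E(u)+\int\!\!\int \beta'(s)\,(H_t-K_t)(u)\,ds\,dt\le \int_0^1\max_{\widehat W}(H_t-K_t)\,dt.
\]
So the continuation map sends $\CF^{<c}(H)$ into $\CF^{<c+\int\max(H-K)}(K)$. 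It commutes with $\Psi_H,\Psi_K$ because both are defined via continuation into the direct limit, and continuation maps compose up to homotopy. Therefore $\rho(K:x)\le\rho(H:x)+\int_0^1\max(H_t-K_t)\,dt$, and by symmetry $\rho(H:x)\le\rho(K:x)-\int_0^1\min(H_t-K_t)\,dt$. Together these give the bound $|\rho(H:x)-\rho(K:x)|\le\|H-K\|$; the stated oscillation norm is in fact weaker than this bound.

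The degenerate case then follows. Approximate $H$ by nondegenerate $H_j$ with $H_j-H$ compactly supported. The Lipschitz estimate shows that $\rho(H_j:x)$ is Cauchy, so $\rho(H:x)$ is well-defined. Spectrality passes to the limit because the set of actions of $1$-periodic orbits of $H$ in degree $k$ (with $\mu^\pm$ conventions) is closed and limits of orbits of $H_j$ are orbits of $H$.

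The main obstacle is the precise compatibility of the continuation maps with the isomorphisms $\Psi_H$, $\Psi_K$ into $\SH^{<a_H}$, and ensuring the homotopy stays admissible. This requires $H_s$ to be linear at infinity with the same slope for all $s$; compact support of $H-K$ ensures exactly this. With a non-monotone homotopy in $s$, the direct-limit identification could shift classes. Since both Hamiltonians have the same slope, however, $\Psi$ is natural under continuation, and this is the step I would check carefully against Irie's construction.
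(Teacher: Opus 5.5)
Your proposal is correct and is the standard argument behind Irie's Lemma~3.2, which the paper quotes without proof. Finite generation of the filtered Floer complex gives spectrality for nondegenerate $H$, the continuation energy estimate along a homotopy that is constant at infinity gives the Lipschitz bound, and approximation handles the degenerate case.

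Two implicit steps are worth writing out. First, passing from your one-sided bounds $\int_0^1\max_{\widehat W}(H_t-K_t)\,dt$ and $-\int_0^1\min_{\widehat W}(H_t-K_t)\,dt$ to $\|H-K\|$ uses $\max_{\widehat W}(H_t-K_t)\ge 0\ge\min_{\widehat W}(H_t-K_t)$, which holds because $H-K$ is compactly supported on the noncompact $\widehat W$. Second, for spectrality in the degenerate case you should choose the approximations $H_j\to H$ in $C^\infty$, not merely with $\|H_j-H\|\to 0$ (allowed since the limit is independent of the approximation), so that $1$-periodic orbits of $H_j$ subconverge to orbits of $H$.
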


\noindent
For sufficiently small
$\delta>0$ there is a natural identification $H_*(W,\partial W)\cong \SH_*^{<\delta}(W,\lambda)$. For every $c>0$ this gives a natural homomorphism
\[
  \iota_c:H^*(W,\Z_2)
  \longrightarrow
  \SH_*^{<c}(W,\lambda).
\]
Following Irie, we set
\[
  F_c:=\iota_c(1)\in \SH_*^{<c}(W,\lambda),
\]
where $1\in H^0(W;\mathbb Z_2)$ is the cohomological unit. We shall also need the following computation of the spectral invariant of the
class $F_c$ for Hofer--Zehnder admissible Hamiltonians. We state it in the form
used by Irie. Let $H\in C_0^\infty(\operatorname{int} W)$ and let
$\nu\in C^\infty([1,\infty))$. For $a\in\mathbb R$ define $H_{a,\nu}:S^1\times \widehat W\longrightarrow \mathbb R$ by
\begin{equation}
\label{extension}
  H_{a,\nu}(t,x)
  :=
  \begin{cases}
    aH(x), & x\in \operatorname{int} W,\\
    \nu(r), & x=(z,r)\in \partial W\times [1,\infty).
  \end{cases}
\end{equation}

\begin{proposition}[Irie, Proposition~3.3]
\label{prop:irie-fundamental-spectral-value}
Let $H\in C_0^\infty(\operatorname{int} W)$ and
$\nu\in C^\infty([1,\infty))$. Assume:
\begin{enumerate}
  \item there exists $r_0>1$ such that $\nu(r)\equiv 0$ for $r\in [1,r_0]$;

  \item there exist $r_1>1$ and $a_\nu\in (0,-\min H)\setminus \operatorname{Spec}(W,\lambda)$ such that $\nu'(r)\equiv a_\nu$ for $r\in [r_1,\infty)$;
  \item one has $ S(\nu):=\sup_{r\geq 1}\bigl(r\nu'(r)-\nu(r)\bigr)<-\min H$.
\end{enumerate}
If $H$ is Hofer--Zehnder admissible with respect to the contractible class
$c_W$, then
\[
  \rho(H_{1,\nu}:F_{a_\nu})=-\min H.
\]
\end{proposition}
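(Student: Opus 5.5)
The plan is to prove the two inequalities $\rho(H_{1,\nu}:F_{a_\nu})\leq -\min H$ and $\rho(H_{1,\nu}:F_{a_\nu})\geq -\min H$ separately. Both come from deforming $H_{1,\nu}$ through the family $H_{s,\nu}$, $s\in[0,1]$, and tracking the spectral invariant with Lemma~\ref{lem:irie-spectral-invariants}.

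For $s=0$, the Hamiltonian $H_{0,\nu}$ vanishes on $W$ and equals $\nu$ on the end. A small Morse perturbation $\epsilon f$ on $W$ makes the constant orbits have action $\approx 0$. The nonconstant orbits sit where $\nu'(r)\in\operatorname{Spec}(W,\lambda)$, and they have action $r\nu'(r)-\nu(r)\leq S(\nu)$. Under $\Psi$ the class $F_{a_\nu}$ is represented by the fundamental class of $(W,\partial W)$, i.e.\ by the minimum/top-degree part of the Morse complex. So I expect $\rho(H_{0,\nu}:F_{a_\nu})\approx 0$, by comparison with the identification $H_*(W,\partial W)\cong\SH^{<\delta}_*$.

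Now deform $s\mapsto H_{s,\nu}$. All these Hamiltonians differ by compactly supported functions and have the same slope $a_\nu\notin\operatorname{Spec}$, so $s\mapsto\rho(H_{s,\nu}:F_{a_\nu})$ is continuous by Lipschitz continuity. By spectrality, it takes values in the action spectrum of $H_{s,\nu}$. I will list this spectrum.

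\begin{enumerate}
\item Constant orbits at critical points $x$ of $H$ have action $-sH(x)$. The value $-s\min H$ is attained on the open set where $H$ equals its minimum.
\item Nonconstant $1$-periodic orbits of $sH$ in $W$ are $s$-periodic orbits of $X_H$ with $s\leq1$. By Hofer--Zehnder admissibility with respect to $c_W$, there are none in the contractible class.
\item Orbits on the end have action at most $S(\nu)<-\min H$.
\end{enumerate}

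Since $F_{a_\nu}$ comes from the unit, I will argue that it lives in the contractible class, so only contractible orbits matter. The spectrum is therefore $\{-sH(x)\}\cup\{\text{values}\leq S(\nu)\}$. This set has measure zero, since critical values of $H$ form a null set by Sard.

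The core step, and the main obstacle, is to pin down which branch the continuous function $s\mapsto\rho(H_{s,\nu}:F_{a_\nu})$ follows. The claim is that it equals $-s\min H$, i.e.\ it follows the minimum. Continuity alone does not prevent jumping between branches $-sH(x)$ for different critical values. To rule this out I will use two facts.

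First, for the lower bound I use the characterization of $F$ via the PSS/Morse identification. A cycle representing the fundamental class must contain the global minimum of $H_{s,\nu}$ (a max of $-H$ in action), because of the Poincaré-duality pairing. Concretely, $F$ pairs nontrivially with the point class, and the point class can be placed in $\{H=\min H\}$. This gives $\rho\geq -s\min H$.

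Second, the upper bound $\rho\leq -s\min H$ holds because every generator of the complex has action at most $\max(-s\min H,S(\nu))$. Since $S(\nu)<-\min H$, any possible jump to the end branch at $s=1$ is below $-\min H$ and is excluded by the lower bound.

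If the pairing argument for the lower bound is not robust, I would instead use the homotopy $H\mapsto \min(H,c)$-type truncations. In each case I would check that the spectrum near $-\min H$ is isolated, so continuity forces $\rho=-\min H$ at $s=1$.
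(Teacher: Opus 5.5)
The paper does not prove this proposition; it quotes it from Irie. Your outline therefore has to stand on its own, and it has a genuine gap exactly at the step you call the core.

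The lower bound $\rho(H_{s,\nu}:F_{a_\nu})\geq -s\min H$ ``by Poincar\'e duality, placing the point class in $\{H=\min H\}$'' is not a valid Floer-theoretic argument. The chain-level identification with a Morse complex, which is what would force every cycle representing $F$ to contain the minimum $p$, only holds for $C^2$-small $sH$. Floer duality merely trades $\rho(H:F)$ for the spectral invariant of the point class of the reversed Hamiltonian, which is equally hard to control.

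A quick test shows the step must fail, because it never uses Hofer--Zehnder admissibility. Take $W=B^{2n}(1)$, $a_\nu\in(\pi,2\pi)$, and any sufficiently deep $H\leq 0$. Then conditions (1)--(3) hold, yet $F_{a_\nu}=0$ in $\SH^{<a_\nu}(B^{2n}(1))$ (the unit dies at action $\pi$), so $\rho=-\infty$. The same example shows that your starting value $\rho(H_{0,\nu}:F_{a_\nu})\approx 0$ is unjustified. $H_{0,\nu}$ does not involve $H$ at all, so nothing at $s=0$ can show $F_{a_\nu}\neq 0$. That nonvanishing is the real content of the proposition, since it is what yields the capacity bound, and it has to be extracted from admissibility.

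Your diagnosis of the obstacle is also off. Jumps among the branches $-sH(x)$ are harmless: they are lines through the origin, disjoint for $s>0$. Since the critical values of $H$ form a compact null, hence totally disconnected, set, $\rho(s)/s$ is constant on any $s$-interval where $\rho$ stays among constant-orbit actions.

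The real danger is the orbits on the end, whose actions $r\nu'(r)-\nu(r)$ do not depend on $s$. For $s\leq S(\nu)/(-\min H)$ the line $-s\min H$ can cross these horizontal branches. A continuous $\rho$ can leave the minimum branch at such a crossing and end below $-\min H$ at $s=1$. Isolation of $-\min H$ in $\operatorname{Spec}(H_{1,\nu})$ does not prevent this, and your truncation fallback leaves both issues untouched.

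One way to repair the argument has two steps.
\begin{enumerate}
\item Run your continuity argument at a slope $\delta$ below the minimal Reeb period, say for $H_{s,\mu}$ with $0\leq\mu'\leq\delta$. There are then no nonconstant orbits on the end, and $F_\delta\neq 0$ is the fundamental class of $(W,\partial W)$. Admissibility puts the contractible spectrum inside $\{-sH(x)\}$, and the small-$s$ Morse computation gives $\rho(H_{1,\mu}:F_\delta)=-\min H$.
\item Transfer to slope $a_\nu$ through Hamiltonians that equal $H$ on $W$ and whose end orbits all have action below a level $c$ with $\max\{S(\nu),\text{other critical values of }-H\}<c<-\min H$. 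Condition (3) is what makes such a level available. In the action window $[c,\infty)$ the only generator is $p$ throughout, and the continuation maps act as $p\mapsto p$. For example, for a monotone homotopy a continuation cylinder from $p$ to $p$ has energy $\leq 0$ and is therefore constant.
\end{enumerate}
Hence the image of $F_{a_\nu}$ in $\HF^{[c,\infty)}(H_{1,\nu})$ is $p\neq 0$. This gives $F_{a_\nu}\neq 0$ and $\rho\geq c$ simultaneously, while your upper bound supplies $\rho\leq -\min H$.
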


We will need the following slight improvement of Irie's proposition.

\begin{lemma}
\label{lem:irie-degree-improvement}
Let $H$ and $\nu$ satisfy the assumptions of Proposition~\ref{prop:irie-fundamental-spectral-value}. If $k\neq n$ and $0\neq x\in \SH^{<a_\nu}_k(W,\lambda)$, then
$\rho(H_{1,\nu}:x)<-\min H$.
\end{lemma}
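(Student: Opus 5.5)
By spectrality (Lemma~\ref{lem:irie-spectral-invariants}(1)), $\rho(H_{1,\nu}:x)$ is the action of some $1$-periodic orbit of a small nondegenerate perturbation of $H_{1,\nu}$ with Conley--Zehnder index $k$, in the limit as the perturbation tends to zero. The plan is to list all the $1$-periodic orbits of $H_{1,\nu}$ together with their actions, and to show that the only ones that can have action $\geq -\min H$ have index $n$. The claim then follows for $k\neq n$.

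\emph{Step 1: orbits in $W$.} Since $H$ is Hofer--Zehnder admissible, it has no nonconstant $1$-periodic orbits, so every $1$-periodic orbit of $aH$ with $a=1$ inside $\operatorname{int}W$ is a critical point $q$ of $H$. Its action is $-H(q)\leq -\min H$, with equality exactly on the minimum set of $H$. The minimum set is an open region where $H$ is constant. We perturb there as in the construction of $H'$: we add a small bump $F$ with a nondegenerate quadratic minimum at $p$. After the perturbation, the only orbit whose action is close to $-\min H$ is the constant orbit at $p$. Its index is $\mu_{\rm CZ}(p)=n$ under the grading convention in which the Morse minimum sits in degree $n$, consistent with $H_*(W,\partial W)$ and with $F_c$ having degree $n$. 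Every other critical point $q$ has action $-H(q)\leq -\min H-\epsilon$ for some $\epsilon>0$. This uses the fact that $\min H$ is isolated among the critical values, which we can arrange by the same perturbation.

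\emph{Step 2: orbits on the cylindrical end.} In the region $r\in[1,r_0]$ we have $\nu\equiv 0$, so only constant orbits appear there, and their action is $0<-\min H$. Where $\nu'(r)\in\operatorname{Spec}(W,\lambda)$, the orbits are reparametrised Reeb orbits with action $r\nu'(r)-\nu(r)\leq S(\nu)<-\min H$ by assumption~(3). There are no orbits for $r\geq r_1$, because $a_\nu\notin\operatorname{Spec}$. Hence every orbit outside the minimum set of $H$ has action at most $\max\{S(\nu),-\min H-\epsilon,0\}<-\min H$.

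\emph{Step 3: the gap.} Take a sequence of nondegenerate perturbations $H_j\to H_{1,\nu}$. By the Lipschitz estimate, $\rho(H_j:x)\to\rho(H_{1,\nu}:x)$. For $j$ large, all index-$k$ orbits of $H_j$ with $k\neq n$ lie near orbits of the previous steps other than $p$, so their actions are at most $-\min H-\epsilon'$ for a uniform $\epsilon'>0$. Spectrality then gives $\rho(H_j:x)\leq -\min H-\epsilon'$, and passing to the limit yields the strict inequality.

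The main obstacle is the degenerate part, namely the plateau where $H$ attains its minimum. Perturbing it to a single nondegenerate minimum must not create new orbits of index $k$ with action near $-\min H$. I would handle this with the $C^0$-small perturbation of Irie already recalled, making the quadratic form $S$ small so that $p$ has index exactly $n$ and the bump has no nonconstant $1$-periodic orbits. The index bookkeeping near the Morse–Bott families on the end also needs checking. However, the action bound $S(\nu)<-\min H$ there does not depend on the index, so this part should be harmless.
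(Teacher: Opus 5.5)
Your enumeration of orbits matches the paper's. The problem is the uniform gap $\epsilon'$ in Step~3, which is equivalent to the claim in Step~1 that after a small perturbation only the orbit at $p$ has action near $-\min H$. That step fails.

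Irie's bump lowers the value at $p$ to $\min H-b$. The rest of the plateau $P=\{H=\min H\}$, outside $\iota(B^4(r_+))$, stays at level $\min H$, so orbits of action exactly $-\min H$ remain. In Step~3 you then let $H_j\to H_{1,\nu}$, which means you return to the unperturbed plateau. If $P$ has topology, small nondegenerate perturbations of $H$ have critical points of other Morse indices, hence $1$-periodic orbits of degree $k\neq n$, with action arbitrarily close to $-\min H$, and these accumulate on $P$. The gap $\epsilon$ is an artifact of the perturbation and shrinks to $0$ with it. The Lipschitz estimate only carries non-strict inequalities to the limit, so your argument gives $\rho(H_{1,\nu}:x)\le-\min H$ and nothing more.

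This cannot be repaired, because the strict inequality is false in this generality. Here is a counterexample.
\begin{itemize}
\item Take $W=D^*_gS^2$, so $n=2$.
\item Let $H=h(|p|_g)$ with $h\equiv\min H$ near $0$, $h\equiv 0$ near $1$, and $0<h'<\mathrm{sys}(g)$ in between. This $H$ is admissible: a closed orbit on $\{|p|_g=r\}$ is a closed geodesic of length $L\geq\mathrm{sys}(g)$ traversed at speed $h'(r)$, so its period exceeds $1$.
\item Let $\nu$ be $C^2$-small with slope $a_\nu<\mathrm{sys}(g)$.
\item Let $x\neq 0$ be the degree-$0$ class, the point class in $H_0(\Lambda S^2)$.
\end{itemize}
For $K_s=(sH)_{1,\nu}$ with $s\in(0,1]$, there are no nonconstant $1$-periodic orbits, so the spectrum is $\{0,-s\min H\}$. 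The map $s\mapsto\rho(K_s:x)$ is Lipschitz and the two branches are disjoint for $s>0$, so it stays on one branch for all $s\in(0,1]$.

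For small $s$, $K_s$ is $C^2$-small, so filtered Floer homology is filtered Morse homology of $-K_s$. For every $c<-s\min H$, the set $\{-K_s<c\}$ misses a neighbourhood of the zero section and retracts onto the end. Hence $\HF^{<c}(K_s)=0$ and $\rho(K_s:x)=-s\min H$. It follows that $\rho(H_{1,\nu}:x)=-\min H$, even though $k=0\neq n$.

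The paper's own proof has the same defect. Its reduction ``by a $C^\infty$-small approximation and Lipschitz continuity'' to a unique, nondegenerate, isolated minimum is harmless for Irie's equality $\rho(H_{1,\nu}:F_{a_\nu})=-\min H$, but it does not preserve a strict inequality.

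What your argument, and the paper's, does prove is the lemma under an extra hypothesis: $\min H$ is attained at a single nondegenerate point and is isolated among the critical values. Irie's perturbation $H'$ is an example; it is still admissible and has $-\min H'>-\min H$. In that case the gap below $-\min H$ is a property of the Hamiltonian itself, and your Step~3 goes through as written.
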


\begin{proof}
As in the proof of \cite[Proposition~3.3]{Irie14}, we may assume, by a $C^\infty$-small approximation and by the Lipschitz continuity of spectral invariants, that $\min H$ is isolated in the set of critical values of $H$, that it is attained at a unique point $p_H\in W$, and that the corresponding constant orbit is nondegenerate. We also assume that $H_{1,\nu}$ is nondegenerate. 

We first recall the relevant action estimates for the 1-periodic orbits of the Hamiltonian $H_{1,\nu}$. The $1$-periodic orbits in the interior which are constant are precisely the critical points of $H$, and their actions are $-H(q)$. Hence their actions are at most $-\min H$, with equality only for the constant orbit at the unique minimum $p_H$. By our grading convention this orbit has degree $n$. The nonconstant $1$-periodic orbits on the cylindrical end have action of the form $r\nu'(r)-\nu(r)$. By assumption, $S(\nu)=\sup_{r\geq 1}(r\nu'(r)-\nu(r))<-\min H$. Thus every orbit on the cylindrical end has action strictly smaller than $-\min H$. Finally, since $H$ is Hofer--Zehnder admissible with respect to the contractible class, there are no nonconstant contractible $1$-periodic orbits in the interior. Therefore, the only orbit of action $-\min H$ is the constant orbit at $p_H$, and this orbit has degree $n$.

Now let $0\neq x\in \SH^{<a_\nu}_k(W,\lambda)$ with $k\neq n$. By Irie's identification $\Psi_{H_{1,\nu}}:\HF(H_{1,\nu})\to \SH^{<a_\nu}(W,\lambda)$, we may regard $x$ as a nonzero class in $\HF_k(H_{1,\nu})$. By spectrality, $\rho(H_{1,\nu}:x)$ is the action of a $1$-periodic orbit of $H_{1,\nu}$ of degree $k$. By the action estimates above, every such orbit has action at most $-\min H$, and the only orbit with action exactly $-\min H$ has degree $n$. Since $k\neq n$, the spectral value cannot be equal to $-\min H$. Hence $\rho(H_{1,\nu}:x)<-\min H$, as claimed.
\end{proof}

\noindent
\textbf{Product.} We recall the pair-of-pants product in a form which allows the two input Hamiltonians to have different slopes from \cite[Ap.\ 16]{Ritter13}. Let $H\colon \widehat W\to \mathbb R$ be an admissible Hamiltonian, linear at infinity. For positive numbers
$a,b,c>0$, consider the Hamiltonians $aH, bH, cH$. The pair-of-pants product is defined by counting solutions on a three-punctured
sphere $P$ with two incoming ends and one outgoing end. Following Ritter, one
chooses a one-form $\beta\in\Omega^1(P)$ such that $\dd\beta\leq 0$, and such that, on the cylindrical ends, $\beta$ is equal to a constant multiple
of $dt$. For the product with inputs $aH$ and $bH$, and output $cH$, the
weights are chosen to be $a, b, c$. The condition for such a one-form $\beta$ to exist is $c\geq a+b$. The corresponding Floer equation is $(du-X_H\otimes \beta)^{0,1}=0$. On the cylindrical ends this becomes the usual Floer equation for the
Hamiltonians $aH$, $bH$, and $cH$, respectively. For regular Floer data, counting rigid solutions gives a chain map
\[
  CF_*(aH)\otimes CF_*(bH)
  \longrightarrow
  CF_*(cH).
\]
The energy identity implies that if an orbit $z$ of $cH$ occurs in the
product of an orbit $x$ of $aH$ and an orbit $y$ of $bH$, then $\mathcal A_{cH}(z)\leq \mathcal A_{aH}(x)+\mathcal A_{bH}(y)$. Hence the product is compatible with the action filtration:
\[
  \HF_*^{<A}(aH)\otimes \HF_*^{<B}(bH)
  \longrightarrow
  \HF_*^{<A+B}(cH).
\]
Passing to the direct limit gives the filtered
pair-of-pants product
\[
  *:
  \SH_*^{<A}(W,\lambda)\otimes \SH_*^{<B}(W,\lambda)
  \longrightarrow
  \SH_*^{<A+B}(W,\lambda).
\]
Therefore, if $x$ and $y$ can be represented below the action levels $A$ and $B$, respectively, then $x*y$ can be represented below $A+B$. Taking the infimum over all such representatives gives
\begin{equation}
\label{eq:spectral-product-estimate}
  \rho(cH:x*y)
  \leq
  \rho(aH:x)+\rho(bH:y).
\end{equation}

\noindent
\textbf{BV-operator.} We will also use the BV-operator on symplectic homology. It can be defined at chain level by counting Floer cylinders with one rotating asymptotic marker;
see Abouzaid \cite[Chapter~10]{Abouzaid15} for this construction in the cotangent bundle setting. For general Liouville domains, the same operation is
part of the standard BV structure on symplectic homology, equivalently arising from the $S^1$-equivariant formalism of Bourgeois--Oancea \cite{BO13}. With our homological grading convention it has degree $+1$, so
$$
\Delta:\SH_k(W,\lambda)\to \SH_{k+1}(W,\lambda).
$$
On the chain level, the BV-operator can be described as follows. Fix a non-degenerate Hamiltonian $H_t$ and write $H^\theta_t=H_{t+\theta}$ for the Hamiltonian obtained by rotating the time variable. For each $\theta\in S^1$, choose a homotopy of Floer data $(H^\theta_{s,t},J^\theta_{s,t})$ from $(H_t,J_t)$ as $s\to -\infty$ to $(H^\theta_t,J^\theta_t)$ as $s\to +\infty$. Thus $H^\theta_{s,t}=H_t$ for $s\ll 0$ and $H^\theta_{s,t}=H_{t+\theta}$ for $s\gg 0$. Given Hamiltonian orbits $x,y\in\mathcal P(H)$, let
$\mathcal M_\Delta(x;y)$ be the space of pairs $(\theta,u)$, with $\theta\in S^1$ and $u:\mathbb R\times S^1\to \widehat W$, solving the parametrized Floer equation $\partial_s u+J^\theta_{s,t}(u)(\partial_tu X_{H^\theta_{s,t}}(u))=0$, with asymptotic conditions $\lim_{s\to-\infty}u(s,t)=x(t)$ and $\lim_{s\to+\infty}u(s,t)=y(t+\theta)$. Here the orbit $t\mapsto y(t+\theta)$ is naturally a $1$-periodic orbit of $H^\theta$. After quotienting by the usual $\mathbb R$-translation in the $s$-variable and
choosing regular Floer data, the zero-dimensional component
$\mathcal M_\Delta^0(x;y)$ is finite. Over $\mathbb Z_2$ one defines
$$
\Delta_H:CF_k(H)\to CF_{k+1}(H)\quad \text{by}\quad
\Delta_H x=\sum_y \#_2\mathcal M_\Delta^0(x;y)\,y,
$$
where $\#_2$ denotes the mod-$2$ count. The compactification of the one-dimensional parametrized moduli spaces gives
$\partial_H\Delta_H+\Delta_H\partial_H=0$. Hence $\Delta_H$ induces a map on
Floer homology,
$\Delta_H:\HF_k(H)\to\HF_{k+1}(H)$.
This construction is compatible with continuation maps, and passing to the
direct limit gives the BV operator
$\Delta:\SH_k(W,\lambda)\to\SH_{k+1}(W,\lambda)$. The energy identity for these cylinders is the same as for the Floer
differential. Hence, whenever $y$ appears in $\Delta_H x$, one has
$\mathcal A_H(y)\leq \mathcal A_H(x)$. Thus $\Delta_H$ is action
non-increasing: $\Delta_H CF_*^{<A}(H)\subset CF_{*+1}^{<A}(H)$, and
consequently 
$$\rho(H:\Delta x)\leq \rho(H:x).$$
In the special case $W=D^*N$, Abouzaid proves that this operator agrees, under
Viterbo's isomorphism, with the BV operator on $H_{*+n}(\Lambda N)$ induced by
loop rotation \cite[Chapters~11--12]{Abouzaid15}.\\

\noindent
\textbf{Upper bound.} The following theorem is a simplified combination of
\cite[Thm.~7 and Thm.~8]{BC25}. We give a simpler prove, only using spectral invariants in spirit of Irie \cite{Irie14}. Note, that the results in \cite{BC25} are not primarily proved for the Hofer--Zehnder capacity and that our proof only works if one hits the unit (fundamental class) via PSS-map.

\begin{theorem}\label{upper bound 2}
Let $(W,\lambda)$ be a Liouville domain and denote for some $a,b>0$ and $c> a+b$, $F_{c}:=\iota_{c}(1)\in \HF^{<c}(W,\lambda)$, where $1\in H^0(W;\mathbb Z_2)$ is the canonical generator. Assume that there exist classes $x\in \SH^{<a}(W,\lambda)$, $y\in \SH^{<b}(W,\lambda)$ such that either
\[
\Delta x * y = F_{c}
\quad\text{in}\quad
\SH^{<c}(W,\lambda),
\]
or $\deg(x)<n$ and 
\[
 x * y = F_{c}
\quad\text{in}\quad
\SH^{<c}(W,\lambda).
\]
Then
\[
c_{HZ}(\operatorname{int}X,d\lambda)\leq c.
\]
\end{theorem}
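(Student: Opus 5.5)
We argue by contradiction, following Irie. Suppose $c_{HZ}(\operatorname{int}W,d\lambda)>c$. Then there is a Hofer--Zehnder admissible $H\in C_0^\infty(\operatorname{int}W)$ (non-positive, in the sign convention of Proposition~\ref{prop:irie-fundamental-spectral-value}) with $-\min H>c$. Admissibility is with respect to all orbits, hence in particular the contractible class. We pick $\nu$ satisfying (1)--(3) of Proposition~\ref{prop:irie-fundamental-spectral-value}, with slope $a_\nu\in(c,-\min H)\setminus\operatorname{Spec}(W,\lambda)$. This is possible because $\operatorname{Spec}$ is nowhere dense, and because $S(\nu)$ can be kept below $-\min H$ by letting $\nu$ rise with slope close to $a_\nu$ only at large $r$. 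Proposition~\ref{prop:irie-fundamental-spectral-value} then gives
\[
\rho(H_{1,\nu}:F_{a_\nu})=-\min H .
\]
We relate $F_c$ to $F_{a_\nu}$ through the natural map $\SH^{<c}\to\SH^{<a_\nu}$, which sends $F_c$ to $F_{a_\nu}$ by naturality of $\iota$. The identity $x*y=F_c$ (or $\Delta x*y=F_c$) therefore persists in $\SH^{<a_\nu}$.

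The core step is to bound $\rho(H_{1,\nu}:F_{a_\nu})$ from above using the product estimate \eqref{eq:spectral-product-estimate}. We want to split the Hamiltonian $H_{1,\nu}$ as a ``sum'' of two pieces, one representing $x$ and one representing $y$. Concretely, we compare $H_{1,\nu}$, via Lipschitz continuity of Lemma~\ref{lem:irie-spectral-invariants}, with a Hamiltonian of the form $c'K$ for a suitable radial $K$. On the inputs we use $a'K$ and $b'K$ with $a'+b'\le c'$ and slopes just above $a$ and $b$. Since $x\in\SH^{<a}$ and $y\in\SH^{<b}$ are represented below those action levels by Hamiltonians that are small ($C^0$-close to $0$) on $W$, we get roughly $\rho(a'K:x)\le a+\epsilon$ and $\rho(b'K:y)\le b+\epsilon$. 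The product estimate then bounds $\rho$ of the output by $a+b+2\epsilon<c$.

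The difficulty is that $H_{1,\nu}$ has large oscillation $-\min H$ inside $W$, so it cannot be compared cheaply to such a $K$. This is where the alternative hypotheses enter, and the argument is a variant of Irie's: we would actually run the product with the output Hamiltonian $H_{1,\nu}$ and one input Hamiltonian carrying all of $H$. Concretely, we take the first input to be $H_{1,\nu_1}$, where $\nu_1$ has slope $a_1\in(a,\cdot)$, and the second input to be a small Hamiltonian $G$ that vanishes on $W$ and has slope just above $b$. The weights must be compatible, since Ritter's one-form needs total output weight at least the sum of the input weights on the ends; on $W$ the Hamiltonian term on the output is just $H$. This gives
\[
\rho(H_{1,\nu}:x*y)\le\rho(H_{1,\nu_1}:x)+\rho(G:y)\le\rho(H_{1,\nu_1}:x)+b .
\]

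The decisive input is now the following. In the case $\deg x<n$, Lemma~\ref{lem:irie-degree-improvement} gives the strict inequality $\rho(H_{1,\nu_1}:x)<-\min H$. In the case $\Delta x*y$, we use $\rho(\Delta x)\le\rho(x)$ together with the fact that $\Delta x$ has degree $\deg x+1$. If that degree is $n$, we instead apply Lemma~\ref{lem:irie-degree-improvement} to $x$ itself, of degree $n-1\ne n$. In either case we obtain $\rho(\cdot)<-\min H$.

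This alone does not contradict $\rho(H_{1,\nu}:F)=-\min H$, because the term $+b$ is still present. The fix is a rescaling argument. We replace $H$ by $\lambda H$ for $\lambda\in(0,1]$, which is still admissible, and use that spectral invariants are continuous and piecewise action-valued in $\lambda$. We aim to show that the gap between $\rho(H_{1,\nu_1}:x)$ and $-\min H$ must be at least the size of an orbit action not equal to $-\min H$. By spectrality, $\rho(H_{1,\nu_1}:x)$ is the action of either a critical point $q\neq p_H$, with action $-H(q)$, or an end orbit, with action at most $a_1$.

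The critical-point alternative is the main obstacle. It must be excluded or absorbed by choosing $H$ so that the second-lowest critical value is close to $0$. We would achieve this through a preliminary flattening: $H$ can be modified to have the shape $f\circ H$ with all other critical values near $0$, keeping admissibility, as in Irie's setup. Once that is done, $\rho(H_{1,\nu_1}:x)\le a_1$, and the estimate becomes
\[
-\min H=\rho(H_{1,\nu}:F_{a_\nu})\le a_1+b+\epsilon<c ,
\]
which contradicts $-\min H>c$. Checking that the spectral inequality is compatible with the differing slopes, and handling the flattening step, is where we expect the real work to lie.
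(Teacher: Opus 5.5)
Your proposal has a genuine gap at the final step. The product estimate you set up is fine: carrying $H$ entirely on the first input, for instance by a closed one-form on the pair of pants, contributes no curvature. It gives $-\min H\le\rho(H_{1,\nu_1}:x)+b$. But Lemma~\ref{lem:irie-degree-improvement} only yields $\rho(H_{1,\nu_1}:x)<-\min H$. The size of that gap is the distance from $-\min H$ to the action level at which $x$ is carried, and nothing in the argument controls it. The class $x$ may be carried by a critical point $q\neq p_H$ with $H(q)$ arbitrarily close to $\min H$; for instance, the rest of the minimum plateau sits only the depth of Irie's local perturbation above $p_H$. The proposed flattening $H\mapsto f\circ H$, which keeps $\min H$ but pushes all other critical values to near $0$, forces $f'\gg1$ on some levels. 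Since $X_{f\circ H}=f'(H)X_H$, periods on those levels shrink by the factor $f'$, so fast orbits can appear and admissibility is lost in general. Nothing like this is in Irie's setup. Rescaling $H\mapsto\lambda H$ only shrinks the gap. So the additive $b$ contributed by the input $G$, which carries none of $H$, is never absorbed, and no contradiction follows.

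The missing idea is to split $H$ \emph{proportionally} between the two inputs instead of loading all of it on one. The paper takes the inputs to be $c_aH_{1,\nu}$ and $c_bH_{1,\nu}$ with $c_a=a/c$ and $c_b=b/c$. These have slopes exactly $a$ and $b$, so Irie's Lemma~2.5 identifies their Floer homologies with $\SH^{<a}$ and $\SH^{<b}$. The output is $H_{1,\nu}$, of slope $c$. Since $c_aH$ and $c_bH$ are again admissible and $S(c_a\nu)=c_aS(\nu)$, every orbit of $c_aH_{1,\nu}$ has action at most $-c_a\min H$, and likewise for $c_b$. The inputs are therefore bounded by their \emph{shares} of the top action, rather than by $-\min H$ and $b$. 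The degree lemma, applied to $x$ or to $\Delta x$ together with $\rho(\Delta x)\le\rho(x)$, makes one share strict; your degree bookkeeping there is fine. That strictness, however small, gives the contradiction without any information about the other critical values.

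When you check details, note one point about this rescaled configuration. Because $H\le0$ on $W$ and $d\beta\le0$, the curvature term $\int H(u)\,d\beta$ has the unfavourable sign and can use up the slack $(1-c_a-c_b)(-\min H)$. The honest total bound is therefore exactly $-\min H$, and this is why the strict inequality is indispensable: if that slack were free, $x=F_a$, $y=F_b$ would bound $c_{HZ}$ by every $c>0$. Your own closed-one-form device avoids the curvature issue entirely if you distribute $H$ with weights $\theta$ and $1-\theta$ over the two inputs (e.g.\ $\theta=a/c$) instead of $1$ and $0$.
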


\begin{proof}
We only prove the slightly more difficult first case, reading the prove it will become clear that the second case is completely analogous. Suppose, for a contradiction, that $c_{HZ}(\operatorname{int}W,d\lambda)>c$. Then there exists a Hofer--Zehnder admissible Hamiltonian $H\in C_0^\infty(\operatorname{int}W)$ such that $-\min H>c$.
Choose $\nu\in C^\infty([1,\infty))$ as in Proposition~\ref{prop:irie-fundamental-spectral-value}, with slope $a_\nu=c\notin \operatorname{Spec}(W,\lambda)$. Thus 
$$S(\nu):=\sup_{r\geq 1}(r\nu'(r)-\nu(r))<-\min H.$$
Consider the Hamiltonian $H_{1,\nu}$ defined by \eqref{extension}. By Irie's Proposition~3.3, $\rho(H_{1,\nu}:F_c)=-\min H$. Now set $c_a=\tfrac{a}{c}$ and $c_b=\tfrac bc$. Since $c>a+b$, we have $c_a+c_b<1$. By Irie's Lemma~2.5, $\HF_*(c_a H_{1,\nu})\cong \SH_*^{<a}(W,\lambda)$ and $\HF_*(c_b H_{1,\nu})\cong \SH_*^{<b}(W,\lambda)$. By assumption, there exist $x\in \HF_*(c_a H_{1,\nu})$ and $y\in \HF_*(c_b H_{1,\nu})$ such that
\[
  \Delta(x)\star y=F_c\in \HF_n(H_{1,\nu}).
\]
Note that either $x$ or $y$ has degree $k<n$, since $\deg(x)+1+\deg(y)-n=n$. By Lemma \ref{lem:irie-degree-improvement} the spectral invariant of such a class satisfies $\rho(c_aH_{1,\nu},x)<-c_a\min H$, and by the BV estimate,
\[
  \rho(c_aH_{1,\nu},\Delta x)\leq \rho(c_aH_{1,\nu},x)<-c_a\min H.
\]
Similarly, if $\deg(y)<n$, then spectrality gives
\[
  \rho(c_bH_{1,\nu},y)<-c_b\min H.
\]
In either case, one of the two summands below is strictly smaller than its top constant-orbit action, while the other is at most its top constant-orbit action. Therefore
\[
    -\min H
    =
    \rho(H_{1,\nu},F_c)
    \leq
    \rho(c_a H_{1,\nu},\Delta(x))
    +
    \rho(c_b H_{1,\nu},y)
    <
    -(c_a+c_b)\min H< -\min H.
\]
    a contradiction!
\end{proof}

\noindent
Theorem \ref{upper bound 2} will provide the second upper bound needed to finish the proof of our main Theorem \ref{ellipsoid}. Using the filtered version of Abbondandolo--Schwarz isomrphism, we will solve the equation $\Delta x\star y =F_c$ on the string topology side, using cycle representative of length no more than $c=2\operatorname{sys}$. \\

\subsection{Proof of Theorem \ref{thmSH}} Let $(N,g)$ be a Riemannian two-sphere and let $\Lambda N := W^{1,2}(S^1,N)$ be its free loop space. We write $\ell(\gamma)$ for the length of a loop $\gamma\in \Lambda N$ and
\[
\sE(\gamma)=\int_{S^1}|\dot\gamma(t)|_g^2\,dt
\]
for its energy.  We shall use the following notion of sweep-out, following \cite{CCdMOR21}. A \emph{sweep-out} of $N$ is a continuous map $\Gamma:S^1\times S^1\longrightarrow N$ of degree one. Equivalently, writing $\gamma_t(s):=\Gamma(t,s)$, a sweep-out is a one-parameter family $(\gamma_t)_{t\in S^1}$ of closed curves whose associated map $S^1\times S^1\to N$ has degree one. We denote the collection of such sweep-outs by $\mathscr S(N)$ and set

\[
\operatorname{dias}(N,g) := \inf_{\Gamma\in\mathscr S(N)} \sup_{t\in S^1}\ell(\gamma_t).
\]

\noindent
For later comparison with the classical Birkhoff construction, fix the standard parametrization of the parallels of the round sphere,

\[
p_z(s)=\bigl(\sqrt{1-z^2}\cos(2\pi s), \sqrt{1-z^2}\sin(2\pi s), z\bigr), \qquad z\in[-1,1].
\]

\vspace{.3cm}

\begin{theorem}[Thm.\ 1.3, \cite{CCdMOR21}]
\label{thm:monotone-sweepout}
Let $(N,g)$ be a Riemannian two-sphere. Suppose that $\Gamma:S^1\times S^1\to N$ is a sweep-out such that $\ell(\gamma_t)<L$, for all $t\in S^1.$ Then there exists a diffeomorphism $\Phi:S^2\to N$ such that, for every $z\in[-1,1]$, the curve $s\longmapsto \Phi(p_z(s))$ has length strictly smaller than $L$.
\end{theorem}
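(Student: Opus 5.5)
The statement to be addressed is Theorem~\ref{thm:monotone-sweepout}, the monotone sweep-out theorem of \cite{CCdMOR21}. The strategy is to upgrade an arbitrary degree-one family of short curves to a foliation-like family of simple curves, conjugate to the parallels. Fix $L$ with $\sup_t\ell(\gamma_t)<L$, and choose $\varepsilon>0$ such that $\sup_t\ell(\gamma_t)<L-\varepsilon$.

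\textbf{Step 1: regularization.} We approximate $\Gamma$ by a smooth sweep-out that is in general position. Concretely, each $\gamma_t$ should be an immersed curve with finitely many transverse double points, except at finitely many times $t$ where a standard codimension-one event occurs (a tangency or a triple point). The approximation is arranged so that all lengths remain below $L-\varepsilon/2$. Since the degree is a homotopy invariant, the approximated map still has degree one.

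\textbf{Step 2: resolving self-intersections.} At each double point, we perform a length-nonincreasing surgery and keep one simple subloop. The choice of subloop must be made coherently in $t$, so that the union of the selected simple curves still sweeps out $N$ with degree one. Here we would argue homologically: a curve decomposes as a sum of simple loops, and the degree-one condition forces some continuous selection of loops to carry nonzero degree. Gaps in this selection are filled by short homotopies through curves of length less than $L$. Since $N\cong S^2$, every simple closed curve bounds discs on both sides.

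\textbf{Step 3: monotonicity.} From Step 2 we obtain a family of simple closed curves $c_t$, $t\in[0,1]$, degenerating to points at both ends. Let $D_t$ denote the disc bounded by $c_t$ on one side. The family should be made monotone, meaning that the discs $D_t$ are nested. To achieve this, we replace $c_t$ by the boundary of $\bigcup_{s\le t}D_s$, or use a curve-shortening flow with obstacles, which can only decrease length. Finally, we smooth the result to a foliation by simple curves, with two singular points at the poles. A foliation of this type is diffeomorphic to the foliation of $S^2$ by the parallels $p_z$, and this gives the desired $\Phi$.

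\textbf{Main obstacle.} We expect Steps 2 and 3 to be the main difficulty: the unions and boundaries used there can produce curves that are longer or non-simple. The first approach to try is the Chambers--Liokumovich-style ``cut and paste'' with area bookkeeping. Failing that, we would rely on the length-decreasing disc flow of \cite{CCdMOR21} itself, which we would simply invoke here as the cited theorem.
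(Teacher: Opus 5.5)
The paper does not prove this statement. It quotes it as Theorem~1.3 of \cite{CCdMOR21} and uses it as a black box in the proof of Theorem~\ref{thmSH}. Your fallback of invoking that theorem is therefore exactly what the paper does, and it is the only complete part of your proposal. Read as a self-contained argument, Steps~1--3 leave the whole content of the theorem unproved, and Steps~2 and~3 fail as stated.

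\textbf{Step 2.} There is no continuous choice of a simple subloop. At a kink or tangency event the set of simple subloops of $\gamma_t$ changes discontinuously, so any selection must jump. Bridging such a jump (e.g.\ from one lobe of a figure-eight to the other) through curves of length $<L$ in general requires contracting a lobe through short curves, which is a problem of the same type as the theorem itself. Nor is there any reason for the selected family alone to have degree one, since the discarded subloops also sweep area. The phrases ``the degree-one condition forces some continuous selection'' and ``gaps are filled by short homotopies'' restate what has to be proved.

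\textbf{Step 3.} This step fails concretely. First, $\bigcup_{s\le t}D_s$ need not be a disc: tiny discs travelling once around the equator sweep out an annulus. Second, the length of its boundary is not controlled by $\sup_s\ell(c_s)$. Into any sweep-out one may insert a detour in which a tiny circle slides along a long non-self-overlapping path and back. This changes neither the degree nor the maximal length, but $\partial\bigcup_{s\le t}D_s$ then has length roughly twice that of the path. Curve-shortening flow does not repair this: it does not create nesting, and members of the family may converge to closed geodesics rather than to points (Grayson). That is precisely how curve shortening produces closed geodesics by min--max.

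Two smaller points. First, even a nested family produced by some cut-and-paste procedure is only continuous, and different curves may share arcs. Reaching a smooth singular foliation conjugate to the parallels, and hence $\Phi$, therefore requires a perturbation with arbitrarily small length increase. This is where strictness of the bound enters, and it is not automatic. Second, you tacitly replace the hypothesis by $\sup_t\ell(\gamma_t)<L$. For a merely continuous $\Gamma$ the function $t\mapsto\ell(\gamma_t)$ is only lower semicontinuous, so this does not follow from $\ell(\gamma_t)<L$ for every $t$. This is harmless in the paper's application, where $L$ can be taken slightly above $\mathrm{dias}(g)$.

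The missing idea is a length-controlled monotonization theorem of Chambers--Liokumovich type for nullhomotopies on Riemannian discs, extended to sweep-outs of the sphere. That is what \cite{CCdMOR21} supplies, so either you prove such a result or, as the paper does, you cite it.
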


\begin{remark}
In particular, after this replacement the sweep-out contains constant curves, corresponding to the two poles. Cutting the parameter interval between these constant curves gives a path in $\Lambda N$ with endpoints in the subspace of constant loops. Conversely, any such path can be closed up by joining its endpoints through constant loops. Since these added loops have length zero, the maximal length is unchanged. Thus the sweep-out width agrees with the loop-space diastole of \cite{BS10} and this definition of diastole agrees with the one given in the introduction.
\end{remark}

\begin{remark}
    Abbondandolo--Mazzucchelli show in \cite[Appendix~A]{BK22} that, if $g$ has strictly positive Gaussian curvature, then $\operatorname{dias}(N,g)=\operatorname{sys}(N,g)$.
\end{remark}

The theorem shows that any such sweep-out can be replaced, without increasing the length bound, by a sweep-out obtained from the parallels of the round sphere. Thus, for all $\varepsilon>0$ there is a map
\[
    u:[-1,1]\longrightarrow \Lambda N,
    \qquad
    u(z)(s):=\Phi(p_z(s)),
\]
where $\Phi:S^2\to N$ is a diffeomorphism and $\ell(u(z))\leq \operatorname{dias}(N,g)+\varepsilon$ for all $z\in[-1,1]$, which extends to a sweep-out $\gamma_\Phi:S^1\to \Lambda N$ by connecting the constant endpoints $u(-1)$ and $u(1)$ by a path of constant loops. In particular $\gamma_\Phi$ defines a homology class $[\gamma_\Phi]\in H_1(\Lambda^{\operatorname{dias}(N,g)+\varepsilon}N,\Z_2)$.\\

\noindent
\textbf{Intermezzo: string topology}
We briefly recall the Chas--Sullivan product and the BV operator on chain level from \cite{ChasSullivan1999}. Let $N$ be a closed oriented manifold of dimension $m$ and let $\Lambda N := W^{1,2}(S^1,N)$. Let $P$ and $Q$ be closed (finite dimensional) manifolds, and $A:P\to\Lambda N$ and $B:Q\to\Lambda N$  continuous maps with
$\ev_0\circ A\pitchfork \ev_0\circ B$. Here, $ev_0: \Lambda N \to N;\ \gamma\mapsto \gamma(0)$ denotes the evaluation at $0$. Define a map
\[
A\star B := \#\circ(A\times_N B):P\times_N Q\to\Lambda N,
\]
where
\[
P\times_N Q:=\{(x,y)\in P\times Q\mid \ev_0(A(x))=\ev_0(B(y))\},
\]
and $\#$ denotes the concatenation of loops. Interpreting the maps $A,B$ and $A\star B$ as cycles in singular homology via triangulation this constructions yields the Chas--Sullivan product:
\[
\star :H_p(\Lambda N)\otimes H_q(\Lambda N)\to H_{p+q-m}(\Lambda N).
\]
The inclusion of $N$ as constant loops
\[
e: N\to \Lambda N,\qquad e(x)(t)=x,
\]
represents the unit $1=[e]$ with respect to the Chas--Sullivan product. Finally, the BV operator
$\Delta:H_k(\Lambda N)\to H_{k+1}(\Lambda N)$ is induced by the $S^1$--action
\[
\rho: S^1\times \Lambda N\to \Lambda N; \qquad \rho(\theta,\gamma)(t)=\gamma(t+\theta),
\]
via $\Delta[A]=\rho_*([S^1]\times[A])$.\\

\medskip

\noindent
\textbf{A nontrivial product.} We return to our setting, where $(N,g)$ is a Riemannian two-sphere and $\gamma_\Phi:S^1\to \Lambda N$ is a sweep-out obtained from a diffeomorphism $\Phi:S^2\to N$ as in Theorem~\ref{thm:monotone-sweepout}. We denote by $\bar\gamma_\Phi$ the reversed sweep-out, that is, the sweep-out obtained by traversing each loop of $\gamma_\Phi$ in the opposite direction.

\begin{theorem}\label{2bir}
Let $\varepsilon>0$, and let $\gamma_\Phi:S^1\to\Lambda N$ be a sweep-out obtained from Theorem~\ref{thm:monotone-sweepout} such that $\ell(\gamma_\Phi(t))\leq \operatorname{dias}(N,g)+\varepsilon$ for all $t\in S^1$. Then
\[
    \Delta[\gamma_\Phi]*\Delta[\bar\gamma_\Phi]=1
    \in H_2(\Lambda^{<2\operatorname{dias}(N,g)+2\varepsilon}N,\mathbb Z_2).
\]
\end{theorem}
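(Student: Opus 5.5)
The plan is to compute the product at the level of explicit geometric cycles, using the concrete sweep-out from Theorem~\ref{thm:monotone-sweepout}, and then contract the resulting cycle onto the constant loops inside the sublevel set.

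\textbf{The two cycles as maps of $S^2$.} Write $u(z)=\Phi\circ p_z$ for $z\in[-1,1]$. The class $\Delta[\gamma_\Phi]$ is represented by the map
\[
A:[-1,1]\times S^1\to\Lambda N,\qquad A(z,\theta)(s)=\Phi(p_z(s+\theta)).
\]
For $z=\pm1$ the loops are constant, so $A$ is independent of $\theta$ there, and $A$ descends to the quotient $S^2=[-1,1]\times S^1/\!\sim$ in which each boundary circle is collapsed to a point. Likewise $\Delta[\bar\gamma_\Phi]$ is represented by $B(z,\theta)(s)=\Phi(p_z(-s+\theta))$ on the same $S^2$. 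Over $\mathbb Z_2$ the parts of $\gamma_\Phi$ made of constant loops contribute only degenerate chains to $\Delta$, so these representatives suffice.

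\textbf{Transversality and the fiber product.} The key observation is that $\ev_0\circ A$ and $\ev_0\circ B$ both equal $\Phi$ under the identification of the quotient with $S^2$ via $(z,\theta)\mapsto p_z(\theta)$ (respectively $p_z(-\theta)$ for $B$). Hence both are diffeomorphisms $S^2\to N$, and the transversality hypothesis $\ev_0\circ A\pitchfork\ev_0\circ B$ of the Chas--Sullivan construction holds automatically. The fiber product $S^2\times_N S^2$ is therefore the graph of a diffeomorphism, so it is a copy of $S^2$ parametrized by the common basepoint $x\in N$. For $x=\Phi(p_z(\theta))$, the loop $A\star B$ at $x$ is $\alpha_x\#\bar\alpha_x$, where $\alpha_x$ is the parallel $u(z)$ reparametrized to start at $x$, and $\bar\alpha_x$ is its reverse from $x$. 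Each such loop has length $2\ell(u(z))\le 2\operatorname{dias}(N,g)+2\varepsilon$. To get strict inequality I will first run the whole argument with $\varepsilon/2$ in place of $\varepsilon$.

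\textbf{Contracting to the constant loops.} I would use the canonical retraction
\[
h_r(x)=\alpha_x|_{[0,r]}\#\overline{\alpha_x|_{[0,r]}},\qquad r\in[0,1].
\]
This is continuous in $(x,r)$, including at the poles, where everything is constant. Its length $2r\,\ell(u(z))$ decreases in $r$, so the homotopy stays in $\Lambda^{<2\operatorname{dias}+2\varepsilon}N$. At $r=0$ it gives the map $x\mapsto$ the constant loop at $x$, i.e.\ $e\circ\mathrm{id}$ precomposed with a degree-one map $S^2\to N$, which represents $[e]=1$ mod $2$. It remains to check that the Chas--Sullivan product respects the length filtration, which it does since lengths add under concatenation. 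Then the identity holds in $H_2(\Lambda^{<2\operatorname{dias}(N,g)+2\varepsilon}N,\mathbb Z_2)$, and the degree count $2+2-2=2$ is consistent.

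\textbf{Expected main obstacle.} The step I expect to need the most care is making the chain-level identification rigorous. Concretely:
\begin{itemize}
\item checking that the collapsed-cylinder representatives really represent $\Delta[\gamma_\Phi]$ and $\Delta[\bar\gamma_\Phi]$, including the sign/orientation of the rotation in $\bar\gamma_\Phi$, which is irrelevant over $\mathbb Z_2$ but must give a degree-one $\ev_0$;
\item checking that the geometric fiber-product description computes the homological product even though $A$ and $B$ are defined on a quotient with cone points.
\end{itemize}
If the poles cause trouble, I would instead smooth $S^2$ near the poles, where $A$ and $B$ are constant loops anyway, and appeal to the fact that the Chas--Sullivan product of transverse smooth cycles is computed by the fiber product.
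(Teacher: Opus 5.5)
Your proposal is correct and follows essentially the same route as the paper. The paper also collapses the rotated sweep-outs to maps $S^2\to\Lambda N$ whose $\ev_0$ is $\Phi$ (resp.\ its reflection), gets transversality for free because these are diffeomorphisms, and identifies the fiber product with a copy of $S^2$ on which the product loop is $\alpha_x\#\bar\alpha_x$. It then contracts these loops by the cancellation homotopy, inside the length sublevel, onto the constant loops, which represent the unit.

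One small point concerns your $\varepsilon/2$ device for strictness: it replaces the given $\gamma_\Phi$ by a different sweep-out. Instead, note that if $\gamma_\Phi$ comes from Theorem~\ref{thm:monotone-sweepout} with $L=\operatorname{dias}(N,g)+\varepsilon$, its parallels already have length strictly below $L$. By compactness, $\max_z\ell(\Phi\circ p_z)<\operatorname{dias}(N,g)+\varepsilon$, so everything lies in the strict sublevel. The paper itself only states the conclusion in the closed sublevel $\Lambda^{\leq 2\operatorname{dias}(N,g)+2\varepsilon}N$.
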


\begin{proof}
We write the BV representatives explicitly. Let
\[
\rho_\Phi:S^1_\theta\times S^1_\sigma\to \Lambda N,
\qquad
\rho_\Phi(\theta,\sigma)(t)=\gamma_\Phi(\sigma)(t+\theta),
\]
and similarly
\[
\bar\rho_{\Phi}:S^1_\eta\times S^1_\tau\to \Lambda N,
\qquad
\bar \rho_{\Phi}(\eta,\tau)(t)=\gamma_\Phi(\tau)(-t-\eta).
\]
Then $\rho_\Phi$ and $\bar\rho_{\Phi}$ represent $\Delta[\gamma_\Phi]$ and $\Delta[\bar\gamma_\Phi]$, respectively.

\ \\
\noindent
\textbf{Claim 1:} The classes $\Delta[\gamma_\Phi]=[\rho_\Phi]$ and $\Delta[\bar\gamma_\Phi]=[\bar\rho_\Phi]$ can be represented by maps $B_\Phi,\bar B_{\Phi}:S^2\to \Lambda N$ such that $\ev_0\circ B_\Phi=\Phi$ and $\ev_0\circ \bar B_{\Phi}=\bar\Phi$. To see this, recall that $\gamma_\Phi:S^1\to\Lambda N$ is obtained from the path of parallel loops $u:[-1,1]\to\Lambda N$, given by $u(z)(s)=\Phi(p_z(s))$, by connecting the constant endpoint loops $u(-1)$ and $u(1)$ through a path of constant loops. On this added path of constant loops, the representative $\rho_\Phi$ is independent of the rotation parameter. Hence $\rho_\Phi$ is a pinched torus: it factors through the quotient of $S^1\times S^1$ obtained by collapsing the annular part corresponding to the path of constant loops. This quotient is naturally homeomorphic to $S^2$. We denote the resulting map by $B_\Phi:S^2\to\Lambda N$. By construction, $(B_\Phi)_*[S^2]=(\rho_\Phi)_*[S^1\times S^1]$, and hence $B_\Phi$ represents $\Delta[\gamma_\Phi]$. With the standard identification of $S^2$ with the quotient of $[-1,1]\times S^1$ obtained by collapsing the boundary circles, the map $B_\Phi$ is explicitly given by $B_\Phi(p_z(s))(t)=\Phi(p_z(s+t))$. Therefore $\ev_0(B_\Phi(p_z(s)))=\Phi(p_z(s))$, so $\ev_0\circ B_\Phi=\Phi$.

The same argument applies to the reversed sweep-out. The representative $\bar\rho_\Phi$ also collapses the annular part corresponding to the constant loops and therefore factors through a map $\bar B_{\Phi}:S^2\to\Lambda N$. Explicitly, using the convention $\bar\rho_\Phi(\eta,\tau)(t)=\gamma_\Phi(\tau)(t-\eta)$, we obtain $\bar B_{\Phi}(p_z(s))(t)=\Phi(p_z(-s-t))$. Hence $\ev_0(\bar B_{\Phi}(p_z(s)))=\Phi(p_z(-s)):=\bar\Phi(p_z(s))$, and therefore $\ev_0\circ \bar B_{\Phi}=\bar\Phi$. By construction, $\bar B_{\Phi}$ represents $\Delta[\bar\gamma_\Phi]$.

\ \\
\noindent
\textbf{Claim 2:} The maps $\ev_0\circ B_\Phi=\Phi$ and $\ev_0\circ \bar B_\Phi=\bar\Phi$ are transverse. Indeed, since $\Phi$ and $\bar\Phi$ are diffeomorphisms, the map $\Phi\times\bar\Phi:S^2\times S^2\to N\times N$ is a diffeomorphism. Hence it is transverse to the diagonal $\Delta_N\subset N\times N$. Equivalently, $\ev_0\circ B_\Phi$ and $\ev_0\circ \bar B_\Phi$ intersect transversely. Therefore the product $[B_\Phi]\star[\bar B_\Phi]$ is represented by the map $B_\Phi\star\bar B_\Phi:S^2\times_N S^2\to \Lambda N$.

\ \\ \noindent 
\textbf{Claim 3:} The map $B_\Phi\star \bar B_\Phi$ is homotopic to the inclusion of constant loops through loops of length at most $2\operatorname{dias}(N,g)+2\varepsilon$. By definition, a point of $S^2\times_N S^2$ is a pair $(x,y)$ such that $\ev_0(B_\Phi(x))=\ev_0(\bar B_\Phi(y))$. Write $x=p_z(\theta)$ and $y=p_{z'}(\eta)$. If $\bar\Phi(p_z(s)):=\Phi(p_z(-s))$, this condition becomes $\Phi(p_z(\theta))=\bar\Phi(p_{z'}(\eta))=\Phi(p_{z'}(-\eta))$. Since $\Phi:S^2\to N$ is a diffeomorphism, this implies $z=z'$ and $\eta=-\theta$. Hence $S^2\times_N S^2$ is identified with $S^2$ by $p_z(\theta)\mapsto (p_z(\theta),p_z(-\theta))$.  Under this identification, the product map sends $p_z(\theta)$ to the concatenation $B_\Phi(p_z(\theta))\# \bar B_\Phi(p_z(-\theta))$. By construction, this is the loop $s\mapsto \Phi(p_z(\theta+s))$ based at $\Phi(p_z(\theta))$, followed by the same loop with the opposite orientation. Thus it is contractible through based loops to the constant loop at $\Phi(p_z(\theta))$. The length of the concatenated loop is at most $2\ell(\gamma_\Phi(z))\leq 2\operatorname{dias}(N,g)+2\varepsilon$. During the contraction, one may use the standard cancellation homotopy, whose loops have length at most $2\ell(\gamma_\Phi(z))$. Therefore the whole homotopy is contained in $\Lambda^{\leq 2\operatorname{dias}(N,g)+2\varepsilon}N$.  Consequently, $B_\Phi\star \bar B_\Phi$ is homologous in $\Lambda^{\leq 2\operatorname{dias}(N,g)+2\varepsilon}N$ to the map $S^2\to\Lambda N$ given by $p_z(\theta)\mapsto c_{\Phi(p_z(\theta))}$. This map is the inclusion of $N$ as constant loops, composed with the diffeomorphism $\Phi:S^2\to N$. Hence it represents the fundamental class of the constant loops, which is the unit $1\in H_2(\Lambda N;\mathbb Z_2)$. It follows that $\Delta[\gamma_\Phi]\star\Delta[\bar\gamma_\Phi]=1$ in $H_2(\Lambda^{\leq 2\operatorname{dias}(N,g)+2\varepsilon}N;\mathbb Z_2)$. \end{proof}

\noindent
\textbf{Deducing Theorem \ref{thmSH}.}
We have all the pieces together to prove Theorem \ref{thmSH}, that is 
$$
c_{HZ}(D_gS^2,\dd \lambda)\leq 2\mathrm{dias} (g),
$$
for any Riemannian metric $g$.Note that it suffices to prove the result in the case where $(S^2,g)$ is bumpy, i.e., the geodesic flow is nondegenerate. As in the case of Reeb flows, a $C^\infty$-generic Riemannian metric $g$ is bumpy. Moreover, both the Hofer--Zehnder capacity $c_{\rm HZ}(D^*_gS^2)$ and the diastole $\operatorname{dias}(g)$ depend continuously on smooth variations of the metric $g$.

By Theorem \ref{2bir} we find classes $x,y\in H_*(\Lambda^{<\mathrm{dias}(g)+\varepsilon} S^2,\Z_2)$ such that 
\[
\Delta(x)*y=1
    \in H_2(\Lambda^{<2\operatorname{dias}(g)+2\varepsilon}N,\mathbb Z_2),\qquad \forall\varepsilon > 0.
\]

Using the filtered version of Viterbo isomorphism as a BV-algebra, as in
\cite{Abouzaid15},
\[
(\operatorname{SH}^{<a}_*(D^*_gS^2), \Delta , *)\cong (H_2(\Lambda^{<a+2\varepsilon}N,\mathbb Z_2), \Delta, *)
\]
we are in exactly the setup to apply Theorem \ref{upper bound 2}, which proves Theorem \ref{thmSH} taking $\varepsilon\to 0$.

\appendix

\section{Period mapping and length estimates}\label{appendix: period map}

In this appendix, we use the ellipsoidal coordinates from Klingenberg \cite{Kl95}. Since the main text follows Karney's conventions \cite{Kar25b}, we begin by recording the relation between the two sets of notation.

\medskip
\noindent
\noindent
\textbf{Dictionary between Karney's and Klingenberg's conventions.}
In the main text, following Karney \cite{Kar25b}, we write the triaxial ellipsoid as
\[
E(a,b,c)=\left\{(x,y,z)\in\mathbb{R}^3\;\middle|\;
\frac{x^2}{a^2}+\frac{y^2}{b^2}+\frac{z^2}{c^2}=1\right\},
\qquad a>b>c>0.
\]
Klingenberg \cite{Kl95} writes the same ellipsoid as
\[
\frac{x_0^2}{a_0}+\frac{x_1^2}{a_1}+\frac{x_2^2}{a_2}=1,
\qquad 0<a_0<a_1<a_2.
\]
The two conventions are related by $a_0=c^2, a_1=b^2, a_2=a^2$ and  $x_0=z, x_1=y, x_2=x$. Klingenberg introduces ellipsoidal coordinates $(u_1,u_2)\in[a_0,a_1]\times[a_1,a_2]$ by
\begin{equation}\label{eq:Klingenberg-coordinates}
\begin{aligned}
x_0(u_1,u_2)^2
&=
\frac{a_0(u_1-a_0)(u_2-a_0)}
     {(a_1-a_0)(a_2-a_0)},\\
x_1(u_1,u_2)^2
&=
\frac{a_1(u_1-a_1)(u_2-a_1)}
     {(a_0-a_1)(a_2-a_1)},\\
x_2(u_1,u_2)^2
&=
\frac{a_2(u_1-a_2)(u_2-a_2)}
     {(a_0-a_2)(a_1-a_2)}.
\end{aligned}
\end{equation}
These coordinates are related to Karney's ellipsoidal latitude $\beta$ and longitude $\omega$ by
\begin{equation*}
    \begin{aligned}
    &u_1=a_1-(a_1-a_0)\cos^2\beta
    =c^2+(b^2-c^2)\sin^2\beta \\
    &u_2=a_1+(a_2-a_1)\sin^2\omega
    =b^2+(a^2-b^2)\sin^2\omega.
\end{aligned}
\end{equation*}
The principal ellipses are therefore given by
\[
\begin{aligned}
\gamma_z=E\cap\{z=0\}
       =E\cap\{x_0=0\}
&\quad\longleftrightarrow\quad \{u_1=a_0\},\\
\gamma_y=E\cap\{y=0\}
       =E\cap\{x_1=0\}
&\quad\longleftrightarrow\quad
\{u_1=a_1\}\cup\{u_2=a_1\},\\
\gamma_x=E\cap\{x=0\}
       =E\cap\{x_2=0\}
&\quad\longleftrightarrow\quad \{u_2=a_2\}.
\end{aligned}
\]
Thus $\gamma_x$, $\gamma_y$, and $\gamma_z$ are respectively the minor, median, and major principal ellipses. Let $F\in[-k'^2,k^2]$ denote Karney's first integral and let $\gamma\in[a_0,a_2]$ denote Klingenberg's separation constant. The two parameters are related by
\[
\gamma=a_1-(a_2-a_0)F
      =b^2-(a^2-c^2)F,
\]
or equivalently by
\[
F=\frac{a_1-\gamma}{a_2-a_0}
 =\frac{b^2-\gamma}{a^2-c^2}.
\]
Their distinguished values correspond as follows:
\[
\begin{array}{c|c|c}
\text{Karney} & \text{Klingenberg} & \text{limiting geodesic}\\
\hline
F=k^2 & \gamma=a_0=c^2 & \gamma_z\\
F=0 & \gamma=a_1=b^2 & \gamma_y\\
F=-k'^2 & \gamma=a_2=a^2 & \gamma_x.
\end{array}
\]
In particular,
\[
F>0
\quad\Longleftrightarrow\quad
\gamma\in(a_0,a_1)
\quad\Longleftrightarrow\quad
\text{circumpolar},
\]
whereas
\[
F<0
\quad\Longleftrightarrow\quad
\gamma\in(a_1,a_2)
\quad\Longleftrightarrow\quad
\text{transpolar}.
\]
We denote Klingenberg's period mapping by $\omega_{\mathrm K}(\gamma)$. When expressed in terms of Karney's first integral, the same map becomes
\[
\upomega(F):=
\omega_{\mathrm K}\bigl(a_1-(a_2-a_0)F\bigr).
\]
The function $\gamma\mapsto\omega_{\mathrm K}(\gamma)$ is strictly decreasing. Since the change of variables $F\mapsto\gamma$ is also strictly decreasing, the map $F\mapsto\upomega(F)$ is strictly increasing.\\

\noindent
\textbf{Separation of variables.}
From here on, we use Klingenberg's convention. In elliptic coordinates, the Riemannian metric on the ellipsoid is given by
\begin{equation}
\label{equation: pull-back metric}
ds^2=(u_2-u_1)\bigl(U_1\,du_1^2+U_2\,du_2^2\bigr),
\end{equation}
where
\[
U_i=U_i(u_i)=\frac{(-1)^i u_i}
{4(a_0-u_i)(a_1-u_i)(a_2-u_i)}.
\]
In particular, the $u_1$- and $u_2$-coordinate curves are lines of curvature. The geodesic equations admit a separation constant
$\gamma\in(a_0,a_1)\cup(a_1,a_2)$ and can be written as
\begin{equation}
\label{equation: geodesic equation on elliptic coordinates}
\frac{\sqrt{U_1}}{\sqrt{\gamma-u_1}}\,\frac{du_1}{ds}
=
\pm
\frac{\sqrt{U_2}}{\sqrt{u_2-\gamma}}\,\frac{du_2}{ds}.
\end{equation}
Equivalently,
\[
\gamma=u_1\sin^2\alpha+u_2\cos^2\alpha,
\]
where $\alpha$ is the angle between the tangent vector of the geodesic and the $u_1$-coordinate line. The principal ellipses arise as limiting cases: the shortest principal ellipse $\{x_2=0\}$ corresponds to $\gamma=a_2$, the intermediate principal ellipse $\{x_1=0\}$ to $\gamma=a_1$, and the longest principal ellipse $\{x_0=0\}$ to $\gamma=a_0$. Since $s$ is the arc-length parameter, \eqref{equation: pull-back metric} gives
\[
1=(u_2-u_1)\left(
U_1\left(\frac{du_1}{ds}\right)^2
+
U_2\left(\frac{du_2}{ds}\right)^2
\right).
\]
On the other hand, squaring \eqref{equation: geodesic equation on elliptic coordinates} gives
\[
\frac{U_1}{\gamma-u_1}\left(\frac{du_1}{ds}\right)^2
=
\frac{U_2}{u_2-\gamma}\left(\frac{du_2}{ds}\right)^2
=: \lambda.
\]
Hence
\[
U_1\left(\frac{du_1}{ds}\right)^2
=\lambda(\gamma-u_1),
\qquad
U_2\left(\frac{du_2}{ds}\right)^2
=\lambda(u_2-\gamma).
\]
Substituting into the unit-speed condition yields
\[
1=\lambda(u_2-u_1)^2.
\]
Therefore,
\[
\frac{du_1}{ds}
=
\pm\frac{\sqrt{\gamma-u_1}}
{(u_2-u_1)\sqrt{U_1}},
\qquad
\frac{du_2}{ds}
=
\pm\frac{\sqrt{u_2-\gamma}}
{(u_2-u_1)\sqrt{U_2}}.
\]
Introducing a new parameter $\tau$ along the geodesic by
\[
d\tau=\frac{ds}{u_2-u_1},
\]
we may rewrite these equations as
\begin{equation}
\label{Equation: preferred reparametrisation}
\frac{du_1}{d\tau}
=
\pm\frac{\sqrt{\gamma-u_1}}{\sqrt{U_1}},
\qquad
\frac{du_2}{d\tau}
=
\pm\frac{\sqrt{u_2-\gamma}}{\sqrt{U_2}}.
\end{equation}
The signs change whenever the corresponding coordinate reaches a turning value. Under this parametrization the two equations are separated: each depends only on one variable and on the parameter $\gamma$. Note that the solution is not parametrised by unit speed, i.e.,
\[
    \|(\dot{u}_1,\dot{u}_2)\|^2
    = u_2-u_1.
\]

\medskip

\noindent
\textbf{Definition of the period map.}
First suppose that $\gamma\in(a_0,a_1)$. The corresponding geodesic winds around the $x_0$-axis, while $u_1$ oscillates between $a_0$ and $\gamma$. Under the parametrization \eqref{Equation: preferred reparametrisation}, the $\tau$-time required for half an oscillation is
\[
\omega_2(\gamma)
= 2\cdot \int_{a_0}^\gamma \tfrac{1}{\dot u_1} d u_1=
\int_{a_0}^{\gamma}
\frac{\sqrt{u_1}\,du_1}
{\sqrt{(\gamma-u_1)(u_1-a_0)(a_1-u_1)(a_2-u_1)}}.
\]
Similarly, the $\tau$-time required for half a winding around the $x_0$-axis is
\[
\omega_1(\gamma)
=
\int_{a_1}^{a_2}
\frac{\sqrt{u_2}\,du_2}
{\sqrt{(u_2-\gamma)(u_2-a_0)(u_2-a_1)(a_2-u_2)}}.
\]
We define their ratio by
\begin{equation}
\label{equation: Klingenberg period map}
\omega_{\mathrm K}(\gamma)
:=
\frac{\omega_1(\gamma)}{\omega_2(\gamma)}.
\end{equation}
An analogous discussion applies to the case $\gamma\in(a_1,a_2)$. In this case, the corresponding geodesic winds around the $x_2$-axis, while $u_2$ oscillates between $\gamma$ and $a_2$. Under the parametrization \eqref{Equation: preferred reparametrisation}, one similarly defines the $\tau$-times required for a half-winding and a half-oscillation. We define $\omega_{\mathrm K}(\gamma)$ to be the ratio between the time required for a half-winding around the $x_2$-axis and the time required for a half-oscillation. This defines Klingenberg's period map
\[
\omega_{\mathrm K}:(a_0,a_1)\cup(a_1,a_2)\longrightarrow\mathbb R.
\]
It measures the ratio between winding and oscillation for a geodesic with parameter $\gamma$.\\

\noindent
\textbf{Properties of period mapping.} The following proposition implies that $\omega_K$ admits a continuous extension, which we again denote by $\omega_K : [a_0,a_2] \to \mathbb{R}$, and which is strictly decreasing and satisfies $\omega_K(a_1)=1$.

\begin{proposition}
\label{Proposition: Basic property of period mapping from Klingenberg}
    The ratio $\omega_K(\gamma)$, regarded as a function on $(a_0,a_1)\cup(a_1,a_2)$, is strictly decreasing on each interval. In particular, if $\gamma \in (a_0,a_1)$, then $\omega_K(\gamma)>1$, while if $\gamma \in (a_1,a_2)$, then $\omega_K(\gamma)<1$. Moreover, it satisfies
    \[
    \lim_{\gamma \to a_1-}\omega_K(\gamma)=\lim_{\gamma \to a_1+}\omega_K(\gamma)=1,
    \]
    and limits
    \[
    \lim_{\gamma \to a_0+}\omega_K(\gamma)\quad\text{and}\quad \lim_{\gamma \to a_2-}\omega_K(\gamma)
    \]
    exist and are finite.
\end{proposition}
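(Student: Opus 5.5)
I would prove the proposition by working directly with the two complete elliptic-type integrals $\omega_1(\gamma)$ and $\omega_2(\gamma)$ on $(a_0,a_1)$; the transpolar interval $(a_1,a_2)$ is handled by the symmetric argument, exchanging the roles of $u_1$ and $u_2$. First, substitute $u_1=a_0+(\gamma-a_0)\sin^2\phi$ in $\omega_2$ and $u_2=a_1+(a_2-a_1)\sin^2\psi$ in $\omega_1$. This removes the square-root endpoint singularities, giving
\[
\omega_2(\gamma)=\int_0^{\pi}\frac{\sqrt{u_1}\,d\phi}{\sqrt{(a_1-u_1)(a_2-u_1)}},\qquad
\omega_1(\gamma)=\int_0^{\pi}\frac{\sqrt{u_2}\,d\psi}{\sqrt{(u_2-\gamma)(u_2-a_0)}},
\]
up to harmless constant factors. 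The boundary behaviour then follows.
\begin{itemize}
\item As $\gamma\to a_0^+$, both integrands converge uniformly to continuous positive functions. Hence both integrals have finite positive limits and $\lim\omega_K$ exists.
\item As $\gamma\to a_1^-$, both $\omega_1$ and $\omega_2$ diverge logarithmically. The integrand of $\omega_2$ blows up near $u_1=a_1$ and that of $\omega_1$ near $u_2=a_1$. I would extract the common leading term $C\log\frac{1}{a_1-\gamma}$ with the same constant $C=\sqrt{a_1}/\sqrt{(a_1-a_0)(a_2-a_1)}$ from each, so that $\omega_K\to1$.
\end{itemize}

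For monotonicity I would rather not differentiate the ratio naively. It is cleaner to use Klingenberg's torus description: the geodesic flow on the Liouville tori with parameter $\gamma$ is conjugate to a linear flow, and $\omega_K$ is its rotation number. Strict monotonicity of $\omega_K$ then amounts to the twist (non-degeneracy) condition of the integrable system. Concretely, I would compute $\frac{d}{d\gamma}\log\omega_1-\frac{d}{d\gamma}\log\omega_2$, where
\[
\omega_1'(\gamma)=\tfrac12\int\frac{\sqrt{u_2}\,du_2}{(u_2-\gamma)^{3/2}\sqrt{\cdots}}>0 .
\]
The derivative $\omega_2'$ must be taken after the substitution above, and one shows $\omega_2'/\omega_2>\omega_1'/\omega_1$ fails in the correct direction. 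The sign is delicate, so the fallback is the classical route Klingenberg uses: express $\omega_1,\omega_2$ as periods of the holomorphic differential $\sqrt{u}\,du/\sqrt{P(u)}$, with $P(u)=(u-\gamma)(u-a_0)(u-a_1)(u-a_2)$, on the elliptic curve $w^2=P(u)/u$, and invoke Legendre-type relations.

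The comparisons with $1$ follow from monotonicity together with the limit at $a_1$. Finiteness at $a_2^-$ is the same uniform-convergence argument as at $a_0^+$.

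\textbf{Main obstacle.} The expected main obstacle is strict monotonicity. The ratio of two elliptic periods need not be monotone in general, and a Wronskian/Picard–Fuchs argument is needed: one shows that $\omega_1,\omega_2$ solve the same second-order ODE in $\gamma$ and that their Wronskian has constant sign. That Wronskian equals $\omega_1\omega_2'-\omega_2\omega_1'$, which gives the sign of $\omega_K'$ directly. Since the paper attributes this to \cite{Kl95}, in the write-up I would cite Klingenberg's Lemma on the period map for monotonicity and give the asymptotic computations explicitly.
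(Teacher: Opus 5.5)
Your proposal is correct, and in most of its ingredients it coincides with the paper. In both, strict monotonicity is taken from Klingenberg; the paper cites his Thm.~3.5.14. The comparisons with $1$ are deduced from monotonicity plus the limit at $a_1$. Finiteness at $a_0^+$ and $a_2^-$ is a direct computation; your substitution $u_1=a_0+(\gamma-a_0)\sin^2\phi$ is the paper's $u_1=a_0+(\gamma-a_0)t$ in trigonometric form. It also gives the exact value $\lim_{\gamma\to a_0^+}\omega_2(\gamma)=\pi\sqrt{a_0}/\sqrt{(a_1-a_0)(a_2-a_0)}$, whereas the paper's display drops the factor $\pi=\int_0^1 dt/\sqrt{t(1-t)}$, harmlessly for finiteness.

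The genuine difference is the limit at $a_1$. The paper again cites Klingenberg, while you prove it by extracting the logarithmic singularities, and your constant is right. Put $\epsilon=|a_1-\gamma|$. Near $u_1=\gamma$ for $\omega_2$, and near $u_2=a_1$ for $\omega_1$, both integrals reduce to $\int_0^{\delta} dx/\sqrt{x(x+\epsilon)}=\log(1/\epsilon)+O(1)$ with the common prefactor $\sqrt{a_1}/\sqrt{(a_1-a_0)(a_2-a_1)}$. The same computation works from the transpolar side. This makes the gluing $\omega_K(a_1)=1$ self-contained, at the cost of a short asymptotic analysis. The paper's route is shorter but leans on the reference for both monotonicity and the behaviour at $a_1$.

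One caution: leave the Picard--Fuchs/Wronskian fallback out of the write-up. The differential $\sqrt{u}\,du/\sqrt{P(u)}=u\,du/y$ lives on $y^2=uP(u)$, which is a curve of genus two; the equation $w^2=P(u)/u$ does not define an elliptic curve. Its Gauss--Manin system has rank four. The monodromy is generated by the Picard--Lefschetz transvections as $\gamma$ circles $0,a_0,a_1,a_2$, whose vanishing cycles span $H_1$ and pairwise intersect, so it is irreducible. Hence no second-order Picard--Fuchs equation for $\omega_1,\omega_2$ exists. A second-order ODE with ad hoc coefficients exists only where one already knows the Wronskian does not vanish, which is circular.

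Also, your sentence about $\omega_2'/\omega_2$ is garbled. Decrease of $\omega_K$ is equivalent to $\omega_1'/\omega_1<\omega_2'/\omega_2$. This is a genuinely nontrivial comparison, since both $\omega_1$ and $\omega_2$ increase in $\gamma$, which is exactly why citing Klingenberg is the right call.
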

\begin{proof}
    The monotonicity and the behavior near $\gamma = a_1$ follow from \cite[Theorem 3.5.14]{Kl95}. Since a monotone function on an open interval admits a continuous extension whenever it is bounded, it remains only to verify the finiteness of the limits as $\gamma\to a_0+$ and $\gamma\to a_2-$. This follows from the direction computations. The limit of half winding time satisfies
    \begin{equation*}
    \begin{aligned}
       \lim_{\gamma \to a_0+ }\omega_1(\gamma) &= \int_{a_1}^{a_2} \frac{1}{u_2-a_0} \times \frac{\sqrt{u_2}}{\sqrt{(u_2-a_1)(a_2-u_2)}}~du_2\\
       &\le  \int_{a_1}^{a_2} \frac{1}{a_1-a_0} \times \frac{\sqrt{u_2}}{\sqrt{(u_2-a_1)(a_2-u_2)}}~du_2  \le \frac{L(a_1, a_2)}{2(a_1-a_0)},
    \end{aligned}
    \end{equation*}
    and
    \begin{equation*}
    \begin{aligned}
       \lim_{\gamma \to a_0+ }\omega_2(\gamma) &= \lim_{\gamma \to a_0+}\int_{a_0}^\gamma \frac{\sqrt{u_1}}{\sqrt{(\gamma-u_1)(u_1-a_0)(a_1-u_1)(a_2-u_1)}}~du_1\\
       &= \frac{\sqrt{a_0}}{\sqrt{(a_1-a_0)(a_2-a_0)}}  \times \lim_{\gamma \to a_0+}\int_{a_0}^\gamma \frac{1}{\sqrt{(\gamma-u_1)(u_1-a_0)}}~du_1\\
       &= \frac{\sqrt{a_0}}{\sqrt{(a_1-a_0)(a_2-a_0)}}  \times \lim_{\gamma \to a_0+}\int_{0}^{1} \frac{1}{\sqrt{t(1-t)}}~dt\\
       &= \frac{\sqrt{a_0}}{\sqrt{(a_1-a_0)(a_2-a_0)}}.
    \end{aligned}
    \end{equation*} 
    Hence $\omega(\gamma)$ admits a finite limit as $\gamma \to a_0+$. The case $\gamma \to a_2-$ is analogous.
\end{proof}

\noindent
\textbf{Length estimates.} We now turn to the discussion of the length of closed geodesics. Recall that closed geodesics arise precisely when $\omega(\gamma)$ is a rational number. For $\gamma \in (a_0,a_1)$, assume that $\omega(\gamma)$ is a rational number, i.e.,
\[
\omega(\gamma) = \frac{m}{n}
\]
for some natural numbers $m > n$. Then, using the parametrisation given by \eqref{Equation: preferred reparametrisation} and $\Vert \dot u\Vert=u_2-u_1$, the length of the corresponding closed geodesic can be calculated as
\begin{equation}
\begin{aligned}
\int \Vert\dot{u}(t)\Vert ~ dt &= \int u_2(t)-u_1(t)~dt\\
        &= \int (u_2(t)-\gamma) + (\gamma-u_1(t))~dt\\
        &= 4n \cdot \int_{0}^{\omega_2(\gamma)/2} (u_2(t)-\gamma)~dt + 4m \cdot \int_{0}^{\omega_1(\gamma)/2} (\gamma-u_1(t))~dt\\
        &= 2n \cdot \int_{a_1}^{a_2} \frac{\sqrt{u_2(u_2-\gamma)}}{\sqrt{(u_2-a_0)(u_2-a_1)(a_2-u_2)}}~du_2\\ & \qquad + 2m \cdot \int_{a_0}^{\gamma} \frac{\sqrt{u_1(\gamma-u_1)}}{\sqrt{(u_1-a_0)(a_1-u_1)(a_2-u_1)}}~du_1.
\end{aligned}
\end{equation}
In particular, the average time for a half-winding is given by
$$
    T(\gamma):=  \int_{a_1}^{a_2} \frac{\sqrt{u_2(u_2-\gamma)}}{\sqrt{(u_2-a_0)(u_2-a_1)(a_2-u_2)}} ~du_2 +  \frac{\omega_1(\gamma)}{\omega_2(\gamma)}\cdot \int_{a_0}^{\gamma} \frac{\sqrt{u_1(\gamma-u_1)}}{\sqrt{(u_1-a_0)(a_1-u_1)(a_2-u_1)}}~du_1,
$$
whenever $\omega(\gamma) \in \mathbb{Q}$. The right-hand side admits a continuous extension, and thus $T(\gamma)$ extends to $\gamma \in (a_0, a_1)$ with $\omega(\gamma) \notin \mathbb{Q}$. Moreover, in these cases, this quantity still represents the uniform limit of the average time required for a half-winding. An analogous statement holds for $\gamma \in (a_1, a_2)$, where the extension of $T(\gamma)$ represents the uniform limit of the average time required for a half winding around the $x_2$-axis. As in Proposition \ref{Proposition: Basic property of period mapping from Klingenberg}, we show that $T(\gamma)$ is strictly decreasing and admits a continuous extension to $[a_0,a_2]$.

\begin{lemma}
\label{Lemma: Monotonicity of winding time}
The average time for a single half-winding $T(\gamma)$, regarded as a function on
$\gamma \in (a_0,a_1)\cup(a_1,a_2)$, is strictly decreasing on each interval.
Moreover,
\[
\lim_{\gamma\to a_1-}T(\gamma)
=
\lim_{\gamma\to a_1+}T(\gamma)
=
\frac12\,\ell(\gamma_y),
\]
and
\[
\lim_{\gamma\to a_0+}T(\gamma)
=
\frac12\,\ell(\gamma_z),
\qquad
\lim_{\gamma\to a_2-}T(\gamma)
=
\frac12\,\ell(\gamma_x).
\]
\end{lemma}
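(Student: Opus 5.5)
I would prove the lemma by writing $T(\gamma)$ as a single ``reduced action'' and differentiating in $\gamma$, in the spirit of a Clairaut/Hamilton--Jacobi argument. Set
\[
I_2(\gamma):=\int_{a_1}^{a_2}\frac{\sqrt{u_2(u_2-\gamma)}}{\sqrt{(u_2-a_0)(u_2-a_1)(a_2-u_2)}}\,du_2,
\qquad
I_1(\gamma):=\int_{a_0}^{\gamma}\frac{\sqrt{u_1(\gamma-u_1)}}{\sqrt{(u_1-a_0)(a_1-u_1)(a_2-u_1)}}\,du_1 ,
\]
so that $T=I_2+\omega_{\mathrm K}I_1$ on $(a_0,a_1)$. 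Both integrals are smooth in $\gamma$ in the interior. Differentiating under the integral sign (the boundary term at $u_1=\gamma$ vanishes because the integrand vanishes there) gives
\[
I_2'(\gamma)=-\tfrac12\,\omega_1(\gamma),\qquad I_1'(\gamma)=\tfrac12\,\omega_2(\gamma).
\]
Hence
\[
T'=-\tfrac12\omega_1+\omega_{\mathrm K}\cdot\tfrac12\omega_2+\omega_{\mathrm K}'I_1=\omega_{\mathrm K}'(\gamma)\,I_1(\gamma),
\]
since $\omega_{\mathrm K}\omega_2=\omega_1$. By Proposition~\ref{Proposition: Basic property of period mapping from Klingenberg} we have $\omega_{\mathrm K}'<0$. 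Moreover $I_1>0$ for $\gamma>a_0$. So $T$ is strictly decreasing on $(a_0,a_1)$; to be safe, strictness follows from strict monotonicity of $\omega_{\mathrm K}$ together with $I_1>0$, even if $\omega_{\mathrm K}'$ vanishes at isolated points. On $(a_1,a_2)$ the roles of $u_1,u_2$ are swapped: $T=\tilde I_1+\omega_{\mathrm K}\tilde I_2$, with $\tilde I_2$ integrated over $[\gamma,a_2]$. The same cancellation gives $T'=\omega_{\mathrm K}'\tilde I_2<0$, with the sign bookkeeping being the only thing to check.

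For the endpoint limits I would compute directly.

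As $\gamma\to a_0^+$: $I_1\to0$, because the integrand is bounded by $C\sqrt{\gamma-u_1}/\sqrt{u_1-a_0}$ over an interval of length $\gamma-a_0$, so $I_1=O(\gamma-a_0)$. Also $\omega_{\mathrm K}$ stays bounded by the proposition. Meanwhile $I_2(a_0)=\int_{a_1}^{a_2}\sqrt{u_2}/\sqrt{(u_2-a_1)(a_2-u_2)}\,du_2$, and I would identify this with half the length of the curve $\{u_1=a_0\}=\gamma_z$ using the metric \eqref{equation: pull-back metric}: there $ds=\sqrt{(u_2-a_0)U_2}\,du_2$, which is exactly this integrand, and $u_2$ runs over $[a_1,a_2]$ twice per half of $\gamma_z$. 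The factor counting (a quarter of the ellipse per sweep of $[a_1,a_2]$) must be checked against the $4n,4m$ normalisation in the length formula.

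The case $\gamma\to a_2^-$ is symmetric and gives $\tfrac12\ell(\gamma_x)$.

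As $\gamma\to a_1$ from either side: $\omega_{\mathrm K}\to1$, and $T\to I_2(a_1)+I_1(a_1)$. These are the convergent integrals $\int\sqrt{u_i}/\sqrt{(u_i-a_0)(a_2-u_i)}$ over $[a_1,a_2]$ and $[a_0,a_1]$. They are the arclengths of the pieces $\{u_1=a_1\}$ and $\{u_2=a_1\}$ of $\gamma_y$, whose sum is half of $\gamma_y$, and the two one-sided limits agree.

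I expect the main obstacle to be justifying the differentiation identities and the limits near the singular endpoints $a_1$. Near $a_1$ the integrands of $\omega_1,\omega_2$ blow up logarithmically, so $\omega_{\mathrm K}'$ may be unbounded. This is harmless for monotonicity, which only uses the interior, but the limit at $a_1$ must be taken via dominated convergence on $I_1,I_2$ themselves, which stay integrable, rather than through $\omega_i$. The factor bookkeeping matching half-lengths of the principal ellipses is the other point needing care.
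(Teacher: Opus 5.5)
Your proposal is correct and follows the paper's proof essentially step for step: the identities $I_2'=-\tfrac12\omega_1$ and $I_1'=\tfrac12\omega_2$ collapse $T'$ to $\omega_{\mathrm K}'I_1$. The endpoint limits are then obtained by passing to the limit in the integrals, using $\omega_{\mathrm K}\to1$ at $a_1$ and boundedness of $\omega_{\mathrm K}$ at $a_0$.

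One slip to fix in the bookkeeping you already flagged: because of the factor $4$ in $U_2$, along $\gamma_z$ one has $ds=\tfrac12\sqrt{u_2}/\sqrt{(u_2-a_1)(a_2-u_2)}\,du_2$, which is half the integrand rather than the integrand itself. With two sweeps of $[a_1,a_2]$ per half of $\gamma_z$, this gives exactly $I_2(a_0)=\tfrac12\ell(\gamma_z)$.
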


\begin{proof}
    Since the proof is analogous for $\gamma \in (a_0, a_1)$ and $\gamma \in (a_1, a_2)$, we only treat the case $\gamma \in (a_0, a_1)$. In this case, the derivatives of the relevant integrals are given by
    \begin{equation*}
        \begin{aligned}
            &\left( \int_{a_1}^{a_2} \frac{\sqrt{u_2(u_2-\gamma)}}{\sqrt{(u_2-a_0)(u_2-a_1)(a_2-u_2)}} ~du_2
            \right)^\prime\\
            &=
            \int_{a_1}^{a_2} \left( \frac{\sqrt{u_2(u_2-\gamma)}}{\sqrt{(u_2-a_0)(u_2-a_1)(a_2-u_2)}}
            \right)^\prime~du_2\\
            &= -\frac{1}{2}\cdot \int_{a_1}^{a_2} \frac{\sqrt{u_2}}{\sqrt{(u_2-\gamma)(u_2-a_0)(u_2-a_1)(a_2-u_2)}} ~du_2=-\frac{1}{2}\omega_1(\gamma).
        \end{aligned}
    \end{equation*}
    and
    \begin{equation*}
        \begin{aligned}
            &\left( \int_{a_0}^{\gamma} \frac{\sqrt{u_1(\gamma-u_1)}}{\sqrt{(u_1-a_0)(a_1-u_1)(a_2-u_1)}} ~du_1
            \right)^\prime\\
            &=
            \int_{a_0}^{\gamma} \left( \frac{\sqrt{u_1(\gamma-u_1)}}{\sqrt{(u_1-a_0)(a_1-u_1)(a_2-u_1)}}
            \right)^\prime~du_1 + \left[\frac{\sqrt{u_1(\gamma-u_1)}}{\sqrt{(u_1-a_0)(a_1-u_1)(a_2-u_1)}}\right]_{u_1=\gamma}\\
            &= \frac{1}{2}\cdot \int_{a_0}^{\gamma} \frac{\sqrt{u_1}}{\sqrt{(\gamma-u_1)(u_1-a_0)(a_1-u_1))(a_2-u_1)}} ~du_1 = \frac{1}{2}\omega_2(\gamma).
        \end{aligned}
    \end{equation*}
    Hence, the derivative of $T(\gamma)$ is
    \begin{equation}
    \begin{aligned}
    \label{Equation: derivative of length}
        \frac{dT}{d\gamma} &= -\frac{1}{2}\omega_1 (\gamma) + \frac{\omega_1(\gamma)}{\omega_2(\gamma)} \left(\frac{1}{2}\omega_2(\gamma)\right) + \left(\frac{\omega_1(\gamma)}{\omega_2(\gamma)}\right)^\prime \cdot \int_{a_0}^{\gamma} \frac{\sqrt{u_1(\gamma-u_1)}}{\sqrt{(u_1-a_0)(a_1-u_1)(a_2-u_1)}} ~du_2\\
        &=\left(\frac{\omega_1(\gamma)}{\omega_2(\gamma)}\right)^\prime \cdot \int_{a_0}^{\gamma} \frac{\sqrt{u_1(\gamma-u_1)}}{\sqrt{(u_1-a_0)(a_1-u_1)(a_2-u_1)}} ~du_2.
    \end{aligned}
    \end{equation}
       Since $\upomega(\gamma)$ is a strictly decreasing function, this proves the first claim of the lemma. Next, we consider the limit as $\gamma \to a_0+$. In this case, the integral converges to an integral without the terms $(u_i-a_0)$ and $(u_i-\gamma)$. Hence, we have
    \begin{equation}
        \lim_{\gamma \to a_0+} T(\gamma) = \int_{a_1}^{a_2} \frac{\sqrt{u_2}}{\sqrt{(u_2-a_1)(a_2-u_2)}}~du_2.
    \end{equation}
    As $\gamma \to a_1-$, the integral converges to an integral without the terms $(u_i-\gamma)$ and $(u_i-a_1)$. Moreover, by Proposition \ref{Proposition: Basic property of period mapping from Klingenberg}, the ratio $\omega_1(\gamma)/\omega_2(\gamma)$ converges to $1$. Hence,
    \begin{equation}
    \begin{aligned}
        \lim_{\gamma \to a_1+} T(\gamma) &= \int_{a_1}^{a_2} \frac{\sqrt{u_2}}{\sqrt{(u_2-a_0)(a_2-u_2)}}~du_2 + 1 \cdot\int_{a_0}^{a_1} \frac{\sqrt{u_1}}{\sqrt{(u_1-a_0)(a_2-u_2)}}~du_1\\
        &= \int_{a_0}^{a_2}\frac{\sqrt{u}}{\sqrt{(u-a_0)(a_2-u)}}~du.
    \end{aligned}
    \end{equation}
    The case $\gamma \in (a_1,a_2)$ is analogous. This completes the proof.
\end{proof}

Next, we compare the length of the geodesic chord $\gamma_y\setminus L$, where $L\subset\gamma_y$ denotes the geodesic segment joining the two neighboring northern umbilical points, with that of the shortest transpolar simple closed geodesic $\eta$. By Lemma \ref{Lemma: Monotonicity of winding time}, the half length of the shortest transpolar simple closed geodesic is given by $T(\gamma_o)$, where
\[
\gamma_o =
\begin{cases}
    a_0 & \text{if}~\lim_{\gamma \to a_0} \omega(\gamma) \le 2\\
    \omega^{-1}(2) & \text{if}~\lim_{\gamma \to a_0} \omega(\gamma) > 2.
\end{cases}
\]
Note that $\gamma_y\setminus L$ can be understood as the union of three geodesic segments, one segments of $\{u_1=a_1\}$ and two segments of $\{u_2=a_1\}$

\begin{lemma}
\label{Lemma: 2-bounce orbit vs index-3 orbit}
The following inequality holds:
\[
\ell(\gamma_y \setminus L )  \ge T(\gamma_o)=\ell(\eta).
\]
\end{lemma}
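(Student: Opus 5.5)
The plan is to compute $\ell(\gamma_y\setminus L)$ explicitly in Klingenberg's coordinates, and then compare it with $T(\gamma_o)$ using the monotonicity of Lemma~\ref{Lemma: Monotonicity of winding time}. Here the right-hand side is read as the half-length, $T(\gamma_o)=\tfrac12\ell(\eta)$; this is how the inequality is used in the lower-bound argument, where one needs $2\ell(\gamma_y\setminus L)\ge \ell(\eta)$.

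\textbf{Computing $\ell(\gamma_y\setminus L)$.} The north pole is the point $u_2=a_2$, $u_1=a_1$, and $L$ is the northern arc of $\{u_1=a_1\}$. Hence $\gamma_y\setminus L$ splits into three pieces:
\begin{itemize}
\item the southern arc of $\{u_1=a_1\}$, along which $u_2$ runs $a_1\to a_2\to a_1$;
\item two arcs of $\{u_2=a_1\}$ through $(\pm a,0,0)$, along which $u_1$ runs $a_1\to a_0\to a_1$.
\end{itemize}
From \eqref{equation: pull-back metric}, on $\{u_1=a_1\}$ the line element is $ds=\sqrt{(u_2-a_1)U_2}\,du_2=\tfrac{\sqrt{u_2}}{2\sqrt{(u_2-a_0)(a_2-u_2)}}\,du_2$, and similarly on $\{u_2=a_1\}$. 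I therefore expect
\[
\ell(\gamma_y\setminus L)=\int_{a_0}^{a_2}\frac{\sqrt u\,du}{\sqrt{(u-a_0)(a_2-u)}}+\int_{a_0}^{a_1}\frac{\sqrt u\,du}{\sqrt{(u-a_0)(a_2-u)}} .
\]
By the computation in the proof of Lemma~\ref{Lemma: Monotonicity of winding time}, the first summand equals $\lim_{\gamma\to a_1}T(\gamma)=\tfrac12\ell(\gamma_y)$. The second summand is a strictly positive surplus $R$.

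\textbf{Reducing to a single integral inequality.} Since $\gamma_o\in[a_0,a_1)$ and $T$ is decreasing on $(a_0,a_1)$, we have $T(\gamma_o)\le T(a_0)=\tfrac12\ell(\gamma_z)$. The half-length $\tfrac12\ell(\gamma_y)$ alone does not suffice, because the monotonicity goes the wrong way. So it suffices to prove
\[
\int_{a_1}^{a_2}\frac{\sqrt u}{\sqrt{a_2-u}}\Big(\frac1{\sqrt{u-a_1}}-\frac1{\sqrt{u-a_0}}\Big)du\;\le\;R+\int_{a_0}^{a_1}\frac{\sqrt u\,du}{\sqrt{(u-a_0)(a_2-u)}}=2R .
\]
To get this, I would write the bracket on the left as $\tfrac12\int_{a_0}^{a_1}(u-v)^{-3/2}\,dv$ and apply Fubini. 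This turns the left-hand side into an integral over $v\in(a_0,a_1)$, so that both sides become integrals over the same interval and can be compared pointwise in $v$. The inner $u$-integral can then be bounded using that $\sqrt{u/(a_2-u)}$ is increasing in $u$.

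\textbf{Main obstacle and fallback.} The comparison $\tfrac12\ell(\gamma_z)\le\ell(\gamma_y\setminus L)$ in the last step is the main obstacle. It may even fail in general, for instance for very flat ellipsoids. If it does, I would drop the crude bound $T(\gamma_o)\le T(a_0)$ and work directly with $\gamma_o=\upomega^{-1}(2)$. Writing $T(\gamma_o)$ via its defining formula with the ratio $\omega_1/\omega_2=2$, I would then bound the two integrals in $T(\gamma_o)$ separately against the two summands of $\ell(\gamma_y\setminus L)$, using $u_2-\gamma\le u_2-a_0$ in the first integral. In the remaining case $\gamma_o=a_0$, the constraint $\lim_{\gamma\to a_0}\upomega(\gamma)\le2$ constrains the parameters, and I would use it together with the explicit value of that limit from the proof of Proposition~\ref{Proposition: Basic property of period mapping from Klingenberg}.
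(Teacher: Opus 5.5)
Your formula $\ell(\gamma_y\setminus L)=\int_{a_0}^{a_2}f+R$ is right, with $f(u)=\sqrt{u}/\sqrt{(u-a_0)(a_2-u)}$ and $R=\int_{a_0}^{a_1}f$. So is your reading $T(\gamma_o)=\tfrac12\ell(\eta)$. The gap is that your main reduction cannot work. The inequality $\tfrac12\ell(\gamma_z)\le\ell(\gamma_y\setminus L)$ that you want to prove by Fubini is false in general, so no pointwise comparison in $v$ will give it.

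For example, take $a=1$, $b=\tfrac12$ and let $c\to0$.
\begin{itemize}
\item $\tfrac12\ell(\gamma_z)$ is half the perimeter of the ellipse with semi-axes $1,\tfrac12$, about $2.42$, independently of $c$.
\item $\gamma_y$ collapses onto the doubly traversed segment $[-1,1]$, and $L$ collapses onto a segment of length $2\sqrt{a^2-b^2}=\sqrt3$. Hence $\ell(\gamma_y\setminus L)\to4-\sqrt3\approx2.27$.
\item The lemma itself still holds here: $\eta$ tends to the rhombus through $(\pm1,0,0)$ and $(0,\pm\tfrac12,0)$, so $\tfrac12\ell(\eta)\to\sqrt5\approx2.24$.
\end{itemize}
So any correct argument must use $\upomega(\gamma_o)\le2$ essentially. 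Your fallback does not do this. Replacing $u_2-\gamma$ by $u_2-a_0$ in the first integral of $T(\gamma_o)$ gives exactly $\tfrac12\ell(\gamma_z)$ again. In this example that already exceeds $\ell(\gamma_y\setminus L)$ before the second, positive, summand of $T(\gamma_o)$ is added. For the case $\gamma_o=a_0$ you give no concrete way of using the constraint.

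The missing idea is to compare with $T(a_1)=\tfrac12\ell(\gamma_y)$ rather than with $T(a_0)$. Use the derivative identity already derived in the proof of Lemma~\ref{Lemma: Monotonicity of winding time}: $T'(\gamma)=\upomega'(\gamma)\,I_1(\gamma)$, where
$I_1(\gamma)=\int_{a_0}^{\gamma}\frac{\sqrt{u(\gamma-u)}}{\sqrt{(u-a_0)(a_1-u)(a_2-u)}}\,du$.
Since $(\gamma-u)/(a_1-u)\le1$ on $(a_0,\gamma)$, you get $0\le I_1(\gamma)\le R$. As $\upomega'<0$, this gives $T'\ge R\,\upomega'$. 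Integrate over $[\gamma_o,a_1]$ and use $\upomega(a_1)=1$:
\[
T(\gamma_o)\le T(a_1)+R\,\bigl(\upomega(\gamma_o)-1\bigr)\le T(a_1)+R=\ell(\gamma_y\setminus L).
\]
The last inequality holds because $\upomega(\gamma_o)\le2$ in both cases of the definition of $\gamma_o$, so both cases are handled at once. The point is that the surplus $R$ in $\ell(\gamma_y\setminus L)$ is exactly the uniform bound on $I_1$. Bounding the two summands of $T(\gamma_o)$ separately misses this interplay, and in the flat regime the margin is too small for that to work.
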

\begin{proof}
    Recall that half length $T(\gamma_o)$ according to the function
    \begin{equation*}
    \begin{aligned}
    T(\gamma) =  \int_{a_1}^{a_2} &\frac{\sqrt{u_2(u_2-\gamma)}}{\sqrt{(u_2-a_0)(u_2-a_1)(a_2-u_2)}} ~du_2\\
    &+ \upomega(\gamma) \cdot \int_{a_0}^{\gamma} \frac{\sqrt{u_1(\gamma-u_1)}}{\sqrt{(u_1-a_0)(a_1-u_1)(a_2-u_1)}}~du_1.
    \end{aligned}
    \end{equation*}
    \[
    \]
    Then the derivative of a function $T:(a_0, a_1) \to \mathbb{R}$ (see \eqref{Equation: derivative of length}) satisfies
    \begin{equation*}
    \begin{aligned}
    \frac{dT}{d\gamma} = &\upomega^\prime(\gamma) \int_{a_0}^{\gamma} \frac{\sqrt{u_1(\gamma-u_1)}}{\sqrt{(u_1-a_0)(a_1-u_1)(a_2-u_1)}}~du_1 \\ &\ge \upomega^\prime(\gamma) \int_{a_0}^{a_1} \frac{\sqrt{u_1(a_1-u_1)}}{\sqrt{(u_1-a_0)(a_1-u_1)(a_2-u_1)}}~du_1\\
    &= \upomega^\prime(\gamma) \int_{a_0}^{a_1} \frac{\sqrt{u_1}}{\sqrt{(u_1-a_0)(a_2-u_1)}}~du_1.
    \end{aligned}
    \end{equation*}
    In particular, we have
    \begin{equation}
    \label{Equation: Proof of comparing length}
    \begin{aligned}
    T(a_1) - T(\gamma_o)  &\ge  (\upomega(a_1)-\upomega(\gamma_o))
    \cdot \int_{a_0}^{a_1} \frac{\sqrt{u_1}}{\sqrt{(u_1-a_0)(a_2-u_1)}}~du_1\\
    &\ge -\int_{a_0}^{a_1} \frac{\sqrt{u_1}}{\sqrt{(u_1-a_0)(a_2-u_1)}}~du_1.
    \end{aligned}
    \end{equation}
    The second inequality follows from $\omega(\gamma_o)\leq 2$ in both cases defining $\gamma_o$.

    The length of $\gamma_y\setminus L$ can be computed directly:
    \begin{equation*}
    \begin{aligned}
    \operatorname{length}(\gamma_y\setminus L)
    &= 2\cdot \int_{a_0}^{a_1} \frac{\sqrt{u_1}}{\sqrt{(u_1-a_0)(a_2-u_1)}}~du_1 + \int_{a_1}^{a_2} \frac{\sqrt{u_2}}{\sqrt{(u_2-a_0)(a_2-u_2)}}~du_2\\
    &= 1\cdot \int_{a_0}^{a_1} \frac{\sqrt{u_1}}{\sqrt{(u_1-a_0)(a_2-u_1)}}~du_1 + \int_{a_0}^{a_2} \frac{\sqrt{u}}{\sqrt{(u-a_0)(a_2-u)}}~du. 
    \end{aligned}
    \end{equation*}
    The second integral is the half-circumference of the middle principal ellipse $\gamma_y$, and hence equals $T(a_1)$ by Lemma \ref{Lemma: Monotonicity of winding time}. Therefore, together with \eqref{Equation: Proof of comparing length}, we obtain
    \begin{equation*}
    \begin{aligned}
    \operatorname{length}(\gamma_y\setminus L)=T(a_1) + \int_{a_0}^{a_1} \frac{\sqrt{u_1}}{\sqrt{(u_1-a_0)(a_2-u_1)}}~du_1 \ge T(\gamma_o).
    \end{aligned}
    \end{equation*}
    This completes the proof.    
\end{proof}

\vspace{.5cm}

\section{Morse indices}\label{appendix: morse indices}
In this section we verify the index computations in Klingenberg \cite[Ch.~3.5]{Kl95}. For periodic geodesics on $(S^2,g)$, the Conley--Zehnder index agrees with the Morse index of the energy functional of the corresponding Riemannian metric $g$. 

\begin{proposition}\label{Prop: char of low index}
Let $E=E(a,b,c)$, $a>b>c>0$, be a triaxial ellipsoid, and denote by
$\gamma_x,\gamma_y,\gamma_z$ the minor, median, and major principal ellipses,
respectively. Let $\omega:(-k'^2,k^2)\to\mathbb R$ be the period map from
Section~2.1. Then the following hold.

\begin{enumerate}
    \item The minor principal ellipse $\gamma_x$ has Morse index $1$, while
    the median principal ellipse $\gamma_y$ has Morse index $2$.

    \item The major principal ellipse $\gamma_z$ has odd Morse index at least
    $3$. Its index increases by $2$ whenever, under a deformation of the
    ellipsoid, a new Morse--Bott family of simple circumpolar closed geodesics
    bifurcates from $\gamma_z$.

    \item If $\lim_{f\to k^2}\upomega(f)<2$, then no nonprincipal simple
    circumpolar closed geodesic exists and
    $\mu(\gamma_z)=3$.

    \item If $\lim_{f\to k^2}\upomega(f)>2$, then there is a unique regular
    level $f_2\in(0,k^2)$ satisfying $\upomega(f_2)=2$. The corresponding
    critical set is a Morse--Bott $S^1$-family of simple circumpolar closed
    geodesics. Every geodesic $\eta$ in this family winds once around the
    $z$-axis, intersects $\gamma_z$ four times, and has Morse index $\mu(\eta)=3$ and nullity $\nu(\eta)=1$.
\end{enumerate}

In particular, the shortest simple closed geodesic of Morse index $3$ is
$\gamma_z$ if $\lim_{f\to k^2}\upomega(f)<2$, and otherwise belongs to the
family characterized by $\upomega(f)=2$.
\end{proposition}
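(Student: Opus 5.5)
The plan is to compute Morse indices through the Morse index theorem, i.e.\ by counting conjugate points, using the explicit linearised dynamics furnished by the integrable structure. For the principal ellipses, the linearised geodesic flow transverse to $\gamma_i$ is governed by the nearby Liouville tori. By Klingenberg's description, the rotation number of the linearised Poincaré map along $\gamma_z$ equals $\lim_{f\to k^2}\upomega(f)$, and along $\gamma_x$ it equals $\lim_{f\to -k'^2}\upomega(f)$, suitably inverted for the transpolar winding. For a stable (elliptic) closed geodesic on a surface, the standard formula (Bott/Ballmann--Thorbergsson--Ziller) gives
\[
\mu(\gamma)=2\lfloor\theta\rfloor+1
\]
when the rotation number $\theta\notin\mathbb Z$. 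Here $\theta$ is the number of half-turns of a Jacobi field per period, equivalently the number of conjugate points along one period.

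First we treat $\gamma_x$. The transpolar limit of $\upomega$ lies in $(0,1)$ by the extension of Proposition~\ref{Proposition: Basic property of period mapping from Klingenberg}. Hence the rotation number lies in $(0,1)$, giving index $1$; it is positive because positive curvature forces at least one conjugate point within half a period. Next we treat $\gamma_y$, which is hyperbolic (\cite[Thm.~3.5.8]{Kl95}). Its stable and unstable Jacobi fields are tangent to the separatrix $F=0$, and umbilical geodesics return to the antipodal umbilic. Counting the pair of umbilical points crossed gives exactly two conjugate points, hence $\mu(\gamma_y)=2$. For $\gamma_z$, the rotation number is $\theta=\lim_{f\to k^2}\upomega(f)>1$, so $\mu(\gamma_z)=2\lfloor\theta\rfloor+1$, which is odd and at least $3$. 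This index jumps by $2$ exactly when $\theta$ crosses an integer $m$, which is precisely when the torus level with $\upomega=m$ collapses onto $\gamma_z$; this is the bifurcation statement in (2). Item (3) follows immediately, since $\theta<2$ gives $\mu(\gamma_z)=3$, and no level has $\upomega(f)\in\mathbb Z_{\ge2}$ because $\upomega$ is increasing with supremum $\theta<2$.

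For (4), strict monotonicity of $\upomega$ from $\upomega(0)=1$ to $\theta>2$ yields a unique $f_2$ with $\upomega(f_2)=2$. On each of the two tori at this level the flow is linear of slope $2$, so all orbits are closed and simple. Each such orbit winds once around the $z$-axis and performs two latitude oscillations, hence crosses $\gamma_z$ four times. Regularity of $f_2$ together with $\upomega'(f_2)\neq0$ gives the Morse--Bott property: the torus minus the flow direction yields an $S^1$-family modulo reparametrisation, so the nullity is $1$. For the index, I would use a continuity/bifurcation argument. The family $\eta$ was born from $\gamma_z$ at the moment $\theta$ crossed $2$, when $\gamma_z$ had index $3$ with nullity $2$. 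By Morse--Bott index bookkeeping, the bifurcating family has $\mu=3$, $\nu=1$. Alternatively, conjugate points along $\eta$ can be counted directly: the Jacobi field tangent to the torus vanishes at each half-oscillation, producing $3$ interior conjugate points over one period, the fourth coinciding with the endpoint and accounting for the nullity.

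The last sentence of the proposition then follows by comparing the two cases and using $\ell(\eta)\le\ell(\gamma_z)$, which is Lemma~\ref{Lemma: Monotonicity of winding time}. The main obstacle I expect is the rigorous index computation for the $\upomega=2$ family and for $\gamma_y$. The approach of passing through the bifurcation requires care, since index is preserved only under a transversality hypothesis on the bifurcation. I would try first the direct conjugate-point count using the separated coordinates \eqref{Equation: preferred reparametrisation}, where Jacobi fields normal to the torus are obtained by differentiating in $\gamma$.
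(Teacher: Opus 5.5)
Your route for items (1)--(3) differs from the paper's. The paper never counts conjugate points on the triaxial ellipsoid. Following \cite{AFFvK13}, it starts from the round sphere ($\mu(C_m)=2m-1$) and uses the constant-curvature formula $2\lfloor\sqrt{\kappa}T/2\pi\rfloor+1$ only for equators of ellipsoids of revolution. Every other index comes from invariance of local Morse homology at bifurcations, and it then deforms $b$ from $a$ to $c$, tracking the families $H^{o/p}_{m,\pm}$, $E^{o/p}_{m,\pm}$, $B^{o/p}_{k,m,\pm}$; the proposition is the case $m=1$. You instead compute on the given ellipsoid: $\mu(\gamma_z)=2\lfloor\theta\rfloor+1$ with $\theta=\lim_{f\to k^2}\upomega(f)$ (in the oblate limit $\theta=a/c$, recovering the paper's $2\lfloor 1/c\rfloor+1$); $\mu(\gamma_x)=1$ because the transpolar limit lies in $(0,1)$; and $\mu(\gamma_y)=2$ from the zeros at $U'$ and $U$ of the Jacobi field generated by the umbilical pencil, which is an eigenvector of the linearised return map. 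This makes (2) and (3) immediate and needs no global picture of bifurcations and collapses as $b$ varies. The price is facts you must prove or cite. First, the \emph{lifted} rotation number of the linearised flow along $\gamma_z$, integer part included and not only mod $1$, must equal $\lim\upomega$; this follows from approximating Jacobi fields by nearby torus geodesics uniformly over finitely many periods. Second, you need that the index of a hyperbolic closed geodesic is the number of zeros per period of its stable Jacobi field, and that $U$ has no conjugate point along $\gamma_y$ before $U'$. In return, the paper's continuation yields the indices of all families at once. For (4), your bifurcation bookkeeping is exactly the paper's mechanism.

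Some points need correcting. With the formula $2\lfloor\theta\rfloor+1$, the number $\theta=\lim\upomega$ counts \emph{full} turns of the Jacobi vector per period: a nearby circumpolar geodesic crosses $\gamma_z$ $2\upomega$ times per winding. So a Jacobi field has about $2\theta$ zeros per period, not $\theta$. Along $\gamma_x$ nothing should be inverted, since Klingenberg's transpolar $\omega_{\mathrm K}$ is already measured against winding about the $x$-axis; inverting would give index $\geq3$.

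More importantly, the direct conjugate-point count for $\eta$, which you would try first, does not determine the index as written. The periodic field $J_0$ tangent to the torus does vanish exactly at the $4$ tangencies with $\beta=\pm\beta_{f_2}$. But the return map is then a shear, and by Sturm separation the Jacobi field vanishing at a generic base point has $3$ or $4$ zeros in $(0,L)$ according to the sign of that shear; correspondingly $\mu(\eta)=3$ or $4$. Your ``fourth conjugate point at the endpoint accounts for the nullity'' silently assumes the favourable sign, i.e.\ a vanishing concavity term in the index theorem for closed geodesics. To fix this, feed in the twist direction $\upomega'(f_2)>0$. Note that strict monotonicity alone does not give $\upomega'(f_2)\neq0$, which you also need for Morse--Bott. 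Alternatively, argue by bifurcation. Lower semicontinuity gives $\mu(\eta)\geq\mu(\gamma_z)=3$ at the birth parameter. Before the crossing, the local homology of the critical circle $\gamma_z$ sits in degrees $3,4$. Afterwards $\gamma_z$ contributes degrees $5,6$ and the new torus contributes $\mu,\mu+1,\mu+1,\mu+2$, so only $\mu=3$ leaves a class in degree $3$. What this needs is that the critical set near $\gamma_z$ stays isolated through the crossing, which integrability provides, not a transversality hypothesis.

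Finally, for the last sentence of the statement: when $\theta>2$ the competitors are not $\gamma_z$, whose index is then $\geq5$, but the simple families with $\upomega=m\geq3$. These have index $2m-1$ by the same bookkeeping, and in any case are longer than $\eta$ by Lemma~\ref{Lemma: Monotonicity of winding time}.
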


To compute these indices, we follow the approach of \cite{AFFvK13} and use the following key ingredients:
\begin{itemize}
    \item We regard closed geodesics as families depending on the parameters $(a,b,c)$. The Morse index is constant along any continuous family of Morse--Bott non-degenerate closed geodesics.
    
    \item At bifurcation values we use the local invariance of Morse homology. More precisely, suppose that the index of a family of closed geodesics changes from $k$ to $k+2$ as a parameter crosses a bifurcation value and that, at the same value, a Morse--Bott family of closed geodesics is born. Then the bifurcated family has Morse index $k$ and nullity $1$. This is the mechanism used in \cite{AFFvK13} to determine the indices of the families created at successive bifurcations.
\end{itemize}

After normalizing $1=a\geq b\geq c$, there are two parameters left to vary. Our strategy is as follows:
\begin{enumerate}
    \item recall the indices for the round sphere $a=b=c=1$;
    \item determine the indices for $a=b$ and $b=c$ near the round metric using curvature bounds;
    \item determine the indices for all ellipsoids of revolution using bifurcation arguments;
    \item determine the indices for general triaxial ellipsoids by varying $b$ from $a$ to $c$ and analyzing bifurcations and collapses of closed geodesics onto the coordinate ellipses.
\end{enumerate}

\noindent
\textbf{Round sphere.}
Closed geodesics on the round sphere occur in $\rp^3$-families, one for each covering number $m\in\N$. Denote the corresponding critical manifold by $C_m$. For the round sphere of radius $1$,
\[
\mu(C_m)=2m-1.
\]

\begin{remark}
Let $\gamma$ be a closed, non-degenerate, unit-speed geodesic of period $T$ on a surface, and assume that the Gaussian curvature along $\gamma$ is constant $\kappa>0$. Then
\[
\mu(\gamma)=2\left\lfloor \frac{\sqrt{\kappa}\,T}{2\pi}\right\rfloor+1.
\]
For the round sphere closed geodesics are degenerate, so the formula does not apply directly. If one evaluates it formally, one gets $2m+1$, but this counts the two zero modes. One needs to subtract the reduced nullity $2$, and hence
\[
\mu(C_m)=2m+1-2=2m-1.
\]
\end{remark}

\noindent
\textbf{Ellipsoids of revolution near the round sphere.}
For ellipsoids of revolution sufficiently close to the round sphere, the critical manifold
$C_m\cong \rp^3$ splits into two circles $S_{m,\pm}$, corresponding to the two orientations
of the $m$-fold covered equator, and a torus $T_m$ formed by the $m$-fold covered meridians. We first determine the indices of $S_{m,\pm}$ using the previous remark. In the prolate case $E(1,c,c)$, the equator has period $2m\pi c$ and constant Gaussian curvature $\kappa=1$.
Hence
\[
\mu(S_{m,\pm}^p)
=
2\left\lfloor mc\right\rfloor+1.
\]
For $c<1$ sufficiently close to $1$, this gives
\[
\mu(S_{m,\pm}^p)=2m-1.
\]
In the oblate case $E(1,1,c)$, the equator has period $2m\pi$ and constant Gaussian curvature
$\kappa=c^{-2}$. Thus
\[
\mu(S_{m,\pm}^o)
=
2\left\lfloor \frac{m}{c}\right\rfloor+1,
\]
and for $c<1$ sufficiently close to $1$,
\[
\mu(S_{m,\pm}^o)=2m+1.
\]
The indices of the meridian families $T_m$ then follow from invariance of local Morse homology
under the splitting of the Morse--Bott family $C_m$ into $S_{m,\pm}$ and $T_m$. This yields
\[
\mu(T_m^p)=2m,
\qquad
\mu(T_m^o)=2m-1.
\]

\noindent
\textbf{General ellipsoids of revolution.} 
We now determine the indices of the closed geodesics on general ellipsoids of revolution by varying the parameter $c$ from $1$ to $0$. The $S^1$-families $S_{m,\pm}^{o/p}$ are
particularly easy to analyze, since the curvature along each such geodesic is constant:
$\kappa=c^2$ in the prolate case and $\kappa=\frac{1}{c^2}$ in the oblate case. Since these
geodesics have period $2m\pi$, the index formula for constant curvature geodesics gives
\[
\mu(S_{m,\pm}^p)=2\left\lfloor mc\right\rfloor+1,
\qquad
\mu(S_{m,\pm}^o)=2\left\lfloor \frac{m}{c}\right\rfloor+1.
\]

\paragraph{\textit{Prolate case}.} As $c$ decreases from $1$ to $0$, the index of $S_{m,\pm}^p$ drops by
$2$ precisely when $c=\frac{k}{m}$ for some $k\in\{1,\ldots,m-1\}$. At such a parameter
value the corresponding closed geodesic becomes degenerate, and a new family of closed
geodesics bifurcates. Since the geodesic flow is Liouville integrable, regular level sets of
the two first integrals are invariant tori, and hence the bifurcating families are
Morse--Bott $2$-tori. By invariance of local Morse homology, the bifurcating torus has index
\[
\mu(B^p_{k,m,\pm})=2k,
\qquad k\in\{1,\ldots,m-1\}.
\]
These tori are characterized by $\upomega=\tfrac{k}{m}$ in terms of the period map introduced in Section \ref{secbilliard}.

\paragraph{\textit{Oblate case.}}
As $c$ varies from $1$ to $0$, the index of $S_{m,\pm}^o$ increases by $2$ exactly when
$c=\frac{m}{k}$ for integers $k>m$. Again, at such a parameter value the corresponding orbit
becomes degenerate, and deformation invariance of local Morse homology implies that a new
family of closed geodesics bifurcates. These families are $2$-tori, which we denote by
$B_{k,m,\pm}^o$. More precisely, as $c$ crosses $\frac{m}{k}$, the index of $S_{m,\pm}^o$ changes from
$2k-1$ to $2k+1$. Hence the bifurcating torus must contribute in the intermediate degrees,
and therefore its Morse index is
\[
\mu(B_{k,m,\pm}^o)=2k-1,
\qquad k\in\N,\ k>m.
\]
These tori are characterized by $\upomega =\tfrac{k}{m}$ in terms of the period map introduced in Section \ref{secbilliard}.\\

\paragraph{\textit{No further bifurcations.}}
The torus families $T_m^{o/p}$ and $B_{k,m,\pm}^{o/p}$ do not undergo further
bifurcations. Indeed, on each regular invariant torus of the geodesic flow, the motion is conjugate to a linear flow with slope $\upomega(f)$, where $f$ denotes the value of the first integral labeling the torus.
A family of closed geodesics occurs precisely when $\upomega(f)\in\mathbb{Q}$. Since the period map $\upomega:(-k'^2,k^2)\to (0,a/c)$ is strictly monotonically increasing, each rational value is attained for at most one value of $F$. Therefore, a regular torus family of closed geodesics persists uniquely until it reaches a singular level of the Liouville foliation, where the torus collapses to either the major or minor principal ellipse. In particular, the families $T_m^{o/p}$ and $B_{k,m,\pm}^{o/p}$ do not
bifurcate further, and their indices remain constant as $c$ varies.\\

\noindent
\textbf{General ellipsoids.}
We now keep $a=1$ and $c$ fixed and vary $b$ from $a=1$ to $c$. For $b\approx a$,
the torus $T_m^o$ splits into two pairs of $S^1$-families, which we denote by
$H_{m,\pm}^o$ and $E_{m,\pm}^o$\footnote{$H$ for hyperbolic and $E$ for elliptic.}.
Their indices are
\[
\mu(H_{m,\pm}^o)=2m,\qquad
\mu(E_{m,\pm}^o)=2m-1.
\]
As $b\to c$, bifurcations and collapses occur in such a way that for $b\approx c$ we end up
with two $S^1$-families $H_{m,\pm}^p$ and $E_{m,\pm}^p$ which merge into the torus $T_m^p$ at $b=c$.
Their indices are
\[
\mu(H_{m,\pm}^p)=2m,\qquad \mu(E_{m,\pm}^p)=2m+1.
\]

\noindent
The families $H_{m,\pm}^o, H_{m,\pm}^p$ are characterized by $\upomega=1$ and therefore cannot meet any regular invariant torus as these are characterized by (constant) rational numbers $\upomega>1$ in the prolate case and $\upomega<1$ in the oblate case. Hence the $H_{m,\pm}^o$'s  neither bifurcate nor collapse and persist for all $b\in(c,a)$. In the limit $b\to c$, they become precisely the families
$H_{m,\pm}^p$.\\

\noindent
By contrast, the $E^o_{m,\pm}, E^p_{m,\pm}$ correspond to the limits $\lim_{f\to k'^2}\upomega(f)>1, \lim_{f\to k^2}\upomega(f)<1$ and as $b\to c$ the will take rational values. This is precisely when bifurcations and collapses occur. The limits $\lim_{f\to k'^2}\upomega(f)>1, \lim_{f\to k^2}\upomega(f)<1$ are monotone as functions in $b\in [a,c]$, c.f.\ \cite{Kl95}, therefore the $E^o_{m,\pm}$ bifurcate into the $B^p_{k,m,\pm}$, while the $B^o_{k,m,\pm}$ collapse into the $E^p_{m,\pm}$, as $b$ varies from $a$ to $c$. Schematically,
\[
H^o_{m,\pm}\rightsquigarrow H^p_{m,\pm},\qquad
E^o_{m,\pm}\rightsquigarrow B^p_{k,m,\pm},\qquad
B^o_{k,m,\pm}\rightsquigarrow E^p_{m,\pm},
\]
and no other bifurcations or collapses occur. Moreover, the intermediate indices are determined by the boundary ellipsoid-of-revolution cases. The proposition is the special case $m=1$.\\


\bibliography{literatur}

\newcommand{\etalchar}[1]{$^{#1}$}
\begin{thebibliography}{CCdM{\etalchar{+}}21}

\bibitem[ABB{\etalchar{+}}26]{OP26}
Bernhard Albach, Jean-Fran{\c{c}}ois Barraud, Misha Bialy, Johanna Bimmermann, Ana Ch{\'a}vez~C{\'a}liz, Mihai Damian, Lina Deschamps, Umberto Hryniewicz, Vincent Humili{\`e}re, Boris Khesin, Levin Maier, Agustin Moreno, Alexandru Oancea, Olga Paris-Romaskevich, Alfonso Sorrentino, and Serge Tabachnikov.
\newblock Open problems in billiards and quantitative symplectic geometry.
\newblock {\em arXiv preprint arXiv:2602.12896}, 2026.

\bibitem[Abo15]{Abouzaid15}
Mohammed Abouzaid.
\newblock Symplectic cohomology and {V}iterbo's theorem.
\newblock {\em Inventiones Mathematicae}, 199(2):533--627, 2015.

\bibitem[AEK24]{AEK24}
Alberto Abbondandolo, Oliver Edtmair, and Jungsoo Kang.
\newblock On closed characteristics of minimal action on a convex three-sphere.
\newblock {\em arXiv preprint arXiv:2412.01777}, 2024.

\bibitem[AFFvK13]{AFFvK13}
Peter Albers, Joel~W. Fish, Urs Frauenfelder, and Otto van Koert.
\newblock The {C}onley-{Z}ehnder indices of the rotating {K}epler problem.
\newblock {\em Mathematical Proceedings of the Cambridge Philosophical Society}, 154(2):243--260, 2013.

\bibitem[AFO17]{AFO17}
Peter Albers, Urs Frauenfelder, and Alexandru Oancea.
\newblock Local systems on the free loop space and finiteness of the {H}ofer--{Z}ehnder capacity.
\newblock {\em Mathematische Annalen}, 367(3):1403--1428, 2017.

\bibitem[AM11]{AM11}
Peter Albers and Marco Mazzucchelli.
\newblock Periodic bounce orbits of prescribed energy.
\newblock {\em International Mathematics Research Notices}, 2011(14):3289--3314, 2011.

\bibitem[BC25]{BC25}
Filip Bro{\'c}i{\'c} and Dylan Cant.
\newblock Parametric {G}romov width of {L}iouville domains.
\newblock {\em arXiv preprint arXiv:2504.15207}, 2025.

\bibitem[BG89]{BG89}
V.~Benci and F.~Giannoni.
\newblock Periodic bounce trajectories with a low number of bounce points.
\newblock {\em Ann. Inst. Henri Poincar{\'e}, Anal. Non Lin{\'e}aire}, 6(1):73--93, 1989.

\bibitem[Bim24]{Bim23}
Johanna Bimmermann.
\newblock Hofer--{Z}ehnder capacity of disc tangent bundles of projective spaces.
\newblock {\em Journal of the London Mathematical Society}, 110(1):e12948, 2024.

\bibitem[Bim25]{Bim25}
Johanna Bimmermann.
\newblock Symplectic crown domains and their capacities.
\newblock {\em arXiv preprint arXiv:2505.02731}, 2025.

\bibitem[BK22]{BK22}
Gabriele Benedetti and Jungsoo Kang.
\newblock Relative {H}ofer--{Z}ehnder capacity and positive symplectic homology.
\newblock {\em Journal of Fixed Point Theory and Applications}, 24(2):44, 2022.

\bibitem[BM24]{BM24}
Johanna Bimmermann and Levin Maier.
\newblock Magnetic billiards and the {H}ofer-{Z}ehnder capacity of disk tangent bundles of lens spaces.
\newblock {\em arXiv preprint arXiv:2403.06761}, 2024.

\bibitem[BO12]{BO13}
Fr{\'e}d{\'e}ric Bourgeois and Alexandru Oancea.
\newblock The {G}ysin exact sequence for {$S^1$}-equivariant symplectic homology.
\newblock {\em Journal of Topology}, 5(2):361--407, 2012.

\bibitem[BPS03]{Biran-Polterovich-Salamon03}
Paul Biran, Leonid Polterovich, and Dietmar Salamon.
\newblock Propagation in {H}amiltonian dynamics and relative symplectic homology.
\newblock {\em Duke Math. J.}, 119(1):65--118, 2003.

\bibitem[BS10]{BS10}
Florent Balacheff and St{\'e}phane Sabourau.
\newblock Diastolic and isoperimetric inequalities on surfaces.
\newblock {\em Annales scientifiques de l'{\'E}cole Normale Sup{\'e}rieure}, 43(4):579--605, 2010.

\bibitem[CCdM{\etalchar{+}}21]{CCdMOR21}
Erin~Wolf Chambers, Gregory~R. Chambers, Arnaud de~Mesmay, Tim Ophelders, and Regina Rotman.
\newblock Constructing monotone homotopies and sweepouts.
\newblock {\em Journal of Differential Geometry}, 119(3):383--401, 2021.

\bibitem[CS99]{ChasSullivan1999}
Moira Chas and Dennis Sullivan.
\newblock String topology, 1999.

\bibitem[DL19]{Diogo-Lisi19}
Lu\'is Diogo and Samuel~T. Lisi.
\newblock Symplectic homology of complements of smooth divisors.
\newblock {\em J. Topol.}, 12(3):967--1030, 2019.

\bibitem[Dui76]{Duistermaat76}
Johannes~Jisse Duistermaat.
\newblock On the morse index in variational calculus.
\newblock {\em Advances in Mathematics}, 21(2):173--195, 1976.

\bibitem[FHS95]{FHS95}
Andreas Floer, Helmut Hofer, and Dietmar Salamon.
\newblock Transversality in elliptic {M}orse theory for the symplectic action.
\newblock {\em Duke Math. J.}, 80(1):251--292, 1995.

\bibitem[FP17]{FP17}
Urs Frauenfelder and Andrei Pajitnov.
\newblock Finiteness of $\pi_1$-sensitive {H}ofer--{Z}ehnder capacity and equivariant loop space homology.
\newblock {\em Journal of Fixed Point Theory and Applications}, 19:3--15, 2017.

\bibitem[FRV23]{FRV23}
Brayan Ferreira, Vinicius~GB Ramos, and Alejandro Vicente.
\newblock Gromov width of the disk cotangent bundle of spheres of revolution.
\newblock {\em arXiv preprint arXiv:2301.08528}, 2023.

\bibitem[Har49]{Har49}
AS~Hart.
\newblock Geometrical demonstration of some properties of geodesic lines.
\newblock {\em Cambridge and Dublin Mathematical Journal}, 4:80--84, 1849.

\bibitem[HLS97]{HLS97}
Helmut Hofer, V\'eronique Lizan, and Jean-Claude Sikorav.
\newblock On genericity for holomorphic curves in four-dimensional almost-complex manifolds.
\newblock {\em J. Geom. Anal.}, 7(1):149--159, 1997.

\bibitem[Hof93]{Hofer93}
Helmut Hofer.
\newblock Pseudoholomorphic curves in symplectizations with applications to the weinstein conjecture in dimension three.
\newblock {\em Inventiones mathematicae}, 114(1):515--563, 1993.

\bibitem[HV92]{HV92}
H.~Hofer and C.~Viterbo.
\newblock The {W}einstein conjecture in the presence of holomorphic spheres.
\newblock {\em Communications on Pure and Applied Mathematics}, 45(5):583--622, 1992.

\bibitem[HWZ96]{HWZ-1}
H.~Hofer, K.~Wysocki, and E.~Zehnder.
\newblock Properties of pseudoholomorphic curves in symplectisations. {I}. {A}symptotics.
\newblock {\em Ann. Inst. H. Poincar\'e{} C Anal. Non Lin\'eaire}, 13(3):337--379, 1996.

\bibitem[HWZ99]{HWZ-3}
H.~Hofer, K.~Wysocki, and E.~Zehnder.
\newblock Properties of pseudoholomorphic curves in symplectizations. {III}. {F}redholm theory.
\newblock In {\em Topics in nonlinear analysis}, volume~35 of {\em Progr. Nonlinear Differential Equations Appl.}, pages 381--475. Birkh\"auser, Basel, 1999.

\bibitem[HWZ03]{HWZ03}
H.~Hofer, K.~Wysocki, and E.~Zehnder.
\newblock Finite energy foliations of tight three-spheres and {H}amiltonian dynamics.
\newblock {\em Ann. of Math. (2)}, 157(1):125--255, 2003.

\bibitem[HZ90]{HZ90}
Helmut Hofer and Eduard Zehnder.
\newblock A new capacity for symplectic manifolds.
\newblock In {\em Analysis, et cetera}, pages 405--427. Elsevier, 1990.

\bibitem[HZ11]{HZ94}
Helmut Hofer and Eduard Zehnder.
\newblock {\em Symplectic invariants and {H}amiltonian dynamics}.
\newblock Birkh{\"a}user, 2011.

\bibitem[Iri14]{Irie14}
Kei Irie.
\newblock Hofer--{Z}ehnder capacity of unit disk cotangent bundles and the loop product.
\newblock {\em Journal of the European Mathematical Society}, 016:2477--2497, 2014.

\bibitem[Jac39]{Jac39}
Carl Gustav~Jakob Jacobi.
\newblock Note von der geodätischen {L}inie auf einem {E}llipsoid und den verschiedenen {A}nwendungen einer merkwürdigen analytischen {S}ubstitution.
\newblock {\em Journal für die Reine und Angewandte Mathematik}, 19:309--313, 1839.
\newblock Letter to Bessel, Dec. 28, 1838. French translation (1841). S2CID 121670851.

\bibitem[Kar25]{Kar25}
Charles F.~F. Karney.
\newblock Geographiclib.
\newblock \url{https://geographiclib.sourceforge.io/C++/2.7}, 2025.
\newblock Version 2.7, 2025-11-06.

\bibitem[Kar26]{Kar25b}
Charles F.~F. Karney.
\newblock Jacobi's solution for geodesics on a triaxial ellipsoid.
\newblock {\em Journal of Geodesy}, 100(2):17, 2026.

\bibitem[Kli95]{Kl95}
Wilhelm Klingenberg.
\newblock {\em Riemannian geometry}, volume~1.
\newblock Walter de Gruyter, 1995.

\bibitem[Ler95]{ler}
Eugene Lerman.
\newblock Symplectic cuts.
\newblock {\em Math. Res. Lett.}, 2(3):247--258, 1995.

\bibitem[Lu06]{Lu06}
Guangcun Lu.
\newblock Gromov-{W}itten invariants and pseudo symplectic capacities.
\newblock {\em Israel Journal of Mathematics}, 156(1):1--63, 2006.

\bibitem[Mos80]{Mos80}
J{\"u}rgen Moser.
\newblock Various aspects of integrable {Hamiltonian} systems.
\newblock Dynamical systems, {C}.{I}.{M}.{E}. {Lect}., {Bressanone} 1978, {Prog}. {Math}. 8, 233-290 (1980)., 1980.

\bibitem[MS12]{MS12}
Dusa McDuff and Dietmar Salamon.
\newblock {\em {$J$}-holomorphic curves and symplectic topology}, volume~52 of {\em American Mathematical Society Colloquium Publications}.
\newblock American Mathematical Society, Providence, RI, second edition, 2012.

\bibitem[MS17]{MS17}
Dusa McDuff and Dietmar Salamon.
\newblock {\em Introduction to symplectic topology}.
\newblock Oxford Graduate Texts in Mathematics. Oxford University Press, Oxford, third edition, 2017.

\bibitem[Nir53]{Nir53}
Louis Nirenberg.
\newblock The {W}eyl and {M}inkowski problems in differential geometry in the large.
\newblock {\em Communications on Pure and Applied Mathematics}, 6(3):337--394, 1953.

\bibitem[Rit13]{Ritter13}
Alexander~F. Ritter.
\newblock Topological quantum field theory structure on symplectic cohomology.
\newblock {\em Journal of Topology}, 6(2):391--489, 2013.

\bibitem[She22]{Shelukhin22}
Egor Shelukhin.
\newblock Viterbo conjecture for {Z}oll symmetric spaces.
\newblock {\em Invent. Math.}, 230(1):321--373, 2022.

\bibitem[Sie17]{Siefring17}
Richard Siefring.
\newblock Finite-energy pseudoholomorphic planes with multiple asymptotic limits.
\newblock {\em Mathematische Annalen}, 368(1-2):367--390, 2017.

\bibitem[SZ92]{Salamon-Zehnder92}
Dietmar Salamon and Eduard Zehnder.
\newblock Morse theory for periodic solutions of {H}amiltonian systems and the {M}aslov index.
\newblock {\em Comm. Pure Appl. Math.}, 45(10):1303--1360, 1992.

\bibitem[Ush12]{U12}
Michael Usher.
\newblock Many closed symplectic manifolds have infinite {H}ofer--{Z}ehnder capacity.
\newblock {\em Transactions of the American Mathematical Society}, 364(11):5913--5943, 2012.

\bibitem[Voc21]{V21}
Anna-Maria Vocke.
\newblock {\em Periodic bounce orbits in magnetic billiard systems}.
\newblock PhD thesis, 2021.

\bibitem[Web06]{Weber06}
Joa Weber.
\newblock Noncontractible periodic orbits in cotangent bundles and {F}loer homology.
\newblock {\em Duke Math. J.}, 133(3):527--568, 2006.

\bibitem[Zha16]{Zhao16}
Jingyu Zhao.
\newblock {\em Periodic symplectic cohomologies and obstructions to exact Lagrangian immersions}.
\newblock PhD thesis, Columbia University, 2016.
\newblock PhD thesis.

\end{thebibliography}
\bibliographystyle{alpha}
\end{document}